\tikzset{cross/.style={cross out, draw=black, minimum size=2*(#1-\pgflinewidth), inner sep=0pt, outer sep=0pt},
	%default radius will be 1pt.
	cross/.default={1pt}}
\newcommand{\nn}{\nonumber}
\newcommand{\p}{\partial}
\newcommand{\no}[1]{\left\| #1 \right\|}
\newcommand{\what}{\widehat}
\declaretheoremstyle[headfont=\normalfont\bfseries, bodyfont=\itshape, spaceabove=7pt, spacebelow=7pt]{theorem} % 1. defines theorem style
\theoremstyle{theorem} % 2. calls for theorem style defined above
\newtheorem{theorem}{Theorem}[section] % 3. defines specific environment
\newtheorem{remark}{Remark}[section]
\newtheorem{proposition}{Proposition}[section]
\newtheorem{lemma}{Lemma}[section]
\theoremstyle{definition}
\newtheorem{definition}{Definition}[section]
\numberwithin{equation}{section}
\numberwithin{figure}{section}
\let\OLDthebibliography\thebibliography
\renewcommand\thebibliography[1]{
  \OLDthebibliography{#1}
  \setlength{\parskip}{0pt}
  \setlength{\itemsep}{0pt plus 0.3ex}
}
   \def\MR#1{}
\begin{document}

\title{The complex Ginzburg-Landau equation on a finite interval and chaos suppression via a finite-dimensional boundary feedback stabilizer}

\author{Dionyssios Mantzavinos$^\dagger$,  Türker Özsarı$^*$, Kemal Cem Yılmaz$^+$}

\address{\normalfont $^\dagger$Department of Mathematics, University of Kansas, Lawrence, KS, USA\\
\normalfont $^*$Department of Mathematics, Bilkent University, Çankaya, Ankara, Turkey\\
\normalfont $^+$Department of Mathematics, Izmir Institute of Technology, Urla, Izmir, Turkey} %\email{\!mantzavinos@ku.edu}
%\email{\!turker.ozsari@bilkent.edu.tr}
%\email{\!cemyilmaz@iyte.edu.tr}

\thanks{\textit{Acknowledgements.} DM's research is partially supported by  the U.S. National Science Foundation (NSF-DMS 2206270 and NSF-DMS 2509146) and the Simons Foundation (SFI-MPS-TSM-00013970). TÖ and KCY's research were supported by TÜBİTAK 1001 Grant 122F084.}
\subjclass[2020]{35A22, 35B40, 35B65, 35C15, 35K61, 35Q56, 93D15, 93D20, 93D23}
\keywords{}
\date{September 30, 2025}

\begin{abstract}
In this paper, we study the well-posedness and boundary stabilization of the initial-boundary value problem  for the complex Ginzburg-Landau (CGL)  equation on a finite interval.  First, we establish a local well-posedness theory for the open loop model in $L^2$-based fractional Sobolev spaces in the case of Dirichlet-Neumann type inhomogeneous mixed boundary conditions. This local well-posedness result is based on linear estimates  derived by using the weak solution formula obtained via the unified transform (also known as the Fokas method). Next, we study the global well-posedness properties of the open loop model in presence of inhomogeneous boundary conditions. Then, we turn our attention to the rapid boundary feedback stabilization problem and design a nonlocal controller which uses a finite number of Fourier modes of the state of solution. This design relies on the fact that solutions of the CGL equation can be separated into a slow, finite-dimensional component and a rapidly decaying tail, with the former primarily governing long-term behavior.  We determine the necessary number of modes required to stabilize the system at a specified rate. Additionally, we identify the minimum number of modes that ensure stabilization at an unspecified decay rate. These theoretical results are validated by numerical simulations. The spatiotemporal estimates established in the first part of the paper are also employed to obtain local solutions of the controlled system. The existence of global energy solutions follows from stabilization estimates, while uniqueness follows from the uniqueness of an associated initial-boundary value problem with homogeneous boundary conditions whose solutions are in correspondence with the solutions of the original system through a bounded invertible Volterra-type integral transform on Sobolev spaces.   
\end{abstract}

\maketitle
\markboth
{D. Mantzavinos, T. Özsarı and K. C. Yılmaz}
{The complex Ginzburg-Landau equation on a finite interval and chaos suppression}

{\hypersetup{linkcolor=black}
\tableofcontents}

\section{Introduction}
In this paper, we pursue two interrelated goals. The first objective is to study the well-posedness of the following Dirichlet-Neumann initial-boundary value problem for the complex Ginzburg-Landau (CGL) equation on a finite interval in  fractional-order Sobolev spaces:
\begin{equation}\label{cgl-ibvp}
\begin{aligned}
&u_t - \left(\nu + i \alpha\right) u_{xx} - \gamma u + \left(\kappa + i\beta\right) |u|^p u = 0, \quad (x, t) \in (0, L) \times (0, T),
\\
&u(x,0) = u_0(x),
\\
&u(0, t) = a(t), \quad u_x(L, t) = b(t),
\end{aligned} 
\end{equation}
where $\nu, \kappa, p > 0$, $\alpha, \beta \in \mathbb R$, $\gamma \geq 0$, and the initial and boundary data $\{u_0(x), a(t), b(t)\}$ belong in appropriate fractional-order Sobolev spaces that will be discussed later.  

The second objective is to design a rapid boundary feedback stabilization scheme for the CGL equation in~\eqref{cgl-ibvp} by replacing the Neumann input acting at the right endpoint of the domain with a feedback controller which involves an integral-type nonlocal term and uses only a \textit{finite} number of Fourier modes of the state of solution. More precisely, for that second objective, we take $a(t)\equiv 0$ and construct a feedback law at the right endpoint of the domain in the form of
\begin{equation}
u_x(L,t)=b(t):=\int_0^L \xi(y) \, \Gamma [P_N u](y,t) dy + \zeta \, \Gamma [P_N u](x,t) \big|_{x = L}
\end{equation}
where $\zeta\in\mathbb{C}$ is a suitable constant, $\Gamma:H^m(0,L)\rightarrow H^m(0,L)$ is a particular linear bounded operator, and $\xi\in C^\infty([0,L])$ is to be constructed as a special function, with the explicit properties of $\Gamma$ and $\xi$ being intrinsic to the linear part of the CGL equation and to be discussed later. Moreover, $P_N$ is the projection operator defined by
\begin{equation}
		P_N u = \sum_{j=1}^N \left(e_j(\cdot),u(\cdot,t)\right)_2e_j(x) , \quad e_j(x) = \sqrt{\frac{2}{L}} \sin \left(\frac{(2j - 1)\pi x}{2L}\right),
	\end{equation}
	where each of the functions $e_j$ is an eigenfunction of the Laplacian subject to Dirichlet-Neumann boundary conditions, i.e.
	\begin{equation} \label{sturm-liouville}
		\begin{aligned}
			- \varphi_{xx} &= \lambda \varphi_, \quad x \in (0,L), \\
			\varphi(0) &= \varphi^\prime(L) = 0,
		\end{aligned}
	\end{equation}
	with respective eigenvalue 
	$\displaystyle 
		\lambda=\lambda_j = \left(\frac{2j - 1}{2}\right)^2 \frac{\pi^2}{L^2}.
	$
%\subsection{Motivation and history}

The CGL equation provides a mathematical framework for modeling instability waves in systems approaching critical transitions, such as reaction-diffusion processes near a Hopf bifurcation. Note that if $\nu \ll 1$ (and, in particular, if one sets $\nu=0$) then the CGL equation reduces to the nonlinear Schr\"odinger (NLS) equation, while if $\nu \gg 1$ then the CGL equation becomes a reaction-diffusion model.  The CGL equation plays a crucial role in describing various nonlinear phenomena, including wave dynamics, phase transitions, superconductivity, superfluidity, Bose–Einstein condensates, and liquid crystals. For a broader discussion on the diverse applications of this equation, we refer the reader to the article \cite{Ar02} and related literature.

Concerning the well-posedness of initial-boundary value problems with nonzero boundary data for the CGL equation, there are some studies in the literature such as  \cite{GaBu04} and the references therein for results at the $H_x^1$-level with smooth boundary data.  In addition, \cite{LiZh18} is concerned with the CGL equation in negative-indexed fractional Sobolev spaces. There are also some results pertaining to the stochastic CGL equation with white noise as boundary data, see for example \cite{BaKa20}. In the case of homogeneous boundary conditions, more works are available, see for instance \cite{ShYoYo16} and the references therein for a review of such results. 

Our approach for treating the local well-posedness associated with the inhomogeneous initial-boundary value problem \eqref{cgl-ibvp} will be based on the analysis of the weak solution formulation obtained via the unified transform, also known as the Fokas method \cite{Fokas97,Fokasbook}.  Since the introduction of this approach in  \cite{fhm2017} for the NLS equation on the half-line with nonzero Dirichlet data, there has been substantial progress towards the rigorous analysis of solutions of nonlinear initial-boundary value problems, see for instance \cite{hmy2019-nls, OY19, hm2020, KO22, hm2022, AMO24, mmo2024,mo2024} for Schrödinger-type equations and \cite{hmy2019-rd} for a reaction-diffusion equation. The successful implementation of the approach lies in the fact that the unified transform  provides the direct analogue of the Fourier transform when the spatial domain involves a boundary. For further reading on the solution of linear initial-boundary value problems via the unified transform, we refer the reader to the review articles \cite{DTV14,fs2012,cfk2025} and, concerning the case of the finite interval in particular, to the works \cite{p2004, fp2005, s2012, fs2016}.

It is widely recognized that when the Benjamin-Feir-Newell stability condition $\alpha\beta>-1$ is violated, the CGL equation may exhibit instability, leading to the emergence of chaotic behavior.  Such behavior in evolution equations is usually suppressed using interior or  boundary controls, see \cite{K94}, \cite{LT00}, \cite{LT00-2}, and \cite{Zua07} for  general approaches on control/stabilization of evolution pdes.  Some of the studies on the stabilization of unstable solutions of the Ginzburg–Landau equation involve internal feedback controls (global and/or local) \cite{BeMi04, BlSo96, BoBra06, GoNe06, MoSi04, PoSi07, StBe10, StCa07, BaMi96, RuNeGo07}, feedback controls using backstepping \cite{AaKr05, AaKr07, AaKr04} or dynamic boundary conditions \cite{CoOz18}, and non-feedback (open-loop) controls achieving null-controllability \cite{RoZh09}.  

The CGL equation has the important feature that it exhibits a finite-dimensional asymptotic behavior over time \cite{GhHe87}. The second author  of the present work has leveraged this property to extend finite-parameter interior feedback control algorithms developed for the reaction-diffusion equation \cite{AzTi14} to the framework of the CGL equation. These algorithms utilize a finite number of volume elements, Fourier modes, or nodal observables (controllers) to achieve exponential stabilization of solutions (see \cite{KaaOz17} for more details). Most recently, the last two authors of this work developed a finite-dimensional version of the classical backstepping controller design for stabilizing solutions of reaction-diffusion systems from the boundary \cite{KaOzY} with Dirichlet actuation.  The distinctive feature of this control design is that the boundary controller uses only a finite number of Fourier modes of
the state of solution, as opposed to the classical backstepping controller which uses all (infinitely many) modes.  It should be noted that there is also another nice approach that appeared in the framework of the heat equation, which is called the late lumpting technique \cite{Aur19, Woit17} and it relies on the same essential idea that solutions can be split into a slow finite dimensional part and a fast tail, where the former dominates the dynamics in the long run. See \cite{KaOzY} for a through comparison of our approach with the late lumping. A goal of the present paper is to extend this finite-dimensional boundary control design to the setting of the CGL equation with Neumann actuation.  Our primary focus is on the case where $\gamma>0$, as solutions with homogeneous boundary conditions naturally diminish to zero otherwise, eliminating the possibility of instability.
\\[2mm]
\textbf{Main results.}
Our local well-posedness result for the open-loop inhomogeneous nonlinear CGL Dirichlet-Neumann boundary value problem \eqref{cgl-ibvp} can be stated as follows.
\begin{theorem}[Hadamard well-posedness]\label{lwp-t}
Let $s \in \left(\frac 12, \frac 32\right) \cup \left(\frac 32, \frac 52\right)$ and $p>0$ such that either $p \in 2\mathbb N$ or, otherwise,
\begin{equation}\label{p-def}
\begin{aligned}
&\text{if $s\in\mathbb N$, then either $p\geq s$ if $p\in 2\mathbb N-1$, or $\left\lfloor p\right\rfloor\geq s-1$ if $p\notin \mathbb N$},
\\
&\text{if $s\not\in \mathbb N$, then either $p>s$ if $p \in 2\mathbb N-1$, or $\left\lfloor p\right\rfloor\geq \left\lfloor s \right\rfloor$ if $p\notin \mathbb N$}.
\end{aligned}
\end{equation}
Furthermore, let $T>0$ satisfy the conditions \eqref{T-into}, \eqref{T-into-2}, \eqref{T-contr} and \eqref{T-contr-2}. Then, for initial data $u_0 \in H^s(0, L)$ and boundary data $a \in H^{\frac{2s+1}{4}}(0, T)$, $b \in H^{\frac{2s-1}{4}}(0, T)$ satisfying the compatibility conditions
\begin{equation}\label{comp-cond}
u_0(0) = a(0), \ s \in \left(\tfrac 12, \tfrac 32\right) \cup \left(\tfrac 32, \tfrac 52\right),
\quad
u_0'(L) = b(0), \ s \in \left(\tfrac 32, \tfrac 52\right),
\end{equation}
the Dirichlet-Neumann initial-boundary value problem~\eqref{cgl-ibvp} for the complex Ginzburg-Landau equation on the finite interval $(0, L)$ is well-posed in the sense of Hadamard, namely it possesses a solution 
$$
u \in C_t([0, T]; H_x^s(0, L)) \cap C_x([0, L]; H_t^{\frac{2s+1}{4}}(0, T))
$$ 
which is unique within the larger space $C_t([0, T]; H_x^s(0, L))$ and depends continuously on the initial and boundary data, with the relevant size estimate
\begin{equation}
\begin{aligned}
&\quad
\sup_{t\in [0, T]} \no{u(t)}_{H_x^s(0, L)} 
+
\sup_{x\in [0, L]} \no{u(x)}_{H_t^{\frac{2s+1}{4}}(0, T)} 
\\
&\leq 
2\mathcal B(s, \lambda, L, \alpha, \nu, \gamma, T)
\Big( 
\no{u_0}_{H^s(0, L)} 
+
\no{a}_{H^{\frac{2s+1}{4}}(0, T)} 
+
\no{b}_{H^{\frac{2s-1}{4}}(0, T)}
\Big)
\end{aligned}
\end{equation}
where the constant $\mathcal B$ is given by \eqref{cr-def}.
\end{theorem}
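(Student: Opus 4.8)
The plan is to run a contraction-mapping argument in the solution space $X_T := C_t([0,T];H_x^s(0,L)) \cap C_x([0,L];H_t^{\frac{2s+1}{4}}(0,T))$, built on top of the linear estimates established earlier from the (weak) unified transform solution formula. First I would recast \eqref{cgl-ibvp} as the fixed-point equation $u = \Phi[u]$, where $\Phi[u]$ is the weak solution of the forced linear Dirichlet-Neumann problem
\begin{equation*}
v_t - (\nu+i\alpha)v_{xx} - \gamma v = -(\kappa+i\beta)\,|u|^p u, \quad v(x,0)=u_0(x),\quad v(0,t)=a(t),\quad v_x(L,t)=b(t),
\end{equation*}
so that the linear estimate with constant $\mathcal B$ from \eqref{cr-def} gives, schematically,
\begin{equation*}
\no{\Phi[u]}_{X_T} \leq \mathcal B\Big( \no{u_0}_{H^s(0,L)} + \no{a}_{H^{\frac{2s+1}{4}}(0,T)} + \no{b}_{H^{\frac{2s-1}{4}}(0,T)} + \no{\,|u|^p u\,}_{\mathcal F_T} \Big),
\end{equation*}
where $\mathcal F_T$ denotes the forcing space appearing in that estimate. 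The compatibility conditions \eqref{comp-cond} are precisely what place $\{u_0,a,b\}$ in the domain of the linear solution operator; observe that the second condition $u_0'(L)=b(0)$ makes sense as a pointwise trace only when $s>\tfrac32$, which is also why $s=\tfrac32$ is excluded and why the Neumann datum contributes through $H^{\frac{2s-1}{4}}(0,T)$ only in the higher range.

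The analytic heart of the matter is the pair of nonlinear estimates
\begin{equation*}
\no{\,|u|^p u\,}_{\mathcal F_T} \leq C\,T^{\theta}\,\no{u}_{X_T}^{p+1}, \qquad \no{\,|u|^p u - |w|^p w\,}_{\mathcal F_T} \leq C\,T^{\theta}\,\big(\no{u}_{X_T}^{p}+\no{w}_{X_T}^{p}\big)\no{u-w}_{X_T},
\end{equation*}
valid for some $\theta>0$. When $p\in 2\mathbb N$ the nonlinearity is a polynomial in $u$ and $\bar u$, so these reduce to the Banach-algebra property of $H^s(0,L)$ for $s>\tfrac12$ together with the elementary factorization of $|u|^pu-|w|^pw$; when $p$ is odd or non-integer one must instead invoke the mapping properties of the Nemytskii operator $z\mapsto|z|^p z$ on the fractional spaces $H^s$, and it is exactly here that the arithmetic restrictions \eqref{p-def} on the pair $(s,p)$ enter — they guarantee that $u\in H^s(0,L)$ implies $|u|^pu\in H^s(0,L)$ with the stated bounds, accounting for the limited differentiability of $z\mapsto|z|^p$ at the origin. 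The factor $T^{\theta}$ is produced by estimating the forcing against $\mathcal F_T$ over the short time interval, and it is what ultimately turns $\Phi$ into a contraction.

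With these ingredients in place, one argues in the now-standard way. For $T$ obeying \eqref{T-into} and \eqref{T-into-2}, the map $\Phi$ sends the closed ball $B_R\subset X_T$ of radius
\begin{equation*}
R := 2\mathcal B\big( \no{u_0}_{H^s(0,L)} + \no{a}_{H^{\frac{2s+1}{4}}(0,T)} + \no{b}_{H^{\frac{2s-1}{4}}(0,T)} \big)
\end{equation*}
into itself; for $T$ additionally obeying \eqref{T-contr} and \eqref{T-contr-2}, $\Phi$ is a contraction on $B_R$. The Banach fixed-point theorem then produces a solution $u\in X_T\cap B_R$, which immediately yields the asserted size estimate with the prefactor $2\mathcal B$, while the difference form of the nonlinear estimate combined with the linear estimate gives Lipschitz continuous dependence on $\{u_0,a,b\}$. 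For uniqueness in the larger class $C_t([0,T];H_x^s(0,L))$, one notes that any such solution still has nonlinear forcing in $\mathcal F_T$ (the nonlinear estimate needs only the $C_tH_x^s$ norm), hence coincides with the weak linear solution of the corresponding forced problem and therefore belongs to $X_T$ a posteriori; comparing two solutions on a possibly shorter subinterval through the contraction estimate, and then iterating in time, removes the smallness restriction and forces agreement on all of $[0,T]$.

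The step I expect to be the main obstacle is the nonlinear estimate in the non-smooth regime $p\notin 2\mathbb N$: controlling $|u|^p u$ in the fractional forcing space $\mathcal F_T$ up to the regularity level $s<\tfrac52$ is delicate because $z\mapsto|z|^p$ has only limited differentiability at $z=0$, and the conditions \eqref{p-def} are sharp for precisely this reason. A secondary source of bookkeeping is carrying the two regimes $s\in(\tfrac12,\tfrac32)$ and $s\in(\tfrac32,\tfrac52)$ through the argument simultaneously, since the second compatibility condition and the full strength of the Neumann trace estimate come into play only in the latter.
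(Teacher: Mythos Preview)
Your contraction-mapping outline for existence and continuous dependence is essentially the paper's argument: define $\Phi$ via the forced linear solution operator, invoke the linear estimates \eqref{u-se}/\eqref{u-se-2}, control the nonlinearity through Lemma~3.1 of \cite{bo2016} (which is exactly where the arithmetic conditions \eqref{p-def} enter), and close on the ball of radius $\rho_T = 2\mathcal B(\cdots)(\no{u_0}_{H^s}+\no{a}+\no{b})$ under \eqref{T-into}--\eqref{T-contr-2}. One minor clarification: the smallness-in-$T$ that makes $\Phi$ contractive does not come from a separate $T^\theta$ gained in the nonlinear estimate but from the fact that the forcing coefficients $c_3, c_4$ in the \emph{linear} estimates vanish as $T\to 0^+$; the nonlinear bound itself is just $\no{|u|^pu}_{H^s_x}\lesssim \no{u}_{H^s_x}^{p+1}$ pointwise in $t$ (and similarly in $H_t^{\frac{2s+1}{4}}$ for $s>\tfrac32$).

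Where you diverge from the paper is the uniqueness step. You propose to bootstrap any $C_tH_x^s$ solution back into $X_T$ and then iterate the contraction estimate on short subintervals. The paper instead uses a direct $L^2$ energy argument: for two solutions $u,v$ with the same data, multiply the equation for $w=u-v$ by $\bar w$, integrate, drop the good $\nu\no{w_x}^2$ term, bound $||u|^pu-|v|^pv|\le c_p(|u|^p+|v|^p)|w|$, pull out $\no{u}_{L_x^\infty}^p+\no{v}_{L_x^\infty}^p\lesssim \no{u}_{H_x^s}^p+\no{v}_{H_x^s}^p$ via Sobolev (here $s>\tfrac12$ is all that is used), and conclude by Gr\"onwall. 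This is shorter and avoids a genuine subtlety in your bootstrap: for $s\in(\tfrac32,\tfrac52)$ the forcing space $\mathcal F_T$ in \eqref{u-se-2} includes the component $C_xH_t^{\frac{2s+1}{4}}$, and showing $|u|^pu\in C_xH_t^{\frac{2s+1}{4}}$ requires $u$ itself to lie in that space, which a bare $C_tH_x^s$ solution is not known to do a priori. Your bootstrap works cleanly only in the range $s\in(\tfrac12,\tfrac32)$; the energy route sidesteps this entirely.
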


We also prove global well-posedness of energy solutions for certain values of $p$ depending on the sign of $\beta$.

\begin{theorem}[Global well-posedness]\label{gwp-t}
    Let $u_0\in H^1(0,L)$ and $a,b\in H^1_{\textnormal{loc}}(\mathbb{R}_+)$ with the compatibility condition $u_0(0)=a(0)$. Suppose that $p\in (0,2]$ if $\beta\ge 0$ and $p\in (0,\frac{4}{5})$ if $\beta<0$.  If $u \in C_t([0, T]; H_x^1(0, L))$ is a local solution of the Dirichlet-Neumann initial-boundary value problem \eqref{cgl-ibvp} for the CGL equation, then $u$ is also a global-in-time solution.   
\end{theorem}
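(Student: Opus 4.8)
The plan is to show that any local solution $u\in C_t([0,T];H^1_x(0,L))$ of \eqref{cgl-ibvp} extends globally by establishing an \emph{a priori} bound on $\no{u(t)}_{H^1_x(0,L)}$ that is uniform on every bounded time interval. Granting such a bound, a standard continuation argument finishes the proof: Theorem~\ref{lwp-t} applies at $s=1$ (its hypotheses on $p$ hold there for every $p>0$, and $H^1_{\mathrm{loc}}(\mathbb R_+)$ embeds in the trace spaces $H^{3/4}_{\mathrm{loc}}(\mathbb R_+)$ and $H^{1/4}_{\mathrm{loc}}(\mathbb R_+)$ required of $a$ and $b$), and its local existence time depends only on the norms of the data; so if $T^\ast\in(0,\infty)$ were the maximal existence time, the solution started from $u(t_0)$ would live on an interval of length bounded below uniformly as $t_0\uparrow T^\ast$, contradicting maximality. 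As a preliminary reduction I would lift the boundary data: with $\phi(x,t):=a(t)+x\,b(t)$, so that $\phi(0,\cdot)=a$, $\phi_x(L,\cdot)=b$ and $\phi_{xx}\equiv0$, the function $w:=u-\phi$ satisfies the homogeneous conditions $w(0,\cdot)=0$, $w_x(L,\cdot)=0$ together with
\begin{equation*}
w_t-(\nu+i\alpha)w_{xx}-\gamma w+(\kappa+i\beta)|w+\phi|^p(w+\phi)=g,\qquad g:=-\phi_t+\gamma\phi,
\end{equation*}
where, on any $[0,T]$, both $g\in L^2(0,T;L^\infty(0,L))$ and $\sup_{[0,T]}\no{\phi}_{H^1_x}$ are controlled by $\no a_{H^1(0,T)}+\no b_{H^1(0,T)}$. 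The energy identities below are first derived for a smooth Galerkin approximation in the eigenbasis $\{e_j\}$ (which respects the homogeneous boundary conditions) and then passed to the limit; I do not dwell on this routine point.

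The first estimate is at the $L^2$-level: multiplying the $w$-equation by $\bar w$, integrating over $(0,L)$ and taking real parts, every boundary term produced by integrating $(\nu+i\alpha)w_{xx}$ by parts vanishes, and since $\mathrm{Re}\big[(\kappa+i\beta)|u|^pu\,\bar u\big]=\kappa|u|^{p+2}$ one obtains
\begin{equation*}
\tfrac12\tfrac{d}{dt}\no{w}_{L^2}^2+\nu\no{w_x}_{L^2}^2+\kappa\!\int_0^L|u|^{p+2}\,dx=\gamma\no{w}_{L^2}^2+\mathrm{Re}\!\int_0^L\!\big[g\bar w+(\kappa+i\beta)|u|^pu\,\bar\phi\big]\,dx .
\end{equation*}
The $\bar\phi$-term is absorbed by $\kappa\int_0^L|u|^{p+2}$ via Young's inequality, $\mathrm{Re}\int_0^L g\bar w\le\tfrac12\no g_{L^2}^2+\tfrac12\no w_{L^2}^2$, and Gronwall's inequality yields, for every $T>0$,
\begin{equation*}
\sup_{t\in[0,T]}\no{w(t)}_{L^2}^2+\int_0^T\!\big(\nu\no{w_x}_{L^2}^2+\kappa\no{u}_{L^{p+2}}^{p+2}\big)\,dt\le C(T);
\end{equation*}
in particular $\no{u(t)}_{L^2}$ is bounded on $[0,T]$.

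The crux is the $H^1$-estimate. Multiplying the $w$-equation by $-\bar w_{xx}$, integrating and taking real parts, the homogeneous boundary conditions annihilate every boundary term that arises ($w_x(L)=0$, and $w(0,\cdot)=0$ forces $w_t(0,\cdot)=0$), and with $\mathrm{Re}\big[(\nu+i\alpha)\!\int_0^L|w_{xx}|^2\big]=\nu\no{w_{xx}}_{L^2}^2$ one obtains
\begin{equation*}
\tfrac12\tfrac{d}{dt}\no{w_x}_{L^2}^2+\nu\no{w_{xx}}_{L^2}^2=\gamma\no{w_x}_{L^2}^2-\mathrm{Re}\!\int_0^L g\,\bar w_{xx}\,dx+\mathrm{Re}\!\int_0^L(\kappa+i\beta)|u|^pu\,\bar w_{xx}\,dx ,
\end{equation*}
the $g$-term being absorbed by $\tfrac\nu4\no{w_{xx}}_{L^2}^2$. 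Everything hinges on $I:=\mathrm{Re}\int_0^L(\kappa+i\beta)|u|^pu\,\bar w_{xx}$. Since $w_{xx}=u_{xx}$, integrating by parts and writing $u=|u|e^{i\theta}$ produces the identity
\begin{equation*}
I=-\kappa(1+p)\!\int_0^L\!|u|^p|u|_x^2\,dx-\kappa\!\int_0^L\!|u|^{p+2}\theta_x^2\,dx-\beta p\!\int_0^L\!|u|^{p+1}|u|_x\theta_x\,dx+\mathcal R,
\end{equation*}
where $\mathcal R$ collects boundary contributions (the only nontrivial one involves $w_x(0)$, harmless because $|w_x(0)|\lesssim\no{w_x}_{L^2}+\no{w_{xx}}_{L^2}$). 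The task is to dominate the sign-indefinite cross term — and, when its sign cannot be exploited, the remaining terms as well — by a small multiple of $\nu\no{w_{xx}}_{L^2}^2$ plus a quantity in $L^1(0,T)$, using Agmon's inequality $\no u_{L^\infty}\lesssim\no u_{L^2}^{1/2}\no{u_x}_{L^2}^{1/2}+\no u_{L^2}$, the one-dimensional interpolation $\no{u_x}_{L^2}^2\lesssim\no u_{L^2}\no{u_{xx}}_{L^2}+\no u_{L^2}^2$, and the $L^2$-level bounds above. This is exactly where the hypotheses on $p$ are consumed: for $\beta\ge0$ the dominant part of $I$ carries a favorable sign and the interpolation closes for all $p\le2$, whereas for $\beta<0$ that sign is lost and closing the estimate requires the stronger restriction $p<\tfrac45$. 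In either regime one is led to a differential inequality
\begin{equation*}
\tfrac{d}{dt}\no{w_x(t)}_{L^2}^2\le\big(C+m(t)\big)\big(1+\no{w_x(t)}_{L^2}^2\big),\qquad m\in L^1(0,T),
\end{equation*}
so Gronwall's inequality gives $\sup_{[0,T]}\no{w_x(t)}_{L^2}<\infty$, hence $\sup_{[0,T]}\no{u(t)}_{H^1_x}<\infty$ for every finite $T$, and the continuation argument delivers the global solution.

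The step I expect to be the main obstacle is the $H^1$ \emph{a priori} estimate — concretely, controlling $I=\mathrm{Re}\int_0^L(\kappa+i\beta)|u|^pu\,\bar u_{xx}$ so that the nonlinearity does not overpower the parabolic dissipation $\nu\no{w_{xx}}_{L^2}^2$; it is precisely this competition that dictates the subcriticality thresholds $p\le2$ when $\beta\ge0$ and $p<\tfrac45$ when $\beta<0$. The remaining ingredients — the continuation criterion, the boundary lift, the $L^2$-estimate, and the approximation argument used to justify the energy identities — are routine.
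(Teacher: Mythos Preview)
Your strategy is genuinely different from the paper's. The paper works directly with $u$ (no lift), so that each energy identity carries boundary terms involving the unknown traces $u(L,t)$ and $u_x(0,t)$; it then uses \emph{three} multipliers $\bar u$, $-\bar u_{xx}$, and $|u|^p\bar u$, and combines the latter two in the proportion $\alpha:\beta$ so that the troublesome interior term $\beta\,\tfrac p2\,\mathrm{Im}\!\int u^2|u|^{p-2}\bar u_x^2$ cancels identically. In the paper, the restriction $p\le 2$ (for $\beta\ge 0$) does \emph{not} come from $I$ at all but from the nonlinear boundary trace $\int_0^t|u(L,s)|^{p+1}\,ds$, whose estimate produces $\|u_x\|_{L^{(p+2)/2}}$ and therefore needs $(p+2)/2\le 2$; the restriction $p<\tfrac45$ (for $\beta<0$) comes from the fact that, without the lift, the $L^2$ identity does not close on its own (it contains the unknown trace $u_x(0,t)$), so only the \emph{fourth} power $\|u(t)\|_{L^2}^4$ is controllable in terms of the auxiliary quantities, which forces the Gagliardo--Nirenberg/Young exponents to the stated threshold.

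Your lift removes precisely these two obstacles: once $w(0,\cdot)=0$ and $w_x(L,\cdot)=0$, the $L^2$ identity closes directly by Gronwall (giving $\sup_{[0,T]}\|u\|_{L^2}\le C(T)$), and the nonlinear boundary traces are of known size. Consequently, neither of the mechanisms that produce $p\le 2$ or $p<\tfrac45$ in the paper is present in your framework. That is why your sketch has a genuine gap at the place you flag as the ``main obstacle'': the assertion that ``for $\beta\ge0$ the dominant part of $I$ carries a favorable sign'' and that this is what forces $p\le 2$ is incorrect --- the cross term $-\beta p\!\int |u|^{p+1}|u|_x\theta_x$ is sign-indefinite for every $\beta\neq 0$, and your polar decomposition offers no distinction between $\beta>0$ and $\beta<0$. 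You have asserted the theorem's thresholds without deriving them from your own estimates. (There is also a smaller omission: since $\bar w_x=\bar u_x-\bar b$, your boundary remainder $\mathcal R$ contains, in addition to the $w_x(0)$ term, the contribution $\mathrm{Re}\big[(\kappa+i\beta)\,\bar b(t)\,|u(L,t)|^p u(L,t)\big]$, which is not ``harmless'' for the same reason as $w_x(0)$.) If you do want to run your lifted approach, the honest route is to bound $|I|\le|\kappa+i\beta|\,\|u\|_{L^{2p+2}}^{p+1}\|u_{xx}\|_{L^2}$ directly, interpolate, and absorb --- but then you must also explain why the announced thresholds appear, or acknowledge that your method yields a different range of $p$ than the statement you are trying to prove.
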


Theorems \ref{lwp-t} and \ref{gwp-t} summarize our results in the direction of the first objective of this paper, namely that of well-posedness of the open-loop problem \eqref{cgl-ibvp}. In regard to our second objective, namely rapid boundary feedback stabilization, we first establish the well-posedness and stabilization theory for the associated linear dynamics, i.e. the closed-loop problem \eqref{pde_lin}, via the following theorem. 

\begin{theorem}[Linear stabilization] \label{thm-stab-linear}
	Let $\nu > 0$, $\alpha \in \mathbb{R}$, $\gamma \geq 0$, $u_0 \in H^1(0,L)$ and $k$ be a solution of the boundary value problem \eqref{kernel_pde}.
	\begin{enumerate}[label=\textnormal{(\roman*)}, itemsep=2mm, topsep=2mm, leftmargin=7mm]
        \item  \textnormal{(Rapid stabilization)} Suppose $\mu > \gamma - \nu \lambda_1$. If $N$ satisfies
		\begin{equation}
			N > \text{max} \left\{\frac{\mu}{4 \nu \lambda_1} - \frac{1}{2},\frac{\mu}{2(\mu + \nu \lambda_1 - \gamma)} - \frac{1}{2} \right\}
		\end{equation}
		and $(\mu,N)$ is chosen in the sense of Definition \ref{ratemodepair}, then the zero equilibrium of the closed-loop problem~\eqref{pde_lin} subject to the feedback control law  $u_x(L,t) = b(P_Nu)$ given by \eqref{controller} is globally exponentially stable in $H^1(0,L)$. Moreover, the weak solution of \eqref{pde_lin} satisfies the exponential decay estimate
		\begin{equation}
			\no{u(t)}_{H_x^1(0,L)} \leq c_k e^{-\eta_1 t} \no{u_0}_{H^1(0,L)}, \quad \eta_1 = \nu\lambda_1 - \gamma + \mu \left(1 - \frac{1}{2N+1}\right), \quad \text{a.e. }t > 0,
		\end{equation}
		where the constant $c_k>0$ depends on $k$ but is independent of the initial datum.
		
		\item  \textnormal{(Minimal Fourier mode count)} Let $\nu, \alpha$ be such that $\lambda_{M} \leq \dfrac{\gamma}{\nu} < \lambda_{M+1}$. If $\mu$ satisfies
		\begin{equation}
			2(\gamma - \nu \lambda_1) \left(1 - \frac{1}{2M+1}\right)^{-1} < \mu < 2\nu \lambda_{M+1}
		\end{equation} 
		and $(\mu,M)$ is chosen in the sense of Definition \ref{ratemodepair}, then the zero equilibrium of the closed-loop problem~\eqref{pde_lin} subject to the feedback control law $u_x(L,t) = b(P_M u(\cdot,t))$ given by \eqref{controller} is globally exponentially stable in $H^1(0,L)$. Moreover, the weak solution of \eqref{pde_lin} satisfies the exponential decay estimate
		\begin{equation}
			\no{u(t)}_{H_x^1(0,L)} \leq e^{-\eta_2 t} \no{u_0}_{H^1(0,L)}, \quad \eta_2 = \nu\lambda_1 - \gamma + \frac{\mu}{2} \left(1 - \frac{1}{(2M+1)^2}\right), \quad \text{a.e. }t > 0,
		\end{equation}
		where the constant $c_k>0$  depends on $k$ but is independent of the initial datum.
	\end{enumerate}
\end{theorem}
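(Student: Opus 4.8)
The plan is to establish both parts of the theorem by a single two-scale scheme organized around the spectral splitting $u=P_Nu+Q_Nu$ (with $N$ replaced throughout part~(ii) by the fixed integer $M$ for which $\lambda_M\le\gamma/\nu<\lambda_{M+1}$). I would first dispose of well-posedness of the closed-loop problem \eqref{pde_lin}: the feedback $u_x(L,t)=b(P_Nu)$ in \eqref{controller} is a bounded, finite-rank functional of the state — it reads off only $(e_j,u)_2$ for $j\le N$ and the trace of $\Gamma[P_Nu]$ at $x=L$ — so, writing the plant as $u_t=\mathcal Au$ with $\mathcal A=(\nu+i\alpha)\partial_x^2+\gamma$ on the domain encoding $u(0)=0$ and the nonlocal condition $u_x(L)=b(P_Nu)$, and homogenizing this condition by the Volterra-type transform attached to $k$, one exhibits $\mathcal A$ as a bounded perturbation of a maximal dissipative operator; Lumer--Phillips plus a Galerkin/energy argument then gives a unique weak solution in $C_t([0,T];H_x^1(0,L))$ for every $T>0$, and the constant $c_k$ below is nothing but the norm of this transform and of its inverse on $H^1(0,L)$, hence depends on $k$ but not on $u_0$.

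The structural heart of the argument is that $p:=P_Nu$ obeys a \emph{closed} finite-dimensional linear ODE: pairing \eqref{pde_lin} with $e_j$ and integrating by parts twice (using $u(0)=0$, $e_j(0)=0$, $e_j'(L)=0$, $e_j''=-\lambda_je_j$) yields $\dot c_j=\big(-(\nu+i\alpha)\lambda_j+\gamma\big)c_j+(\nu+i\alpha)e_j(L)\,b(\mathbf c_p)$ with $c_j=(e_j,u)_2$, i.e. $\dot{\mathbf c}_p=A_N\mathbf c_p$ where $A_N$ is a rank-one update of $\mathrm{diag}\big(-(\nu+i\alpha)\lambda_j+\gamma\big)_{j\le N}$. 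The role of the kernel $k$ solving \eqref{kernel_pde} — equivalently of the data $\Gamma,\xi,\zeta$ built from it — is to realize, via the discretization of the Volterra transform, a pole placement conjugating $A_N$ to $\mathrm{diag}\big(-(\nu+i\alpha)\lambda_j+\gamma-\mu\big)_{j\le N}$, i.e. shifting every slow eigenvalue left by $\mu$, through a similarity whose condition number is absorbed into $c_k$; admissibility of $(\mu,N)$ in the sense of Definition \ref{ratemodepair} is exactly the (generic) nonresonance condition ensuring that the discretized transform is invertible, so that this conjugation makes sense. It follows at once that $\|P_Nu(t)\|_{H^1}\le c_k\,e^{-(\nu\lambda_1-\gamma+\mu)t}\|P_Nu(0)\|_{H^1}$ — a rate strictly faster than the claimed $\eta_1$ (resp. $\eta_2$).

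For the remaining estimate I would work with $w:=u-\int_0^x k(x,y)\,P_Nu(y,t)\,dy$ (this corresponds, in the notation of \eqref{controller}, to a definite choice of $\Gamma,\xi,\zeta$ built from $k$). A direct computation using the $P_Nu$-equation and \eqref{kernel_pde} shows that $w$ solves the target problem
\[
w_t=(\nu+i\alpha)w_{xx}+(\gamma-\mu)w+\mu\,Q_Nu-(\nu+i\alpha)\,b(P_Nu)\,\psi,\qquad w(0,t)=w_x(L,t)=0,
\]
with $\psi(x)=\int_0^x k(x,y)\sigma(y)\,dy$, $\sigma=\sum_{j\le N}e_j(L)e_j$; the two source terms are the exact price of a finite mode count. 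A short energy computation — test against $\bar w$, integrate by parts, use the Poincaré inequality $\|w_x\|_{L^2}^2\ge\lambda_1\|w\|_{L^2}^2$ on $\{w(0)=0\}$, the bound $\|Q_Nu\|_{L^2}\le\lambda_{N+1}^{-1/2}\|w_x\|_{L^2}+c_k\|P_Nu\|_{L^2}$ (valid since $w$ lies in the $H^1$-closure of $\mathrm{span}\{e_j\}$), and, by Young's inequality, $\mu\lambda_{N+1}^{-1/2}\|w_x\|\|w\|\le\delta\|w_x\|^2+\tfrac{\mu^2}{4\delta\lambda_{N+1}}\|w\|^2$ — reduces everything to optimizing in $\delta\in(0,\nu)$. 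The optimum $\delta^\ast=\mu/\big(2(2N+1)\lambda_1\big)$ is admissible precisely when $\delta^\ast<\nu$, i.e. $N>\tfrac{\mu}{4\nu\lambda_1}-\tfrac12$, and, using $\sqrt{\lambda_1/\lambda_{N+1}}=1/(2N+1)$, leaves the effective dissipation constant $\nu\lambda_1-\mu/(2N+1)$, hence the decay exponent $\eta_1=\nu\lambda_1-\gamma+\mu\big(1-\tfrac1{2N+1}\big)$; the second inequality on $N$ is simply $\eta_1>0$. The residual forcing terms carrying $\|P_Nu\|$ are eliminated, without cost to the rate, by running the estimate with the weighted functional $\|w(t)\|^2+K\|P_Nu(t)\|^2$ for $K$ large (using $2(\nu\lambda_1-\gamma+\mu)>\eta_1$ and the decay of $P_Nu$). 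Repeating this at the $H^1$ level and pulling back ($\|u\|_{H^1}\le c_k\|w\|_{H^1}$) completes part~(i). Part~(ii) is the same computation with $M$ in place of $N$: now $\gamma/\nu<\lambda_{M+1}$ makes the tail $Q_Mu$ \emph{intrinsically} dissipative at the positive rate $\nu\lambda_{M+1}-\gamma$, and the window $2(\gamma-\nu\lambda_1)\big(1-\tfrac1{2M+1}\big)^{-1}<\mu<2\nu\lambda_{M+1}$ is exactly what lets the truncation remainder be absorbed in the $Q_M$-block while still leaving a net gain in the $P_M$-block; tracking constants through the corresponding (now cruder) balance gives $\eta_2=\nu\lambda_1-\gamma+\tfrac\mu2\big(1-\tfrac1{(2M+1)^2}\big)$, which tends to $\nu\lambda_{M+1}-\gamma$ as $\mu\uparrow2\nu\lambda_{M+1}$.

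The main obstacle lies entirely on the linear-PDE side and has two faces. One is to prove that the Volterra-type map attached to $k$ is a bounded isomorphism of $H^1(0,L)$ — and of the higher $H^m$ needed both to interpret the transformed boundary data and to run the $H^1$ energy estimate — with norms depending only on $k$; this is what legitimizes the passage between $u$ and $w$ and the very appearance of $c_k$. The other, more delicate, is the quantitative bookkeeping in the target-system energy estimate: the truncation remainders must be absorbed into the parabolic dissipation tightly enough that the mode-count inequalities come out as the exact thresholds $\delta^\ast<\nu$ and $\eta_1,\eta_2>0$ — so that the loss factors $1-\tfrac1{2N+1}$ and $1-\tfrac1{(2M+1)^2}$ are produced by the optimized Young/Poincaré interplay rather than by a lossy estimate — and all of this must be carried out uniformly in the (possibly large) constant $c_k$.
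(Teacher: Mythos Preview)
Your overall strategy — energy estimate on a transformed variable, Poincar\'e-type control of the high modes, Young optimization to extract the precise rates $\eta_1,\eta_2$ — is the same as the paper's, and your optimization in $\delta$ reproducing $\eta_1$ exactly is correct. But you have inverted the backstepping in the wrong direction, and this creates a genuine gap.

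The paper uses the \emph{forward} transform $u=T_Nw=w+\int_0^x k(x,y)\,P_N\mathbf{w}(y,t)\,dy$ (kernel acting on $P_Nw$, not $P_Nu$). With the kernel solving \eqref{kernel_pde}, this yields the \emph{clean} target
\[
w_t-(\nu+i\alpha)w_{xx}-\gamma w+\mu P_Nw=0,\qquad w(0,t)=w_x(L,t)=0,
\]
with no source terms at all; the energy identity then reads $\tfrac{d}{dt}\|w\|^2+2\nu\|w_x\|^2-2\gamma\|w\|^2+2\mu\,\mathrm{Re}\langle w,P_Nw\rangle=0$, and writing $2\mu\langle w,P_Nw\rangle=2\mu\|w\|^2-2\mu\langle w,w-P_Nw\rangle$ together with the Poincar\'e-type bound $\|w-P_Nw\|\le\lambda_{N+1}^{-1/2}\|w_x\|$ and the Young split gives $\eta_1$ (choice $\epsilon_1=1/(2N+1)$) or $\eta_2$ (choice $\epsilon_1=1$) directly. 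The same works verbatim at the $H^1$ level by testing with $-2\bar w_{xx}$. No weighted functional, no separate ODE analysis of $P_Nu$, no residual sources.

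Your map $w=u-\int_0^x k(x,y)\,P_N\mathbf{u}(y,t)\,dy$ is \emph{not} $T_N^{-1}$ (that inverse is $I-\Upsilon_N$, built recursively in Lemma~\ref{invlem-2}), and this has two concrete consequences. First, your target equation is wrong as stated: the computation actually gives $w_t=(\nu+i\alpha)w_{xx}+(\gamma-\mu)w+\mu Q_Nu-2\mu\,KP_Nu-(\nu+i\alpha)b\,K\sigma$, with an extra $-2\mu KP_Nu$ you omitted. Second — and this is the real gap — your $w$ does \emph{not} satisfy $w_x(L,t)=0$ under the controller \eqref{controller}: that controller feeds back $\Gamma_N[P_Nu]=(I-P_N\Upsilon_N)[P_Nu]$, not $P_Nu$, precisely so that the \emph{paper's} $w=T_N^{-1}u$ (not yours) has homogeneous Neumann trace. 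With a nonzero $w_x(L,t)$ the integration by parts in your energy estimate leaves an uncontrolled boundary term $-2\nu\,\mathrm{Re}\big(w_x(L,t)\,\overline{w(L,t)}\big)$, and your argument stalls. The fix is simply to work with the paper's $w$ and the source-free target \eqref{pde_tarlin}; everything you need (including the admissibility condition as invertibility of $T_N$ and the constant $c_k$ as the product of the $H^1$-operator norms of $T_N$ and $T_N^{-1}$) then falls out with less effort.
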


Finally, we prove the following well-posedness and stabilization theorem for the fully nonlinear problem with sufficiently small initial data.

\begin{theorem}[Nonlinear stabilization] \label{thm-stab-nonlinear}
	Let $\nu, \kappa > 0$, $\alpha, \beta \in \mathbb{R}$, $u_0 \in H^1(0,L)$,  $\eta^* \in \{\eta_1, \eta_2\}$. Suppose that
	\begin{equation}
		\no{u_0}_{L^2(0,L)} \leq \min \left\{\left(\frac{2\eta^* - \varepsilon \lambda_1}{c_{1,\varepsilon} + c_2}\right)^{\frac{4-p}{3p + 4}},\left(\frac{2\eta^* - \varepsilon \lambda_1}{c_{1,\varepsilon} + c_2}\right)^{\frac{1}{p+1}}\right\}
	\end{equation}    
	where $\varepsilon > 0$ is such that $2\eta^* - \varepsilon \lambda_1 > 0$, $c_2$ and $c_{1,\varepsilon}$ are respectively given by \eqref{norm-constants} and \eqref{c1star}, and
	\begin{equation}
		\|u_0^\prime\|_{L^2(0,L)} \leq \min \left\{\left(\frac{2\eta^* - \varepsilon \lambda_1}{ c_{3,\varepsilon,\lambda_1} +  c_{4,\varepsilon,\lambda_1}}\right)^{\frac{4 - p}{7p+4}},\left(\frac{2\eta^* - \varepsilon \lambda_1}{ c_{3,\varepsilon,\lambda_1} +  c_{4,\varepsilon,\lambda_1}}\right)^\frac{1}{2p+1}\right\}
	\end{equation}
	where $c_{3,\varepsilon,\lambda_1}$ and $c_{4,\varepsilon,\lambda_1}$ are given by \eqref{c3c4}. Let $k$ be a solution of the boundary value problem \eqref{kernel_pde}.
	\begin{enumerate}[label=\textnormal{(\roman*)}, itemsep=2mm, topsep=2mm, leftmargin=7mm]
    \item \textnormal{(Rapid stabilization)} Let $\eta^* = \eta_1$. Suppose $\mu > \gamma - \nu \lambda_1$. If $N$ satisfies
		\begin{equation} \label{N_condd}
			N > \max \left\{\frac{\mu}{4 \nu \lambda_1} - \frac{1}{2},\frac{\mu}{2(\mu + \nu \lambda_1 - \gamma)} - \frac{1}{2} \right\}
		\end{equation}
		and $(\mu,N)$ is chosen in the sense of Definition \ref{ratemodepair}, then the zero equilibrium of the closed-loop system \eqref{pde_lin} subject to the feedback control law $u_x(L,t) = b(P_N u)$ given by \eqref{controller} is locally exponentially stable in $H^1(0,L)$. Moreover, the weak solution, $u$ of \eqref{pde_lin} satisfies the decay estimate
		\begin{equation}
			\no{u(t)}_{H_x^1(0,L)} \lesssim c_k e^{-\left(\eta_1 - \frac{\varepsilon \lambda_1}{2}\right) t} \no{u_0}_{H^1(0,L)}, \quad \eta_1 = \nu\lambda_1 - \gamma + \mu \left(1 - \frac{1}{2N+1}\right), \quad \text{a.e. }t > 0,
		\end{equation}
		where $c_k$ is a nonnegative constant that depends on $k$ and independent of the initial datum.
		
		\item \textnormal{(Minimal Fourier mode count)} Let $\eta^* = \eta_2$ and $\nu, \alpha$ be such that $\lambda_{M} \leq \frac{\gamma}{\nu} < \lambda_{M+1}$. If $\mu$ satisfies
		\begin{equation}
			2(\gamma - \nu \lambda_1) \left(1 - \frac{1}{(2M+1)}\right)^{-1}< \mu < 2\nu \lambda_{M+1}
		\end{equation} 
		and $(\mu,M)$ is chosen in the sense of Definition \ref{ratemodepair}, then zero equilibrium of the closed-loop system \eqref{pde_lin} subject to the feedback control law $u_x(L,t) = b(P_M u(\cdot,t))$ given by \eqref{controller} is locally exponentially stable in $H^1(0,L)$. Moreover, the weak solution, $u$ of \eqref{pde_lin} satisfies the decay estimate
		\begin{equation}
			\no{u(t)}_{H_x^1(0,L)} \lesssim e^{-\left(\eta_2 - \frac{\varepsilon \lambda_1}{2}\right) t} \no{u_0}_{H^1(0,L)}, \quad \eta_2 = \nu\lambda_1 - \gamma + \frac{\mu}{2} \left(1 - \frac{1}{(2M+1)^2}\right), \quad \text{a.e. }t > 0,
		\end{equation}
		where $c_k$ is a nonnegative constant that depends on $k$ and independent of the initial datum.
	\end{enumerate}
\end{theorem}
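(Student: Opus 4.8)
The plan is to combine the local well-posedness machinery of Theorem~\ref{lwp-t} with an a priori energy estimate tailored to the dissipative structure that the feedback law \eqref{controller} — built from the backstepping kernel $k$ solving \eqref{kernel_pde} — imposes on the closed-loop system \eqref{pde_lin}, so that the linear decay rates $\eta_1,\eta_2$ of Theorem~\ref{thm-stab-linear} are recovered while the nonlinearity is absorbed for sufficiently small data.

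First I would produce a local solution of the closed-loop problem by invoking Theorem~\ref{lwp-t} with $s=1$. One checks that \eqref{controller} yields admissible Neumann data: since $P_Nu(\cdot,t)$ is a finite linear combination of the smooth eigenfunctions $e_j$ with coefficients $(e_j,u(\cdot,t))_2$ and $\Gamma$ is bounded on $H^m(0,L)$, the map $t\mapsto b(P_Nu)(t)$ inherits through those coefficients the time regularity $H_t^{3/4}\hookrightarrow H_t^{1/4}=H_t^{\frac{2s-1}{4}}$ of $u$, with a bound controlled by $\sup_t\no{u(t)}_{L^2(0,L)}+\sup_x\no{u(x)}_{H_t^{3/4}(0,T)}$. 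Treating $b=b(P_Nu)$ as part of the unknown and viewing the extra term as a bounded perturbation of the boundary-data input, one reruns the contraction-mapping argument underlying Theorem~\ref{lwp-t} in $C_t([0,T];H_x^1(0,L))\cap C_x([0,L];H_t^{3/4}(0,T))$ to obtain, for $\no{u_0}_{H^1(0,L)}$ small, a local solution on some interval $[0,T_0]$.

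The core is an a priori exponential bound. Given a local solution $u$, I would first establish $L^2$ decay: multiplying \eqref{pde_lin} by $\bar u$, integrating over $(0,L)$ and taking real parts, the boundary contribution at $x=L$ (the term at $x=0$ vanishes since $a\equiv 0$) is $\nu\,\mathrm{Re}\!\left[b(P_Nu)\,\overline{u(L,t)}\right]-\alpha\,\mathrm{Im}\!\left[b(P_Nu)\,\overline{u(L,t)}\right]$, and by the construction of $k$ together with the choice of the rate--mode pair in the sense of Definition~\ref{ratemodepair} the resulting linear balance reproduces the rate $\eta^\ast\in\{\eta_1,\eta_2\}$ exactly as in Theorem~\ref{thm-stab-linear}; the nonlinear term contributes the favorable $-\kappa\int_0^L|u|^{p+2}\,dx\le 0$ together with remainders (from the Volterra transform to the target system, or equivalently trace terms) which, estimated by Gagliardo--Nirenberg and Agmon inequalities and absorbed via Young's inequality using the stated threshold on $\no{u_0}_{L^2(0,L)}$ — whose $p$-dependent exponents $\tfrac{4-p}{3p+4}$, $\tfrac{1}{p+1}$ and the constants $c_{1,\varepsilon},c_2$ of \eqref{c1star}, \eqref{norm-constants} come precisely from this balancing — give $\tfrac{d}{dt}\no{u(t)}_{L^2(0,L)}^2\le-(2\eta^\ast-\varepsilon\lambda_1)\no{u(t)}_{L^2(0,L)}^2$; hence $\no{u(t)}_{L^2}$ decays exponentially and stays below the threshold. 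Then, running the analogous estimate for $\no{u_x(t)}_{L^2(0,L)}$ (pairing the equation with $-\bar u_{xx}$ and using $u_t(0,t)=0$), the same mechanism governs the linear part, the worst nonlinear term $\mathrm{Re}\!\int_0^L|u|^pu\,\bar u_{xx}\,dx$ is integrated by parts and bounded by interpolation by a superquadratic power of $\no{u_x}_{L^2}$ times factors controlled by the $L^2$ decay already in hand, and the threshold on $\no{u_0'}_{L^2(0,L)}$ (exponents $\tfrac{4-p}{7p+4}$, $\tfrac{1}{2p+1}$, constants $c_{3,\varepsilon,\lambda_1},c_{4,\varepsilon,\lambda_1}$ of \eqref{c3c4}) allows their absorption, producing $\no{u_x(t)}_{L^2}\lesssim e^{-(\eta^\ast-\varepsilon\lambda_1/2)t}\no{u_0}_{H^1}$. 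Together the two bounds give $\no{u(t)}_{H_x^1(0,L)}\lesssim c_k\,e^{-(\eta^\ast-\varepsilon\lambda_1/2)t}\no{u_0}_{H^1(0,L)}$ with $c_k$ depending only on $k$.

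Finally I would conclude by continuation: the a priori estimate shows the $H^1$ norm stays small on any interval of existence, so the local construction can be reapplied with a uniform existence time, yielding a global weak solution of \eqref{pde_lin}; uniqueness follows as in Theorem~\ref{gwp-t}, by passing through the bounded, boundedly invertible Volterra-type transform associated with $k$ to the corresponding initial-boundary value problem with homogeneous boundary conditions, whose solutions are unique. Cases (i) and (ii) then follow by taking $\eta^\ast=\eta_1$ under \eqref{N_condd}, respectively $\eta^\ast=\eta_2$ under the stated bound on $\mu$ with $M$ determined by $\lambda_M\le\gamma/\nu<\lambda_{M+1}$. I expect the main obstacle to be the a priori $H^1$ estimate: unlike in the linear Theorem~\ref{thm-stab-linear}, the boundary term generated by the \emph{finite}-dimensional controller (acting through $P_N$ rather than the full state) must be reconciled simultaneously with the nonlinearity $|u|^pu$, which is only controllable in $H^1$ by interpolation and hence forces the precise smallness thresholds; one must also establish the $L^2$ decay before the $H^1$ decay, since the latter uses the former as an input.
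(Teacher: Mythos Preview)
Your proposal has a genuine structural gap. You propose to derive the a priori estimates directly on $u$ by multiplying the closed-loop equation by $\bar u$ (respectively $-\bar u_{xx}$), so that the feedback law enters through the boundary term $\mathrm{Re}[(\nu+i\alpha)\,b(P_Nu)\,\overline{u(L,t)}]$, and you assert that ``by the construction of $k$ together with the choice of the rate--mode pair \ldots\ the resulting linear balance reproduces the rate $\eta^\ast$ exactly as in Theorem~\ref{thm-stab-linear}.'' This is not how Theorem~\ref{thm-stab-linear} is proved, and the claim is unfounded: the nonlocal boundary term does not yield any clean dissipative structure in an energy identity for $u$. The entire point of the backstepping construction is that the transformation $u=T_Nw$ converts the plant into the \emph{target model} \eqref{pde_tarnonlin}, which has \emph{homogeneous} boundary conditions (so no boundary terms appear in the energy identities) and the interior damping term $\mu P_Nw$, which is precisely what produces the rate $\eta^\ast$ via \eqref{ctrl_term} and \eqref{ctrl_term-h1}. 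The paper carries out the $L^2$ and $H^1$ estimates on $w$, not on $u$; the nonlinearity in the target equation becomes $f(w)=(\kappa+i\beta)\,T_N^{-1}[|T_Nw|^p T_Nw]$, and this is exactly why the constants $c_1,c_2,c_3,c_4$ in \eqref{norm-constants} and the derived $c_{1,\varepsilon},c_{3,\varepsilon,\lambda_1},c_{4,\varepsilon,\lambda_1}$ all involve $\|T_N\|$ and $\|T_N^{-1}\|$. Your direct approach on $u$ would not produce these constants, which is a signal that the mechanism you describe is not the right one.

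A second, smaller gap: you write that the smallness threshold ``gives $\tfrac{d}{dt}\|u(t)\|_{L^2}^2\le -(2\eta^\ast-\varepsilon\lambda_1)\|u(t)\|_{L^2}^2$.'' In fact the estimate one obtains (after Gagliardo--Nirenberg and Young) is a genuinely \emph{nonlinear} differential inequality, $y'+Ay\le By^{(2p+8)/(4-p)}+Cy^{p+2}$ for $y=\|w\|_{L^2}^2$ (and the analogue for $z=\|w_x\|_{L^2}^2$). The passage from this to exponential decay under the stated smallness on $y(0)$ is not a direct absorption but a Bernoulli-type bootstrap (Lemmas~\ref{lem-nonlin-1} and \ref{lem-nonlin-2} in the paper); the specific exponents $\tfrac{4-p}{3p+4},\tfrac{1}{p+1}$ and $\tfrac{4-p}{7p+4},\tfrac{1}{2p+1}$ come from solving those Bernoulli inequalities, not from ``balancing'' at the level of the energy identity.
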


\noindent
\textbf{Structure.} In Section \ref{wp-s}, we prove Theorem \ref{lwp-t} on the local well-posedness of the open-loop problem \eqref{cgl-ibvp}. Furthermore, at the end of that section, we establish the global well-posedness result of Theorem \ref{gwp-t}. In Section~\ref{stab-s}, we develop a finite-dimensional controller for the CGL equation and prove the closed-loop well-posedness and stabilization results of Theorems \ref{thm-stab-linear} and \ref{thm-stab-nonlinear}. Numerical simulations for the control problem are presented  in Section \ref{numerical}. The solution formula for the forced linear CGL equation on a finite interval via the unified transform is derived in Appendix \ref{utm-sol-s}. In Appendix B, we derive a boundary value problem whose solution is used as the kernel of the boundary feedback stabilizer to be constructed.

\section{Well-posedness of the open-loop problem}
\label{wp-s}

In this section, we establish Theorem \ref{lwp-t} on the local well-posedness of the open-loop problem \eqref{cgl-ibvp} for the CGL equation on a finite interval with inhomogeneous Dirichlet-Neumann data. We then proceed to the proof of the global well-posedness result of Theorem \ref{gwp-t}. 

Theorem \ref{lwp-t} will proved via a contraction mapping argument which will crucially rely on the estimation of the forced linear counterpart of the initial-boundary value problem \eqref{cgl-ibvp}, namely of the problem
\begin{equation}\label{lcgl-ibvp}
\begin{aligned}
&u_t - \left(\nu + i \alpha \right) u_{xx} - \gamma u = \mathfrak f(x, t), \quad (x, t) \in (0, L) \times (0, T),
\\
&u(x,0) = u_0(x),
\\
&u(0, t) = a(t), \quad u_x(L, t) = b(t),
\end{aligned} 
\end{equation}
where $\mathfrak f$ is some given forcing in an appropriate function space to be discussed below. 
With this plan in mind, our first task is to derive an explicit solution formula for the linear problem \eqref{lcgl-ibvp}. 
As noted in the introduction, for this purpose we will employ the unified transform, also known as the Fokas method, which provides the direct analogue of the Fourier transform in the case of initial-boundary value problems. Then, using our linear solution formula, we will establish various linear estimates which we will then combine with a contraction mapping argument in order to prove Theorem \ref{lwp-t}.

In fact, we observe that the term $\gamma u$ can be removed from the linear problem \eqref{lcgl-ibvp} via the transformation 
\begin{equation}\label{gamma-trans}
u(x, t) = e^{\gamma t} w(x, t),
\end{equation}
which turns problem \eqref{lcgl-ibvp} into  
\begin{equation}\label{lcgl-ibvp-w}
\begin{aligned}
&w_t - \left(\nu + i \alpha \right) w_{xx} = e^{-\gamma t} \mathfrak f(x, t) =: f(x, t), \quad (x, t) \in (0, L) \times (0, T),
\\
&w(x,0) = u_0(x),
\\
&w(0,t) = e^{-\gamma t} a(t) =: g_0(t), \quad w_x(L, t) = e^{-\gamma t} b(t) =: h_1(t).
\end{aligned} 
\end{equation}
%
%\cbox[red!20]{
%Idea to be investigated: 
%%
%Solve this problem \eqref{lcgl-ibvp-w} via unified transform --- solution formula will involve straight lines as opposed to a hyperbola (more precisely, the asymptotes of this hyperbola). Then, estimate $v$ using this formula. Derive estimate of the form
%%
%$$
%\sup_{t\in[0, T]} \no{w(t)}_{H^s(0, L)}
%\lesssim_{c_s}
% \no{u_0}_{H^s(0, L)} +  \no{g_0(t)}_{H^{\frac{2s+1}{4}}(0, T)} +  \no{h_1(t)}_{H^{\frac{2s-1}{4}}(0, T)}
%+ \sup_{t\in[0, T]} \no{f(t)}_{H^s(0, L)}.
%$$
%%
%Then, since $\no{u(t)}_{H^s(0, L)} \leq e^{\gamma T} \sup_{t\in[0, T]} \no{w(t)}_{H^s(0, L)}$ and $\no{e^{-\gamma t} g(t)}_{H^m(0, T)} \leq c_{m, \gamma} \no{g}_{H^m(0, T)}$ (the second inequality can be proved directly via the $L^2(0, T)$ characterization of the Sobolev norm for $m\geq 0$ and hopefully can also be proved via a Fourier transform method once an appropriate smooth $L^2$ extension of $e^{-\gamma t}$ is identified), we can deduce a corresponding linear estimate for $u$:
%%
%$$
%\sup_{t\in[0, T]} \no{u(t)}_{H^s(0, L)}
%\lesssim
%c_{s,\gamma} e^{\gamma T}
%\left(
% \no{u_0}_{H^s(0, L)} +  \no{a(t)}_{H^{\frac{2s+1}{4}}(0, T)} +  \no{b(t)}_{H^{\frac{2s-1}{4}}(0, T)}
%+ \sup_{t\in[0, T]} \no{\mathfrak f(t)}_{H^s(0, L)}
%\right).
%$$
%%
%The advantage of this approach is  that we avoid the hyperbolas and associated computations concerning their parametrization and also bypassing the zeros of $e^{ikL} + e^{-ikL}$. But on the other hand these are not too bad... Also, we would need to think how the Strichartz estimates would go via the above transformation...
%}
%
By employing the unified transform, we derive in Appendix \ref{utm-sol-s} the following explicit solution formula for the modified problem \eqref{lcgl-ibvp-w}:
\begin{align}\label{lcgl-sol-T}
w(x, t) 
&=
\frac{1}{2\pi} \int_{\mathbb R} e^{ikx-\omega t} \, \what u_0(k) \, dk 
+ \frac{1}{2\pi} \int_{\mathbb R} e^{ikx-\omega t} 
\int_0^t e^{\omega  t'} \what f(k, t') dt' dk
\nn\\
&\quad
- \frac{1}{2\pi} 
\int_{\p D^+}  
\frac{e^{ikx-\omega t}}{e^{ikL} + e^{-ikL}} 
\left\{
 \left[ e^{ikL} \what u_0(k) + e^{-ikL} \what u_0(-k) \right]
+
 \int_0^t e^{\omega  t'} \left[e^{ikL} \what f(k, t') + e^{-ikL} \what f(-k, t') \right] dt'  
\right\} dk 
\nn\\
&\quad
- \frac{\nu+i\alpha}{\pi} 
\int_{\p D^+}  
\frac{e^{ikx-\omega t}}{e^{ikL} + e^{-ikL}} 
\left[
\widetilde h_1(\omega, T)
+
 ik e^{-ikL} \, \widetilde g_0(\omega, T)
\right] dk 
\\
&\quad
+ \frac{1}{2\pi} 
\int_{\p D^-}    
\frac{e^{ik(x-L)-\omega t}}{e^{ikL} + e^{-ikL}} 
\left\{
\big[ \what u_0(k) - \what u_0(-k) \big]
+
\int_0^t e^{\omega t'} \left[\what f(k, t') - \what f(-k, t') \right] dt' \right\} dk
\nn\\
&\quad
- \frac{\nu+i\alpha}{\pi} 
\int_{\p D^-}   
\frac{e^{ik(x-L)-\omega t}}{e^{ikL} + e^{-ikL}} 
\left[
e^{ikL} \, \widetilde h_1(\omega, T)
+
ik \widetilde g_0(\omega, T) 
\right] dk,
 \nn
\end{align}
where $\what u_0(k)$ and $\what f(k, t')$ denote the finite interval Fourier transforms of $u_0(x)$ and $f(x, t')$ with respect to $x$ as defined by \eqref{ft}, the temporal transforms $\widetilde g_0(\omega, T)$ and $\widetilde h_1(\omega, T)$ are defined by \eqref{tilde-def}, and the contours $\p D^\pm$ are the positively oriented boundaries of the regions $D^\pm$ depicted in Figure \ref{dpm-f} and are described by the pair of straight lines
\begin{equation}\label{hyp-eq}
\text{Im}(k) = \lambda \, \text{Re}(k), \quad \text{Im}(k) = -\frac{1}{\lambda} \, \text{Re}(k),
\quad \lambda := \frac{-\alpha  + \sqrt{\alpha^2 + \nu^2}}{\nu}.
\end{equation}

\begin{figure}[ht!]
\centering
\begin{tikzpicture}[scale=1]
\draw[line width=0.5pt, black](2.15,2.5)--(2.15,2.15);
\draw[line width=0.5pt, black](2.15,2.15)--(2.5,2.15);
\node[] at (2.27, 2.36) {\fontsize{8}{8} $k$};
\draw[line width=0.5pt, black, ->](-2.5,0)--(2.5,0);
\draw[line width=0.5pt, black, ->, dotted](0,-2.5)--(0,2.5);
\node[] at (-0.18, -0.24) {\fontsize{9}{9} $0$};
\draw[domain=-2:2, variable=\x, smooth, line width=0.8pt, blue] plot ({\x}, {0.5*\x});
\draw[domain=-0.9:0.9, variable=\x, smooth, line width=0.8pt, red] plot ({\x}, {-2*\x});
\node[] at (0.5, 1.1) {\fontsize{10}{10} $D^+$};
\node[] at (-0.4, -1.1) {\fontsize{10}{10} $D^-$};
\end{tikzpicture}
\caption{
The straight lines \eqref{hyp-eq} that define the region   $D^+ \cup D^-$.
}
\label{dpm-f}
\end{figure}
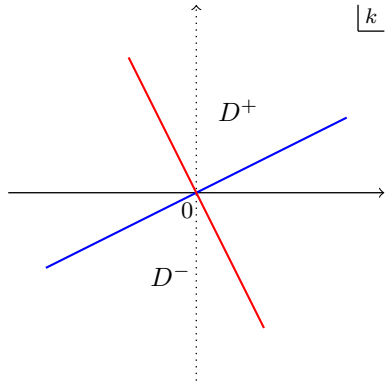

By means of the transformation \eqref{gamma-trans},  formula \eqref{lcgl-sol-T} yields a corresponding solution formula for the original forced linear CGL problem \eqref{lcgl-ibvp}. However, the linear estimates that will be used in the proof of the local well-posedness result of Theorem \ref{lwp-t} will actually be derived at the level of the modified problem \eqref{lcgl-ibvp-w} via the solution formula \eqref{lcgl-sol-T}.

\subsection{Estimates for the reduced  initial-boundary value problem}  
\label{pure-s}
We now estimate the solution to the following fundamental problem, which has zero forcing, zero initial data, and boundary data with compact support, and which is hereafter referred to as the \textit{reduced initial-boundary value problem}:
\begin{equation}\label{pure-ibvp}
\begin{aligned}
&v_t - \left(\nu + i \alpha \right) v_{xx} = 0, \quad (x, t) \in (0, L) \times (0, T'), \ T' > T, 
\\
&v(x,0) = 0,
\\
&v(0,t) = g(t), \quad v_x(L,t) = h(t), \quad \text{supp}(g), \, \text{supp}(h) \subset (0, T'). 
\end{aligned} 
\end{equation}
It is natural to ask which choice of function spaces for the boundary data $g(t), h(t)$ ensures that the solution $v(x, t)$ belongs to the Sobolev space $H^s(0, L)$ as a function of $x$. This question is addressed via the following~theorem.
\begin{theorem}[Space estimate]
\label{pure-ibvp-t}
Let $s \geq \frac 12$. Then, the solution $v(x, t)$ to the reduced initial-boundary value problem~\eqref{pure-ibvp} with boundary data $g\in H^{\frac{2s+1}{4}}(\mathbb R), h\in H^{\frac{2s-1}{4}}(\mathbb R)$ that have support inside $(0, T')$ belongs to the Sobolev space $H^s(0, L)$ as a function of $x$ by virtue of the estimate
\begin{equation}\label{pure-se}
\sup_{t\in [0, T']} \no{v(t)}_{H_x^s(0, L)}
\leq 
c_{s, \lambda, L, \alpha, \nu, T'}
\big(\no{g}_{H^{\frac{2s+1}{4}}(\mathbb R)} + \no{h}_{H^{\frac{2s-1}{4}}(\mathbb R)}\big),
\end{equation}
where $c_{s, \lambda, L, \alpha, \nu, T'}>0$ is a constant that remains bounded as $T'\to 0^+$. In addition, the map $[0, T'] \ni t \mapsto v(t) \in H_x^s(0, L)$ is continuous.
\end{theorem}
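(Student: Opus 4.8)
The plan is to obtain the $H^s_x$ estimate directly from the unified-transform representation of $v$, treating the Dirichlet contribution (the $\widetilde g_0$ terms) and the Neumann contribution (the $\widetilde h_1$ terms) separately. Since the forcing and the initial data vanish for the reduced problem, formula \eqref{lcgl-sol-T} collapses to
$$
v(x,t) = -\frac{\nu+i\alpha}{\pi}\int_{\partial D^+}\frac{e^{ikx-\omega t}}{e^{ikL}+e^{-ikL}}\big[\widetilde h(\omega,T')+ike^{-ikL}\,\widetilde g(\omega,T')\big]\,dk - \frac{\nu+i\alpha}{\pi}\int_{\partial D^-}\frac{e^{ik(x-L)-\omega t}}{e^{ikL}+e^{-ikL}}\big[e^{ikL}\widetilde h(\omega,T')+ik\,\widetilde g(\omega,T')\big]\,dk,
$$
where $\omega=(\nu+i\alpha)k^2$ and, because the data have compact support in $(0,T')$, the temporal transforms are genuine (full) Laplace–Fourier transforms, $\widetilde g(\omega,T')=\int_0^{T'}e^{\omega t'}g(t')\,dt' = \int_{\mathbb R}e^{\omega t'}g(t')\,dt'$, which we may reparametrize along $\partial D^\pm$ as honest Fourier transforms of $g,h$. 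The first step is to make this reparametrization precise: on the two rays comprising $\partial D^+$ (and likewise $\partial D^-$), write $k=\rho e^{i\theta}$ with $\theta$ fixed so that $\mathrm{Re}(\omega)=\mathrm{Re}((\nu+i\alpha)k^2)\geq c\rho^2$; the definition of $\lambda$ in \eqref{hyp-eq} is exactly what guarantees $\mathrm{Re}(\omega)\geq 0$ on $\partial D^+$ and $\leq 0$ on the reflected contour, which is what makes $e^{-\omega t}$ and $e^{\omega t'}$ (for $0\le t'\le t$) bounded.

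**Key steps.** After the reparametrization, I would (i) change variables $k\mapsto \tau$ via $\tau = \pm|\omega|$ or, more cleanly, parametrize each ray by its real part and use $\omega\sim k^2$ to pass to a variable of order $k^2$, so that $\widetilde g(\omega,T')$ becomes (up to a smooth bounded Jacobian and a harmless truncation artifact from the finite $T'$, which is where the ``bounded as $T'\to 0^+$'' claim gets used) the Fourier transform $\widehat g$ evaluated along the corresponding real line; (ii) insert the trivial bound $\|v(t)\|_{H^s_x(0,L)}^2 \lesssim \sum_{j}\||\xi|^j v(\cdot,t)\|^2$ for integer $j\le s$ plus a fractional piece, and for each $x$-derivative pull the factor $(ik)^j$ (plus the $x$-independent $ik$ already present in the Dirichlet term) inside the contour integral; (iii) estimate $\big|\tfrac{1}{e^{ikL}+e^{-ikL}}\big|$ on $\partial D^\pm$ — on the bulk of the contour $\mathrm{Im}(k)$ grows linearly in $|k|$ so this denominator is exponentially small in $|k|$ (no zeros on the contour because the rays avoid the real axis), and near $k=0$ it is bounded below by a constant; (iv) apply Cauchy–Schwarz in the spectral variable with the weight $\langle k\rangle^{2s}$ absorbed against the exponentially decaying denominator and the Gaussian-type factor $e^{-\mathrm{Re}(\omega)t}$, landing on $\|g\|_{H^{(2s+1)/4}}\|h\|_{H^{(2s-1)/4}}$-type quantities once one checks the power counting: the $(2s+1)/4$ and $(2s-1)/4$ Sobolev indices are forced by the $k\sim\tau^{1/2}$ scaling, since $s$ spatial derivatives correspond to $s/2$ powers of $\tau$, and the extra $\pm 1/4$ comes from the Jacobian of $dk$ versus $d\tau$ together with the single explicit $ik$ multiplying the Neumann versus Dirichlet data. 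Finally, (v) for continuity of $t\mapsto v(t)\in H^s_x$, I would note that the same estimates give $\|v(t)-v(t')\|_{H^s_x}\to 0$ as $t'\to t$ by dominated convergence, since the integrand depends continuously on $t$ and is dominated uniformly on $[0,T']$ by the integrable majorant constructed in (i)–(iv).

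**Main obstacle.** The routine part is the exponential decay of $1/(e^{ikL}+e^{-ikL})$ and the Gaussian factor; the delicate part is bookkeeping the exact Sobolev exponents through the change of variables $k\leftrightarrow\omega$, handling the two rays of $\partial D^\pm$ (which meet at the origin, where the naive change of variables degenerates and one must argue the low-frequency portion by hand), and dealing with the fact that $\widetilde g(\omega,T')$ is not literally $\widehat g$ but a Laplace-type transform along a ray — converting it to a statement about $\|g\|_{H^{(2s+1)/4}(\mathbb R)}$ requires either deforming the contour back to the real axis (using analyticity of $e^{\omega t'}\widehat{g\mathbf 1_{[0,T']}}$ and the support condition) or directly estimating the ray integral, and this is where the constant's boundedness as $T'\to 0^+$ must be tracked carefully. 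I expect the neighborhood of $k=0$ on $\partial D^\pm$ and the contour-deformation/reparametrization argument for the temporal transforms to be the genuine technical heart of the proof; everything else is Cauchy–Schwarz against an exponentially small kernel.
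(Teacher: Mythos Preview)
Your overall architecture is right --- start from the reduced formula, parametrize $\partial D^\pm$ by rays, convert the temporal transforms to Fourier transforms of $g,h$, and read off the Sobolev exponents from the $k\sim\tau^{1/2}$ scaling --- but there is a conceptual error and, downstream of it, a real gap.

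\textbf{The conceptual error.} On $\partial D^\pm$ one has $\mathrm{Re}(\omega)=0$ \emph{exactly}, not $\mathrm{Re}(\omega)\geq c\rho^2$. (This is the defining property of the contours \eqref{hyp-eq}: they are precisely the locus $\mathrm{Re}\big((\nu+i\alpha)k^2\big)=0$.) Two consequences: (a) there is no ``Gaussian-type factor $e^{-\mathrm{Re}(\omega)t}$'' to absorb anything --- that factor is identically $1$; (b) your worry that $\widetilde g(\omega,T')$ is a ``Laplace-type transform along a ray'' dissolves, because for $\omega$ purely imaginary and $g$ compactly supported in $(0,T')$ one has literally $\widetilde g(\omega,T')=\mathcal F\{g\}(i\omega)$ with $i\omega\in\mathbb R$. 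So the contour-deformation you flag as the ``technical heart'' is not needed.

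\textbf{The gap.} With the Gaussian factor gone, your step (iv) --- Cauchy--Schwarz in the spectral variable --- does not close. Take for instance the Dirichlet piece of $v^+$: after parametrizing the ray and taking $j$ $x$-derivatives, the kernel has modulus $\sim r^{j+1}e^{-\lambda r x}/(1-e^{-2\lambda rL})$, whose exponential decay in $r$ degenerates as $x\to 0^+$. Cauchy--Schwarz against $(1+\tau^2)^{(2j+1)/4}$ then gives a pointwise bound $|\partial_x^j v(x,t)|\lesssim x^{-1/2}\|g\|_{H^{(2j+1)/4}}$, and $\int_0^L x^{-1}\,dx=\infty$. (The symmetric failure occurs for the $v^-$ Neumann piece near $x=L$.) The exponentially small denominator $1/(e^{ikL}+e^{-ikL})$ helps some terms but is cancelled by the explicit $e^{\pm ikL}$ factors in others, so it does not save you uniformly in $x$.

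What the paper does instead is to take the $L^2_x(0,L)$ norm \emph{first}, recognize (after the change $y=\lambda x$ or $y=\lambda(L-x)$) that the resulting $x$-integral is a Laplace transform in $y$, and invoke Hardy's classical bound $\|\mathcal L F\|_{L^2(0,\infty)}\leq\sqrt\pi\,\|F\|_{L^2(0,\infty)}$. This converts the $L^2_x$ norm directly to an $L^2_r$ norm of the integrand, with no loss at the endpoint; the change of variable $r\mapsto\tau$ then produces the $H^{(2j\pm1)/4}$ norms exactly. For the Neumann terms the missing factor of $r$ forces a preliminary split $r\in[0,1]$ versus $r\in[1,\infty)$ (your instinct about $k=0$ is correct here), with the near-origin piece handled by the crude bound $|\mathcal F\{h\}(i\omega)|\leq\sqrt{T'}\|h\|_{L^2}$ --- which is where the $T'$-dependence enters and why the constant stays bounded as $T'\to0^+$. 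The estimate is obtained for integer $j$ and then interpolated to general $s\geq 0$.
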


\begin{remark}
Estimate \eqref{pure-se} will actually be established by means of the more general estimate
\begin{equation}\label{pure-se-l2}
\sup_{t\in [0, T']} \no{v(t)}_{H_x^s(0, L)}
\leq 
c_{s, \lambda, L, \alpha, \nu, T'}
\big(\no{g}_{L^2(\mathbb R)} + \no{g}_{H^{\frac{2s+1}{4}}(\mathbb R)} + \no{h}_{L^2(\mathbb R)} + \no{h}_{H^{\frac{2s-1}{4}}(\mathbb R)}\big),
\quad s \geq 0, 
\end{equation}
which readily yields \eqref{pure-se} when $s\geq \frac 12$. 
\end{remark}

\begin{remark}[CGL versus NLS]
\label{cgl-nls-r}
In the NLS limit $\nu \to 0^+$, either $\lambda$ or $\frac 1\lambda$ tends to $0^+$ causing the constant $c_{s, \lambda, L, \alpha, \nu, T'}$ in \eqref{pure-se} to blow up (see the relevant constants in \eqref{vj-dir} below). Thus, the CGL estimate \eqref{pure-se} does not directly imply a corresponding estimate for NLS.
\end{remark}

\begin{proof}
In view of the compact support of the boundary data, 
%
%\begin{equation} 
$
\widetilde g(\omega, T') 
% := 
% \int_0^{T'} e^{\omega t} g(t) dt
% =
% \int_{\mathbb R} e^{-i \cdot i\omega t} g(t) dt
% =:
=
\mathcal F\{g\}(i\omega)
$
%\end{equation}
%
and $\widetilde h(\omega, T') = \mathcal F\{h\}(i\omega)$. Thus, taking also into account that the initial datum and the forcing are both zero, in the case of the reduced problem \eqref{pure-ibvp} the unified transform solution \eqref{lcgl-sol-T} takes the form $v = v^+ + v^-$ with
\begin{align}
v^+(x, t) 
&=
- \frac{\nu+i\alpha}{\pi} 
\int_{\p D^+}  
\frac{e^{ikx-\omega t}}{e^{ikL} + e^{-ikL}} 
\left[
\mathcal F\{h\}(i\omega)
+
 ik e^{-ikL} \mathcal F\{g\}(i\omega)
\right] dk,
\label{v+}
\\
v^-(x, t) 
&=
- \frac{\nu+i\alpha}{\pi} 
\int_{\p D^-}   
\frac{e^{ik(x-L)-\omega t}}{e^{ikL} + e^{-ikL}} 
\left[
e^{ikL} \mathcal F\{h\}(i\omega)
+
ik \mathcal F\{g\}(i\omega)
\right] dk.
\label{v-}
\end{align}

The estimation of $v^+$ and $v^-$ is entirely analogous,  thus we only consider $v^-$. 
Parametrizing along the contour $\p D^-$ by letting, as appropriate (recall \eqref{hyp-eq}), $k = -\left(1 + i \lambda\right)r$, $r \in [0, \infty)$, or $k = \left(1 - \frac{i}{\lambda}\right)r$, $r \in [0, \infty)$, we write $v^- = v_{\lambda, 1}^- + v_{\lambda, 2}^-$ where 
\begin{align}
v_{\lambda, 1}^-(x, t) 
&=
\frac{\nu+i\alpha}{\pi} 
\int_0^\infty   
\frac{e^{-i (1 + i \lambda)r (x-L)- \omega((1 + i \lambda)r) t}}{e^{i(1 + i \lambda)rL} + e^{-i(1 + i \lambda)rL}} 
\nn\\
&\quad
\cdot 
\left[
e^{-i(1 + i \lambda)rL} \mathcal F\{h\}(i\omega((1 + i \lambda)r))
-
i(1 + i \lambda)r \mathcal F\{g\}(i\omega((1 + i \lambda)r))
\right] (1 + i \lambda) dr,
\label{v-1}
\\
v_{\lambda, 2}^-(x, t) 
&=
\frac{\nu+i\alpha}{\pi} 
\int_0^\infty   
\frac{e^{i(1 - \frac{i}{\lambda})r(x-L)- \omega((1 - \frac{i}{\lambda}) r) t}}{e^{i(1 - \frac{i}{\lambda})rL} + e^{-i(1 - \frac{i}{\lambda})rL}} 
\nn\\
&\quad
\cdot 
\left[
e^{i(1 - \frac{i}{\lambda})rL} \mathcal F\{h\}(i\omega((1 - \tfrac{i}{\lambda}) r))
+
i\left(1 - \tfrac{i}{\lambda}\right)r \mathcal F\{g\}(i\omega((1 - \tfrac{i}{\lambda}) r))
\right] (1 - \tfrac{i}{\lambda})dr.
\label{v-2}
\end{align}
For any $j\in\mathbb N \cup \{0\}$, by the triangle inequality and the fact that $\text{Re}(\omega((1 + i \lambda)r)) = 0 = \text{Re}(\omega((1 - \tfrac{i}{\lambda}) r))$ since $\omega$ is purely imaginary along $\p D^\pm$, 
\begin{equation}
\begin{aligned}
\big\| \p_x^j v_{\lambda, 1}^-(t) \big\|_{L_x^2(0, L)}
&\leq
\frac{\left(\alpha^2+\nu^2\right)^{\frac 12}}{\pi}
\, \bigg\|
\int_0^\infty 
\frac{e^{\lambda r(x-L)}}{\delta_{\lambda, 1}(r)}
\left(1+\lambda^2\right)^{\frac{j+1}2} r^j
\Big[
e^{\lambda rL} \, \big|\mathcal F\{h\}(i\omega((1 + i \lambda)r)) \big|
\\
&\hskip 5cm
+
\left(1+\lambda^2\right)^{\frac 12} r \big| \mathcal F\{g\}(i\omega((1 + i \lambda)r)) \big|
\Big] dr
\bigg\|_{L_x^2(0, L)}
\end{aligned}
\end{equation}
and
\begin{equation}
\begin{aligned}
\big\| \p_x^j v_{\lambda, 2}^-(t) \big\|_{L_x^2(0, L)}
&\leq
\frac{\left(\alpha^2+\nu^2\right)^{\frac 12}}{\pi}
\, \bigg\|
\int_0^\infty 
\frac{e^{\frac{1}{\lambda} r (x-L)}}{\delta_{\lambda, 2}(r)}
\, \left(1 + \tfrac{1}{\lambda^2}\right)^{\frac{j+1}2} r^j 
\Big[
e^{\frac{1}{\lambda} rL} \,
\big| \mathcal F\{h\}(i\omega((1 - \tfrac{i}{\lambda}) r)) \big| 
\\
&\hskip 5cm
+
\left(1+\tfrac{1}{\lambda^2}\right)^{\frac 12} r 
\big|\mathcal F\{g\}(i\omega((1 - \tfrac{i}{\lambda}) r)) \big|
\Big] dr
\bigg\|_{L_x^2(0, L)}
\end{aligned}
\end{equation}
where
\begin{equation}
\delta_{\lambda, 1}(r) := \left|e^{i (1 + i \lambda) rL} + e^{-i(1 + i \lambda)rL}\right|,
\quad
\delta_{\lambda, 2}(r) := \left|e^{i (1 - \frac{i}{\lambda}) rL} + e^{-i(1 - \frac{i}{\lambda})rL}\right|.
\end{equation}
Then, by the change of variable $y = \lambda(L-x)$ in the $L_x^2(0, L)$ norm of $v_{\lambda, 1}^-$ and $y = \frac{1}{\lambda} (L-x)$ in the $L_x^2(0, L)$ norm of $v_{\lambda, 2}^-$, 
\begin{align}
\big\| \p_x^j v_{\lambda, 1}^-(t) \big\|_{L_x^2(0, L)}
&\leq
\frac{\left(\alpha^2+\nu^2\right)^{\frac 12} \left(1+\lambda^2\right)^{\frac{j+1}2}}{\pi \sqrt \lambda}
\, 
\Bigg\{
\bigg\|
\int_0^\infty 
e^{-ry} \, 
\frac{r^j e^{\lambda rL} \, \big|\mathcal F\{h\}(i\omega((1 + i \lambda)r)) \big|}{\delta_{\lambda, 1}(r)}
\, dr
\bigg\|_{L_y^2(0, \lambda L)}
\label{vjln}
\\
&\quad
+
\left(1+\lambda^2\right)^{\frac 12}
\bigg\|
\int_0^\infty 
e^{-ry} \, 
\frac{r^{j+1} \big| \mathcal F\{g\}(i\omega((1 + i \lambda)r)) \big|}{\delta_{\lambda, 1}(r)}
\, dr
\bigg\|_{L_y^2(0, \lambda L)}
\Bigg\}
\label{vjld}
\end{align}
and
\begin{align}
\big\| \p_x^j v_{\lambda, 2}^-(t) \big\|_{L_x^2(0, L)}
&\leq
\frac{\left(\alpha^2+\nu^2\right)^{\frac 12} \left(1 + \tfrac{1}{\lambda^2}\right)^{\frac{j+1}2}}{\pi \sqrt{\frac 1\lambda}}
\Bigg\{ \bigg\|
\int_0^\infty 
e^{-r y} \, 
\frac{r^j e^{\frac{1}{\lambda} rL} \,
\big| \mathcal F\{h\}(i\omega((1 - \tfrac{i}{\lambda}) r)) \big|}{\delta_{\lambda, 2}(r)} \, dr 
\bigg\|_{L_y^2(0, \frac{L}{\lambda})}
\label{vjmn}
\\
&\quad
+ \left(1+\tfrac{1}{\lambda^2}\right)^{\frac 12}
\, \bigg\|
\int_0^\infty 
e^{-r y} \, 
\frac{r^{j+1} \big|\mathcal F\{g\}(i\omega((1 - \tfrac{i}{\lambda}) r)) \big|}{\delta_{\lambda, 2}(r)} \, dr 
\bigg\|_{L_y^2(0, \frac{L}{\lambda})}
\Bigg\}.
\label{vjmd}
\end{align}

For the Dirichlet data terms \eqref{vjld} and \eqref{vjmd}, we use the fact that the Laplace transform is a bounded linear operator on $L^2(0, \infty)$. This classical result was first established by Hardy in Theorem 3 of \cite{h1929} (see also Lemma 3.2 in \cite{fhm2017}) and, when combined with the fact that
\begin{equation}\label{d-form2}
\begin{aligned}
&\delta_{\lambda, 1}(r) 
= \left|e^{i (1 + i \lambda)rL} + e^{-i (1 + i \lambda)rL}\right|
\geq
e^{\lambda r L} - e^{-\lambda r L},
\\
&\delta_{\lambda, 2}(r) 
=
\left|e^{i(1 - \frac{i}{\lambda})rL}-e^{-i(1 - \frac{i}{\lambda})rL}\right|
\geq
e^{\frac 1\lambda r L} - e^{-\frac 1\lambda r L},
\end{aligned}
\quad
r\geq 0,
\end{equation}
it yields the bounds
\begin{equation}\label{dbound0}
\begin{aligned}
\eqref{vjld}
&\leq
\frac{\left(\alpha^2+\nu^2\right)^{\frac 12} \left(1 + \lambda^2\right)^{1+\frac j2}}{\sqrt{\pi \lambda}}
\, \bigg\|
\frac{r^{j+1} \big| \mathcal F\{g\}(i\omega((1 + i \lambda)r)) \big|}{e^{\lambda r L} - e^{-\lambda r L}}
\bigg\|_{L_r^2(0, \infty)},
\\
\eqref{vjmd}
&\leq
\frac{\left(\alpha^2+\nu^2\right)^{\frac 12} \left(1 + \tfrac{1}{\lambda^2}\right)^{1+\frac j2}}{\sqrt{\frac{\pi}{\lambda}}}
\, \bigg\|
\frac{r^{j+1} \big|\mathcal F\{g\}(i\omega((1 - \tfrac{i}{\lambda}) r)) \big|}{e^{\frac 1\lambda r L} - e^{-\frac 1\lambda r L}}
\bigg\|_{L_r^2(0, \infty)}.
\end{aligned}
\end{equation}
Note that the singularity at $r=0$ is removable thanks to the factor of $r$ present in the numerator of both integrands. In particular, the Mean Value Theorem for the function $e^{\lambda r x}$ on $[-L, L]$ implies 
\begin{equation}\label{d-bound-n}
% e^{\lambda r L} - e^{-\lambda r L}
% =
% 2L \lambda r e^{\lambda r c}, \ c\in(-L, L)
% \ \Rightarrow \
% e^{\lambda r L} - e^{-\lambda r L}
% \geq
% 2L \lambda r e^{-\lambda r L}
% \ \Rightarrow \
\frac{r}{e^{\lambda r L} - e^{-\lambda r L}}
\leq
\frac{e^{\lambda r L}}{2L \lambda}.
\end{equation}
The upper bound \eqref{d-bound-n} can be used for $r\in [0, 1]$, while for $r\geq 1$ we observe that
\begin{equation}\label{d-bound-f}
e^{\lambda r L} - e^{-\lambda r L} \geq e^{\lambda L} - e^{-\lambda L}.
% \ \Rightarrow \
% \frac{1}{e^{\lambda r L} - e^{-\lambda r L}}
% \leq
% \frac{1}{e^{\lambda L} - e^{-\lambda L}}.
\end{equation}
Hence, we use the triangle inequality to split the $r$-integrals in \eqref{dbound0} as follows:
\begin{align}
\eqref{vjld}
&\leq
\frac{\left(\alpha^2+\nu^2\right)^{\frac 12} \left(1 + \lambda^2\right)^{1+\frac j2}}{\sqrt{\pi \lambda}}
\Bigg\{ \bigg\|
\frac{r^{j+1}\big| \mathcal F\{g\}(i\omega((1 + i \lambda)r)) \big|}{e^{\lambda r L} - e^{-\lambda r L}}
\bigg\|_{L_r^2(0, 1)}
\label{vjld3a}
\\
&\hskip 4.5cm
+ \bigg\|
\frac{r^{j+1}\big| \mathcal F\{g\}(i\omega((1 + i \lambda)r)) \big|}{e^{\lambda r L} - e^{-\lambda r L}}
\bigg\|_{L_r^2(1, \infty)}
\Bigg\},
\label{vjld3b}
\\
\eqref{vjmd}
&\leq
\frac{\left(\alpha^2+\nu^2\right)^{\frac 12} \left(1 + \tfrac{1}{\lambda^2}\right)^{1+\frac j2}}{\sqrt{\frac{\pi}{\lambda}}}
\Bigg\{ \bigg\|
\frac{r^{j+1} \big|\mathcal F\{g\}(i\omega((1 - \tfrac{i}{\lambda}) r)) \big|}{e^{\frac 1\lambda r L} - e^{-\frac 1\lambda r L}}
\bigg\|_{L_r^2(0, 1)}
\label{vjmd3a}
\\
&\hskip 4.5cm
+
\bigg\|
\frac{r^{j+1} \big|\mathcal F\{g\}(i\omega((1 - \tfrac{i}{\lambda}) r)) \big|}{e^{\frac 1\lambda r L} - e^{-\frac 1\lambda r L}}
\bigg\|_{L_r^2(1, \infty)}
\Bigg\}.
\label{vjmd3b}
\end{align}

Concerning \eqref{vjld3a}, recalling that $\text{Re}(\omega((1 + i \lambda)r)) = 0$ and $\text{supp}(g) \subset (0, T')$, we have
\begin{equation}\label{ft-l2}
\big|\mathcal F\{g\}(i\omega((1 + i \lambda)r)) \big|
=
\left|
\int_0^{T'} e^{\omega((1 + i \lambda)r) t} g(t) dt
\right|
\leq
\int_0^{T'} \left| g(t) \right| dt
\leq
\sqrt{T'} \no{g}_{L^2(\mathbb R)}.
\end{equation}
Thus, using also \eqref{d-bound-n}, we find
\begin{equation}
\eqref{vjld3a} 
% &\leq 
% \frac{\left(\alpha^2+\nu^2\right)^{\frac 12} \left(1 + \lambda^2\right)^{1+\frac j2}}{\sqrt{\pi \lambda}}
% \sqrt{T'} \no{g}_{L^2(\mathbb R)} \frac{1}{2L \lambda} \no{r^j e^{\lambda r L}}_{L_r^2(0, 1)}
% \nn\\
\leq
\frac{\sqrt{T'} \left(\alpha^2+\nu^2\right)^{\frac 12} \left(1 + \lambda^2\right)^{1+\frac j2} e^{\lambda L} }{2\sqrt{\pi} L  \lambda^{\frac 32}}
 \no{g}_{L^2(\mathbb R)}.
\label{vjld4a}
\end{equation}
Similarly,  for $\eqref{vjmd3a}$ we have
\begin{equation}
\eqref{vjmd3a} 
\leq 
% \frac{\left(\alpha^2+\nu^2\right)^{\frac 12} \left(1 + \tfrac{1}{\lambda^2}\right)^{1+\frac j2}}{\sqrt{\frac{2\pi}{\lambda}}}
% \sqrt{T'} \no{g}_{L^2(\mathbb R)} \frac{\lambda}{2L} \no{r^j e^{\frac 1\lambda r L}}_{L_r^2(0, 1)}
% \nn\\
% &\leq
\frac{\sqrt{T'} \left(\alpha^2+\nu^2\right)^{\frac 12} \left(1 + \tfrac{1}{\lambda^2}\right)^{1+\frac j2} \lambda^{\frac 32} e^{\frac L \lambda}}{2\sqrt{\pi} L}
 \no{g}_{L^2(\mathbb R)}.
\label{vjmd4a}
\end{equation}
Concerning \eqref{vjld3b}, we first use the bound \eqref{d-bound-f} to obtain
$$
\eqref{vjld3b}
\leq
\frac{\left(\alpha^2+\nu^2\right)^{\frac 12} \left(1 + \lambda^2\right)^{1+\frac j2}}{\sqrt{\pi \lambda} \left(e^{\lambda L} - e^{-\lambda L}\right)}
\, 
\Big\|r^{j+1}\big| \mathcal F\{g\}(i\omega((1 + i \lambda)r)) \big| \Big\|_{L_r^2(1, \infty)}.
$$
Then, in view of the fact that $i\omega((1 + i \lambda)r) = (i\nu - \alpha) (1+i\lambda)^2 r^2$ and
$
(i\nu-\alpha)(1 + i \lambda)^2
% =
% (i\nu-\alpha)\left(1 + i \, \frac{-\alpha + \sqrt{\alpha^2+\nu^2}}{\nu}\right)^2
=
- \frac{2 \left(\alpha^2+\nu^2\right) \left(-\alpha+\sqrt{\alpha^2+\nu^2}\right)}{\nu^2} < 0,
$
we make the change of variable
\begin{equation}\label{cov}
r = \frac{\sqrt{-\tau}}{\sqrt{(\alpha - i\nu)(1 + i \lambda)^2}}
% \ \Rightarrow \
% \tau = i\omega((1 + i \lambda)r),
% \
% dr = -\frac{d\tau}{2\sqrt{(\alpha - i\nu)(1 + i \lambda)^2} \sqrt{-\tau}}, 
\end{equation}
to arrive at the estimate
\begin{align}\label{vjld4b}
\eqref{vjld3b}
&\leq
\frac{\left(\alpha^2+\nu^2\right)^{\frac 12} \left(1 + \lambda^2\right)^{1+\frac j2}}{\sqrt{\pi \lambda} \left(e^{\lambda L} - e^{-\lambda L}\right)}
\no{
\frac{|\tau|^{\frac{j+1}{2}}}{\left[(\alpha - i\nu)(1 + i \lambda)^2\right]^{\frac{j+1}{2}}} 
\cdot
\frac{\big| \mathcal F\{g\}(\tau) \big|}{\sqrt 2\left[(\alpha - i\nu)(1 + i \lambda)^2 |\tau|\right]^{\frac 14}}
}_{L_\tau^2(-\infty, -(\alpha-i\nu)(1 + i \lambda)^2)}
\nn\\
% &\leq
% \frac{\left(\alpha^2+\nu^2\right)^{\frac 12} \left(1 + \lambda^2\right)^{1+\frac j2}}{\sqrt{2\pi \lambda} \left(e^{\lambda L} - e^{-\lambda L}\right)}
% \frac{1}{\left(|\alpha - i\nu| |1 + i \lambda|^2\right)^{\frac{2j+3}{4}}}
% \no{
% |\tau|^{\frac{2j+1}{4}}
% \big| \mathcal F\{g\}(\tau) \big| 
% }_{L_\tau^2(\mathbb R)}
% \nn\\
&\leq
\frac{\left(\alpha^2+\nu^2\right)^{\frac{1-2j}{8}} \left(1 + \lambda^2\right)^{\frac 14}}{\sqrt{2\pi \lambda} \left(e^{\lambda L} - e^{-\lambda L}\right)}
\no{g}_{H^{\frac{2j+1}{4}}(\mathbb R)}.
\end{align}
Similarly, for \eqref{vjmd3b} we find
\begin{align}\label{vjmd4b}
\eqref{vjmd3b}
&\leq
\frac{\left(\alpha^2+\nu^2\right)^{\frac 12} \left(1 + \tfrac{1}{\lambda^2}\right)^{1+\frac j2}}{\sqrt{\frac{2\pi}{\lambda}} \big(e^{\frac L \lambda} - e^{-\frac L \lambda}\big)}
\Big\|
r^{j+1} \big|\mathcal F\{g\}(i\omega((1 - \tfrac{i}{\lambda}) r)) \big|
\Big\|_{L_r^2(1, \infty)}
\nn\\
&\leq
\frac{\left(\alpha^2+\nu^2\right)^{\frac 12} \left(1 + \tfrac{1}{\lambda^2}\right)^{1+\frac j2}}{\sqrt{\frac{2\pi}{\lambda}} \big(e^{\frac L \lambda} - e^{-\frac L \lambda}\big)}
\frac{1}{\left(|\alpha - i\nu| |1 - \frac{i}{\lambda}|^2\right)^{\frac{2j+3}{4}}}
\no{
|\tau|^{\frac{2j+1}{4}}
\big| \mathcal F\{g\}(\tau) \big| 
}_{L_\tau^2(\mathbb R)}
\nn\\
&\leq
\frac{\left(\alpha^2+\nu^2\right)^{\frac{1-2j}{8}} \left(1 + \tfrac{1}{\lambda^2}\right)^{\frac 14}}{\sqrt{\frac{2\pi}{\lambda}} \big(e^{\frac L \lambda} - e^{-\frac L \lambda}\big)}
\no{g}_{H^{\frac{2j+1}{4}}(\mathbb R)}.
\end{align}
Combining the individual estimates \eqref{vjld4a}-\eqref{vjmd4b}, we deduce the following estimates for the Dirichlet data terms~\eqref{vjld} and \eqref{vjmd}:
\begin{equation}\label{vj-dir}
\begin{aligned}
\eqref{vjld}
&\leq
\frac{\sqrt{T'} \left(\alpha^2+\nu^2\right)^{\frac 12} \left(1 + \lambda^2\right)^{1+\frac j2} e^{\lambda L} }{2\sqrt{\pi} L  \lambda^{\frac 32}}
 \no{g}_{L^2(\mathbb R)}
+
\frac{\left(\alpha^2+\nu^2\right)^{\frac{1-2j}{8}} \left(1 + \lambda^2\right)^{\frac 14}}{\sqrt{2\pi \lambda} \left(e^{\lambda L} - e^{-\lambda L}\right)}
\no{g}_{H^{\frac{2j+1}{4}}(\mathbb R)},
\\
\eqref{vjmd}
&\leq
\frac{\sqrt{T'} \left(\alpha^2+\nu^2\right)^{\frac 12} \left(1 + \tfrac{1}{\lambda^2}\right)^{1+\frac j2} \lambda^{\frac 32} e^{\frac L \lambda}}{2\sqrt{\pi} L}
 \no{g}_{L^2(\mathbb R)}
 +
 \frac{\left(\alpha^2+\nu^2\right)^{\frac{1-2j}{8}} \left(1 + \tfrac{1}{\lambda^2}\right)^{\frac 14}}{\sqrt{\frac{2\pi}{\lambda}} \big(e^{\frac L \lambda} - e^{-\frac L \lambda}\big)}
\no{g}_{H^{\frac{2j+1}{4}}(\mathbb R)}.
\end{aligned}
\end{equation}

Proceeding to the Neumann data terms \eqref{vjln} and \eqref{vjmn}, we begin by noticing that the factor of $r$ that was present in the numerators of the integrands in the Dirichlet terms \eqref{vjld} and \eqref{vjmd} is now missing. This prevents us from using the boundedness of the Laplace transform in $L^2(0, \infty)$ directly, since combining it with the lower bounds \eqref{d-form2} and then the inequality \eqref{d-bound-n} would introduce a singularity at $r=0$ in our estimates.
This singularity, however, is only a result of the (crude) lower bounds \eqref{d-form2}, since the denominators $\delta_{\lambda, 1}(r)$ and $\delta_{\lambda, 2}(r)$ are in fact strictly positive for $r\geq 0$ and $\lambda > 0$. Indeed, 
%
% \begin{equation}\label{d-form}
% %\begin{aligned}
$
\delta_{\lambda, 1}(r) 
=
% \left|e^{-\lambda r L} e^{i rL} + e^{\lambda r L} e^{-i rL}\right|
%=
% \left|\left(e^{\lambda r L} + e^{-\lambda r L}\right) \cos(rL) + i  \left(e^{\lambda r L} - e^{-\lambda r L}\right) \sin(rL)  \right|
% \\
% &=
\sqrt{e^{2\lambda r L} + e^{-2\lambda r L} + 2 \cos(2rL)}
$
% %\end{aligned}
% \end{equation}
%
and $e^{2\lambda r L} + e^{-2\lambda r L}$ has a global minimum equal to $2$ at $r=0$ and is strictly increasing for $r>0$. Hence, $\delta_{\lambda, 1}(r)> \sqrt{2\left[1+\cos(2rL)\right]} = 2\left|\cos(rL)\right|$ for all $r>0$, which implies that $\delta_{\lambda, 1}(r)>0$ for all $r>0$. 
% (since if $\delta_{\lambda, 1}(r)$ were zero, $2\left|\cos(rL)\right|$ would have to be negative, which is impossible). 
Moreover, $\delta_{\lambda, 1}(0) = 2$ and $\lim_{r\to\infty} \delta_{\lambda, 1}(r) = \infty$. Thus, $\delta_{\lambda, 1}(r) > 0$ for $r \in [0, \infty)$. The same reasoning applies for $\delta_{\lambda, 2}(r)$. 

Therefore, no singularity is present in the $r$-integrals of \eqref{vjln}-\eqref{vjmd} and, in particular, in the Neumann data terms \eqref{vjln} and \eqref{vjmn}. At the same time, due to the fact that for small $\lambda>0$ the denominators $\delta_{\lambda, 1}(r)$ and $\delta_{\lambda, 2}(r)$ can get very close to zero for some positive values of $r$, in order to estimate the Neumann data terms \eqref{vjln} and \eqref{vjmn}, prior to invoking the boundedness of the Laplace transform in $L^2(0, \infty)$ we  break the $r$-integrals via the triangle inequality as follows:
\begin{align}
\eqref{vjln}
&\leq
\frac{\left(\alpha^2+\nu^2\right)^{\frac 12} \left(1+\lambda^2\right)^{\frac{j+1}2}}{\pi \sqrt \lambda}
\Bigg\{
\bigg\|
\int_0^1
e^{-ry} \, 
\frac{r^j e^{\lambda rL} \, \big|\mathcal F\{h\}(i\omega((1 + i \lambda)r)) \big|}{\delta_{\lambda, 1}(r)}
\, dr
\bigg\|_{L_y^2(0, \lambda L)}
\label{vjln-n}
\\
&\hskip 4.5cm
+
\bigg\|
\int_1^\infty 
e^{-ry} \, 
\frac{r^j e^{\lambda rL} \, \big|\mathcal F\{h\}(i\omega((1 + i \lambda)r)) \big|}{\delta_{\lambda, 1}(r)}
\, dr
\bigg\|_{L_y^2(0, \lambda L)}
\Bigg\},
\label{vjln-f}
\\
\eqref{vjmn}
&\leq
\frac{\left(\alpha^2+\nu^2\right)^{\frac 12} \left(1 + \tfrac{1}{\lambda^2}\right)^{\frac{j+1}2}}{\pi \sqrt{\frac 1\lambda}}
\Bigg\{
\bigg\|
\int_0^1
e^{-r y} \, 
\frac{r^j e^{\frac{1}{\lambda} rL} \,
\big| \mathcal F\{h\}(i\omega((1 - \tfrac{i}{\lambda}) r)) \big|}{\delta_{\lambda, 2}(r)} \, dr 
\bigg\|_{L_y^2(0, \frac{L}{\lambda})}
\label{vjmn-n}
\\
&\hskip 4.5cm
+
\int_1^\infty 
e^{-r y} \, 
\frac{r^j e^{\frac{1}{\lambda} rL} \,
\big| \mathcal F\{h\}(i\omega((1 - \tfrac{i}{\lambda}) r)) \big|}{\delta_{\lambda, 2}(r)} \, dr 
\bigg\|_{L_y^2(0, \frac{L}{\lambda})}
\Bigg\}.
\label{vjmn-f}
\end{align}

For \eqref{vjln-f} and \eqref{vjmn-f}, the boundedness of the Laplace transform in $L^2(0, \infty)$ yields
\begin{align*}
\eqref{vjln-f}
&\leq
\frac{\left(\alpha^2+\nu^2\right)^{\frac 12} \left(1+\lambda^2\right)^{\frac{j+1}2}}{\sqrt{\pi \lambda}}
\,
\bigg\|
\frac{r^j e^{\lambda rL} \, \big|\mathcal F\{h\}(i\omega((1 + i \lambda)r)) \big|}{\delta_{\lambda, 1}(r)}
\bigg\|_{L_r^2(1, \infty)},
\\
\eqref{vjmn-f}
&\leq
\frac{\left(\alpha^2+\nu^2\right)^{\frac 12} \left(1 + \tfrac{1}{\lambda^2}\right)^{\frac{j+1}2}}{\sqrt{\frac{\pi}{\lambda}}}
\,
\bigg\|
\frac{r^j e^{\frac{1}{\lambda} rL} \,
\big| \mathcal F\{h\}(i\omega((1 - \tfrac{i}{\lambda}) r)) \big|}{\delta_{\lambda, 2}(r)} 
\bigg\|_{L_r^2(1, \infty)}.
\end{align*}
Then, combining the lower bounds \eqref{d-form2} with the fact that for $r\geq 1$ we have, via the bound  \eqref{d-bound-f},
\begin{equation}\label{d-bound-f2}
\frac{e^{\lambda r L}}{e^{\lambda r L} - e^{-\lambda r L}}
=
1 + \frac{e^{-\lambda r L}}{e^{\lambda r L} - e^{-\lambda r L}}
\leq
1 + \frac{1}{e^{\lambda L} - e^{-\lambda L}},
\end{equation}
we can proceed as for \eqref{vjld3b} and \eqref{vjmd3b} to eventually obtain the analogues of \eqref{vjld4b} and \eqref{vjmd4b}, namely 
\begin{equation}
\begin{aligned}
\eqref{vjln-f}
% &\leq
% \frac{\left(\alpha^2+\nu^2\right)^{\frac 12} \left(1 + \lambda^2\right)^{1+\frac j2}}{\sqrt{2\pi \lambda}}
% \left(1 + \frac{1}{e^{\lambda L} - e^{-\lambda L}}\right)
% \frac{1}{\left(|\alpha - i\nu| |1 + i \lambda|^2\right)^{\frac{2j+1}{4}}}
% \no{
% |\tau|^{\frac{2j-1}{4}}
% \big| \mathcal F\{h\}(\tau) \big| 
% }_{L_\tau^2(\mathbb R)}
% \nn\\
&\leq
\frac{\left(\alpha^2+\nu^2\right)^{\frac{3-2j}{8}} \left(1 + \lambda^2\right)^{\frac 34}}{\sqrt{2\pi \lambda}}
\left(1 + \frac{1}{e^{\lambda L} - e^{-\lambda L}}\right)
\no{h}_{H^{\frac{2j-1}{4}}(\mathbb R)},
\label{vjln-f2}
\\
\eqref{vjmn-f}
% &\leq
% \frac{\left(\alpha^2+\nu^2\right)^{\frac 12} \left(1 + \tfrac{1}{\lambda^2}\right)^{1+\frac j2}}{\sqrt{\frac{2\pi}{\lambda}}}
% \left(1 + \frac{1}{e^{\frac{L}{\lambda}} - e^{-{\frac{L}{\lambda}}}}\right)
% \frac{1}{\left(|\alpha - i\nu| |1 - \frac{i}{\lambda}|^2\right)^{\frac{2j+1}{4}}}
% \no{
% |\tau|^{\frac{2j-1}{4}}
% \big| \mathcal F\{h\}(\tau) \big| 
% }_{L_\tau^2(\mathbb R)}
% \nn\\
&\leq
\frac{\left(\alpha^2+\nu^2\right)^{\frac{3-2j}{8}} \left(1 + \tfrac{1}{\lambda^2}\right)^{\frac 34}}{\sqrt{\frac{2\pi}{\lambda}}}
\left(1 + \frac{1}{e^{\frac{L}{\lambda}} - e^{-{\frac{L}{\lambda}}}}\right)
\no{h}_{H^{\frac{2j-1}{4}}(\mathbb R)}.
%\label{vjmn-f2}
\end{aligned}
\end{equation}

On the other hand, for the terms \eqref{vjln-n} and \eqref{vjmn-n}, while the bound \eqref{d-bound-f2} is not valid and, moreover, the bound \eqref{d-bound-n} (which is valid) is not convenient as it results in a singularity at $r=0$, we use the fact that $\delta_{\lambda, 1}(r) \delta_{\lambda, 2}(r) > 0$ for  $r \in [0, \infty)$ together with the analogue of inequality \eqref{ft-l2} for $h$ in order to infer
\begin{equation}
\begin{aligned}
\eqref{vjln-n} 
&\leq 
\frac{\left(\alpha^2+\nu^2\right)^{\frac 12} \left(1+\lambda^2\right)^{\frac{j+1}2}}{\pi \sqrt \lambda}
\, \sqrt{T'} 
\no{h}_{L^2(\mathbb R)} 
\bigg\|
\int_0^1
e^{-ry} \, 
\frac{r^j e^{\lambda rL}}{\delta_{\lambda, 1}(r)} \, dr
\bigg\|_{L_y^2(0, \lambda L)},
%\label{vjln-n2}
\\
\eqref{vjmn-n} 
&\leq 
\frac{\left(\alpha^2+\nu^2\right)^{\frac 12} \left(1+\frac{1}{\lambda^2}\right)^{\frac{j+1}2}}{\pi \sqrt{\frac 1\lambda}}
\, \sqrt{T'} 
\no{h}_{L^2(\mathbb R)} 
\bigg\|
\int_0^1
e^{-ry} \, 
\frac{r^j e^{\frac 1\lambda rL}}{\delta_{\lambda, 2}(r)} \, dr
\bigg\|_{L_y^2(0, \frac{L}{\lambda})},
\label{vjmn-n2}
\end{aligned}
\end{equation}
Since $\delta_{\lambda, 1}(r) \delta_{\lambda, 2}(r) > 0$ for $r \in [0, \infty)$, we have
\begin{equation}\label{small-r}
\bigg\|
\int_0^1
e^{-ry} \, 
\frac{r^j e^{\lambda rL}}{\delta_{\lambda, 1}(r)} \, dr
\bigg\|_{L_y^2(0, \lambda L)}
+
\bigg\|
\int_0^1
e^{-ry} \, 
\frac{r^j e^{\frac 1\lambda rL}}{\delta_{\lambda, 2}(r)} \, dr
\bigg\|_{L_y^2(0, \frac{L}{\lambda})}
= c_{\lambda, L, j} < \infty.
\end{equation}
Thus, up to a finite multiplicative constant depending on the various parameters of the problem, \eqref{vjln-n} and \eqref{vjmn-n} are bounded above by the $L^2$ norm of the Neumann datum $h$.

Overall, combining the Dirichlet data estimates \eqref{vj-dir} with the Neumann data estimates \eqref{vjln-f2}-\eqref{vjmn-n2} and the $L^2$ characterization of the Sobolev norm, we deduce the following Sobolev space estimate for the component $v^-(x, t)$ of the solution of the reduced initial-boundary value problem \eqref{pure-ibvp}:
\begin{equation}\label{vj-est00}
\no{v^-(t)}_{H_x^j(0, L)}
\leq 
c_{j, \lambda, L, \alpha, \nu, T'}
\big(\no{g}_{L^2(\mathbb R)} + \no{g}_{H^{\frac{2j+1}{4}}(\mathbb R)} + \no{h}_{L^2(\mathbb R)} + \no{h}_{H^{\frac{2j-1}{4}}(\mathbb R)}\big), \quad j \in \mathbb N \cup \{0\}, \ t \in [0, T'].
\end{equation}
Furthermore, through interpolation (e.g. see Theorem 5.1 in \cite{lm1972} or Theorem 4.1.2 in \cite{bl1976}), the validity of this estimate can be extended to any $s\geq 0$, namely  
\begin{equation}\label{vj-est0}
\no{v^-(t)}_{H_x^s(0, L)}
\leq 
c_{s, \lambda, L, \alpha, \nu, T'}
\big(\no{g}_{L^2(\mathbb R)} + \no{g}_{H^{\frac{2s+1}{4}}(\mathbb R)} + \no{h}_{L^2(\mathbb R)} + \no{h}_{H^{\frac{2s-1}{4}}(\mathbb R)}\big), \quad s\geq 0, \ t \in [0, T'].
\end{equation}
Estimate \eqref{vj-est0} is precisely estimate \eqref{pure-se-l2} for $v^-$. 
Restricting $s\geq \frac 12$ allows us to control the $L^2$ norms of $g$ and $h$ by their associated higher-order Sobolev norms and hence further deduce the desired estimate \eqref{pure-se} for $v^-$, i.e. 
\begin{equation}\label{vj-est}
\no{v^-(t)}_{H_x^s(0, L)}
\leq 
c_{s, \lambda, L, \alpha, \nu, T'}
\big(\no{g}_{H^{\frac{2s+1}{4}}(\mathbb R)} + \no{h}_{H^{\frac{2s-1}{4}}(\mathbb R)}\big), \quad s\geq \frac 12, \ t \in [0, T'].
\end{equation}

We emphasize that, as already noted in Remark \ref{cgl-nls-r} above, attempting to take the NLS limit $\nu \to 0^+$ of estimate~\eqref{vj-est} will result in the blow-up of the relevant constant $c_{s, \lambda, L, \alpha, \nu, T'}$. 
Also, as noted earlier, the estimation of the component $v^+$ of the solution $v$ of the reduced initial-boundary value problem \eqref{pure-ibvp} is entirely analogous.
Moreover, the continuity of the map $[0, T'] \ni t \mapsto v(t) \in H_x^s(0, L)$ follows via the dominated convergence theorem and estimations entirely analogous to the ones above. Thus, the proof of Theorem \ref{pure-ibvp-t} is complete. 
\end{proof}

In addition to the space regularity established by Theorem \ref{pure-ibvp-t}, we have the following time regularity estimates for the reduced initial-boundary value problem \eqref{pure-ibvp}.
\begin{theorem}[Time estimates]
\label{pure-te-t}
Suppose $s \geq \frac 12$. Then, the solution of the reduced initial-boundary value problem~\eqref{pure-ibvp} admits the Sobolev time estimates
\begin{align}
\sup_{x\in [0, L]} \no{v(x)}_{H_t^{\frac{2s+1}{4}}(0, T')}
&\leq 
c_{s, \lambda, L, \alpha, \nu, T'} 
\big(\no{g}_{H^{\frac{2s+1}{4}}(\mathbb R)} + \no{h}_{H^{\frac{2s-1}{4}}(\mathbb R)}\big),
\label{pure-te}
\\
\sup_{x\in [0, L]} \no{v_x(x)}_{H_t^{\frac{2s-1}{4}}(0, T')}
&\leq 
c_{s, \lambda, L, \alpha, \nu, T'} 
\big(\no{g}_{H^{\frac{2s+1}{4}}(\mathbb R)} + \no{h}_{H^{\frac{2s-1}{4}}(\mathbb R)}\big),
\label{pure-te2}
\end{align}
where $c_{s, \lambda, L, \alpha, \nu, T'}>0$ is a constant that remains bounded as $T'\to 0^+$. In addition, the maps $[0, L] \ni x \mapsto v(x) \in H_t^{\frac{2s+1}{4}}(0, T')$ and $[0, L] \ni x \mapsto v_x(x) \in H_t^{\frac{2s-1}{4}}(0, T')$ are continuous.
\end{theorem}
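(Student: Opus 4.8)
The plan is to run the proof of Theorem~\ref{pure-ibvp-t} with the roles of $x$ and $t$ interchanged, so that $t$ becomes the ``primary'' variable and $x\in[0,L]$ a mere parameter. Write $\sigma:=\tfrac{2s+1}{4}$, so that \eqref{pure-te} asks for $\sup_x\no{v(x)}_{H_t^{\sigma}(0,T')}\lesssim\no{g}_{H^{\sigma}(\mathbb R)}+\no{h}_{H^{\sigma-1/2}(\mathbb R)}$ and \eqref{pure-te2} for $\sup_x\no{v_x(x)}_{H_t^{\sigma-1/2}(0,T')}\lesssim\no{g}_{H^{\sigma}(\mathbb R)}+\no{h}_{H^{\sigma-1/2}(\mathbb R)}$. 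I would begin from the same splitting $v=v^++v^-$ with $v^\pm$ as in \eqref{v+}--\eqref{v-}, the same parametrizations of $\partial D^\pm$ by the rays $k=\pm(1+i\lambda)r$ and $k=\pm(1-\tfrac i\lambda)r$, $r\ge0$, the same decomposition $v^\pm=v_{\lambda,1}^\pm+v_{\lambda,2}^\pm$, and the same $r\in[0,1]$ versus $r\ge1$ split of each ray integral. The structural fact that drives everything --- already exploited in Theorem~\ref{pure-ibvp-t} --- is that $\omega=\omega(k)$ is purely imaginary along $\partial D^\pm$. For the reduced problem the initial datum and the forcing vanish and $\widetilde g(\omega,T')=\mathcal F\{g\}(i\omega)$, $\widetilde h(\omega,T')=\mathcal F\{h\}(i\omega)$ are independent of $t$; hence $e^{-\omega t}$ is a pure oscillation and each of the four pieces of $v$ is in fact defined by its integral formula for every $t\in\mathbb R$. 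I would estimate the $H_t^{\sigma}(\mathbb R)$ norm of this full-line function and then restrict to $(0,T')$, which is harmless since the restriction map $H^{\sigma}(\mathbb R)\to H^{\sigma}(0,T')$ has norm $\le1$ for $\sigma\ge0$, and in particular keeps the constant bounded as $T'\to0^+$.

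The device that replaces the $L^2(0,\infty)$-boundedness of the Laplace transform (used in Theorem~\ref{pure-ibvp-t}) is Plancherel's identity in $t$. On each ray, restricted to $r\ge1$, I would perform the change of variable $\tau=i\omega(k)$ --- the same substitution as in \eqref{cov}, which is real-valued along $\partial D^\pm$ --- turning the piece into $\int_{J}e^{i\tau t}\,m_\bullet(x,\tau)\,d\tau$ for some half-line $J\subseteq\mathbb R$, i.e.\ an inverse Fourier transform in $t$; Plancherel then gives $\no{v_\bullet(x)}_{H_t^{\sigma}(\mathbb R)}^2\simeq\int_J(1+\tau^2)^{\sigma}|m_\bullet(x,\tau)|^2\,d\tau$, and the same with $m_\bullet$ replaced by $k(\tau)m_\bullet$ for $\partial_x v_\bullet$. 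The symbol $m_\bullet(x,\tau)$ factors as $\mathcal F\{h\}(\tau)$ (respectively $\mathcal F\{g\}(\tau)$, which are honest Fourier transforms since $\mathrm{supp}(g),\mathrm{supp}(h)\subset(0,T')$) times: the Jacobian of \eqref{cov}, of size $\simeq|\tau|^{-1/2}$; an algebraic factor equal to $1$ in the $h$-terms and $\simeq|k|\simeq|\tau|^{1/2}$ in the $g$-terms; and an $x$-dependent exponential/denominator factor that is bounded \emph{uniformly in $x\in[0,L]$ and $r\ge1$}, because $|e^{ik(x-L)}|=e^{\lambda r(x-L)}\le1$ (resp.\ $e^{\frac1\lambda r(x-L)}\le1$) since $x\le L$, the denominators obey the lower bound \eqref{d-form2}, the ratio $e^{\lambda rL}/(e^{\lambda rL}-e^{-\lambda rL})$ is bounded for $r\ge1$ by \eqref{d-bound-f2} (and $1/(e^{\lambda rL}-e^{-\lambda rL})$ by \eqref{d-bound-f}), and the extra $|k|$ in the $g$-terms is absorbed by the Jacobian rather than by the denominator. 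Counting powers of $|\tau|$: the $h$-terms give $m_\bullet\simeq|\tau|^{-1/2}\mathcal F\{h\}(\tau)$ up to $x$-bounded factors, so $\int_J(1+\tau^2)^{\sigma}|m_\bullet|^2\lesssim\int(1+\tau^2)^{\sigma}|\tau|^{-1}|\mathcal F\{h\}(\tau)|^2\lesssim\no{h}_{H^{\sigma-1/2}(\mathbb R)}^2$, using that $(1+\tau^2)^{1/2}/|\tau|$ is bounded on the range $|\tau|\gtrsim1$ produced by $r\ge1$; the $g$-terms give $m_\bullet\simeq\mathcal F\{g\}(\tau)$, so $\int_J(1+\tau^2)^{\sigma}|m_\bullet|^2\lesssim\no{g}_{H^{\sigma}(\mathbb R)}^2$; and differentiating in $x$ inserts a factor $ik\simeq|\tau|^{1/2}$, i.e.\ costs half a temporal derivative, so since $(1+\tau^2)^{\sigma-1/2}|\tau|\lesssim(1+\tau^2)^{\sigma}$ the $H_t^{\sigma-1/2}$-bound for $v_x$ comes out of the very same $\tau$-integrals that give the $H_t^{\sigma}$-bound for $v$, with the right-hand data norms unchanged --- this is precisely the passage from \eqref{pure-te} to \eqref{pure-te2}. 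The low-frequency pieces $r\in[0,1]$ are handled exactly as in Theorem~\ref{pure-ibvp-t}: bound $|\mathcal F\{g\}(i\omega)|,|\mathcal F\{h\}(i\omega)|\le\sqrt{T'}\big(\no{g}_{L^2(\mathbb R)}+\no{h}_{L^2(\mathbb R)}\big)$ via \eqref{ft-l2}, note that the resulting function of $t$ is real-analytic with all $t$-derivatives bounded uniformly in $x$ (since $\omega$ and $\delta_{\lambda,j}^{-1}$ are bounded on the compact set $r\in[0,1]$), so its $H_t^{\sigma}(0,T')$ norm is finite and vanishes as $T'\to0^+$; the hypothesis $s\ge\tfrac12$ then lets one absorb these $L^2$ norms into the higher-order data norms.

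Finally, the continuity of $[0,L]\ni x\mapsto v(x)\in H_t^{\sigma}(0,T')$ and $x\mapsto v_x(x)\in H_t^{\sigma-1/2}(0,T')$ follows from the dominated convergence theorem applied to the $\tau$-integrals: $m_\bullet(x,\tau)$ and $k(\tau)m_\bullet(x,\tau)$ depend continuously on $x$ and are dominated, uniformly in $x$, by $|\tau|$-weighted multiples of $|\mathcal F\{g\}(\tau)|+|\mathcal F\{h\}(\tau)|$, again because $e^{\lambda r(x-L)},e^{\frac1\lambda r(x-L)}\le1$ for $x\in[0,L]$. I expect the only real work to be bookkeeping: tracking, ray by ray and for each of $g$ and $h$, the precise power of $|k|$ carried by $m_\bullet$ (and the extra $ik$ from $\partial_x$) so that the temporal Sobolev index on the left matches $\sigma$ in \eqref{pure-te} and $\sigma-\tfrac12$ in \eqref{pure-te2}, and checking that every $x$-dependence collapses into a factor $\le1$ or into the $r\ge1$ bounds \eqref{d-form2}, \eqref{d-bound-f}, \eqref{d-bound-f2}. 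This is entirely parallel to the computations already carried out for the terms \eqref{vjld}--\eqref{vjmn-n2} in the proof of Theorem~\ref{pure-ibvp-t}, with the boundedness of the Laplace transform on $L^2(0,\infty)$ replaced by the Plancherel theorem in $t$; I do not anticipate a genuinely new analytic difficulty.
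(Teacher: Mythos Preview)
Your proposal is correct and follows essentially the same approach as the paper: split each ray integral at $r=1$, handle the far piece by the change of variable $\tau=i\omega$ and the Fourier characterization of $H_t^{\sigma}(\mathbb R)$ (exactly your Plancherel step), handle the near piece via the $L^2$ bound \eqref{ft-l2} on the data transforms and the smoothness in $t$, and deduce continuity by dominated convergence. The only cosmetic difference is that for \eqref{pure-te2} the paper notes that the extra factor of $r$ coming from $\partial_x$ actually removes the need to split at $r=1$, whereas you carry the split through and absorb the extra $|\tau|^{1/2}$ into the Sobolev weight; both arguments are equally short.
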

\begin{proof}
The main ideas are those used in the proof of Theorem \ref{pure-ibvp-t}. In fact, the proof of Theorem \ref{pure-te-t} is simpler in the sense that it does not require the boundedness of the Laplace transform in $L^2(0, \infty)$. Thus, we only provide the essential details.
Returning to the expression \eqref{v-1} for the component $v_{\lambda, 1}^-$, we split the range of integration near and away from $r=0$ to write
\begin{align}
v_{\lambda, 1}^-(x, t) 
&=
\frac{\nu+i\alpha}{\pi} 
\int_1^\infty   
\frac{e^{-i (1 + i \lambda)r (x-L)- \omega((1 + i \lambda)r) t}}{e^{i(1 + i \lambda)rL} + e^{-i(1 + i \lambda)rL}} 
\nn\\
&\hskip 2cm
\cdot 
\left[
e^{-i(1 + i \lambda)rL} \mathcal F\{h\}(i\omega((1 + i \lambda)r))
-
i(1 + i \lambda)r \mathcal F\{g\}(i\omega((1 + i \lambda)r))
\right] (1 + i \lambda) dr
\label{v1-f}
\\
&\quad
+ \frac{\nu+i\alpha}{\pi} 
\int_0^1   
\frac{e^{-i (1 + i \lambda)r (x-L)- \omega((1 + i \lambda)r) t}}{e^{i(1 + i \lambda)rL} + e^{-i(1 + i \lambda)rL}} 
\nn\\
&\hskip 2cm
\cdot 
\left[
e^{-i(1 + i \lambda)rL} \mathcal F\{h\}(i\omega((1 + i \lambda)r))
-
i(1 + i \lambda)r \mathcal F\{g\}(i\omega((1 + i \lambda)r))
\right] (1 + i \lambda) dr
\label{v1-n}
\end{align}
and then proceed as follows. 

For the term \eqref{v1-f}, since $r>1$ we apply the change of variable \eqref{cov} in order to exploit the fact that $\text{Re}(\omega) = 0$ along $\p D^-$ and hence use the Fourier characterization of the $H^{\frac{2s+1}{4}}(\mathbb R)$ norm. More specifically, 
\begin{align}
\eqref{v1-f}
&=
\frac{\nu+i\alpha}{\pi} 
\int_{-\infty}^{-(\alpha - i\nu)(1 + i \lambda)^2} 
e^{i\tau t} 
\cdot
\frac{e^{-i (1 + i \lambda)r(\tau) (x-L)}}{e^{i(1 + i \lambda)r(\tau)L} + e^{-i(1 + i \lambda)r(\tau)L}} 
\nn\\
&\quad
\cdot 
\left[
e^{-i(1 + i \lambda)r(\tau)L} \mathcal F\{h\}(\tau)
-
i\left(1 + i \lambda\right) \frac{\sqrt{-\tau}}{\sqrt{(\alpha - i\nu)(1 + i \lambda)^2}} \mathcal F\{g\}(\tau)
\right] \frac{1 + i \lambda}{2\sqrt{(\alpha - i\nu)(1 + i \lambda)^2} \sqrt{-\tau}} \, d\tau
\nn
\end{align}
and so the temporal Fourier transform of \eqref{v1-f} is given by
$$
\begin{aligned}
\mathcal F\left\{\eqref{v1-f}\right\}(\tau)
&=
\left(\nu+i\alpha\right) \chi_{(-\infty, -(\alpha - i\nu)(1 + i \lambda)^2]}(\tau)
\frac{e^{-i (1 + i \lambda)r(\tau) (x-L)}}{e^{i(1 + i \lambda)r(\tau)L} + e^{-i(1 + i \lambda)r(\tau)L}} 
\\
&\quad
\cdot 
\left[
e^{-i(1 + i \lambda)r(\tau)L} \mathcal F\{h\}(\tau)
-
i\left(1 + i \lambda\right) \frac{\sqrt{-\tau}}{\sqrt{(\alpha - i\nu)(1 + i \lambda)^2}}  \mathcal F\{g\}(\tau)
\right] \frac{1 + i \lambda}{\sqrt{(\alpha - i\nu)(1 + i \lambda)^2} \sqrt{-\tau}}.
\end{aligned}
$$
Thus, by the Fourier characterization of the $H^{\frac{2s+1}{4}}(\mathbb R)$ norm, and in view of  \eqref{d-form2}, \eqref{d-bound-f} and \eqref{d-bound-f2}, we find
\begin{align}
&
\no{\eqref{v1-f}}_{H_t^{\frac{2s+1}{4}}(\mathbb R)}^2
=
2\left(\alpha^2+\nu^2\right)^{\frac 12}
\left(1 + \frac{1}{e^{\lambda L} - e^{-\lambda L}}\right)^2
\int_{-\infty}^{-(\alpha - i\nu)(1 + i \lambda)^2} 
\left(1+\tau^2\right)^{\frac{2s+1}{4}} |\tau|^{-1}
\left|\mathcal F\{h\}(\tau)\right|^2 d\tau
\nn\\
&\hspace*{3.4cm}
+
\frac{2}{\left(e^{\lambda L} - e^{-\lambda L}\right)^2} 
\int_{-\infty}^{-(\alpha - i\nu)(1 + i \lambda)^2} 
\left(1+\tau^2\right)^{\frac{2s+1}{4}} \left|\mathcal F\{g\}(\tau)\right|^2 d\tau
\nn\\
&
\leq
2 \left(1 + \frac{1}{e^{\lambda L} - e^{-\lambda L}}\right)^2 
\left[\left(\alpha^2+\nu^2\right)^{\frac 12} + \frac{1}{1 +  \lambda^2}\right] \no{h}_{H^{\frac{2s-1}{4}}(\mathbb R)}^2
+
\frac{2}{\left(e^{\lambda L} - e^{-\lambda L}\right)^2} \no{g}_{H^{\frac{2s+1}{4}}(\mathbb R)}^2,
\quad
s\in\mathbb R,
\label{v1-f-est}
\end{align}
with the last inequality thanks to the fact that, since $|\tau| \geq (\alpha - i\nu)(1 + i \lambda)^2 > 0$, for $c^2 = 1 + \frac{1}{(\alpha - i\nu)^2(1 + i \lambda)^4} > 1$ we have $|\tau|^{-1} \leq c \left(1+\tau^2\right)^{-\frac 12}$. 
% %
% $$
% \frac{1}{c^2-1} \leq \tau^2
% \ \Leftrightarrow \
% 1 + \tau^2 \leq c^2 \tau^2
% \ \Leftrightarrow \
% |\tau|^{-2} \leq c^2 \left(1+\tau^2\right)^{-1}
% \ \Leftrightarrow \
% |\tau|^{-1} \leq c \left(1+\tau^2\right)^{-\frac 12}.
% $$

On the other hand, for the term \eqref{v1-n} we use the $L^2$ characterization of the $H^{\frac{2s+1}{4}}(\mathbb R)$ norm and the analogue of \eqref{small-r}. In particular, for each $\mu \in \mathbb N\cup \{0\}$, recalling that $\text{Re}(\omega((1 + i \lambda)r)) = 0$ we have
\begin{align}
\no{\eqref{v1-n}}_{H_t^\mu(0, T')}
\leq
\frac{\left(\alpha^2+\nu^2\right)^{\frac 12}\left(1+\lambda^2\right)^{\frac 12}}{\pi}
\sum_{j=0}^\mu
\Bigg[
e^{\lambda L}
&\no{ 
\int_0^1   
\frac{\left|\omega((1 + i \lambda)r)\right|^\mu}{\delta_{\lambda, 1}(r)} 
\left|\mathcal F\{h\}(i\omega((1 + i \lambda)r))\right|
 dr
}_{L_t^2(0, T')}
\nn\\
+
\left(1+\lambda^2\right)^{\frac 12}
&\no{
\int_0^1   
\frac{\left|\omega((1 + i \lambda)r)\right|^\mu r}{\delta_{\lambda, 1}(r)} 
\left|\mathcal F\{g\}(i\omega((1 + i \lambda)r))\right|
 dr
}_{L_t^2(0, T')}
\Bigg].
\nn
\end{align}
Hence, by \eqref{ft-l2} and the fact that $\delta_{\lambda, 1}(r) > 0$ for all $r \in [0, \infty)$, we obtain
\begin{align}
\no{\eqref{v1-n}}_{H_t^\mu(0, T')}
&\leq
\frac{\left(\alpha^2+\nu^2\right)^{\frac 12}\left(1+\lambda^2\right)^{\frac 12}}{\pi}
\sum_{j=0}^\mu
\Bigg[
e^{\lambda L}
\sqrt{T'} \no{h}_{L_t^2(0, T')}
\no{ 
\int_0^1   
\frac{\left|\omega((1 + i \lambda)r)\right|^\mu}{\delta_{\lambda, 1}(r)} 
 dr
}_{L_t^2(0, T')}
\nn\\
&\hskip 3.15cm
+
\left(1+\lambda^2\right)^{\frac 12}
\sqrt{T'} \no{g}_{L_t^2(0, T')}
\no{
\int_0^1   
\frac{\left|\omega((1 + i \lambda)r)\right|^\mu r}{\delta_{\lambda, 1}(r)} 
 dr
}_{L_t^2(0, T')}
\Bigg]
\nn\\
&\leq
c_{\mu, \lambda, L, \alpha, \nu, T'} \left(\no{h}_{L_t^2(0, T')} + \no{g}_{L_t^2(0, T')} \right). 
\label{v1-n-est}
\end{align}
The validity of this estimate can actually be extended to any $\mu \geq 0$ via interpolation. Hence, upon restricting $s\geq \frac 12$ so that the $L^2$ norm of $h$ can be controlled by its $H^{\frac{2s-1}{4}}$ norm,  estimates~\eqref{v1-f-est} and \eqref{v1-n-est} yield the time estimate \eqref{pure-te} for the component $v_{\lambda, 1}^-$ of $v^-$. The remaining terms $v_{\lambda, 2}^-$ and $v^+$ can be handled analogously. 

The proof of the second time estimate \eqref{pure-te2} is simpler because the derivative with respect to $x$ creates an extra factor of $r$ in the integrands of \eqref{v-1} and \eqref{v-2} and hence eliminates the need for splitting the range of integration near and away from zero. Finally, the continuity of the corresponding maps follows via the same arguments combined with the dominated convergence theorem.
\end{proof}

\subsection{Linear decomposition}

Our goal is to use Theorem \ref{pure-ibvp-t} for the reduced initial-boundary value problem~\eqref{pure-ibvp} in order to infer an analogous result for the general forced linear initial-boundary value problem \eqref{lcgl-ibvp-w}. This will be accomplished by exploiting  linearity and the superposition principle. Before doing so, however, in addition to Theorem \ref{pure-ibvp-t} we must derive appropriate estimates for the Cauchy (initial value) problem of the linear CGL equation.

More precisely, consider the initial-boundary value problem \eqref{lcgl-ibvp-w} with data $u_0 \in H^s(0, L)$, $g_0 \in H^{\frac{2s+1}{4}}(0, T)$, $h_1 \in H^{\frac{2s-1}{4}}(0, T)$ and forcing $f \in C_t([0, T]; H_x^s(0, L))$, $s>\frac 12$, and let $U_0 \in H^s(\mathbb R)$ and $F\in C_t([0, T]; H_x^s(\mathbb R))$ be extensions of $u_0$ and $f$ such that
\begin{equation}\label{U0F-def}
\no{U_0}_{H^s(\mathbb R)} \leq 2\no{u_0}_{H^s(0, L)}
\quad
\text{and}
\quad
\no{F(t)}_{H_x^s(\mathbb R)} \leq 2\no{f(t)}_{H_x^s(0, L)}, 
\ t\in [0, T].
\end{equation}
Note that the existence of such extension follows from the definition of  $H^s(0, L)$ as the restriction of $H^s(\mathbb R)$ on the interval $(0, L)$ equipped with the infimum norm and the infimum approximation property. 
Then, the solution of  problem \eqref{lcgl-ibvp-w}, which is given by
\begin{equation}\label{s-not}
w(x, t) = S\big[u_0, g_0, h_1; f\big](x, t)
\end{equation}
with the solution operator on the right side provided by the unified transform formula \eqref{lcgl-sol-T}, can be expressed as the linear combination
\begin{equation}\label{lin-sup}
S\big[u_0, g_0, h_1; f\big]
= 
S\big[U_0; 0\big]\big|_{x\in (0, L)}
+
S\big[0; F\big]\big|_{x\in (0, L)}
+
S\big[0, \psi_0, \psi_1; 0\big] 
\end{equation}
where $S\big[U_0; 0\big]:= U$ is the solution of the homogeneous Cauchy problem
\begin{equation}\label{U-ivp}
\begin{aligned}
&U_t - \left(\nu + i \alpha \right) U_{xx} = 0,
\\
&U(x,0) = U_0(x),
\end{aligned} 
\quad
x\in\mathbb R, \ t\in(0,T),
\end{equation}
$S\big[0; F] := W$ satisfies the inhomogeneous Cauchy problem
\begin{equation}\label{W-ivp}
\begin{aligned}
&W_t - \left(\nu + i \alpha \right) W_{xx} = F(x, t),
\\
&W(x,0) = 0,
\end{aligned} 
\quad
x\in\mathbb R, \ t\in(0,T),
\end{equation}
and the third term on the right side of \eqref{lin-sup} denotes the solution to a reduced version of the initial-boundary value problem \eqref{lcgl-ibvp-w} in which the initial datum and the forcing are zero and the boundary data are given by
\begin{equation}\label{psi-def}
\begin{aligned}
&\psi_0(t) := g_0(t) - S\big[U_0; 0\big](0, t) - S\big[0; F\big](0, t),
\\
&\psi_1(t) := h_1(t) - \p_x S\big[U_0; 0\big](L, t) - \p_x S\big[0; F\big](L, t),
\end{aligned}
\end{equation}
i.e. they have been obtained from the original boundary data $g_0, h_1$ by subtracting from them the corresponding traces of the solutions to the Cauchy problems \eqref{U-ivp} and \eqref{W-ivp}. 
In this connection, in order to use the decomposition~\eqref{lin-sup}  for the purpose of estimates, we need to show that $\psi_0, \psi_1$ belong to the same spaces as $g_0, h_1$ by establishing that the aforementioned Cauchy problem traces belong to these spaces as well. This is accomplished through the results of Theorems \ref{U-ivp-t} and \ref{W-ivp-t} below. After proving these results, we will return to the superposition \eqref{lin-sup} and combine them with the space estimate of Theorem \ref{pure-ibvp-t} in order to deduce a corresponding  estimate for the initial-boundary value problem \eqref{lcgl-ibvp-w}.

\subsection{Estimates for the homogeneous linear Cauchy problem}
We now estimate the homogeneous linear problem \eqref{U-ivp}, before proceeding to the more elaborate estimation of the forced linear problem \eqref{W-ivp}.
\begin{theorem}[Homogeneous linear Cauchy problem]\label{U-ivp-t}
The solution $U = S\big[U_0; 0\big]$ to the Cauchy problem \eqref{U-ivp}  satisfies the space estimate
\begin{equation}\label{U-ivp-se}
\sup_{t\in [0, T]} \no{U(t)}_{H_x^s(\mathbb R)} \leq \no{U_0}_{H^s(\mathbb R)},  \quad s\in \mathbb R,
\end{equation}
and  the time estimates 
\begin{align}\label{U-ivp-te}
\sup_{x\in\mathbb R} \no{U(x)}_{H_t^{\frac{2s+1}{4}}(0, T)} &\leq c_2(s, \alpha, \nu, T) \no{U_0}_{H^s(\mathbb R)}, \quad s\geq -\frac 12, 
\\
\label{Ux-ivp-te}
\sup_{x\in\mathbb R} \no{U_x(x)}_{H_t^{\frac{2s-1}{4}}(0, T)} &\leq c_2(s-1, \alpha, \nu, T) \no{U_0}_{H^s(\mathbb R)}, \quad s\geq \frac 12,
\end{align}
with the constant $c_2(s, \alpha, \nu, T)$ given by \eqref{c2-def}.
Furthermore, the map $[0, T] \ni t \mapsto U(t) \in H_x^s(\mathbb R)$ as well as the maps $\mathbb R \ni x \mapsto U(x) \in H_t^{\frac{2s+1}{4}}(0, T)$ and $\mathbb R \ni x \mapsto U_x(x) \in H_t^{\frac{2s-1}{4}}(0, T)$ are continuous.
\end{theorem}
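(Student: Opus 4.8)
The plan is to reduce everything to the spatial Fourier transform. Applying $\mathcal{F}_x$ to \eqref{U-ivp} gives $\widehat U(k,t)=e^{-(\nu+i\alpha)k^2 t}\,\widehat U_0(k)$, hence
\[
U(x,t)=\frac{1}{2\pi}\int_{\mathbb R}e^{ikx-(\nu+i\alpha)k^2 t}\,\widehat U_0(k)\,dk,\qquad t\ge 0 .
\]
Since $\nu>0$, the symbol satisfies $\big|e^{-(\nu+i\alpha)k^2 t}\big|=e^{-\nu k^2 t}\le 1$ for all $k\in\mathbb R$ and $t\ge 0$, so the space estimate \eqref{U-ivp-se} is immediate from Plancherel (with constant $1$), and the continuity of $t\mapsto U(t)\in H^s_x(\mathbb R)$ follows by dominated convergence applied to $\int_{\mathbb R}(1+k^2)^s\big|e^{-(\nu+i\alpha)k^2 t}-e^{-(\nu+i\alpha)k^2 t_0}\big|^2|\widehat U_0(k)|^2\,dk$, the integrand tending to $0$ pointwise and being dominated by $4(1+k^2)^s|\widehat U_0(k)|^2\in L^1$.

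For the time estimate \eqref{U-ivp-te} I would split the $k$-integral at $|k|=1$. On the low band $|k|\le1$ the solution is real-analytic in $t$ and each $\partial_t$ brings down $-(\nu+i\alpha)k^2$, bounded there by $(\alpha^2+\nu^2)^{1/2}$; hence, for every integer $m$, the $H^m_t(0,T)$ norm of the low-band piece is $\le c\sqrt{T}\,\|\widehat U_0\|_{L^2(|k|\le1)}\le c\sqrt{T}\,\|U_0\|_{H^s}$ (using $(1+k^2)^s\approx1$ on $|k|\le1$, so this holds for every $s$), and interpolation upgrades this to the fractional index $\tfrac{2s+1}{4}$, uniformly in $x$ and with a constant vanishing as $T\to0^+$. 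On the high band $|k|>1$ the $t$-dependence is genuinely parabolic; since a zero-extension in $t$ is illegitimate once $\tfrac{2s+1}{4}\ge\tfrac12$, the device is the even extension $\widetilde U(x,t):=\tfrac1{2\pi}\int e^{ikx-(\nu+i\alpha)k^2|t|}\widehat U_0(k)\,dk$, which is defined and decaying for all $t\in\mathbb R$ precisely because $\text{Re}(\nu+i\alpha)=\nu>0$, so that $\|U^{\mathrm{high}}(x,\cdot)\|_{H^{(2s+1)/4}_t(0,T)}\le\|\widetilde U^{\mathrm{high}}(x,\cdot)\|_{H^{(2s+1)/4}_t(\mathbb R)}$ with
\[
\mathcal{F}_t\big[\widetilde U^{\mathrm{high}}(x,\cdot)\big](\sigma)=\frac{1}{\pi}\int_{|k|>1}e^{ikx}\,\frac{(\nu+i\alpha)k^2}{(\nu+i\alpha)^2k^4+\sigma^2}\,\widehat U_0(k)\,dk .
\]
Inserting this into the Fourier characterization of $H^{(2s+1)/4}_t(\mathbb R)$, taking absolute values inside the $k$-integral, and using the lower bound $\big|(\nu+i\alpha)^2k^4+\sigma^2\big|\gtrsim_{\nu,\alpha}k^4+\sigma^2$ (valid because $(\nu+i\alpha)^2$ stays off the negative real axis when $\nu>0$), the estimate becomes a bilinear form in $(k,k')$ which I would control by a Schur test; the rescaling $\sigma\mapsto k^2\rho$ shows that the weight $(1+\sigma^2)^{(2s+1)/4}$ produces exactly the Sobolev weight $(1+k^2)^s$, yielding $\sup_x\|U^{\mathrm{high}}(x,\cdot)\|_{H^{(2s+1)/4}_t(0,T)}\lesssim\|U_0\|_{H^s}$ for $s\ge-\tfrac12$, with a constant obtained from $\int_0^\infty$ and hence bounded as $T\to0^+$. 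Since $\widetilde U$ lies in $H^b_t$ only for $b<\tfrac32$, for $s\ge\tfrac52$ I would first peel off integer temporal derivatives through the equation, $\partial_t^m U=(\nu+i\alpha)^m\partial_x^{2m}U$, reducing to the base case with $U_0$ replaced by $\partial_x^{2m}U_0\in H^{s-2m}$.

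The derivative estimate \eqref{Ux-ivp-te} then follows by observing that $\widehat{U_x}(k,t)=ik\,e^{-(\nu+i\alpha)k^2 t}\widehat U_0(k)$, i.e.\ $U_x$ solves \eqref{U-ivp} with datum $v_0$ such that $\widehat{v_0}(k)=ik\,\widehat U_0(k)$; since $\|v_0\|_{H^{s-1}}\le\|U_0\|_{H^s}$, applying \eqref{U-ivp-te} with $s$ replaced by $s-1$ gives the bound with constant $c_2(s-1,\alpha,\nu,T)$ for $s\ge\tfrac12$. The continuity of $x\mapsto U(x)\in H^{(2s+1)/4}_t(0,T)$ and of $x\mapsto U_x(x)\in H^{(2s-1)/4}_t(0,T)$ follows from the same representations together with dominated convergence (or a density argument based on smooth data and the uniform estimates just established). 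The only genuinely delicate point is the high-band time estimate: a crude Minkowski bound there is exactly logarithmically divergent at the scaling threshold $\tfrac{2s+1}{4}$, so one must retain the full complex symbol and close via the Schur test (equivalently, treat the temporal integral as a bona fide Fourier integral after the change of variables $\tau=k^2$); this is also what renders the sharp thresholds $s\ge-\tfrac12$ and $s\ge\tfrac12$ attainable and what links the blow-up in the NLS limit $\nu\to0^+$, noted in Remark \ref{cgl-nls-r}, to the degeneration $\text{Re}(\nu+i\alpha)\to0$.
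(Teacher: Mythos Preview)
Your proposal is correct in outline; the space estimate, the reduction of \eqref{Ux-ivp-te} to \eqref{U-ivp-te}, and the continuity arguments match the paper exactly. The substantive difference is your treatment of the high-frequency time estimate. The paper does not use the even-extension/temporal-Fourier/Schur route you describe; instead it works directly with the $L^2(0,T)$ characterization \eqref{hm-l2} of $H^m_t(0,T)$ (integer-order norms of $\partial_t^j U$ plus the Slobodeckij seminorm \eqref{t-frac-def} of $\partial_t^{[m]} U$) and, on the band $|k|>1$, substitutes $\tau=\nu k^2$ so that the $t$-dependence becomes a Laplace transform, closing via Hardy's $L^2(0,\infty)$ boundedness of the Laplace transform. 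This is exactly the ``$\tau=k^2$'' alternative you mention parenthetically at the end; the paper makes it the main argument rather than a side remark.

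The two routes trade off differently. The Laplace-transform approach handles every $s\ge-\tfrac12$ in a single pass (each $\partial_t$ simply contributes an extra power of $\tau$, with no regularity ceiling) and yields the explicit constant \eqref{c2-def}, which the paper later tracks in the lifespan conditions \eqref{T-into}--\eqref{T-contr-2}. Your even-extension route is cleaner in Fourier-analytic spirit but carries two extra wrinkles: the Schur test requires a nontrivial choice of weights (the obvious choice is logarithmically borderline at the endpoint $s=-\tfrac12$, as you correctly anticipate), and the cap $\widetilde U\in H^{3/2-}_t$ forces the separate derivative-peeling step for $s\ge\tfrac52$. Both are surmountable, but if you keep this route you should actually exhibit Schur weights that close, including at the endpoint.
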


\begin{proof}
The solution to problem \eqref{U-ivp} is given by
\begin{equation}\label{U-sol}
U(x, t) = \frac{1}{2\pi} \int_{\mathbb R} e^{ikx - \omega t} \mathcal F\{U_0\}(k) dk
\end{equation}
where $\mathcal F\{U_0\}(k) := \int_{\mathbb R} e^{-ikx} U_0(x) dx$ is the Fourier transform on the real line. In particular, $\mathcal F\{U\}(k, t) = e^{-\omega t} \mathcal F\{U_0\}(k)$ and so, by the definition of the Sobolev $H^s(\mathbb R)$ norm and the fact that $\big|e^{-\omega t}\big| = e^{-\nu k^2 t} \leq 1$ for $k\in\mathbb R$, $\alpha \in \mathbb R$, $\nu > 0$ and $t\geq 0$,  we readily obtain the space estimate \eqref{U-ivp-se}.

For the time estimate \eqref{U-ivp-te}, we note that, contrary to the reduced initial-boundary value problem \eqref{pure-ibvp} and the proof of Theorem \ref{pure-te-t}, where the change of variables $\tau = i\omega$ was employed due to the fact that $\text{Re}(\omega) = 0$ for $k \in \p D^\pm$, now that $k\in\mathbb R$ we have $\text{Re}(\omega) =  \nu k^2$ and so it is not possible to estimate the $H^{\frac{2s+1}{4}}(0, T)$ norm   of $U(x, t)$  via its Fourier characterization. Instead, we employ the $L^2(0, T)$ characterization
\begin{equation}\label{hm-l2}
\no{U(x)}_{H_t^m(0, T)}
=
\sum_{j=0}^{[m]} \big\| \p_t^j U(x) \big\|_{L_t^2(0, T)} + \big\|\p_t^{[m]} U(x)\big\|_\mu, \quad m\geq 0,
\end{equation}
where $[m]$ and $\mu$ respectively denote the integer and fractional parts of $m$ so that $\mu = m - [m]\in [0, 1)$, and the Sobolev-Slobodeckij seminorm $\no{\cdot}_{\mu}$ is zero for $\mu = 0$ and  is otherwise given by
\begin{equation}\label{t-frac-def}
%\begin{aligned}
\no{U(x)}_{\mu}^2
% &:=
% \int_0^T \int_0^T \frac{\left|U(x, t_1)-U(x, t_2)\right|^2}{|t_1-t_2|^{1+2\mu}} dt_1 dt_2
% \\
=
2
\int_0^T\int_0^{T-t} \frac{\left|U(x, z+t) - U(x, t)\right|^2}{z^{1+2\mu}} dz dt,
%\end{aligned},
\quad
\mu \in (0, 1).
\end{equation}
We note that the restriction $s\geq -\frac 12$ in estimate \eqref{U-ivp-te} is imposed so that $m = \frac{2s+1}{4} \geq 0$, thus allowing us to employ \eqref{hm-l2} for this choice of $m$.

We first estimate the whole derivative norms of \eqref{hm-l2} and then proceed to the fractional part \eqref{t-frac-def}.
Differentiating \eqref{U-sol} in $t$ and applying the triangle inequality, we have
\begin{align}
\big\| \p_t^j U(x) \big\|_{L_t^2(0, T)}
%=
%\no{\frac{1}{2\pi} \int_{\mathbb R} e^{ikx - \omega t} \omega^j \mathcal F\{U_0\}(k) dk}_{L^2(0, T)}
%&\leq
%\frac{1}{2\pi} 
%\no{
%\int_{\mathbb R} e^{-\nu k^2 t} |\omega|^j \left|\mathcal F\{U_0\}(k)\right| dk}_{L^2(0, T)}
%\nn\\
&\leq
\frac{1}{2\pi} 
\no{
\int_{|k|>1} e^{-\nu k^2 t} |\omega|^j \left|\mathcal F\{U_0\}(k)\right| dk}_{L^2(0, T)}
\label{U-te1}
\\
&\quad
+
\frac{1}{2\pi} 
\no{
\int_{|k|<1} e^{-\nu k^2 t} |\omega|^j \left|\mathcal F\{U_0\}(k)\right| dk}_{L^2(0, T)}.
\label{U-te2}
\end{align}
For the term \eqref{U-te1}, making the changes of variables $k=-\sqrt{\frac \tau \nu}$ for $k<0$ and $k= \sqrt{\frac \tau \nu}$ for $k>0$, both of which imply that $|\omega| = \nu k^2 = \tau$, we find
$$
\eqref{U-te1}
\leq
\frac{1}{4\pi \sqrt \nu} 
\no{
\int_\nu^\infty e^{-\tau t} \cdot \tau^{j-\frac 12} \left(
\big|\mathcal F\{U_0\}(-\sqrt{\tfrac \tau \nu})\big|
+
\big|\mathcal F\{U_0\}(\sqrt{\tfrac \tau \nu})\big|
\right)
 d\tau
}_{L^2(0, T)}.
$$
Thus, by the boundedness of the Laplace transform in $L^2(0, \infty)$, 
$$
\eqref{U-te1}
\leq
\frac{1}{4\sqrt{\pi \nu}} 
\no{\tau^{j-\frac 12} \left(
\big|\mathcal F\{U_0\}(-\sqrt{\tfrac \tau \nu})\big|
+
\big|\mathcal F\{U_0\}(\sqrt{\tfrac \tau \nu})\big|
\right)
}_{L^2(\nu, \infty)}.
$$
Changing variable from $\tau$ back to $k$ and using the fact that  $k^2 \simeq 1+k^2$ for $|k|>1$, we obtain
\begin{equation}\label{U-te3}
\eqref{U-te1}
\leq
\frac{\nu^{j-\frac 12}}{2\sqrt{\pi}} 
\no{(k^2)^{\frac{2j-\frac 12}{2}} 
\big|\mathcal F\{U_0\}(k)\big|
}_{L^2(\mathbb R \setminus [-1, 1])}
\leq 
\frac{\nu^{j-\frac 12}}{2\sqrt{\pi}} \no{U_0}_{H^{2j-\frac 12}(\mathbb R)}, \quad j\in\mathbb N\cup\{0\}.
\end{equation}
Furthermore, by Minkowski's integral inequality and the fact that $e^{-\nu k^2 t} \leq 1$,
$$
\eqref{U-te2}
\leq
\frac{1}{2\pi} 
\int_{|k|<1} \no{
e^{-\nu k^2 t} |\omega|^j \left|\mathcal F\{U_0\}(k)\right|}_{L^2(0, T)} dk
\leq
\frac{\sqrt T}{2\pi} 
\int_{|k|<1}   |\omega|^j \left|\mathcal F\{U_0\}(k)\right| dk
$$
and hence, by Cauchy-Schwarz inequality, which is convenient to use since the range of integration in $k$ is bounded, 
\begin{equation}\label{U-te4}
\begin{aligned}
\eqref{U-te2}
&\leq
\frac{\sqrt T}{2\pi} 
\left(\int_{|k|<1}   |\omega|^{2j} (1+k^2)^{-(2j-\frac 12)} dk\right)^{\frac 12} 
\left(\int_{|k|<1}  (1+k^2)^{2j-\frac 12}\left|\mathcal F\{U_0\}(k)\right|^2 dk\right)^{\frac 12}
\\
&\leq
\frac{\nu^j \sqrt T}{2^{\frac 14} \pi} 
\no{U_0}_{H^{2j-\frac 12}(\mathbb R)}, \quad j\in\mathbb N\cup\{0\}.
\end{aligned}
\end{equation}
Thus, returning to \eqref{U-te1}-\eqref{U-te2}, we deduce
\begin{equation}\label{Uj-te}
\big\|\p_t^j U(x)\big\|_{L_t^2(0, T)}
\leq
\frac{\nu^j(1+\frac{1}{\sqrt \nu}) (1+\sqrt T)}{2^{\frac 14} \sqrt \pi} 
\no{U_0}_{H^{2j-\frac 12}(\mathbb R)}, 
\quad j \in \mathbb N \cup \{0\}.
\end{equation}

Concerning the fractional seminorm \eqref{t-frac-def}, we once again split the $k$-integral near and away from zero to write
\begin{align}
\big\|\p_t^{[m]} U(x)\big\|_{\mu}^2
&\leq
4
\int_0^T\int_0^{T-t} z^{-(1+2\mu)}
\left|
\frac{1}{2\pi} \int_{|k|<1} \left|e^{-\omega z} - 1\right| |\omega|^{[m]} \left| \mathcal F\{U_0\}(k)\right| dk
\right|^2 dz dt
\label{Uf-n}
\\
&\quad
+
4
\int_0^T\int_0^{T-t} z^{-(1+2\mu)}
\left|
\frac{1}{2\pi} \int_{|k|>1} e^{-\nu k^2 t} \left|e^{-\omega z} - 1\right| |\omega|^{[m]} \left| \mathcal F\{U_0\}(k)\right| dk
\right|^2 dz dt.
\label{Uf-f}
\end{align}
The $k$-integral in \eqref{Uf-n} is over a bounded domain, so we apply the Cauchy-Schwarz inequality to get
\begin{equation}\label{zeta-bound00}
\eqref{Uf-n}
\leq
\frac{T}{\pi^2} \no{U_0}_{H^s(\mathbb R)}^2
\int_{|k|<1} 
\left(
\int_0^\infty \frac{\left|e^{-\omega z} - 1\right|^2}{z^{1+2\mu}} dz \right)  |\omega|^{2[m]} \left(1+k^2\right)^{-s}  dk.
\end{equation}
The dependence of the $z$-integral on $k$ can be extracted by means of the change of variable $\zeta = \nu k^2 z$:
\begin{equation}\label{zeta-bound0}
\int_0^\infty \frac{\left|e^{-\omega z} - 1\right|^2}{z^{1+2\mu}} dz
=
\left(\nu k^2\right)^{2\mu} 
\int_0^\infty \frac{\big|e^{-\left(1+i \frac\alpha \nu \right) \zeta} - 1\big|^2}{\zeta^{1+2\mu}} d\zeta.
\end{equation}
The $\zeta$-integral on the right side converges for all $\mu \in (0, 1)$. Indeed, by Lemma 2.1 of \cite{fhm2016}, %with \gamma = -\frac \alpha nu + i, x=1, y=0, k = \zeta 
$$
\big|e^{-\left(1+i \frac\alpha \nu \right) \zeta} - 1\big|
\leq
\sqrt 2 \left(1+ \frac \alpha \nu\right) \left(1-e^{-\zeta}\right),
\ \zeta \geq 0,
$$ 
therefore, making crucial use of the fact that $\mu \in (0, 1)$, we find
\begin{equation}\label{zeta-bound}
\int_0^\infty \frac{\big|e^{-\left(1+i \frac\alpha \nu \right) \zeta} - 1\big|^2}{\zeta^{1+2\mu}} d\zeta
\leq
2\left(1+ \frac \alpha \nu\right)^2 
\int_0^\infty \frac{\big(1-e^{-\zeta}\big)^2}{\zeta^{1+2\mu}} d\zeta
=
\frac{2\pi \left(2-2^{2\mu}\right) \csc(2\pi \mu) \left(1+ \frac \alpha \nu\right)^2}{\Gamma(1+2\mu)},
\end{equation}
where $\Gamma(\cdot)$ is the Gamma function.
Combining \eqref{zeta-bound0} and \eqref{zeta-bound} with \eqref{zeta-bound00}, and since $2m - s = \frac 12$, we obtain
\begin{align}
\eqref{Uf-n}
&\leq
\frac{2T \left(2-2^{2\mu}\right) \csc(2\pi \mu) \left(1 + \frac \alpha \nu\right)^2 \nu^{2\mu}}{\pi \Gamma(1+2\mu)} 
\no{U_0}_{H^s(\mathbb R)}^2
\int_{|k|<1}   \left(k^2\right)^{2\mu} \left(\alpha^2 + \nu^2\right)^{[m]} (k^2)^{2[m]}  \left(1+k^2\right)^{-s}  dk
\nn\\
&\leq
\frac{2T \left(2-2^{2\mu}\right) \csc(2\pi \mu) \left(1 + \frac \alpha \nu\right)^2 \left(1 + \frac{\alpha^2}{\nu^2}\right)^{[m]} \nu^{2m}}{\pi \Gamma(1+2\mu)} 
\no{U_0}_{H^s(\mathbb R)}^2
\int_{|k|<1}   \left(1+k^2\right)^{\frac 12}  dk
\nn\\
&\leq
\frac{4\sqrt 2 \, T \left(2-2^{2\mu}\right) \csc(2\pi \mu) \left(1 + \frac \alpha \nu\right)^2 \left(1 + \frac{\alpha^2}{\nu^2}\right)^{[m]} \nu^{2m}}{\pi \Gamma(1+2\mu)}
\no{U_0}_{H^s(\mathbb R)}^2.
\label{zeta-bound1}
\end{align}

Next, we estimate the term \eqref{Uf-f}, which requires a different approach since integration in $k$ takes place over an unbounded domain. Rearranging and making the change of variable $\tau = \nu k^2$ in the $k$-integral, we have
\begin{align*}
&\eqref{Uf-f}
=
4
\int_0^T\int_0^{T-t} z^{-(1+2\mu)}
\left|
\frac{1}{2\pi} \int_1^\infty e^{-\nu k^2 t} \left|e^{-\omega z} - 1\right| 
|\omega|^{[m]} \big( \left| \mathcal F\{U_0\}(k)\right| +  \left| \mathcal F\{U_0\}(-k)\right|\big) dk
\right|^2 dz dt
\\
&=
\frac{1}{\pi^2}
\int_0^T z^{-(1+2\mu)} 
\int_0^{T-z} 
\left|
\int_\nu^\infty e^{-\tau t} \left|e^{-\omega(k(\tau)) z} - 1\right| |\omega|^{[m]} \big( \left| \mathcal F\{U_0\}(k(\tau))\right| +  \left| \mathcal F\{U_0\}(-k(\tau))\right|\big) \frac{1}{2\nu k(\tau)} \, d\tau
\right|^2 dt dz
\\
&\leq
\frac{1}{\pi^2}
\int_0^T z^{-(1+2\mu)} 
\no{
\mathcal L\left\{\chi_{[\nu, \infty)}(\tau) \left|e^{-\omega(k(\tau)) z} - 1\right| |\omega|^{[m]} \big( \left| \mathcal F\{U_0\}(k(\tau))\right| +  \left| \mathcal F\{U_0\}(-k(\tau))\right|\big) \frac{1}{2\nu k(\tau)}\right\}}_{L_t^2(0, \infty)}^2 dz.
\end{align*}
Hence, by the boundedness of the Laplace transform in $L^2(0, \infty)$, 
$$
\eqref{Uf-f}
\leq
\frac{1}{\pi}
\int_0^T z^{-(1+2\mu)} 
\no{
\left|e^{-\omega(k(\tau)) z} - 1\right| |\omega|^{[m]} \big( \left| \mathcal F\{U_0\}(k(\tau))\right| +  \left| \mathcal F\{U_0\}(-k(\tau))\right|\big) \frac{1}{2\nu k(\tau)}}_{L_\tau^2(\nu, \infty)}^2 dz
$$
so, restoring $k$ in place of $\tau$ and rearranging, we have
\begin{align*}
&\eqref{Uf-f}
\leq
\frac{1}{2\nu \pi}
\int_1^\infty 
\left( \int_0^T \frac{\left|e^{-\omega z} - 1\right|^2}{z^{1+2\mu}} dz \right) \frac{\nu^{2[m]} \left(1 + \frac{\alpha^2}{\nu^2}\right)^{[m]} \left(k^2\right)^{2[m]}}{k} \big( \left| \mathcal F\{U_0\}(k)\right| +  \left| \mathcal F\{U_0\}(-k)\right|\big)^2 dk
\\
&\leq
\frac{\sqrt 2 \left(2-2^{2\mu}\right) \csc(2\pi \mu) \left(1 + \frac \alpha \nu\right)^2 \left(1 + \frac{\alpha^2}{\nu^2}\right)^{[m]} \nu^{2m-1}}{\Gamma(1+2\mu)}
\int_1^\infty \left(k^2\right)^{2(\mu+[m])} \left(1+k^2\right)^{-\frac 12} \big( \left| \mathcal F\{U_0\}(k)\right| +  \left| \mathcal F\{U_0\}(-k)\right|\big)^2 dk
\\
&\leq
\frac{2\sqrt 2 \left(2-2^{2\mu}\right) \csc(2\pi \mu) \left(1 + \frac \alpha \nu\right)^2 \left(1 + \frac{\alpha^2}{\nu^2}\right)^{[m]} \nu^{2m-1}}{\Gamma(1+2\mu)}
\int_{\mathbb R \setminus [-1, 1]} \left(k^2\right)^{2m} \left(1+k^2\right)^{-\frac 12} \left| \mathcal F\{U_0\}(k)\right|^2 dk
\end{align*}
with the second inequality due to combining \eqref{zeta-bound0} and \eqref{zeta-bound} with the fact that $k\geq 1$ implies $k^2 \geq \frac 12 \left(1+k^2\right)$. Therefore, since $2m - \frac 12 = s$,  
\begin{equation}\label{zeta-bound2}
\eqref{Uf-f}
\leq
\frac{2\sqrt 2 \left(2-2^{2\mu}\right) \csc(2\pi \mu) \left(1 + \frac \alpha \nu\right)^2 \left(1 + \frac{\alpha^2}{\nu^2}\right)^{[m]} \nu^{2m-1}}{\Gamma(1+2\mu)}
\no{U_0}_{H^s(\mathbb R)}^2.
\end{equation}
This bound combined with \eqref{zeta-bound1} implies the fractional seminorm estimate
\begin{equation}\label{frac-U}
\big\|\p_t^{[m]} U(x)\big\|_{\mu}^2
\leq
\frac{2\sqrt 2 \left(2 \nu T + \pi\right) \left(2-2^{2\mu}\right) \csc(2\pi \mu)  \left(1 + \frac \alpha \nu\right)^2 \left(1 + \frac{\alpha^2}{\nu^2}\right)^{[m]} \nu^{2m}}{\pi \nu \Gamma(1+2\mu)}
\no{U_0}_{H^s(\mathbb R)}^2
\end{equation}
for $s\geq - \frac 12$ and $\mu \in (0, 1)$, 
which is in turn combined with the $L^2$-estimate \eqref{Uj-te} to yield, via \eqref{hm-l2}, the desired estimate \eqref{U-ivp-te} with constant
%%
%$$
%\no{U(x)}_{H_t^m(0, T)}
%\leq
%\left(
%\frac{\sqrt 2 \left(1-\nu^{[m]+1}\right) (1+\sqrt T)}{2^{\frac 14} \sqrt{\nu \pi} \left(1-\sqrt \nu\right)} 
%+
%\frac{\sqrt 2 2^{\frac 14} \left(2 \nu T + \pi\right)^{\frac 12} \sqrt{\left(2-2^{2\mu}\right)\csc(2\pi \mu)}  \left(1 + \frac \alpha \nu\right) \left(1 + \frac{\alpha^2}{\nu^2}\right)^{\frac{[m]}{2}} \nu^{m}}{\sqrt{\pi \nu \Gamma(1+2\mu)}}
%\right)
%\no{U_0}_{H^s(\mathbb R)}
%$$
%%
%where 
%
\begin{equation}\label{c2-def}
c_2(s, \alpha, \nu, T) := \frac{\left(1-\nu^{[m]+1}\right) (1+\sqrt T)}{2^{\frac 14} \sqrt{\nu \pi} \left(1-\sqrt \nu\right)} 
+
\frac{2^{\frac 34} \sqrt{\pi + 2 \nu T} \sqrt{\left(2-2^{2\mu}\right) \csc(2\pi \mu)}  \left(1 + \frac \alpha \nu\right) \left(1 + \frac{\alpha^2}{\nu^2}\right)^{\frac{[m]}{2}} \nu^{m} \chi_{(0, 1)}(\mu)}{\sqrt{\pi \nu \Gamma(1+2\mu)}},
\end{equation}
where we recall that $m = \frac{2s+1}{4}$ and $\mu = m-[m]$.

Furthermore, observing that $U_x$ satisfies the Cauchy problem 
\begin{equation}\label{Ux-ivp}
\begin{aligned}
&(U_x)_t - \left(\nu + i \alpha \right) (U_x)_{xx} = 0,
\\
&U_x(x,0) = U_0'(x),
\end{aligned} 
\quad
x\in\mathbb R, \ t\in(0,T),
\end{equation}
we employ estimate \eqref{U-ivp-te} with $U_x$, $U_0'$ and $s-1$ in place of $U$, $U_0$ and $s$, respectively, to deduce
$$
\no{U_x(x)}_{H_t^{\frac{2(s-1)+1}{4}}(0, T)} \leq c_2(s-1, \alpha, \nu, T) \no{U_0'}_{H^{s-1}(\mathbb R)}, \ x\in \mathbb R, \ s-1\geq -\frac 12,
$$
which readily yields the second time estimate \eqref{Ux-ivp-te} in view of the fact that $\no{U_0'}_{H^{s-1}(\mathbb R)} \leq \no{U_0}_{H^s(\mathbb R)}$.

Finally, the continuity of the relevant maps follows via the dominated convergence theorem and estimations entirely analogous to the ones provided above.
\end{proof}

\begin{remark}[Fractional seminorm vs. interpolation]
Concerning the proof of estimates \eqref{U-ivp-te} and \eqref{Ux-ivp-te} for fractional values of $\frac{2s+1}{4}$, an alternative to estimating the fractional Sobolev-Slobodeckij seminorm is to use interpolation (Theorem 5.1 in \cite{lm1972} or Theorem 4.1.2 in \cite{bl1976}) between the whole derivatives estimates \eqref{Uj-te}, similarly to what we did in the proof of Theorem \ref{pure-ibvp-t}. However, in the interest of presenting both approaches, in the proof of Theorem \ref{U-ivp-t} above, as well as of its inhomogeneous counterpart Theorem \ref{W-ivp-t} below, we have chosen to estimate the fractional seminorm explicitly.
\end{remark}

\subsection{Estimates the forced linear Cauchy problem}
The next result concerns the forced linear Cauchy problem \eqref{W-ivp} and provides the final piece needed for estimating the initial-boundary value problem \eqref{lcgl-ibvp-w} via the superposition \eqref{lin-sup}.
\begin{theorem}[Forced linear Cauchy problem]\label{W-ivp-t}
The solution $W = S\big[0; F\big]$ to the Cauchy problem \eqref{W-ivp} satisfies the space estimate
\begin{equation}\label{W-ivp-se}
\sup_{t\in [0, T]} \no{W(t)}_{H_x^s(\mathbb R)} \leq T \sup_{t\in [0, T]} \no{F(t)}_{H_x^s(\mathbb R)}, \quad s\in \mathbb R,
\end{equation}
and  the time estimates
\begin{align}\label{W-ivp-te}
\no{W(x)}_{H_t^{\frac{2s+1}{4}}(0, T)} &\leq c_3(s, \alpha, \nu, T) 
 \sup_{t\in [0, T]} \no{F(t)}_{H_x^s(\mathbb R)}, \quad -\frac 12 \leq s \leq \frac 32, \ s \neq \frac 12, \ x\in\mathbb R, 
\\ 
 \label{W-ivp-te2}
\no{W(x)}_{H_t^{\frac{2s+1}{4}}(0, T)} &\leq c_4(s, \alpha, \nu, T) 
 \sup_{t\in [0, T]} \no{F(t)}_{H_x^s(\mathbb R)}
 +
\big(2 T^{\frac{4}{2s+5}} + T^{\frac 13}\big)  \no{F(x)}_{H_t^{\frac{2s+1}{4}}(0, T)}, 
\nn\\
&\hskip 6.35cm \frac 32 < s \leq \frac 72, \ s \neq \frac 52, \ x\in\mathbb R,
\\
\label{Wx-ivp-te}
\no{W_x(x)}_{H_t^{\frac{2s-1}{4}}(0, T)} &\leq c_3(s-1, \alpha, \nu, T) \sup_{t\in [0, T]} \no{F(t)}_{H_x^s(\mathbb R)}, \quad \frac 12 \leq s \leq \frac 52, \ s \neq \frac 32, \ x\in\mathbb R,
\end{align}
with the constants $c_3(s, \alpha, \nu, T)$ and $c_4(s, \alpha, \nu, T)$ given by \eqref{c3-def} and \eqref{c4-def}.
Furthermore, the maps $[0, T] \ni t \mapsto W(t) \in H_x^s(\mathbb R)$ as well as $\mathbb R \ni x \mapsto W(x) \in H_t^{\frac{2s+1}{4}}(0, T)$ and $\mathbb R \ni x \mapsto W_x(x) \in H_t^{\frac{2s-1}{4}}(0, T)$ are continuous.
\end{theorem}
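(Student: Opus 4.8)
The starting point is Duhamel's formula for \eqref{W-ivp}, which on the Fourier side reads $\mathcal F\{W\}(k,t)=\int_0^t e^{-\omega(t-t')}\,\mathcal F\{F\}(k,t')\,dt'$ with $\omega=(\nu+i\alpha)k^2$. The space estimate \eqref{W-ivp-se} is then immediate: since $\big|e^{-\omega(t-t')}\big|=e^{-\nu k^2(t-t')}\le1$ for $0\le t'\le t$, multiplying by $(1+k^2)^{s/2}$, taking $L^2_k$ norms and applying Minkowski's integral inequality gives $\no{W(t)}_{H^s_x(\mathbb R)}\le\int_0^t\no{F(t')}_{H^s_x(\mathbb R)}\,dt'\le T\sup_{t'\in[0,T]}\no{F(t')}_{H^s_x(\mathbb R)}$; applying the same bound to $W(t)-W(t')$ together with the dominated convergence theorem yields the continuity of $t\mapsto W(t)\in H^s_x(\mathbb R)$.

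For the time estimate \eqref{W-ivp-te}, in which $m:=\tfrac{2s+1}{4}\in[0,1]$, the plan is to reproduce the scheme of the proof of Theorem \ref{U-ivp-t}: one expands the Sobolev--Slobodeckij characterization \eqref{hm-l2} of $\no{W(x)}_{H^m_t(0,T)}$, splits the $k$-integral into $|k|<1$ and $|k|\ge1$, estimates the bounded-frequency part by the Cauchy--Schwarz and Minkowski inequalities together with $e^{-\nu k^2 t}\le1$, and on $|k|\ge1$ substitutes $\tau=\nu k^2$ so that the $t$-dependence becomes a Laplace transform, to which one applies the $L^2(0,\infty)$-boundedness of the Laplace transform established in \cite{h1929} (the increments $e^{-\omega z}-1$ occurring in the fractional seminorm being controlled via Lemma 2.1 of \cite{fhm2016}, exactly as in the homogeneous case). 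The only new ingredient is the inner $t'$-integral of $\mathcal F\{F\}(k,t')$, which is absorbed by Minkowski's integral inequality at the cost of a factor controlled by $T$ and by $\sup_t\no{F(t)}_{H^s_x}$; at the single endpoint $s=\tfrac32$ one additionally needs $\no{\partial_t W(x)}_{L^2_t}$, handled through the identity $\partial_t W=(\nu+i\alpha)W_{xx}+F$ furnished by the PDE together with the bound $\no{F(x,\cdot)}_{L^2_t(0,T)}\le\sqrt T\,\no{F(x,\cdot)}_{L^\infty_t(0,T)}\lesssim\sqrt T\sup_t\no{F(t)}_{H^s_x(\mathbb R)}$, valid by the embedding $H^s(\mathbb R)\hookrightarrow C(\mathbb R)$ since $s>\tfrac12$.

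The estimate \eqref{W-ivp-te2}, where now $m\in(1,2]$, will be obtained by bootstrapping from \eqref{W-ivp-te}. Because $W(\cdot,0)=0$ and the Fourier-multiplier semigroup commutes with $\partial_x$, one has $W_{xx}=S[0;F_{xx}]$ and, from the equation itself, $\partial_t W=(\nu+i\alpha)\,S[0;F_{xx}]+F$; consequently every contribution to $\no{W(x)}_{H^m_t(0,T)}$ beyond $\no{W(x)}_{L^2_t}$ splits, via this identity, into (i) an $H^{m-1}_t$-type quantity of $S[0;F_{xx}]$, bounded by estimate \eqref{W-ivp-te} applied at the shifted index $s-2$ — admissible precisely because $s-2\in(-\tfrac12,\tfrac32)\setminus\{\tfrac12\}$ exactly when $s\in(\tfrac32,\tfrac72)\setminus\{\tfrac52\}$, with $\tfrac{2(s-2)+1}{4}=m-1$ — yielding a bound $c_3(s-2,\alpha,\nu,T)\sup_t\no{F_{xx}(t)}_{H^{s-2}_x}\le c_3(s-2,\alpha,\nu,T)\sup_t\no{F(t)}_{H^s_x}$; and (ii) time-Sobolev norms of $F$ itself, which, the forcing enjoying no parabolic smoothing, must enter via $\no{F(x)}_{H^{\frac{2s+1}{4}}_t(0,T)}$. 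The explicit coefficient $2T^{4/(2s+5)}+T^{1/3}$ in front of this last term is what one extracts from an interpolation/scaling argument trading part of the $H^{\frac{2s+1}{4}}_t$ norm of $F$ against its $L^2_t$ norm (the latter being $\lesssim\sqrt T\sup_t\no{F(t)}_{H^s_x}$), the residual $\no{W(x)}_{L^2_t}$ being controlled by \eqref{W-ivp-se} and the embedding $H^s\hookrightarrow C$; at the endpoint $s=\tfrac72$ one iterates the identity once more, writing $\partial_t^2 W=(\nu+i\alpha)^2 S[0;F_{xxxx}]+(\nu+i\alpha)F_{xx}+\partial_t F$ and invoking \eqref{W-ivp-te} at index $s-4=-\tfrac12$. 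Estimate \eqref{Wx-ivp-te} then follows at once from $W_x=S[0;F_x]$: applying \eqref{W-ivp-te} with $s-1$ in place of $s$ — legitimate since $s-1\in[-\tfrac12,\tfrac32]\setminus\{\tfrac12\}$ exactly when $s\in[\tfrac12,\tfrac52]\setminus\{\tfrac32\}$ and $\tfrac{2(s-1)+1}{4}=\tfrac{2s-1}{4}$ — gives $\no{W_x(x)}_{H^{\frac{2s-1}{4}}_t(0,T)}\le c_3(s-1,\alpha,\nu,T)\sup_t\no{F_x(t)}_{H^{s-1}_x}\le c_3(s-1,\alpha,\nu,T)\sup_t\no{F(t)}_{H^s_x}$, while the continuity of $x\mapsto W(x)$ and $x\mapsto W_x(x)$ follows from the dominated convergence theorem applied to the integral representations, via estimates analogous to the above.

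I expect the principal difficulty to lie in the bookkeeping for \eqref{W-ivp-te2}: no new idea is required beyond Theorem \ref{U-ivp-t}, but the forced identity $\partial_t W=(\nu+i\alpha)W_{xx}+F$ unavoidably reintroduces a copy of $F$ carrying no gain of regularity, so the full $H^{\frac{2s+1}{4}}_t$-norm of $F$ must appear on the right-hand side, and securing the favourable $T\to0^+$ behaviour of its coefficient — important for the contraction argument used afterwards — is the delicate point.
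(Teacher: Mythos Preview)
Your approach is correct and is the paper's. One place deserves more care than ``absorbed by Minkowski'': in the fractional seminorm $\|W(x)\|_\mu$ for $\mu\in(0,1)$, writing $W(x,t+z)-W(x,t)$ via Duhamel produces, besides the piece carrying the increment $e^{-\omega z}-1$ that you describe, a second contribution $\int_t^{t+z}S[F(\cdot,t');0](x,t+z-t')\,dt'$ from the shifted upper limit of the $t'$-integral, and this term is \emph{not} covered by the homogeneous estimate plus an outer Minkowski. The paper treats it by a case split $\mu\gtrless\tfrac12$ --- exploiting $\int_{\mathbb R}(1+k^2)^{-s}\,dk<\infty$ when $s>\tfrac12$, and Cauchy--Schwarz in $t'$ followed by a reordering of the $(t,z,t')$ integrals when $s<\tfrac12$ --- and it is precisely this split that forces the exclusion $s\neq\tfrac12$ in \eqref{W-ivp-te}.

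The coefficient $2T^{4/(2s+5)}+T^{1/3}$ in \eqref{W-ivp-te2} is not a generic interpolation: the bootstrap through $\partial_t W=(\nu+i\alpha)S[0;F_{xx}]+F$ naturally produces $\|F(x)\|_{H^{m-1}_t(0,T)}$ (from the seminorm of $W_t$) and $\|F(x)\|_{H^1_t(0,T)}$ (at the endpoint $m=2$), and the paper then invokes the inequality $\|\varphi\|_{H^{m-1}_t(0,T)}\le T^{4/(2s+5)}\|\varphi\|_{H^{m}_t(0,T)}$ of Lemma~3.7 in \cite{bo2016} to upgrade these to $\|F(x)\|_{H^m_t}$ with the stated powers of $T$.
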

\begin{proof}
The solution to problem \eqref{W-ivp}  is given by
\begin{equation}\label{W-sol}
W(x, t) = \frac{1}{2\pi} \int_0^t \int_{\mathbb R} e^{ikx-\omega(t-t')} \mathcal F\{F\}(k, t') dk dt'
\end{equation}
with $\mathcal F\{F\}$ denoting the spatial Fourier transform of the forcing $F$. Thus, by Minkowski's integral inequality,
$$
\no{W(t)}_{H_x^s(\mathbb R)} \leq \int_0^t \no{\frac{1}{2\pi}\int_{\mathbb R} e^{ikx-\omega(t-t')} \mathcal F\{F\}(k, t') dk}_{H_x^s(\mathbb R)} dt'
$$
and, using the Fourier characterization of the $H_x^s(\mathbb R)$ norm along with the fact that $\text{Re}(\omega) = \nu k^2 \geq 0$ for $k\in\mathbb R$,
$$
\no{W(t)}_{H_x^s(\mathbb R)} \leq  \int_0^t \left(\int_{\mathbb R} \left(1+k^2\right)^s \left| e^{-\omega(t-t')} \mathcal F\{F\}(k, t')\right|^2 dk\right)^{\frac 12} dt'
\leq
T \sup_{t\in [0, T]} \no{F(t)}_{H_x^s(\mathbb R)},
$$
which is the space estimate \eqref{W-ivp-se}.

Concerning the time estimates \eqref{W-ivp-te}-\eqref{Wx-ivp-te}, we begin by noting that \eqref{W-ivp-te} with $W$, $F$, $s$ replaced respectively by $W_x$, $F_x$, $s-1$  amounts to
$$
\no{W_x(x)}_{H_t^{\frac{2(s-1)+1}{4}}(0, T)} \leq c_3(s-1, \alpha, \nu, T) \sup_{t\in [0, T]} \no{F_x(t)}_{H_x^{s-1}(\mathbb R)}, \ -\frac 12 \leq s-1 \leq \frac 32, \ s-1 \neq \frac 12, \ x\in\mathbb R,
$$
which readily implies \eqref{Wx-ivp-te} since $\no{F_x(t)}_{H_x^{s-1}(\mathbb R)} \leq \no{F(t)}_{H_x^s(\mathbb R)}$. 
Hence, it suffices to prove \eqref{W-ivp-te} and \eqref{W-ivp-te2}. 
To this end, we hereafter restrict $s\geq -\frac 12$ in order to use the $L^2$ characterization of the Sobolev norm given by \eqref{hm-l2}. 
First, according to Duhamel's principle, we express the solution $W = S\big[0; F\big]$ to the inhomogeneous problem~\eqref{W-ivp} in terms of the solution $U = S\big[U_0; 0\big]$ to the homogeneous problem \eqref{U-ivp} as 
\begin{equation}\label{duh}
W(x, t) = \int_0^t S\big[F(\cdot, t'); 0\big](x, t-t') dt'.
\end{equation}
\textbf{The case of $s=-\frac 12 \Leftrightarrow \frac{2s+1}{4} = 0$.} For each $x\in\mathbb R$, 
\begin{equation}\label{erf-3}
\no{W(x)}_{L_t^2(0, T)}^2
\leq
\int_0^T \left( \int_0^t \left|S\big[F(\cdot, t'); 0\big](x, t-t')\right| dt'\right)^2 dt
\end{equation}
and so, by the Cauchy-Schwarz inequality in $t'$, 
\begin{align}
\no{W(x)}_{L_t^2(0, T)}^2
&\leq
\int_0^T t \left( \int_0^t \left|S\big[F(\cdot, t'); 0\big](x, t-t')\right|^2 dt'\right) dt
\nn\\
&\leq
T \int_0^T   \int_{t'}^T \left|S\big[F(\cdot, t'); 0\big](x, t-t')\right|^2 dt  dt'
\nn
\end{align}
after bounding $t$ by $T$ and changing the order of integration. Letting $t = t'+\rho$, we then have
\begin{equation}\label{erf-2}
\no{W(x)}_{L_t^2(0, T)}^2
\leq
T \int_0^T   \int_{0}^{T-t'} \left|S\big[F(\cdot, t'); 0\big](x, \rho)\right|^2 d\rho  dt'
=
T  \int_0^T \no{S\big[F(\cdot, t'); 0\big](x, \rho)}_{L_\rho^2(0, T-t')}^2 dt'.
\end{equation}
We estimate the $L_\rho^2(0, T-t')$ norm on the right side by proceeding as in the proof of the homogeneous time estimate \eqref{U-ivp-te}. In particular, we use formula \eqref{U-sol} and the triangle inequality to write
\begin{align}
\no{S\big[F(\cdot, t'); 0\big] (x, \rho)}_{L_\rho^2(0, T-t')}
%=
%\no{\frac{1}{2\pi} \int_{\mathbb R} e^{ikx - \omega \rho}   \mathcal F\{F\}(k, t') dk}_{L_\rho^2(0, T-t')}
%&\leq
%\frac{1}{2\pi} 
%\no{
%\int_{\mathbb R} e^{-\nu k^2 \rho}  \left|\mathcal F\{F\}(k, t')\right| dk}_{L_\rho^2(0, T-t')}
%\nn\\
&\leq
\frac{1}{2\pi} 
\no{
\int_{|k|>1} e^{-\nu k^2 \rho} \left|\mathcal F\{F\}(k, t')\right| dk}_{L_\rho^2(0, T-t')}
\label{WU-te1}
\\
&\quad
+
\frac{1}{2\pi} 
\no{
\int_{|k|<1} e^{-\nu k^2 \rho} \left|\mathcal F\{F\}(k, t')\right| dk}_{L_\rho^2(0, T-t')}.
\label{WU-te2}
\end{align}

For the term \eqref{WU-te1}, making the changes of variables $k=-\sqrt{\frac \tau \nu}$ for $k<0$ and $k= \sqrt{\frac \tau \nu}$ for $k>0$, both of which imply that $|\omega| = \nu k^2 = \tau$, we find
\begin{equation}\label{erf-z}
\begin{aligned}
\eqref{WU-te1}
&\leq
\frac{1}{4\pi \sqrt \nu} 
\no{
\int_\nu^\infty e^{-\tau \rho} \cdot \tau^{-\frac 12} \left(
\big|\mathcal F\{F\}(-\sqrt{\tfrac \tau \nu}, t')\big|
+
\big|\mathcal F\{F\}(\sqrt{\tfrac \tau \nu}, t')\big|
\right)
 d\tau
}_{L_\rho^2(0, T-t')}
\\
&\leq
\frac{1}{4\pi \sqrt \nu} 
\no{
\int_0^\infty e^{-\tau \rho} \cdot \chi_{[\nu, \infty)}(\tau) \tau^{-\frac 12} \left(
\big|\mathcal F\{F\}(-\sqrt{\tfrac \tau \nu}, t')\big|
+
\big|\mathcal F\{F\}(\sqrt{\tfrac \tau \nu}, t')\big|
\right)
 d\tau
}_{L_\rho^2(0, \infty)}.
\end{aligned}
\end{equation}
Thus, by the boundedness of the Laplace transform in $L^2(0, \infty)$, 
$$
\eqref{WU-te1}
\leq
\frac{1}{4\sqrt{\pi \nu}} 
\no{\tau^{-\frac 12} \left(
\big|\mathcal F\{F\}(-\sqrt{\tfrac \tau \nu}, t')\big|
+
\big|\mathcal F\{U_0\}(\sqrt{\tfrac \tau \nu}, t')\big|
\right)
}_{L_\tau^2(\nu, \infty)}.
$$
Changing variable from $\tau$ back to $k$ and using the fact that  $k^2 \simeq 1+k^2$ for $|k|>1$, we obtain
\begin{equation}\label{WU-te3}
\eqref{WU-te1}
\leq
\frac{1}{2\sqrt{\pi \nu}} 
\no{(k^2)^{-\frac 14} 
\big|\mathcal F\{F\}(k, t')\big|
}_{L_k^2(\mathbb R \setminus [-1, 1])}
\leq 
\frac{1}{2\sqrt{\pi \nu}} \no{F(t')}_{H_x^{-\frac 12}(\mathbb R)}.
\end{equation}

Also, by Minkowski's integral inequality and the fact that $e^{-\nu k^2 \rho} \leq 1$,
$$
\eqref{WU-te2}
\leq
\frac{1}{2\pi} 
\int_{|k|<1} \no{
e^{-\nu k^2 \rho} \left|\mathcal F\{F\}(k, t')\right|}_{L_\rho^2(0, T-t')} dk
\leq
\frac{\sqrt{T-t'}}{2\pi} 
\int_{|k|<1}  \left|\mathcal F\{F\}(k, t')\right| dk
$$
hence by the Cauchy-Schwarz inequality we obtain
$$
\eqref{WU-te2}
\leq
\frac{\sqrt{T-t'}}{2\pi} 
\left(\int_{|k|<1}  \left(1+k^2\right)^{\frac 12} dk\right)^{\frac 12} 
\left(\int_{|k|<1}  \left(1+k^2\right)^{-\frac 12}\left|\mathcal F\{F\}(k, t')\right|^2 dk\right)^{\frac 12}
\leq
\frac{\sqrt{T-t'}}{2^{\frac 14} \pi} 
\no{F(t')}_{H_x^{-\frac 12}(\mathbb R)}.
$$
Thus, returning to \eqref{WU-te1}-\eqref{WU-te2}, we deduce
\begin{equation}\label{WUj-te}
\no{S\big[F(\cdot, t'); 0\big] (x, \rho)}_{L_\rho^2(0, T-t')}
\leq
\frac{\left(1+ \sqrt \nu\, \right) \left(1+\sqrt{T-t'} \, \right)}{2^{\frac 14} \sqrt{\pi \nu}} 
\no{F(t')}_{H_x^{-\frac 12}(\mathbb R)}.
\end{equation}
In turn, \eqref{erf-2} yields
\begin{equation}\label{W-l2}
\no{W(x)}_{L_t^2(0, T)}^2
% &\leq
% T  \int_0^T\frac{\left(1+ \sqrt \nu\, \right)^2 \left(1+\sqrt{T-t'} \, \right)^2}{\sqrt 2 \pi \nu} 
% \no{F(t')}_{H_x^{-\frac 12}(\mathbb R)}^2 dt'
% \nn\\
\leq
\frac{T^2\left(1+ \sqrt \nu\, \right)^2 \big(1+\sqrt T \, \big)^2}{\sqrt 2 \pi \nu} 
\sup_{t\in [0, T]} \no{F(t)}_{H_x^{-\frac 12}(\mathbb R)}^2.
\end{equation}
\textbf{The case of $s=\frac 32 \Leftrightarrow \frac{2s+1}{4} = 1$.} Using Leibniz's rule to differentiate \eqref{W-sol} with respect to $t$, we have
\begin{align}\label{wt}
W_t(x, t) 
&= 
\frac{1}{2\pi} \int_{\mathbb R} e^{ikx} \mathcal F\{F\}(k, t) dk
+
\int_0^t \frac{1}{2\pi} \int_{\mathbb R} e^{ikx-\omega(t-t')} (-\omega) \mathcal F\{F\}(k, t') dk dt'
\nn\\
&= 
F(x, t)
+
\left(\nu+i\alpha\right)
\int_0^t S\big[F_{xx}(\cdot, t'); 0\big](x, t-t') dt'
\end{align}
with the second equality after recalling that $\omega = \left(\nu+i\alpha\right) k^2$. Thus, by the triangle inequality, 
$$
\no{W_t(x)}_{L_t^2(0, T)}
\leq
\no{F(x)}_{L_t^2(0, T)} 
+ 
\sqrt{\alpha^2 + \nu^2} \no{\int_0^t S\big[F_{xx}(\cdot, t'); 0\big](x, t-t') dt'}_{L_t^2(0, T)}.
$$
Handling the second norm on the right side like \eqref{erf-3}, i.e. via estimate \eqref{W-l2} but now with $F_{xx}$ in place of $F$, we obtain
\begin{equation}\label{W-te1}
\no{W_t(x)}_{L_t^2(0, T)}
\leq
\no{F(x)}_{L_t^2(0, T)}
+
\frac{T\left(1+ \sqrt \nu\, \right) \big(1+\sqrt T \, \big) \sqrt{\alpha^2 + \nu^2}}{2^{\frac 14} \sqrt{\pi \nu}} 
\sup_{t\in [0, T]} \no{F_{xx}(t)}_{H_x^{-\frac 12}(\mathbb R)}.
\end{equation}
In addition,  by the Sobolev embedding theorem, 
\begin{equation}\label{W-te2}
\no{F(x)}_{L_t^2(0, T)}^2
% &:= 
% \int_0^T \left|F(x, t)\right|^2 dt 
\leq
\int_0^T \no{F(t)}_{L_x^\infty(\mathbb R)}^2 dt 
% \nn\\
% &
\leq
\int_0^T \no{F(t)}_{H_x^{\frac 12+}(\mathbb R)}^2 dt 
\leq
T \sup_{t\in [0, T]} \no{F(t)}_{H_x^{\frac 12+}(\mathbb R)}^2.
\end{equation}
Combining \eqref{W-te1}, \eqref{W-te2}
% , we deduce
% %
% $$
% \no{W_t(x)}_{L_t^2(0, T)}
% \leq
% \sqrt T \left(1 + \frac{\sqrt T\left(1+ \sqrt \nu\, \right) \big(1+\sqrt T \, \big) \sqrt{\alpha^2 + \nu^2}}{2^{\frac 14} \sqrt{\pi \nu}} \right)
%  \sup_{t\in [0, T]} \no{F(t)}_{H_x^{\frac 32}(\mathbb R)}
% $$
% %
%which alongside 
and \eqref{W-l2} with the definition  \eqref{hm-l2} we deduce
%that, for each $x\in\mathbb R$, 
%
\begin{equation}\label{W-h1}
\no{W(x)}_{H_t^1(0, T)}
\leq
\sqrt T \left(1 + \frac{\sqrt T\left(1+ \sqrt \nu\, \right) \big(1+\sqrt T \, \big) \left(1+ \sqrt{\alpha^2 + \nu^2}\right)}{2^{\frac 14} \sqrt{\pi \nu}}  \right)
 \sup_{t\in [0, T]} \no{F(t)}_{H_x^{\frac 32}(\mathbb R)},
 \quad
 x\in\mathbb R.
\end{equation}
\textbf{The case of $s=\frac 72 \Leftrightarrow \frac{2s+1}{4} = 2$.} 
Differentiating \eqref{wt} once more with respect to $t$, we have
\begin{align}\label{wtt}
W_{tt}(x, t) 
&= 
F_t(x, t) + \left(\nu+i\alpha\right) S\big[F_{xx}(\cdot, t); 0\big](x, 0)
+
\left(\nu+i\alpha\right)^2 \int_0^t S\big[\p_x^4 F(\cdot, t'); 0\big](x, t-t') dt'.
\end{align}
Therefore, by the triangle inequality and the fact that $S\big[F_{xx}(\cdot, t); 0\big](x, 0) = F_{xx}(x, t)$, we find
$$
\no{W_{tt}(x)}_{L_t^2(0, T)}
\leq 
\no{F_t(x)}_{L_t^2(0, T)} 
+
\sqrt{\alpha^2+\nu^2} \no{F_{xx}(x)}_{L_t^2(0, T)}
% \nn\\
% &\quad
+
\left(\alpha^2+\nu^2\right) \int_0^t \no{S\big[\p_x^4 F(\cdot, t'); 0\big](x, t-t')}_{L_t^2(0, T)} dt'.
$$
The second term can be handled analogously to \eqref{W-te2} and the third term like \eqref{erf-3}, i.e. via estimate \eqref{W-l2} but with $\p_x^4 F$ in place of $F$. Hence, we further obtain
\begin{align}
\no{W_{tt}(x)}_{L_t^2(0, T)}
&\leq
\no{F_t(x)}_{L_t^2(0, T)} 
+
\sqrt{\alpha^2+\nu^2}\, \sqrt T \sup_{t\in [0, T]} \no{F_{xx}(t)}_{H_x^{\frac 12+}(\mathbb R)}
\nn\\
&\quad
+
\frac{T\left(1+ \sqrt \nu\, \right) \big(1+\sqrt T \, \big)\left(\alpha^2+\nu^2\right)}{2^{\frac 14} \sqrt{\pi \nu}} 
\sup_{t\in [0, T]} \no{\p_x^4 F(t)}_{H_x^{-\frac 12}(\mathbb R)}
\nn\\
&\leq
\no{F(x)}_{H_t^1(0, T)} 
+
\sqrt{\alpha^2+\nu^2}\, \sqrt T \sup_{t\in [0, T]} \no{F(t)}_{H_x^{\frac 52+}(\mathbb R)}
\nn\\
&\quad
+
\frac{T\left(1+ \sqrt \nu\, \right) \big(1+\sqrt T \, \big)\left(\alpha^2+\nu^2\right)}{2^{\frac 14} \sqrt{\pi \nu}} 
\sup_{t\in [0, T]} \no{F(t)}_{H_x^{\frac 72}(\mathbb R)}
\label{wtt-est}
\end{align}
which combined with the $H^1$ estimate \eqref{W-h1} yields
\begin{align}\label{W-h2}
\no{W(x)}_{H_t^2(0, T)}
&\leq
\sqrt T \left(1 + \frac{\sqrt T\left(1+ \sqrt \nu\, \right) \big(1+\sqrt T \, \big) \left(1+ 2\sqrt{\alpha^2 + \nu^2}\right)}{2^{\frac 14} \sqrt{\pi \nu}} + \sqrt{\alpha^2+\nu^2} \right)
\sup_{t\in [0, T]} \no{F(t)}_{H_x^s(\mathbb R)}
 \nn\\
 &\quad
 +
 \no{F(x)}_{H_t^1(0, T)}.
\end{align}

Estimates \eqref{W-l2}, \eqref{W-h1} and \eqref{W-h2} provide the desired result \eqref{W-ivp-te} in the cases of $s=-\frac 12$, $s=\frac 32$ and $s=\frac 72$ respectively, which for $m = \frac{2s+1}{4}$ correspond to the integer values $m=0$, $m=1$ and $m=2$ respectively. In order to extend the validity of \eqref{W-ivp-te} to $s\in \left(-\frac 12, \frac 72\right)\setminus \{\frac 12, \frac 52\}$, which corresponds to $m \in (0, 2)\setminus \{\frac 12, \frac 32\}$, we will additionally estimate the Sobolev-Slobodeckij seminorm  \eqref{t-frac-def}. 
% \textcolor{red}{
% If we proceed via interpolation instead of estimating the fractional seminorm, can we extend the validity of \eqref{W-ivp-te} also to  $s=\frac 12, \frac 52$?}
%
\\[2mm]
\textbf{The case of $s \in \left(-\frac 12, \frac 32\right) \setminus \left\{\frac 12\right\}$.} Then, $m = \frac{2s+1}{4} \in \left(0,  1\right) \setminus \left\{\frac 12\right\}$ and so $[m]=0$. Thus, according to \eqref{hm-l2}, we need to estimate $\no{W(x)}_\mu$ with $\mu \in (0, 1) \setminus \left\{\frac 12\right\}$. By \eqref{t-frac-def} and the Duhamel representation~\eqref{duh}, we have  
\begin{align}
\no{W(x)}_\mu^2
&\leq
4 \int_0^T \int_0^{T-t}
z^{-(1+2\mu)} \left| \int_0^t 
\Big\{
S\big[F(\cdot, t'); 0\big] (x, t+z-t') 
-
S\big[F(\cdot, t'); 0\big] (x, t-t')
\Big\} dt' \right|^2 dz dt
\label{W-ivp-te-b1}
\\
&\quad
+ 4
\int_0^T \int_0^{T-t}
z^{-(1+2\mu)} \left|\displaystyle \int_t^{t+z} S\big[F(\cdot, t'); 0\big] (x, t+z-t') dt'\right|^2   dz dt.
\label{W-ivp-te-b2}
\end{align}
For the first term, we apply the Cauchy-Schwarz inequality in the $t'$-integral and then bound $t$ by $T$ to obtain
$$
\eqref{W-ivp-te-b1}
\leq
4 T \int_0^T \int_0^{T-t}
z^{-(1+2\mu)} \int_0^t 
\left| 
S\big[F(\cdot, t'); 0\big] (x, t+z-t') 
-
S\big[F(\cdot, t'); 0\big] (x, t-t')
 \right|^2 dt' dz dt.
$$
Thus, interchanging the $z$-integral with the $t'$-integral and then the $t'$-integral with the $t$-integral, we have
\begin{align}
\eqref{W-ivp-te-b1}
&\leq
4 T \int_0^T  \int_{t'}^T 
\int_0^{T-t}
\frac{\left| 
S\big[F(\cdot, t'); 0\big] (x, t+z-t') 
-
S\big[F(\cdot, t'); 0\big] (x, t-t')
 \right|^2}{z^{1+2\mu}} dz dt  dt'
\nn\\
&=
4 T \int_0^T  \int_{0}^{T-t'} 
\int_0^{T-\rho-t'}
\frac{\left| 
S\big[F(\cdot, t'); 0\big] (x, \rho + z) 
-
S\big[F(\cdot, t'); 0\big] (x, \rho)
 \right|^2}{z^{1+2\mu}} dz d\rho  dt'
 \nn
\end{align}
with last equality due to the change of variable $t = \rho + t'$. Therefore, augmenting the $z\rho$-region by exploiting the fact that $S\big[F(\cdot, t'); 0\big] (x, \rho)$ makes sense for any $\rho\geq 0$, we find
\begin{align}
\eqref{W-ivp-te-b1}
&\leq
4 T \int_0^T  \int_{0}^{T} 
\int_0^{T-\rho}
\frac{\left| 
S\big[F(\cdot, t'); 0\big] (x, \rho + z) 
-
S\big[F(\cdot, t'); 0\big] (x, \rho)
 \right|^2}{z^{1+2\mu}} dz d\rho  dt'
\nn\\
&=
2 T  \int_0^T \no{S\big[F(\cdot, t'); 0\big] (x, \rho)}_{\mu_\rho}^2 dt'
\leq
2 T  \int_0^T \no{S\big[F(\cdot, t'); 0\big] (x, \rho)}_{H_\rho^{\frac{2s+1}{4}}(0, T)}^2 dt'
\label{wtemp1}
\end{align}
which allows us to conclude via the homogeneous time estimate \eqref{U-ivp-te} that
\begin{equation}
\eqref{W-ivp-te-b1}
\leq
2T c_2(s, \alpha, \nu, T)^2 \int_0^T 
\no{F(t')}_{H_x^s(\mathbb R)}^2 dt'
\leq
2T^2 c_2(s, \alpha, \nu, T)^2
\sup_{t\in \left[0, T\right]} \no{F(t)}_{H_x^s(\mathbb R)}^2.
\label{W-ivp-te-b1-est}
\end{equation}

The estimation of the term \eqref{W-ivp-te-b2} is more involved and requires us to treat the cases $\mu>\frac 12$ and $\mu< \frac 12$ separately. 
For $\mu>\frac 12$, we use formula \eqref{U-sol} and the fact that $\text{Re}(\omega) \geq 0$ for $k\in\mathbb R$ to infer
\begin{align}
\eqref{W-ivp-te-b2}
&=
\frac{1}{\pi^2} 
\int_0^T \int_0^{T-t} z^{-(1+2\mu)}
 \left|\displaystyle \int_t^{t+z} 
 \int_{\mathbb R} e^{ik x-\omega(t+z-t')}
\mathcal F\{F\}(k, t') dk
 dt'\right|^2  dz dt
\nn\\
&\leq 
\frac{1}{\pi^2} 
\int_0^T \int_0^{T-t} z^{-(1+2\mu)}
 \left(
 \int_t^{t+z} 
 \int_{\mathbb R} \left|\mathcal F\{F\}(k, t')\right| dk
 dt'\right)^2  dz dt.
 \nn
\end{align}
Hence, by the Cauchy-Schwarz inequality in $k$ and, importantly, by the fact that $\mu>\frac 12$ corresponds to $s>\frac 12$ and so $\int_{k \in \mathbb R} \left(1+k^2\right)^{-s}  dk = c_s < \infty$,  we obtain
\begin{align}
\eqref{W-ivp-te-b2}
&\leq
\frac{c_s}{\pi^2} 
\int_0^T \int_0^{T-t} z^{-(1+2\mu)}
 \left(  \int_t^{t+z} \no{F(t')}_{H_x^s(\mathbb R)}  dt'\right)^2  dz dt
\nn\\
&\leq
\frac{c_s}{\pi^2} 
\sup_{t\in [0, T]} \no{F(t)}_{H_x^s(\mathbb R)}^2
\int_0^T \int_0^{T-t} z^{1-2\mu}  dz dt
=
\frac{c_s \, T^{3-2\mu}}{2\pi^2(1-\mu) (3-2\mu)}   \sup_{t\in [0, T]} \no{F(t)}_{H_x^s(\mathbb R)}^2.
\label{W-ivp-te-b2-est}
\end{align}

If $\mu<\frac 12$, then applying the Cauchy-Schwarz inequality in $t'$ and then interchanging the integrals with respect to $t$ and $z$, we find
\begin{align*} 
\eqref{W-ivp-te-b2}
&\leq
% 4 \int_0^T \int_0^{T-t}
% z^{-2\mu} \int_t^{t+z} \left|S\big[F(\cdot, t'); 0\big] (x, t+z-t')\right|^2 dt'    dz dt
% \nn\\
% &=
4 \int_0^{T} z^{-2\mu}  \int_0^{T-z} 
 \int_t^{t+z} \left|S\big[F(\cdot, t'); 0\big] (x, t+z-t')\right|^2 dt'  dt dz
\nn\\
&=
4 \int_0^{T} z^{-2\mu}  \int_{z}^T 
 \int_{r-z}^{r} \left|S\big[F(\cdot, t'); 0\big] (x, r-t')\right|^2 dt'  dr dz
\end{align*}
after making the change of variable $t = r - z$. Next, for each $z \in [0, T]$, as shown in  Figure \ref{regions-f} we have
\begin{align}
\int_{r=z}^T \int_{t'=r-z}^r
&\leq 
\int_{t'=0}^{T-z} \int_{r=t'}^{t'+z}
+
\int_{t'=T-z}^{T} \int_{r=t'}^{T}
\label{regions-ineq}
\\
&\leq 
\int_{t'=0}^{T} \int_{r=t'}^{t'+z}
+
\int_{t'=T-z}^{T} \int_{r=t'}^{T}
=
\int_{t'=0}^{T} \int_{\rho=0}^{z}
+
\int_{t'=T-z}^{T} \int_{\rho=0}^{T-t'}
\nn
\end{align}
where for the last equality we have let $r = \rho + t'$. Hence, 
\begin{align} 
\eqref{W-ivp-te-b2}
&\leq
4 \int_0^{T} z^{-2\mu}  \int_{0}^T 
 \int_{0}^{z} \left|S\big[F(\cdot, t'); 0\big] (x, \rho)\right|^2 d\rho  dt' dz
\nn\\
&\quad
+
4 \int_0^{T} z^{-2\mu}  \int_{T-z}^T 
 \int_{0}^{T-t'} \left|S\big[F(\cdot, t'); 0\big] (x, \rho)\right|^2 d\rho dt' dz 
\nn\\
&\leq
% 8 \int_0^{T} z^{-2\mu}  \int_{0}^T 
%  \int_{0}^{T} \left|S\big[F(\cdot, t'); 0\big] (x, \rho)\right|^2 d\rho  dt' dz
% \nn\\
% &=
8 \int_0^{T} z^{-2\mu} \int_{0}^{T} \no{S\big[F(\cdot, t'); 0\big] (x, \rho)}_{L_\rho^2(0, T)}^2 dt'  dz
\end{align}
with the last inequality after augmenting the domains of the two $\rho$-integrals and of the second $t'$-integral.
Estimating the $L_\rho^2(0, T)$ norm via \eqref{WUj-te} with $T$ in place of $T-t'$, respectively, we obtain
%\footnote{\textcolor{red}{Can we extract a power of $z$ instead of bounding the $L_\rho^2(0, z)$ norm by the $L_\rho^2(0, \infty)$ norm in the proof of \eqref{WUj-te}? This would help us deal with the case $\mu = 1/2$. It would require a suitable bound on the error function at $\sqrt{z\tau}$, which arises after modifying the proof of the $L^2$ boundedness of the Laplace transform from $L_\rho^2(0, \infty)$ to $L_\rho^2(0, z)$.}}
%
\begin{equation}
\eqref{W-ivp-te-b2}
\leq
\frac{8 \left(1+ \sqrt \nu\, \right)^2 \big(1+\sqrt{T} \, \big)^2}{\sqrt 2 \pi \nu}
\int_0^{T} z^{-2\mu} \int_{0}^{T}  \no{F(t')}_{H_x^{-\frac 12}(\mathbb R)}^2 dt'  dz
\end{equation}
so integrating in $z$ (recall that $\mu<\frac 12$) we infer
\begin{align}\label{W-ivp-te-low}
\eqref{W-ivp-te-b2}
&\leq
\frac{8\left(1+ \sqrt \nu\, \right)^2\big(1+ \sqrt T \, \big)^2 \, T^{1-2\mu}}{\sqrt 2 \pi \nu \left(1-2\mu\right)} \int_0^T   \no{F(t')}_{H_x^{-\frac 12}(\mathbb R)}^2 dt'
\nn\\
&\leq 
\frac{8\left(1+ \sqrt \nu\, \right)^2\big(1+ \sqrt T \, \big)^2 \, T^{2(1-\mu)}}{\sqrt 2 \pi \nu \left(1-2\mu\right)}
\sup_{t\in [0, T]} \no{F(t)}_{H_x^{-\frac 12}(\mathbb R)}^2.
\end{align}

In view of estimates \eqref{W-ivp-te-b1-est}, \eqref{W-ivp-te-b2-est} and \eqref{W-ivp-te-low}, inequalities \eqref{W-ivp-te-b1} and \eqref{W-ivp-te-b2} yield
\begin{align}\label{W-ivp-te-b-est}
\no{W(x)}_\mu
&\leq
\left(
2T^2 c_2(s, \alpha, \nu, T)^2
+
\frac{c_s \, T^{3-2\mu} \chi_{(\frac 12, 1)}(\mu)}{2\pi^2(1-\mu) (3-2\mu)}
+
\frac{8\left(1+ \sqrt \nu\, \right)^2\big(1+ \sqrt T \, \big)^2 \, T^{2(1-\mu)}\chi_{(0, \frac 12)}(\mu)}{\sqrt 2 \pi \nu \left(1-2\mu\right)}
\right)^{\frac 12}
\nn\\
&\qquad
\cdot \sup_{t\in \left[0, T\right]} \no{F(t)}_{H_x^s(\mathbb R)},
\quad 
-\frac 12 < s < \frac 32, \ s\neq \frac 12. 
\end{align}
Returning to the definition \eqref{hm-l2} and combining the $L^2$ and $H^1$ estimates \eqref{W-l2} and \eqref{W-h1} with the fractional estimate~\eqref{W-ivp-te-b-est}, we obtain the time estimate \eqref{W-ivp-te} for all $s \in \left[-\frac 12, \frac 32\right] \setminus \left\{\frac 12\right\}$ with associated constant 
\begin{equation}\label{c3-def}
\begin{aligned}
c_3(s, \alpha, \nu, T) 
&:= 
\sqrt T \left(1 + \frac{\sqrt T\left(1+ \sqrt \nu\, \right) \big(1+\sqrt T \, \big) \left(1+ \sqrt{\alpha^2 + \nu^2}\right)}{2^{\frac 14} \sqrt{\pi \nu}}  \right)
\\
&\hskip -1cm
+ 
\left(
2T^2 c_2(s, \alpha, \nu, T)^2
+
\frac{c_s \, T^{3-2\mu} \chi_{(\frac 12, 1)}(\mu)}{2\pi^2(1-\mu) (3-2\mu)}
+
\frac{8\left(1+ \sqrt \nu\, \right)^2\big(1+ \sqrt T \, \big)^2 \, T^{2(1-\mu)}\chi_{(0, \frac 12)}(\mu)}{\sqrt 2 \pi \nu \left(1-2\mu\right)}
\right)^{\frac 12},
\end{aligned}
\end{equation}
where $c_2(s, \alpha, \nu, T)$ is given by \eqref{c2-def} and we recall that $m = \frac{2s+1}{4}$ and $\mu = m - [m]$.

\begin{figure}[h!]
\centering
\includegraphics[scale=0.5]{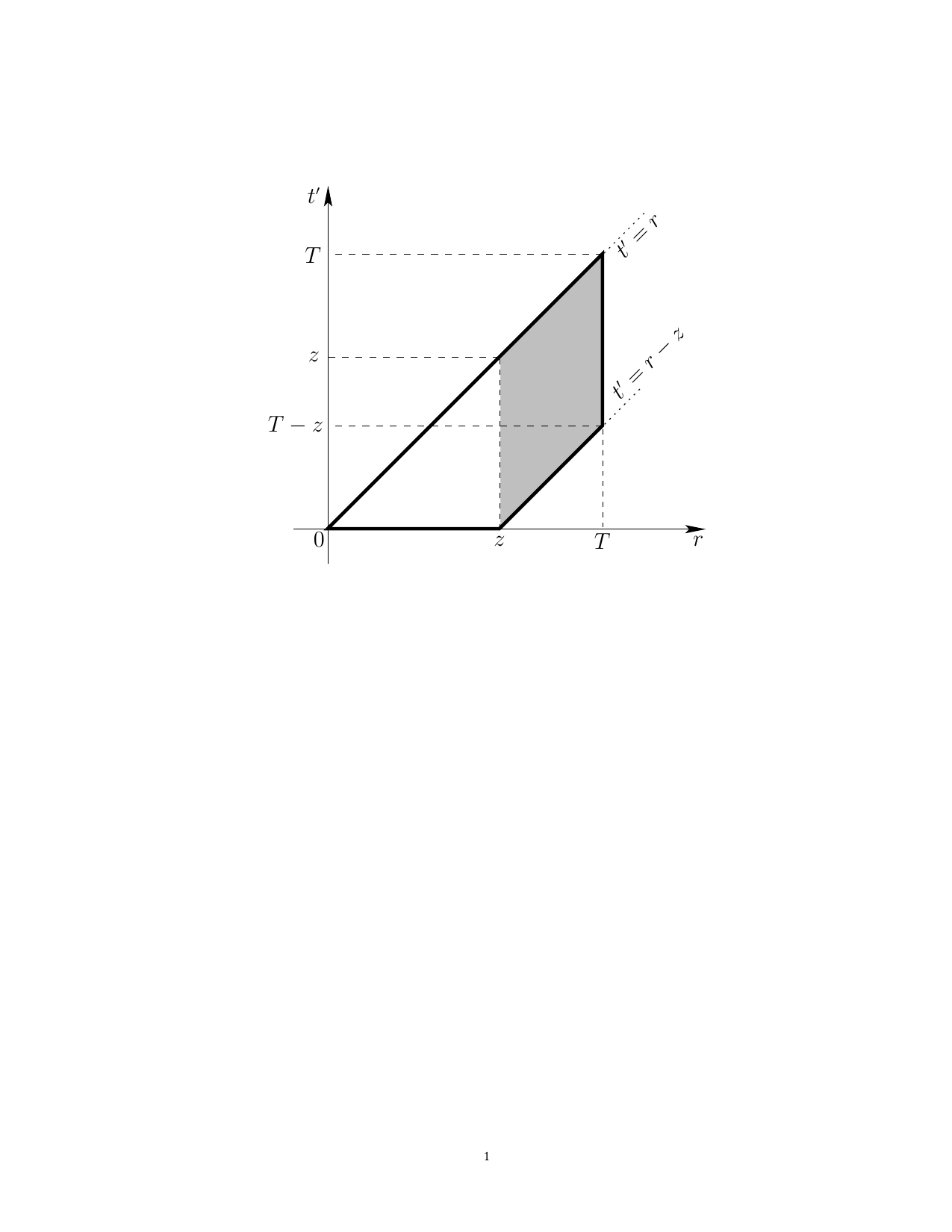}
\caption{
The regions of integration corresponding to the left side (shaded) and the right side (enclosed in bold) of inequality \eqref{regions-ineq}.
}
\label{regions-f}
\end{figure}

\noindent
\textbf{The case of $m\in(1, 2)$ with $m\neq \frac 32$.}
Recalling \eqref{wt}, we compute
\begin{align}
W_t(x,z+t) - W_t(x,t) 
&=
F(x,z+t) - F(x, t)
+
\left(\nu+i\alpha\right)
\int_t^{t+z} S\big[F_{xx}(\cdot, t'); 0\big](x, t+z-t') dt'
\nn\\
&\quad
+
\left(\nu+i\alpha\right)
\int_0^t \left\{S\big[F_{xx}(\cdot, t'); 0\big](x, t+z-t')-S\big[F_{xx}(\cdot, t'); 0\big](x, t-t')\right\} dt'.
\end{align}
Therefore, by the definition \eqref{t-frac-def} of the fractional seminorm and the inequality $|a+b+c|^2 \leq 2a^2 + 2(b+c)^2 \leq 2a^2 + 4b^2 + 4c^2$, 
\begin{align}
&\no{W_t(x)}_\mu^2
% &=
% 2\int_0^T \int_0^{T-t} z^{-(1+2\mu)} \left|W_t(x,z+t) - W_t(x,t)\right|^2 dz dt
% \nn\\
\leq
4\int_0^T \int_0^{T-t} z^{-(1+2\mu)} \left|F(x,z+t) - F(x, t)\right|^2 dz dt
\nn\\
&
+8 \left(\alpha^2+\nu^2\right) \int_0^T \int_0^{T-t} z^{-(1+2\mu)} \left| \int_t^{t+z} S\big[F_{xx}(\cdot, t'); 0\big](x, t+z-t') dt' \right| ^2 dz dt
\nn\\
&
+ 8 \left(\alpha^2+\nu^2\right) \int_0^T \int_0^{T-t} z^{-(1+2\mu)} \left| \int_0^t \left\{S\big[F_{xx}(\cdot, t'); 0\big](x, t+z-t')-S\big[F_{xx}(\cdot, t'); 0\big](x, t-t')\right\} dt' \right|^2 dz dt.
\nn
\end{align}
Identifying the first term as $\no{F(x)}_\mu^2$ and handling the remaining two terms like \eqref{W-ivp-te-b1} and \eqref{W-ivp-te-b2} after 
(i)~replacing $F$ by $F_{xx}$, 
(ii) noting that $\frac{2s+1}{4}$ in \eqref{wtemp1} now corresponds to $\frac{2s+1}{4}-1 = \frac{2(s-2)+1}{4}$, 
(iii) noting that $s$ in \eqref{W-ivp-te-b2-est} can now be replaced by $s-2$ (since the relevant range is $s>\frac 52$ which implies $s-2 > \frac 12$), 
and (iv) bounding the $H^{-\frac 12}$ norm of $F_{xx}$ in \eqref{W-ivp-te-low} by the $H^{s-2}$ norm of $F_{xx}$ since now $s>\frac 32$, we obtain
\begin{align}
\no{W_t(x)}_\mu^2
&\leq
4 \no{F(x)}_\mu^2 
+
2 \left(\alpha^2+\nu^2\right) \cdot 2T^2 c_2(s-2, \alpha, \nu, T)^2
\sup_{t\in \left[0, T\right]} \no{F_{xx}(t)}_{H_x^{s-2}(\mathbb R)}^2
\\
&\quad
+
2 \left(\alpha^2+\nu^2\right) \cdot \left(\frac{c_s \, T^{3-2\mu} \chi_{(\frac 12, 1)}(\mu)}{2\pi^2(1-\mu) (3-2\mu)} + \frac{4\left(1+ \sqrt \nu\, \right)^2\big(1+ \sqrt T \, \big)^2 T^{2(1-\mu)}}{\sqrt 2 \pi \nu \left(1-2\mu\right)} \right)
\sup_{t\in [0, T]} \no{F_{xx}(t)}_{H_x^{s-2}(\mathbb R)}^2.
\nn
\end{align}
Finally, since for $\frac 32 < s < \frac 72$ we have $\mu = \frac{2s+1}{4} - 1$, we deduce 
\begin{align}\label{wt-frac}
\no{W_t(x)}_\mu
&\leq
\sqrt{2\left(\alpha^2+\nu^2\right)}\, 
\Bigg(
\frac{c_s \, T^{3-2\mu} \chi_{(\frac 12, 1)}(\mu)}{2\pi^2(1-\mu) (3-2\mu)} + \frac{4\left(1+ \sqrt \nu\, \right)^2\big(1+ \sqrt T \, \big)^2 T^{2(1-\mu)}}{\sqrt 2 \pi \nu \left(1-2\mu\right)}  
+
2T^2 c_2(s-2, \alpha, \nu, T)^2
\Bigg)^{\frac 12}
\nn\\
&\quad
\cdot
\sup_{t\in [0, T]} \no{F(t)}_{H_x^s(\mathbb R)}
+
2 \no{F(x)}_{H_t^{\frac{2s+1}{4}-1}(0, T)} ,
\quad
\frac 32 < s < \frac 72, \ s\neq \frac 52. 
\end{align}

Returning to the definition \eqref{hm-l2}, we combine the $H^1$ and $H^2$ estimates \eqref{W-h1} and \eqref{W-h2} with the fractional estimate~\eqref{wt-frac} and, importantly, we additionally invoke the inequality (Lemma 3.7 in \cite{bo2016})  
\begin{equation}\label{T-power-ineq}
\no{F(x)}_{H_t^{\frac{2s+1}{4}-1}(0, T)} \leq T^{\frac{4}{2s+5}}\no{F(x)}_{H_t^{\frac{2s+1}{4}}(0, T)}, \quad s\geq -\frac 12,
\end{equation} 
to deduce the time estimate~\eqref{W-ivp-te2} for all $s \in \left(\frac 32, \frac 72\right] \setminus \left\{\frac 52\right\}$ and with associated constant 
\begin{equation}\label{c4-def}
\begin{aligned}
&
c_4(s, \alpha, \nu, T)
:=
\sqrt T \left(1 + \sqrt{\alpha^2+\nu^2} + \frac{\sqrt T\left(1+ \sqrt \nu\, \right) \big(1+\sqrt T \, \big) \left(1+ 2\sqrt{\alpha^2 + \nu^2}\right)}{2^{\frac 14} \sqrt{\pi \nu}} \right)
\\
&+
\sqrt{2\left(\alpha^2+\nu^2\right)} 
\left(
\frac{c_s \, T^{3-2\mu} \chi_{(\frac 12, 1)}(\mu)}{2\pi^2(1-\mu) (3-2\mu)} + \frac{4\left(1+ \sqrt \nu\, \right)^2\big(1+ \sqrt T \, \big)^2 T^{2(1-\mu)}}{\sqrt 2 \pi \nu \left(1-2\mu\right)}  
+
2T^2 c_2(s-2, \alpha, \nu, T)^2\right)^{\frac 12},
\end{aligned}
\end{equation}
where $c_2(s, \alpha, \nu, T)$ is given by \eqref{c2-def} and we recall that $m = \frac{2s+1}{4}$ and $\mu = m - [m]$.
\end{proof}

\begin{remark}[Time estimate and  solution space for well-posedness]
The norm of $F(x, t)$ in $H_t^{\frac{2s+1}{4}-1}(0, T)$, which arises in the estimation of $W(x, t)$ in $H_t^{\frac{2s+1}{4}}(0, T)$ when $s>\frac 32$ (see \eqref{W-h2} and \eqref{wt-frac}), introduces the need for a temporal Sobolev space in the solution space  of the nonlinear problem used in Section \ref{lwp-s}. 
%(see \eqref{xt-def}). 
There are two options in this regard: either include the space $C_x([0, L]; H_t^{\frac{2s+1}{4}-1}(0, T))$ in the well-posedness solution space, or use inequality \eqref{T-power-ineq} to replace the $H_t^{\frac{2s+1}{4}-1}(0, T)$ norm with the $H_t^{\frac{2s+1}{4}}(0, T)$ norm and instead include the space $C_x([0, L]; H_t^{\frac{2s+1}{4}}(0, T))$ in the solution space. We end up with the latter option, thus leading to the time estimate \eqref{W-ivp-te2} and the solution space \eqref{xt-def}, as it has the advantage of generating a positive power of $T$ that will be crucial in the contraction mapping argument and, furthermore, in the relevant range of $s>\frac 32$ it satisfies the algebra property (while the space $H_t^{\frac{2s+1}{4}-1}(0, T)$ does not). 
\end{remark}

%\begin{remark}
%Since $W_t(x) = F(x, t) + \int_0^t \p_t S\big[F(\cdot, t'); 0\big](x, t-t') dt'$, for $\frac 32<s<\frac 72$ the analogue of the fractional estimate  \eqref{W-ivp-te-b-est}  additionally involves the norm of $F$ in $H_t^{\frac{2s+1}{4}-1}(0, T)$. Thus, if one wishes to extend the validity of the local well-posedness result of Theorem \ref{lwp-t} to $s>\frac 32$, then one needs to refine the solution space $C_t([0, T]; H_x^s(0, L))$ in the contraction mapping argument for the nonlinear problem \eqref{cgl-ibvp} by intersecting it with the space $C([0, L]; H_t^{\frac{2s+1}{4}-1}(0, T))$. (In fact, one ends up working in $C_t([0, T]; H_x^s(0, L)) \cap C([0, L]; H_t^{\frac{2s+1}{4}}(0, T))$ so that algebra property is available in both spaces.) On the other hand, as proved later in this work, for the range $\frac 12 < s < \frac 32$ covered by Theorem \ref{lwp-t} local well-posedness can be shown in the larger space $C_t([0, T]; H_x^s(0, L))$. 
%\end{remark}

\subsection{Construction of compactly supported boundary data}
Our goal is to combine the various linear estimates derived so far in order to deduce a corresponding estimate for the forced linear problem \eqref{lcgl-ibvp}, which will be used in the next section to establish the local well-posedness of the nonlinear problem~\eqref{cgl-ibvp} for $s \in \left(\frac 12, \frac 32\right) \cup \left(\frac 32, \frac 52\right)$, which is precisely the range given in Theorem \ref{lwp-t}. Note that the corresponding ranges for the Sobolev exponents of the Dirichlet and Neumann data $g_0$ and $h_1$ are, respectively, $\frac{2s+1}{4} \in \left(\frac 12, 1\right) \cup \left(1, \frac 32\right)$ and $\frac{2s-1}{4} \in \left(0, \frac 12\right) \cup \left(\frac 12, 1\right)$. Hence, by the Sobolev embedding theorem, $g_0 \in H^{\frac{2s+1}{4}}(0, T)$ is a continuous function, while $h_1 \in H^{\frac{2s-1}{4}}(0, T)$ is continuous for $s\in \left(\frac 32, \frac 52\right)$ but not necessarily for $s\in \left(\frac 12, \frac 32\right)$.

With the above in mind, we revisit the decomposition~\eqref{lin-sup} and note that, thanks to the time estimates of Theorems \ref{U-ivp-t} and \ref{W-ivp-t} and the extension inequalities \eqref{U0F-def}, the boundary data~\eqref{psi-def} of the third component in that decomposition actually belong to the same Sobolev spaces with the original boundary data. 
More precisely, we have that $\psi_0 \in H^{\frac{2s+1}{4}}(0, T)$ and $\psi_1 \in H^{\frac{2s-1}{4}}(0, T)$ with the estimates 
\begin{align}
\no{\psi_0}_{H^{\frac{2s+1}{4}}(0, T)}
&\leq
\no{g_0}_{H^{\frac{2s+1}{4}}(0, T)}
+
2c_2(s, \alpha, \nu, T) \no{u_0}_{H^s(0,L)}
+
2c_3(s, \alpha, \nu, T) \sup_{t\in [0, T]} \no{f(t)}_{H_x^s(0, L)}, 
\quad
\frac 12 < s < \frac 32, 
\label{mbc-est}
\\
\no{\psi_0}_{H^{\frac{2s+1}{4}}(0, T)}
&\leq
\no{g_0}_{H^{\frac{2s+1}{4}}(0, T)}
+
2c_2(s, \alpha, \nu, T) \no{u_0}_{H^s(0,L)}
+
2c_4(s, \alpha, \nu, T) 
 \sup_{t\in [0, T]} \no{f(t)}_{H_x^s(0, L)}
\nn\\
&\quad
+
\big(2 T^{\frac{4}{2s+5}} + T^{\frac 13}\big) \sup_{x\in [0, L]} \no{f(x)}_{H_t^{\frac{2s+1}{4}}(0, T)},
\quad
\frac 32 < s < \frac 52, 
\label{mbc-est-2}
\\
\no{\psi_1}_{H^{\frac{2s-1}{4}}(0, T)}
&\leq
\no{h_1}_{H^{\frac{2s-1}{4}}(0, T)}
+
2c_2(s-1, \alpha, \nu, T) \no{u_0}_{H^s(0, L)}
+
2c_3(s-1, \alpha, \nu, T) \sup_{t\in [0, T]} \no{f(t)}_{H_x^s(0, L)},
\nn\\
&\quad
\hskip 10cm \frac 12 < s < \frac 52, \ s\neq \frac 32.
\label{mbc-est-3}
\end{align}
In addition, by the compatibility conditions \eqref{comp-cond} and the continuity of Theorems \ref{U-ivp-t} and \ref{W-ivp-t}, we have 
\begin{equation}
\psi_0(0) := g_0(0) - u_0(0) - 0  = 0, \  s \in \left(\tfrac 12, \tfrac 32\right) \cup \left(\tfrac 32, \tfrac 52\right), 
\quad
\psi_1(0) := h_1(0) - u_0'(L) - 0  = 0, \  s \in \left(\tfrac 32, \tfrac 52\right). 
\end{equation}

Next, for $s \in \left(\frac 12, \frac 32\right) \cup \left(\frac 32, \frac 52\right)$, we extend the Dirichlet datum $\psi_0$ from $(0, T)$ to $(0, T+1)$ (actually, any interval $(0, T')$ with $T'>T$ works) in a way that the resulting extension also vanishes at $t=T+1$ while maintaining the Sobolev regularity of the original function. 
First, similarly to \eqref{U0F-def}, we let $\Psi^0 \in H^{\frac{2s+1}{4}}(\mathbb R)$ be an extension of $\psi_0 \in H^{\frac{2s+1}{4}}(0, T)$ such that
$$
\no{\Psi^0}_{H^{\frac{2s+1}{4}}(\mathbb R)} \leq 2 \no{\psi_0}_{H^{\frac{2s+1}{4}}(0, T)}. 
$$
Then, for $\theta \in C_c^\infty(\mathbb R)$ satisfying $\no{\theta}_{L^\infty(\mathbb R)} \leq 1$, $\theta \equiv 1$ on $[-T, T]$,  $\theta \equiv 0$ on $(-T-1, T+1)^c$ and $\no{\theta}_{H^{\frac{2s+1}{4}}(\mathbb R)} \lesssim 1+\sqrt T$,\footnote{Since $\theta \in C_c^\infty(\mathbb R)$, for $s\in\left(\frac 12, \frac 52\right)$ we have $\no{\theta}_{H^{\frac{2s+1}{4}}(\mathbb R)} \leq \no{\theta}_{H^2(\mathbb R)} = \no{\theta}_{L^2(-T', T')} + \no{\theta'}_{L^2(-T', T')}+ \no{\theta''}_{L^2(-T', T')}$ and for a suitable choice of $T'$, e.g. $T'=T+1$, the first norm is bounded by $\sqrt{2T'} = \sqrt{2(T+1)}$ while the second and third norms can be bounded by $\sqrt 2 \max_{[T, T']}|\theta'| = \sqrt 2 \max_{[T, T+1]}|\theta'| \lesssim 1$ and $\sqrt 2 \max_{[T, T']}|\theta''| = \sqrt 2 \max_{[T, T+1]}|\theta''| \lesssim 1$, respectively, for a suitable choice of $\theta$ since the derivatives of $\theta$ are itself continuous and so they attains their maximum values on the interval of fixed width $[T, T+1]$, which can be adjusted to values smaller than $1$ since there is enough room to go from $\theta = 1$ to $\theta = 0$.} 
we let $\Psi_\theta^0(t) := \theta(t) \Psi^0(t)$ and use 
 the algebra property in $H^{\frac{2s+1}{4}}(\mathbb R)$ (recall that $s>\frac 12$) to infer
$$
\no{\Psi_\theta^0}_{H^{\frac{2s+1}{4}}(\mathbb R)} 
\leq
c_s \big(1+\sqrt T \, \big)
\no{\Psi^0}_{H^{\frac{2s+1}{4}}(\mathbb R)} 
\leq
2c_s \big(1+\sqrt T \, \big) \no{\psi_0}_{H^{\frac{2s+1}{4}}(0, T)}.
$$
Moreover, importantly, $\Psi_\theta^0(0) = \Psi_\theta^0(T+1) = 0$. Hence, since $\frac{2s+1}{4} \in \left(\frac 12, 1\right) \cup \left(1, \frac 32\right)$, Theorem~11.5 of \cite{lm1972} implies that $\Psi_\theta^0 \in H_0^{\frac{2s+1}{4}}(0, T+1)$. In turn, according to Theorem 11.4 of \cite{lm1972}, the function 
\begin{equation}\label{Psi0-def}
\Psi_0(t) := \left\{
\begin{array}{ll}
\Psi_\theta^0(t), &t \in (0, T+1),
\\
0, &t \in (0, T+1)^c,
\end{array}
\right.
\end{equation}
belongs to $H^{\frac{2s+1}{4}}(\mathbb R)$ and, in view of the above construction,  satisfies the estimate
\begin{equation}\label{Psi0-est}
\no{\Psi_0}_{H^{\frac{2s+1}{4}}(\mathbb R)} 
\leq
c_s \big(1+\sqrt T \, \big) \no{\psi_0}_{H^{\frac{2s+1}{4}}(0, T)}.
\end{equation}
Therefore, $\Psi_0$ is an extension of $\psi_0$ with compact support in $(0, T+1)$ and the same Sobolev regularity with $\psi_0$. 

For the Neumann datum $\psi_1$, the above construction is not necessary for $s \in \left(\frac 12, \frac 32\right)$ since that interval corresponds to $\frac{2s-1}{4} \in \left(0, \frac 12\right)$ and so Theorem 11.4 of \cite{lm1972} readily implies that the extension of $\psi_1$ by zero outside $(0, T)$ satisfies the analogue of \eqref{Psi0-est}. 
On the other hand, for $s \in \left(\frac 32, \frac 52\right)$ we have $\frac{2s-1}{4} \in \left(\frac 12, 1\right)$ and a constrution analogous to \eqref{Psi0-def} is required. 
Overall, the function
\begin{equation}\label{Psi1-def}
\Psi_1(t) := 
\left\{
\begin{aligned}
&
\left\{
\begin{array}{ll}
\psi_1(t), &t \in (0, T),
\\
0, &t \in [0, T]^c,
\end{array}
\right.
\ s \in \left(\tfrac 12, \tfrac 32\right),
\\
&\left\{
\begin{array}{ll}
\Psi_\theta^1(t), &t \in (0, T+1),
\\
0, &t \in (0, T+1)^c,
\end{array}
\right. \quad s \in \left(\tfrac 32, \tfrac 52\right),
\end{aligned}
\right.
\end{equation}
with $\Psi_\theta^1$ defined analogously to $\Psi_\theta^0$ is an extension of $\psi_1$ outside $(0, T)$ that belongs to $H^{\frac{2s-1}{4}}(\mathbb R)$ and satisfies 
\begin{equation}\label{Psi1-est}
\no{\Psi_1}_{H^{\frac{2s-1}{4}}(\mathbb R)} 
\leq
\left\{
\begin{aligned}
&c_s \no{\psi_1}_{H^{\frac{2s-1}{4}}(0, T)}, \quad
s \in \left(\tfrac 12, \tfrac 32\right),
\\
&c_s \big(1+\sqrt T \, \big) \no{\psi_1}_{H^{\frac{2s-1}{4}}(0, T)}, \quad
s \in \left(\tfrac 32, \tfrac 52\right).
\end{aligned}
\right.
\end{equation}

\subsection{Estimation of the forced linear CGL equation on a finite interval}
Suppose $s \in \left(\frac 12, \frac 32\right) \cup \left(\frac 32, \frac 52\right)$. Recall the notation $S\big[u_0, g_0, h_1; f\big]$ introduced in \eqref{s-not} for the solution to the initial-boundary value problem~\eqref{lcgl-ibvp-w}. Then, by the definitions \eqref{Psi0-def} and \eqref{Psi1-def} of $\Psi_0$ and $\Psi_1$, the solution  $S\big[0, \psi_0, \psi_1; 0\big]$ on $(0, L) \times (0, T)$ is the restriction on $(0, T)$ of the solution $S\big[0, \Psi_0, \Psi_1; 0\big]$ on $(0, L) \times (0, T+1)$. The problem associated with the latter solution amounts precisely to the reduced initial-boundary value problem \eqref{pure-ibvp} with $g(t) = \Psi_0(t)$, $h(t) = \Psi_1(t)$ and $T'=T+1$. Importantly, the compact support assumption for $g$ and $h$ as stated in \eqref{pure-ibvp} is indeed satisfied by $\Psi_0$ and $\Psi_1$. Therefore, letting
\begin{equation}\label{xt-def}
X_T = C_t([0, T]; H_x^s(0, L)) \cap C_x([0, L]; H_t^{\frac{2s+1}{4}}(0, T))
\end{equation}
we can employ Theorems~\ref{pure-ibvp-t} and \ref{pure-te-t} to infer 
\begin{equation*}
\no{S\big[0, \Psi_0, \Psi_1; 0\big]}_{X_{T+1}}
\leq
c_1(s, \lambda, L, \alpha, \nu, T) 
\big(
\no{\Psi_0}_{H^{\frac{2s+1}{4}}(\mathbb R)} + \no{\Psi_1}_{H^{\frac{2s-1}{4}}(\mathbb R)}
\big),
\end{equation*}
where $c_1$ is the positive constant resulting after combining estimates \eqref{pure-se} and \eqref{pure-te} with $T'=T+1$. 
Note that $c_1$ remains bounded as $T\to 0^+$. Then, since $S\big[0, \psi_0, \psi_1; 0\big] \equiv S\big[0, \Psi_0, \Psi_1; 0\big]$ on $(0, L) \times (0, T)$ and also $\Psi_0$ and $\Psi_1$ satisfy~\eqref{Psi0-est} and \eqref{Psi1-est}, respectively, we conclude that
\begin{equation}\label{psi01-est}
\no{S\big[0, \psi_0, \psi_1; 0\big]}_{X_T}
\leq
c_1(s, \lambda, L, \alpha, \nu, T) \, c_s \big(1+\sqrt T \, \big) 
\big(
\no{\psi_0}_{H^{\frac{2s+1}{4}}(0, T)} + \no{\psi_1}_{H^{\frac{2s-1}{4}}(0, T)}
\big).
\end{equation}

Combining the linear superposition \eqref{lin-sup} with the space and time estimates of Theorems \ref{U-ivp-t} and \ref{W-ivp-t},  estimate \eqref{psi01-est}, and the inequalities \eqref{U0F-def}, \eqref{mbc-est}-\eqref{mbc-est-3}, we readily infer estimates for the solution $w = S\big[u_0, g_0, h_1; f\big]$ of the forced linear initial-boundary value problem~\eqref{lcgl-ibvp-w}. Specifically, for $s\in \left(\frac 12, \frac 32\right)$ we have
\begin{align}\label{w-se}
&\quad
\no{S\big[u_0, g_0, h_1; f\big]}_{X_T}
\nn\\
&\leq
2 \, \Big(1 + c_2(s, \alpha, \nu, T) + c_1(s, \lambda, L, \alpha, \nu, T) \, c_s \big(1+\sqrt T \, \big)  \big[ c_2(s, \alpha, \nu, T) + c_2(s-1, \alpha, \nu, T)\big] \Big) 
\no{u_0}_{H^s(0, L)} 
\nn\\
&\quad
+ 
2 \, \Big(T + c_3(s, \alpha, \nu, T) + c_1(s, \lambda, L, \alpha, \nu, T) \, c_s \big(1+\sqrt T \, \big)  \big[ c_3(s, \alpha, \nu, T) + c_3(s-1, \alpha, \nu, T)\big] \Big)
\sup_{t\in [0, T]} \no{f(t)}_{H_x^s(0, L)} 
\nn\\[-2mm]
&\quad
+ 
c_1(s, \lambda, L, \alpha, \nu, T) \, c_s \big(1+\sqrt T \, \big) 
\Big(
\no{g_0}_{H^{\frac{2s+1}{4}}(0, T)} 
+
\no{h_1}_{H^{\frac{2s-1}{4}}(0, T)}
\Big) 
\end{align}
and for $s\in \left(\frac 32, \frac 52\right)$ we have
\begin{align}\label{w-se-2}
&\quad
\no{S\big[u_0, g_0, h_1; f\big]}_{X_T}
\nn\\
&\leq
2 \, \Big(1 + c_2(s, \alpha, \nu, T) + c_1(s, \lambda, L, \alpha, \nu, T) \, c_s \big(1+\sqrt T \, \big)  \big[ c_2(s, \alpha, \nu, T) + c_2(s-1, \alpha, \nu, T)\big] \Big) 
\no{u_0}_{H^s(0, L)} 
\nn\\
&\quad
+ 
2 \, \Big(T + c_4(s, \alpha, \nu, T) + c_1(s, \lambda, L, \alpha, \nu, T) \, c_s \big(1+\sqrt T \, \big)  \big[c_4(s, \alpha, \nu, T) + c_3(s-1, \alpha, \nu, T)\big] \Big)
\sup_{t\in [0, T]} \no{f(t)}_{H_x^s(0, L)} 
\nn\\[-1mm]
&\quad
+ \big(2 T^{\frac{4}{2s+5}} + T^{\frac 13}\big) \left[1 + c_1(s, \lambda, L, \alpha, \nu, T) \, c_s \big(1+\sqrt T \, \big) \right] \sup_{x\in [0, L]} \no{f(x)}_{H_t^{\frac{2s+1}{4}}(0, T)}
\nn\\[-1mm]
&\quad
+ 
c_1(s, \lambda, L, \alpha, \nu, T) \, c_s \big(1+\sqrt T \, \big) 
\Big(
\no{g_0}_{H^{\frac{2s+1}{4}}(0, T)} 
+
\no{h_1}_{H^{\frac{2s-1}{4}}(0, T)}
\Big).
\end{align}

In view of the transformation \eqref{gamma-trans}, estimates \eqref{w-se} and \eqref{w-se-2} readily imply corresponding estimates for the solution to the finite interval problem \eqref{lcgl-ibvp} for the forced linear CGL equation. In particular, denoting the solution to that problem by $u = \Phi\big[u_0, a, b; \mathfrak f\big]$, we have via  \eqref{gamma-trans} that
\begin{equation}\label{Phi-def}
\Phi\big[u_0, a, b; \mathfrak f\big](x, t) = e^{\gamma t} S\big[u_0, e^{-\gamma t} a, e^{-\gamma t} b; e^{-\gamma t} \mathfrak f\big](x, t). 
\end{equation}
Hence, combining estimates \eqref{w-se} and \eqref{w-se-2} with the fact that, for $\gamma \geq 0$ and any function $\varphi$, 
$$
\no{e^{-\gamma t} \varphi}_{H^{\frac{2s+1}{4}}(0, T)} \leq c_{s, \gamma} \no{\varphi}_{H^{\frac{2s+1}{4}}(0, T)},
\quad
\no{e^{\gamma t} \varphi}_{H^{\frac{2s+1}{4}}(0, T)} \leq c_{s, \gamma} e^{\gamma T} \no{\varphi}_{H^{\frac{2s+1}{4}}(0, T)},
$$
we deduce the following crucial estimates for the solution of problem~\eqref{lcgl-ibvp}:
\begin{align}\label{u-se}
&\quad
\no{\Phi\big[u_0, a, b; \mathfrak f\big]}_{X_T}
\nn\\
&\leq
\left(1+c_{s, \gamma}\right) e^{\gamma T} 
\bigg\{
2\Big(1 + c_2(s, \alpha, \nu, T) + c_1(s, \lambda, L, \alpha, \nu, T) \, c_s \big(1+\sqrt T \, \big)  \big[ c_2(s, \alpha, \nu, T) + c_2(s-1, \alpha, \nu, T)\big] \Big) 
\no{u_0}_{H^s(0, L)} 
\nn\\
&\quad
+ 
2 \, \Big(T + c_3(s, \alpha, \nu, T) + c_1(s, \lambda, L, \alpha, \nu, T) \, c_s \big(1+\sqrt T \, \big) \big[ c_3(s, \alpha, \nu, T) + c_3(s-1, \alpha, \nu, T)\big] \Big) 
\sup_{t\in [0, T]} \no{\mathfrak f(t)}_{H_x^s(0, L)} 
\nn\\
&\quad
+ 
c_1(s, \lambda, L, \alpha, \nu, T) \, c_s \big(1+\sqrt T \, \big) \, c_{s, \gamma}
\Big( 
\no{a}_{H^{\frac{2s+1}{4}}(0, T)} 
+
\no{b}_{H^{\frac{2s-1}{4}}(0, T)}
\Big)\bigg\}, \quad \frac 12 < s < \frac 32,
\end{align}
and
\begin{align}\label{u-se-2}
&\quad
\no{\Phi\big[u_0, a, b; \mathfrak f\big]}_{X_T}
\nn\\
&\leq
\left(1+c_{s, \gamma}\right) e^{\gamma T} 
\bigg\{
2\Big(1 + c_2(s, \alpha, \nu, T) + c_1(s, \lambda, L, \alpha, \nu, T) \, c_s \big(1+\sqrt T \, \big)  \big[ c_2(s, \alpha, \nu, T) + c_2(s-1, \alpha, \nu, T)\big] \Big) 
\no{u_0}_{H^s(0, L)} 
\nn\\
&\quad
+ 
2 \, \Big(T + c_4(s, \alpha, \nu, T) + c_1(s, \lambda, L, \alpha, \nu, T) \, c_s \big(1+\sqrt T \, \big) \big[ c_4(s, \alpha, \nu, T) + c_3(s-1, \alpha, \nu, T)\big] \Big) 
\sup_{t\in [0, T]} \no{\mathfrak f(t)}_{H_x^s(0, L)} 
\nn\\
&\quad
+
\big(2 T^{\frac{4}{2s+5}} + T^{\frac 13}\big) \left[1 + c_1(s, \lambda, L, \alpha, \nu, T) \, c_s \big(1+\sqrt T \, \big) \right] c_{s, \gamma} \sup_{x\in [0, L]} \no{\mathfrak f(x)}_{H_t^{\frac{2s+1}{4}}(0, T)}
\nn\\
&\quad
+ 
c_1(s, \lambda, L, \alpha, \nu, T) \, c_s \big(1+\sqrt T \, \big) \, c_{s, \gamma}
\Big( 
\no{a}_{H^{\frac{2s+1}{4}}(0, T)} 
+
\no{b}_{H^{\frac{2s-1}{4}}(0, T)}
\Big)\bigg\}, \quad \frac 32 < s < \frac 52.
\end{align}
As we shall see next, the linear estimates \eqref{u-se} and \eqref{u-se-2} provide the basis for the contraction mapping argument leading to the proof of the local well-posedness Theorem \ref{lwp-t} for the CGL equation on a finite interval. 

\subsection{Local well-posedness of CGL on a finite interval}
\label{lwp-s}

We now combine the central linear estimates \eqref{u-se} and \eqref{u-se-2} with a contraction mapping argument in order to establish the local well-posedness result of Theorem~\ref{lwp-t} for the Dirichlet-Neumann initial-boundary value problem~\eqref{cgl-ibvp} of the nonlinear Ginzburg-Landau equation on a finite interval. 

We say that $u$ is a solution to the nonlinear initial-boundary value problem \eqref{cgl-ibvp} if and only if $u$ solves the integral equation $u = \Phi\big[u_0, a, b; -\left(\kappa + i\beta\right) |u|^p u\big]$, where $\Phi$ is defined through the unified transform solution formula for the forced linear counterpart \eqref{lcgl-ibvp} of problem \eqref{cgl-ibvp} according to~\eqref{Phi-def}. With this definition in hand, proving that $u$ satisfies problem \eqref{cgl-ibvp} amounts to proving that $u$ is a fixed point of the map
\begin{equation}\label{it-map}
u(x, t) \mapsto \Phi\big[u_0, a, b; -\left(\kappa + i\beta\right) |u|^p u\big].
\end{equation}
Next, for $s \in \left(\frac 12, \frac 32\right) \cup \left(\frac 32, \frac 52\right)$, we shall show the existence and uniqueness of such a fixed point in the space $X_T$ defined by~\eqref{xt-def} for an appropriate choice of lifespan $T>0$ determined below.
\\[2mm]
\textit{Existence.}
Let $B(0, \rho_T) \subset X_T$ denote the closed ball in $X_T$ with center at the origin and radius  
\begin{equation}\label{rst-def}
\rho_T
:= 
2\mathcal B(s, \lambda, L, \alpha, \nu, \gamma, T)
\Big( 
\no{u_0}_{H^s(0, L)} 
+
\no{a}_{H^{\frac{2s+1}{4}}(0, T)} 
+
\no{b}_{H^{\frac{2s-1}{4}}(0, T)}
\Big)
\end{equation}
where
\begin{equation}\label{cr-def}
\begin{aligned}
\mathcal B(s, \lambda, L, \alpha, \nu, \gamma, T) 
&=
\left(1+c_{s, \gamma}\right) e^{\gamma T} 
\Big\{
2\Big(1 + c_2(s, \alpha, \nu, T) +  c_1(s, \lambda, L, \alpha, \nu, T) \, c_s \big(1+\sqrt T \, \big) 
\\
&\quad
\cdot \big[ c_2(s, \alpha, \nu, T) + c_2(s-1, \alpha, \nu, T)\big] \Big) 
+ 
c_1(s, \lambda, L, \alpha, \nu, T) \, c_s \big(1+\sqrt T \, \big) \, c_{s, \gamma} 
\Big\}.
\end{aligned}
\end{equation}
Then, for $s\in \left(\frac 12, \frac 32\right)$ and $u \in B(0, \rho_T)$, the linear estimate \eqref{u-se} with $\mathfrak f = \mathfrak f(u) = -\left(\kappa + i\beta\right) |u|^p u$ and $p$ satisfying the conditions \eqref{p-def} can be combined with Lemma 3.1 of \cite{bo2016}, which provides a generalization of Lemma~4.10.2 in \cite{c2003} and is valid for $s>\frac 12$, in order to handle the nonlinearity and infer the estimate
\begin{equation}\label{Phi-se}
\begin{aligned}
&
\no{\Phi\big[u_0, a, b; -\left(\kappa + i\beta\right) |u|^p u\big](t)}_{X_T}
\leq
\frac{\rho_T}{2}
+ 
2 \left(1+c_{s, \gamma}\right) e^{\gamma T} c_{s, p} \sqrt{\kappa^2+\beta^2} 
\\
&
\cdot 
\Big\{T + c_3(s, \alpha, \nu, T) + c_1(s, \lambda, L, \alpha, \nu, T) \, c_s \big(1+\sqrt T \, \big) \big[ c_3(s, \alpha, \nu, T) + c_3(s-1, \alpha, \nu, T)\big] \Big\} \rho_T^{p+1}.
\end{aligned}
\end{equation}
Similarly, for $s\in \left(\frac 32, \frac 52\right)$, the linear estimate \eqref{u-se-2} yields
\begin{equation}\label{Phi-se-2}
\begin{aligned}
&
\no{\Phi\big[u_0, a, b; -\left(\kappa + i\beta\right) |u|^p u\big](t)}_{X_T}
\leq
\frac{\rho_T}{2}
+ 
\left(1+c_{s, \gamma}\right) e^{\gamma T} c_{s, p} \sqrt{\kappa^2+\beta^2} 
\\
&
\cdot 
\Big\{2\Big(T + c_4(s, \alpha, \nu, T) + c_1(s, \lambda, L, \alpha, \nu, T) \, c_s \big(1+\sqrt T \, \big) \big[ c_4(s, \alpha, \nu, T) + c_3(s-1, \alpha, \nu, T)\big] \Big)
\\
&\quad
+ \big(2 T^{\frac{4}{2s+5}} + T^{\frac 13}\big) \left[1 + c_1(s, \lambda, L, \alpha, \nu, T) \, c_s \big(1+\sqrt T \, \big) \right] c_{s, \gamma} \Big\} \rho_T^{p+1}.
\end{aligned}
\end{equation}

In view of the above two estimates, choosing $T>0$ such that the right sides of \eqref{Phi-se} and \eqref{Phi-se-2} are bounded above by $\rho_T$, i.e. imposing the conditions
\begin{align}
&4 \left(1+ c_{s, \gamma}\right) e^{\gamma T} c_{s, p} \sqrt{\kappa^2+\beta^2} \, \Big\{T + c_3(s, \alpha, \nu, T) + c_1(s, \lambda, L, \alpha, \nu, T) \, c_s \big(1+\sqrt T \, \big) 
\nn\\
&\hspace*{4.6cm} \cdot \big[ c_3(s, \alpha, \nu, T) + c_3(s-1, \alpha, \nu, T)\big] \Big\} \, \rho_T^p
\leq
1, 
\quad
\frac 12 < s < \frac 32, 
\label{T-into}
\\
&2 \left(1+ c_{s, \gamma}\right) e^{\gamma T} c_{s, p} \sqrt{\kappa^2+\beta^2} \, \Big\{2\Big(T + c_4(s, \alpha, \nu, T) + c_1(s, \lambda, L, \alpha, \nu, T) \, c_s \big(1+\sqrt T \, \big) 
\big[ c_4(s, \alpha, \nu, T) + c_3(s-1, \alpha, \nu, T)\big] \Big)
\nn\\
&\hspace*{3cm} 
+ \big(2 T^{\frac{4}{2s+5}} + T^{\frac 13}\big) \left[1 + c_1(s, \lambda, L, \alpha, \nu, T) \, c_s \big(1+\sqrt T \, \big) \right] c_{s, \gamma} \Big\}  \, \rho_T^p
\leq
1, 
\quad
\frac 32 < s < \frac 52, 
\label{T-into-2}
\end{align}
we ensure that the map \eqref{it-map} takes $B(0, \rho_T)$ to $B(0, \rho_T)$. It is important to note that the conditions \eqref{T-into} and~\eqref{T-into-2} can be fulfilled by an appropriate $T>0$ \textbf{without a smallness condition on the data} due to the fact that the constants $c_1, c_2, c_3, c_4$ are well-defined for all $T\geq 0$ and, additionally, $\lim_{T\to 0^+} c_3 = \lim_{T\to 0^+} c_4 = 0$ so that the coefficient of $\rho_T^p$ in \eqref{T-into} and~\eqref{T-into-2} can be made arbitrarily small.  

In addition, for $s\in \left(\frac 12, \frac 32\right)$ and any two $u_1, u_2 \in B(0, \rho_T)$, the linear estimate \eqref{u-se} with $\mathfrak f = \mathfrak f(u) = -\left(\kappa + i\beta\right) \left(|u_1|^p u_1 - |u_2|^p u_2\right)$ together with Lemma 3.1 of \cite{bo2016} imply
\begin{align}\label{Phi-diff}
&\quad
\no{\Phi\big[u_0, a, b; -\left(\kappa + i\beta\right) |u_1|^p u_1\big](t) - \Phi\big[u_0, a, b; -\left(\kappa + i\beta\right) |u_2|^p u_2\big](t)}_{X_T}
%\nn\\
%&=
%\no{\Phi\big[0, 0, 0; -\left(\kappa + i\beta\right) \left(|u_1|^p u_1 - |u_2|^p u_2\right)\big](t)}_{X_T}
\nn\\
&\leq
2 \left(1+ c_{s, \gamma}\right) e^{\gamma T} c_{s, p} \sqrt{\kappa^2+\beta^2} \, \Big\{T + c_3(s, \alpha, \nu, T) + c_1(s, \lambda, L, \alpha, \nu, T) \, c_s \big(1+\sqrt T \, \big) 
\nn\\[-1mm]
&\quad
\cdot 
\big[ c_3(s, \alpha, \nu, T) + c_3(s-1, \alpha, \nu, T)\big] \Big\} \, 
2 \rho_T^p \no{u_1-u_2}_{X_T}.
\end{align}
Similarly, for $s\in \left(\frac 32, \frac 52\right)$ the linear estimate \eqref{u-se-2} yields
\begin{align}\label{Phi-diff-2}
&\quad
\no{\Phi\big[u_0, a, b; -\left(\kappa + i\beta\right) |u_1|^p u_1\big](t) - \Phi\big[u_0, a, b; -\left(\kappa + i\beta\right) |u_2|^p u_2\big](t)}_{X_T}
\nn\\
&\leq
\left(1+ c_{s, \gamma}\right) e^{\gamma T} c_{s, p} \sqrt{\kappa^2+\beta^2} \, \Big\{ 2\Big(T + c_4(s, \alpha, \nu, T) + c_1(s, \lambda, L, \alpha, \nu, T) \, c_s \big(1+\sqrt T \, \big) 
\big[ c_4(s, \alpha, \nu, T) + c_3(s-1, \alpha, \nu, T)\big] \Big)
\nn\\
&\quad
+ \big(2 T^{\frac{4}{2s+5}} + T^{\frac 13}\big) \left[1 + c_1(s, \lambda, L, \alpha, \nu, T) \, c_s \big(1+\sqrt T \, \big) \right] c_{s, \gamma} \Big\} \, 
2 \rho_T^p \no{u_1-u_2}_{X_T}.
\end{align}
Hence, further restricting $T>0$ so that the coefficient of $\no{u_1-u_2}_{X_T}$ on the right side of \eqref{Phi-diff} and \eqref{Phi-diff-2} is strictly less than one, i.e. imposing the conditions
\begin{align}
&4\left(1+ c_{s, \gamma}\right) e^{\gamma T} c_{s, p} \sqrt{\kappa^2+\beta^2} \, \Big\{T + c_3(s, \alpha, \nu, T) + c_1(s, \lambda, L, \alpha, \nu, T) \, c_s \big(1+\sqrt T \, \big) 
\nn\\
&\hspace*{4.5cm} \cdot 
\big[ c_3(s, \alpha, \nu, T) + c_3(s-1, \alpha, \nu, T)\big] \Big\} \, 
\rho_T^p  < 1,
\quad
\frac 12 < s < \frac 32,
\label{T-contr}
\\
&2\left(1+ c_{s, \gamma}\right) e^{\gamma T} c_{s, p} \sqrt{\kappa^2+\beta^2} \, \Big\{2\Big(T + c_4(s, \alpha, \nu, T) + c_1(s, \lambda, L, \alpha, \nu, T) \, c_s \big(1+\sqrt T \, \big) 
\big[ c_4(s, \alpha, \nu, T) + c_3(s-1, \alpha, \nu, T)\big] \Big)
\nn\\
&\hspace*{3cm}
+ \big(2 T^{\frac{4}{2s+5}} + T^{\frac 13}\big) \left[1 + c_1(s, \lambda, L, \alpha, \nu, T) \, c_s \big(1+\sqrt T \, \big) \right] c_{s, \gamma} \Big\} \, 
\rho_T^p  < 1,
\quad
\frac 32 < s < \frac 52,
\label{T-contr-2}
\end{align}
which, as in the case of \eqref{T-into} and \eqref{T-into-2} earlier, are realizable \textbf{without a smallness condition on the data} due to the fact that the constants $c_1, c_3, c_4$ are well-defined for all $T\geq 0$ and  $\lim_{T\to 0^+} c_3 = \lim_{T\to 0^+} c_4 = 0$, we ensure that the map \eqref{it-map} is a contraction in $B(0, \rho_T)$. In turn, Banach's fixed point theorem implies the existence of a unique fixed point of this map in $B(0, \rho_T)$ which, according to our definition of solution earlier, corresponds to a  unique solution in $B(0, \rho_T)$ for the CGL initial-boundary value problem \eqref{cgl-ibvp}. This completes the proof of the existence part of Theorem~\ref{lwp-t}.
\\[2mm]
\textit{Continuous dependence on the data.}
The argument for this part of the proof is similar to the one for the existence part (see, for example, \cite{mmo2024} for more details).
\\[2mm]
\textit{Extending uniqueness to $C_t([0, T]; H_x^s(0, L))$.}
The proof of existence readily implied uniqueness in the ball $B(0, \rho_T) \subset X_T$, with $\rho_T>0$ given by \eqref{rst-def} and $T>0$ satisfying \eqref{T-into}, \eqref{T-into-2}, \eqref{T-contr} and \eqref{T-contr-2}. 
We now complete the proof of the local Hadamard well-posedness result given in Theorem \ref{lwp-t} by extending uniqueness to the larger space $C_t([0, T]; H_x^s(0, L)) \supset X_T \supset B(0, \rho_T)$.

Suppose $u, v \in C_t([0, T]; H_x^s(0, L))$ are two solutions associated with the same triplet of data $(u_0, a, b)$. Then, the difference $w:=u-v$ satisfies the problem
\begin{equation*}%\label{w-ibvp}
\begin{aligned}
&w_t - \left(\nu + i \alpha\right) w_{xx} - \gamma w + \left(\kappa + i\beta\right) \left(|u|^p u - |v|^p v\right) = 0, \quad  (x, t) \in (0, L) \times (0, T),
\\
&w(x,0) = 0,
\\
&w(0, t) = 0, \quad w_x(L, t) = 0.
\end{aligned} 
\end{equation*}
If $u, v$ are sufficiently smooth, then we can proceed via an energy estimate as follows. Multiplying by $\overline w$ the equation satisfied by $w$ and then integrating in $x$ while enforcing the boundary conditions for $w$, we have
$$
\int_0^L \overline w w_t \, dx
%&=
%\left(\nu + i \alpha\right) \int_0^L \overline w w_{xx} dx 
%+
%\gamma \int_0^L \overline w w dx
%-
%\left(\kappa + i\beta\right) \int_0^L \overline w \left(|u|^p u - |v|^p v\right) dx
%\\
=
-\left(\nu + i \alpha\right) \no{w_x(t)}_{L_x^2(0, L)}^2
+
\gamma \no{w(t)}_{L_x^2(0, L)}^2
-
\left(\kappa + i\beta\right) \int_0^L \overline w \left(|u|^p u - |v|^p v\right) dx.
$$
The real part of this equation yields
\begin{align*}
\frac 12 \frac{d}{dt} \no{w(t)}_{L_x^2(0, L)}^2
&=
- \nu \no{w_x(t)}_{L_x^2(0, L)}^2
+ \gamma \no{w(t)}_{L_x^2(0, L)}^2
\nn\\
&\quad
- \kappa \, \text{Re} \int_0^L \overline w \left(|u|^p u - |v|^p v\right) dx
+ \beta \, \text{Im} \int_0^L \overline w \left(|u|^p u - |v|^p v\right) dx
\nn\\
&\leq
\gamma \no{w(t)}_{L_x^2(0, L)}^2
- \kappa \, \text{Re} \int_0^L \overline w \left(|u|^p u - |v|^p v\right) dx
+ \beta \, \text{Im} \int_0^L \overline w \left(|u|^p u - |v|^p v\right) dx
\end{align*}
with the last inequality due to the fact that $\nu>0$.
Bounding the right side via the triangle inequality and then using the standard inequality $\left| |u|^p u - |v|^p v\right| \leq c_p \left(|u|^p + |v|^p\right) |u-v|$, $p\geq 0$ (trivial for $p=0$, otherwise see for example the proof of Lemma 4.2 in \cite{hkmms2024}, which is actually valid for any $p>0$), we have
\begin{align*}
\frac 12 \frac{d}{dt} \no{w(t)}_{L_x^2(0, L)}^2
&\leq
\gamma \no{w(t)}_{L_x^2(0, L)}^2
+ \left(\kappa + |\beta|\right) \, \bigg| \int_0^L \overline w \left(|u|^p u - |v|^p v\right) dx\bigg|
\nn\\
&\lesssim
\gamma \no{w(t)}_{L_x^2(0, L)}^2
+ \left(\kappa + |\beta|\right) \int_0^L \left(|u|^p + |v|^p\right) |w|^2 dx 
\nn\\
&\lesssim
\left[
\gamma 
+ \left(\kappa + |\beta|\right) 
\left(\no{|u|^p}_{L_x^\infty(0, L)}  + \no{|v|^p}_{L_x^\infty(0, L)}\right) 
\right]
\no{w(t)}_{L_x^2(0, L)}^2.
\end{align*}
Bounding the $L^\infty$ norms by  $H^s$ norms thanks to the Sobolev embedding theorem (since $s>\frac 12$) and then using Lemma 3.1 of \cite{bo2016} to move the power $p$ outside the norms, we find
\begin{equation*}
\begin{aligned}
\frac 12 \frac{d}{dt} \no{w(t)}_{L_x^2(0, L)}^2
&\lesssim
\left[
\gamma 
+ \left(\kappa + |\beta|\right) 
\left(\no{u(t)}_{H_x^s(0, L)}^p  + \no{v(t)}_{H_x^s(0, L)}^p\right) 
\right]
\no{w(t)}_{L_x^2(0, L)}^2
\\
&\leq
\left[
\gamma 
+ 
2\left(\kappa + |\beta|\right) \rho_T^p \right]
\no{w(t)}_{L_x^2(0, L)}^2.
\end{aligned}
\end{equation*}
Integrating this differential inequality, we obtain $\no{w(t)}_{L_x^2(0, L)}^2 \leq \no{w(0)}_{L_x^2(0, L)}^2 e^{2\left[\gamma + 2\left(\kappa + |\beta|\right) \rho_T^p \right] t}$ which in view of the initial condition $w(0) = 0$ allows us to conclude that $\no{w(t)}_{L_x^2(0, L)} \equiv 0$ or, equivalently (since $s>\frac 12$), $u\equiv v$. 
The case of rough $u, v$ an be treated via mollification along the lines of the arguments used in the proof of Proposition 1.4 in \cite{h2005}.

\subsection{Global solutions of the open-loop problem}

We now prove Theorem \ref{gwp-t}. We assume $u_0\in H^1(0,L)$, $a,b\in H_{\text{loc}}^1(\mathbb{R}_+)$, $\kappa,\nu>0$, $\gamma\ge 0$, $0<p\le 2$ if $\beta\ge 0$ and $0<p<\frac{4}{5}$ if $\beta<0$.  
Furthermore, without loss of generality we take $\alpha>0$. This is not a necessary assumption and is made only for convenience. The case $\alpha<0$ can be handled by changing $\alpha$ with $-\alpha$ in certain arguments in the proofs below.
\\[2mm]
\textbf{The case of $\beta\ge 0$.}
Multiplying the CGL equation by the conjugate $\bar{u}$ of $u$, integrating over $(0,L)$, taking the real part of the resulting expression and using the boundary conditions $u(0,t)=a(t)$ and $u_x(L,t)=b(t)$, we have
 \begin{equation}
 \begin{aligned}
 \frac12\frac{d}{dt}\no{u(t)}_{L_x^2(0,L)}^2
 &-\gamma\no{u(t)}_{L^2_x(0,L)}^2
 +\nu\no{u_x(t)}_{L_x^2(0,L)}^2+\kappa\no{u(t)}_{L_x^{p+2}(0,L)}^{p+2}\\
 &-\text{Re}\left[(\nu+i\alpha) \left(b(t)\bar{u}(L,t)-u_x(0,t)\bar{a}(t)\right)\right]=0.
 \end{aligned}
 \end{equation}
Integrating this equation in $t$, we find
\begin{align}
E_0(t):=\no{u(t)}_{L^2_x(0,L)}^2
&\leq
\no{u_0}_{L_x^2(0,L)}^2+2\gamma\int_0^t\no{u(s)}_{L^2_x(0,L)}^2ds \nn\\
&\quad
-2\nu\int_0^t\no{u_x(s)}_{L^2_x(0,L)}^2ds - 2\kappa\int_0^t\no{u(s)}_{L_x^{p+2}(0,L)}^{p+2}ds\nn\\
&\quad
+2 \int_0^t \text{Re}\left[(\nu+i\alpha) \left(b(s)\bar{u}(L,s)-u_x(0,s)\bar{a}(s)\right)\right] ds
\end{align}
so using the fact that $\kappa, \nu > 0$ as well as the Cauchy-Schwarz inequality for the last term, we obtain
\begin{equation}\label{L2defest0}
E_0(t)
\leq
\no{u_0}_{L_x^2(0,L)}^2+2\gamma\int_0^t E_0(s) ds
+2 \left|\nu+i\alpha\right| \left(\no{b}_{L^2(0,t)} D_L(t)+\no{a}_{L^2(0,t)} N_0(t)\right)
\end{equation}
where 
\begin{equation}
D_L(t):=\no{u(L,\cdot)}_{L^2(0,t)}, \quad N_0(t):=\no{u_x(0,\cdot)}_{L^2(0,t)}.
\end{equation}  
Young's inequality $wz \leq \frac{w^q}{q} + \frac{z^{q'}}{q'}$ for $w =\left(\epsilon q\right)^{\frac 1q} \tilde w$ and $z= c_\epsilon \tilde z$ with $c_\epsilon =  \left(\epsilon q\right)^{-\frac{q'}{q}} (q')^{-1}$ 
yields what is known as the epsilon-Young inequality for $\tilde w$ and $\tilde z$, namely $\tilde w \tilde z \leq \epsilon \tilde w^q + c_\epsilon \tilde z^{q'}$. 
Using this inequality for $\tilde w =  \no{b}_{L^2(0,T)}$, $\tilde z = D_L(t)$ and $q=2$ as well as for $\tilde w = \no{a}_{L^2(0,T)}$, $\tilde z =   N_0(t)$ and $q=2$, we obtain
\begin{equation}\label{L2defest}
E_0(t)
\leq
\no{u_0}_{L_x^2(0,L)}^2+2\gamma\int_0^t E_0(s) ds
+2 \left|\nu+i\alpha\right| \left(\epsilon D_L(t)^2 + c_\epsilon \no{b}_{L^2(0,t)}^2  + \epsilon N_0(t)^2 + c_\epsilon \no{a}_{L^2(0,t)}^2\right)
\end{equation}

Concerning $D_L$, we note that  
\begin{align}\label{uLt}
\left|u(L,t)\right|^2&=\frac{1}{L}\int_0^L \p_x \left(xu(x,t)\bar{u}(x,t)\right)dx=\frac{1}{L}\int_0^L \left(\left|u(x,t)\right|^2+2x\, \text{Re}(u\bar u_x)\right) dx
\nn\\
&\le \frac{1}{L}\left(\no{u(t)}_{L^2_x(0,L)}^2+2L\no{u(t)}_{L_x^2(0,L)}\no{u_x(t)}_{L_x^2(0,L)}\right)\le c\no{u(t)}_{H^1_x(0,L)}^2
\end{align}
with $c = 2 + \frac 1L$ depending only on $L$.
Thus, we obtain
\begin{equation}\label{DLtsquare}
D_L(t)^2 \equiv \int_0^t|u(L,s)|^2ds\le c\int_0^t\no{u(s)}_{H^1_x(0,L)}^2ds,\quad 0\le t\le T.
\end{equation}

Proceeding to $N_0$, we observe that
\begin{equation*}
    \left|u_x(0,t)\right|^2  =-\frac{1}{L}\int_0^L \p_x \left((L-x)|u_x(x,t)|^2\right) dx =\frac{1}{L}\int_0^L|u_x(x,t)|^2dx-\frac{2}{L}\text{Re}\int_0^L(L-x)u_{xx}\bar{u}_xdx
\end{equation*}
so by Young's inequality $wz \leq \frac{w^q}{q} + \frac{z^{q'}}{q'}$ with $w= |u_{xx}(x,t)|$, $z=|u_{x}(x,t)|$, $q=2$ we obtain
\begin{equation}
\left|u_x(0,t)\right|^2
 \le c\left(\no{u_x(t)}_{L_x^2(0,L)}^2+\no{u_{xx}(t)}_{L_x^2(0,L)}^2\right)
\end{equation}
where $c$ depends on $L$.
Therefore, 
\begin{equation}\label{N0tsquare}
N_0^2(t)\equiv \int_0^t|u_x(0,s)|^2ds \le c\int_0^t\left(\no{u_x(s)}_{L_x^2(0,L)}^2+\no{u_{xx}(s)}_{L_x^2(0,L)}^2\right)ds.
\end{equation}

Next, we form a different energy by multiplying the CGL equation by $-\bar{u}_{xx}$ and integrating over $(0,L)$ to get
\begin{equation}\label{mult_uxx}
\begin{aligned}
-\int_0^Lu_t\bar{u}_{xx}dx + (\nu+i\alpha)\int_0^L|u_{xx}|^2dx-\left(\kappa+i\beta\right)\int_0^L|u|^pu\bar{u}_{xx}dx+\gamma\int_0^Lu\bar{u}_{xx}dx=0.
\end{aligned}
\end{equation}
The first term can be rewritten through integration by parts in $x$ as
\begin{equation}
-\int_0^Lu_t\bar{u}_{xx}dx = -u_t(L,t)\bar{b}(t)+a'(t)\bar{u}_x(0,t) + \int_0^Lu_{xt}\bar{u}_xdx.
\end{equation}
Moreover, the third term can be rewritten through integration by parts in $x$ as
\begin{align}\label{ibp-en}
\int_0^L|u|^pu\bar{u}_{xx}dx 
= 
\int_0^L \left(u^{\frac{p+2}{2}}\bar{u}^{\frac{p}{2}}\right) \bar{u}_{xx}dx
&=
\left|u(L,t)\right|^pu(L,t)\bar{b}(t)-|a(t)|^pa(t)\bar{u}_x(0,t) 
\nn\\
&\quad
-\int_0^L\left(\tfrac{p+2}{2}|u|^p|u_x|^2+\tfrac{p}{2}u^2|u|^{p-2}\bar{u}_x^2\right)dx
\end{align}
and the fourth term  can be expressed as
\begin{equation}
\int_0^Lu\bar{u}_{xx}dx =  u(L,t)\bar{b}(t)-a(t)\bar{u}_x(0,t) -  \int_0^L|u_x|^2dx.
\end{equation}
Also, observing that
$
\text{Re}\left(u^2 \bar u_x^2\right) + |u|^2 |u_x|^2
=
\frac 12 \left(u^2 \bar u_x^2 + \bar u^2 u_x^2\right) + |u|^2 |u_x|^2
=
\frac 12 \left(u \bar u_x + \bar u u_x\right)^2 
=
2 \left(\text{Re}(u \bar u_x)\right)^2 \geq 0
$,
we have
\begin{equation}\label{re-ineq}
\text{Re}\int_0^L\left(\tfrac{p+2}{2}|u|^p|u_x|^2+\tfrac{p}{2}u^2|u|^{p-2}\bar{u}_x^2\right)dx\ge \int_0^L\left(\tfrac{p+2}{2}-\tfrac{p}{2}\right)|u|^p|u_x|^2dx\ge 0.
\end{equation}
In view of the above calculations and the fact that $\kappa, \nu>0$, we take the real parts in \eqref{mult_uxx} to obtain
\begin{align}\label{mul_H1est}
&\quad
\frac{1}{2}\frac{d}{dt}\no{u_x(t)}^2_{L^2_x(0,L)} + \nu\no{u_{xx}(t)}^2_{L_x^2(0,L)}
\nn\\
&\leq 
\text{Re}\left(u_t(L,t)\bar{b}(t)-a'(t)\bar{u}_x(0,t)\right)
+\text{Re}\left[\left(\kappa+i\beta\right)\left(\left|u(L,t)\right|^pu(L,t)\bar{b}(t)-|a(t)|^pa(t)\bar{u}_x(0,t)\right)\right]
\nn\\
&\quad
-\gamma \, \text{Re}\left(u(L,t)\bar{b}(t)-a(t)\bar{u}_x(0,t)\right)
+\beta \, \text{Im}\int_0^L \tfrac{p}{2}u^2|u|^{p-2}\bar{u}_x^2 dx + \gamma \no{u_x(t)}_{L^2_x(0,L)}^2.
\end{align}

Next, we estimate the time integrals of the boundary terms on the right side of \eqref{mul_H1est}.  For the first of these integrals, using the Cauchy-Schwarz inequality and recalling the definition of $D_L(t)$, we have
\begin{align}
\text{Re}\int_0^tu_t(L,s)\bar{b}(s)ds
&=
\text{Re}\left(u(L,t)\bar{b}(t)-u_0(L)\bar{b}(0)-\int_0^tu(L,s)\overline{b'}(s)ds\right)
\nn\\
&\leq
\left|u(L, t)\right| \no{b}_{L^\infty(0, T)} + \left|u_0(L)\right| \left|b(0)\right|
+
D_L(t) \no{b'}_{L^2(0, T)}
\end{align}
so, in view of \eqref{uLt} and the Sobolev embedding theorem,
\begin{equation}
\text{Re}\int_0^tu_t(L,s)\bar{b}(s)ds
\leq
c \no{u(t)}_{H_x^1(0,L)} \no{b}_{H^1(0, T)} + c 
+
D_L(t)  \no{b}_{H^1(0, T)}.
\end{equation}
%
%$\no{b}_{L^\infty(0,T)}\le c\no{b}_{H^1(0,T)}$ for $t\in [0,T]$,
Employing the epsilon-Young  inequality  $w z \leq \epsilon w^q + c_\epsilon z^{q'}$ for $w =  \no{u(t)}_{H_x^1(0,L)}$, $z = c \no{b}_{H^1(0,T)}$ and $q=2$, as well as the standard Young's inequality for $w = D_L(t)$, $z =  \no{b}_{H^1(0,T)}$ and $q=2$, we obtain
\begin{align}\label{be1}
\text{Re}\int_0^tu_t(L,s)\bar{b}(s)ds
&\leq
c  + \epsilon \no{u(t)}_{H_x^1(0,L)}^2 + c^2 c_\epsilon  \no{b}_{H^1(0, T)}^2 
+
\frac 12 D_L(t)^2 + \frac 12  \no{b}_{H^1(0, T)}^2
\nn\\
&\leq
c  +  \epsilon \no{u(t)}_{H_x^1(0,L)}^2 + \left(1+ c^2 c_\epsilon\right) \no{b}_{H^1(0, T)}^2 
+
c \int_0^t\no{u(s)}_{H_x^1(0,L)}^2ds
\end{align}
after using \eqref{DLtsquare} in the last step.
Similarly, by the definition of $N_0(t)$, the Cauchy-Schwarz inequality and the epsilon-Young inequality, 
\begin{align}\label{be2}
-\text{Re}\left(\int_0^ta'(s)\bar{u}_x(0,s)ds\right)
&\le  
N_0(t)\no{a'}_{L^2(0,T)}
\le 
\epsilon N_0^2(t)+c_\epsilon\no{a'}_{L^2(0,T)}^2
\nn\\
&\le \epsilon \int_0^t\left(\no{u_{xx}(s)}_{L_x^2(0,L)}^2+\no{u_x(s)}_{L_x^2(0,L)}^2\right)ds +c_\epsilon\no{a}_{H^1(0,T)}^2.
\end{align}

Concerning the second boundary term in \eqref{mul_H1est}, we begin by noting that
\begin{align}
\left|u(L,t)\right|^{p}u(L,t) 
&= \frac{1}{L}\int_0^L \p_x \left(x \left|u(x,t)\right|^{p} u(x,t)\right) dx 
\nn\\
&= \frac{1}{L}\int_0^L \left(\left|u(x,t)\right|^pu(x,t) + x \, \tfrac{p+2}{2}\left|u(x,t)\right|^pu_x(x,t)+ x \, \tfrac
{p}{2}\left|u(x,t)\right|^{p-2} \left(u(x,t)\right)^2 \bar{u}_x(x,t)\right)dx 
\nn
\end{align}
so by H\"older's inequality 
\begin{equation}\label{nonlinbdrest}
\left|u(L,t)\right|^{p+1}
\leq c\left(\no{u(t)}_{L^{p+2}_x(0,L)}^{p+1}+\no{u(t)}_{L_x^{p+2}(0,L)}^p \no{u_x(t)}_{L_x^{\frac{p+2}{2}}(0,L)}\right)
\end{equation}
where $c = \frac 1L + p + 1$.
Young's inequality $wz \leq \frac{w^q}{q} + \frac{z^{q'}}{q'}$ applied twice, once with $w= \no{u(t)}_{L_x^{p+2}(0,L)}^p$, $z=\no{u_x(t)}_{L_x^{\frac{p+2}{2}}(0,L)}$, $q=\frac{p+2}{p}$ and once with $w= \no{u(t)}_{L_x^{p+2}(0,L)}^{p+1}$, $z=1$, $q=\frac{p+2}{p+1}$, further yields
\begin{align}\label{nonlinbdrest2}
\int_0^t \left|u(L,t)\right|^{p+1} dt
&\le c\int_0^t\left(\no{u(s)}_{L^{p+2}_x(0,L)}^{p+1}+\no{u(s)}_{L_x^{p+2}(0,L)}^p\no{u_x(s)}_{L_x^{\frac{p+2}{2}}(0,L)}\right)ds
\nn\\
&\le c + c\int_0^t\left(\no{u(s)}_{L^{p+2}_x(0,L)}^{p+2}+\no{u_x(s)}_{L_x^{\frac{p+2}{2}}(0,L)}^{\frac{p+2}{2}}\right)ds, \quad 0\le t\le T,
\end{align}
where $c$ depends on $L$, $p$ and $T$.
%
%Moreover, {\color{red} if $p\le 2$} then 
%\begin{equation}\label{nonlinbdrest2}
%\int_0^t\left|u(L,t)\right|^{p+1}dt\le c+c\int_0^t\left(\no{u(s)}_{L^{p+2}_x(0,L)}^{p+2}+\no{u_x(s)}_{L_x^{2}(0,L)}^{2}\right) ds,\quad 0\le t\le T,
%\end{equation}
%where $c$ depends on $L,p$ and $T$.
%
Assuming that $p\leq 2$ and using \eqref{nonlinbdrest2}, we have
\begin{align}\label{upubbar}
\text{Re}\left(\left(\kappa+i\beta\right)\int_0^t|u(L,s)|^pu(L,s)\bar{b}(s)ds\right)
&\leq
\left|\kappa+i\beta\right| \no{b}_{L^\infty(0,T)} \int_0^t \left|u(L,s)\right|^{p+1} ds 
\nn\\
%&\le c \left(\kappa^2+\beta^2\right)^{\frac12}\no{b}_{L^\infty(0,T)}\int_0^t\left(\no{u(s)}_{L^{p+2}_x(0,L)}^{p+1}+\no{u(s)}_{L_x^{p+2}(0,L)}^p\no{u_x(s)}_{L_x^{\frac{p+2}{2}}(0,L)}\right)ds
%\nn\\
& \le c + c\int_0^t\left(\no{u(s)}_{L^{p+2}_x(0,L)}^{p+2}+\no{u_x(s)}_{L_x^{2}(0,L)}^{2}\right) ds,
\end{align}
where $c$ depends on $\kappa,\beta,\no{b}_{L^\infty_t(0,L)},L, p, T$. Moreover, by the Cauchy-Schwarz and epsilon-Young inequalities,
    \begin{align}\label{apauxbar}
-\text{Re}\left(\left(\kappa+i\beta\right)\int_0^t |a(s)|^pa(s)\bar{u}_x(0,s) ds\right) 
&\le \left|\kappa + i \beta\right| N_0(t)\no{a}_{L^{2p+2}(0,T)}^{p+1}
\nn\\
 &\le \epsilon N_0^2(t)+c_\epsilon \no{a}_{L^{2p+2}(0,T)}^{2p+2}.
    \end{align}

Finally, regarding the third boundary term in \eqref{mul_H1est}, using the Cauchy-Schwarz and epsilon-Young inequalities we find
\begin{align}\label{be3}
-\gamma\text{Re}\int_0^t\left(u(L,t)\bar{b}(t)-a(t)u_x(0,t)\right) ds 
&\le \gamma D_L(t)\no{b}_{L^2(0,T)}+\gamma N_0(t) \no{a}_{L^2(0,T)}
\nn\\
& \le \epsilon D_L^2(t)+\epsilon N_0^2(t) + c_\epsilon\no{a}_{L^2(0,T)}^2 + c_\epsilon\no{b}_{L^2(0,T)}^2.
\end{align}

We proceed to the formation of yet one more energy by multiplying the CGL equation by $|u|^p\bar{u}$ and integrating over $(0,L)$ to obtain
\begin{equation}\label{mult_u_to_p}
        \text{Re}\int_0^L\left(u_t|u|^p\bar{u}-(\nu+i\alpha) u_{xx}|u|^p\bar{u}+\left(\kappa+i\beta\right)|u|^{2p+2}-\gamma|u|^{p+2}\right)dx=0.
\end{equation}
The first term can be written as
\begin{equation}
\text{Re}\int_0^Lu_t|u|^p\bar{u}dx
=
\frac 12 \int_0^L |u|^p  \, \p_t(|u|^2) dx
=
\frac{1}{p+2} \frac{d}{dt} \int_0^L \left(|u|^2\right)^{\frac{p+2}{2}} dx
=
\frac{1}{p+2}\frac{d}{dt}\no{u}_{L_x^{p+2}(0,L)}^{p+2}
\end{equation}
while for the second term \eqref{ibp-en} yields
    \begin{align}
        -\text{Re}\Big((\nu+i\alpha)\int_0^Lu_{xx}|u|^p\bar{u}dx\Big)
        &=
         - \nu \text{Re} \int_0^L \bar u_{xx}|u|^p u dx - \alpha \, \text{Im} \int_0^L \bar u_{xx}|u|^p u dx
\nn\\
        &=\nu\text{Re}\int_0^L\left(\tfrac{p+2}{2}|u|^p|u_x|^2+\tfrac{p}{2}u^2|u|^{p-2}\bar{u}_x^2\right)dx
       + \alpha \, \text{Im}\int_0^L \tfrac{p}{2}u^2|u|^{p-2}\bar{u}_x^2 dx\nn\\
        &\quad -\text{Re}\left[ (\nu-i\alpha)\left(\left|u(L,t)\right|^pu(L,t)\bar{b}(t)-|a(t)|^pa(t)\bar{u}_x(0,t)\right)\right].
    \end{align}
As in \eqref{upubbar} and \eqref{apauxbar}, we have
\begin{align}\label{upubbar2}
       &\quad \left|\text{Re}\int_0^t\left((\nu-i\alpha)\left(|u(L,s)|^pu(L,s)\bar{b}(s)-|a(s)|^pa(s)\bar{u}_x(0,s)\right)\right)ds\right|\nn\\
        &\le c + c\int_0^t\left(\no{u(s)}_{L^{p+2}_x(0,L)}^{p+2}+\no{u_x(s)}_{L_x^{2}(0,L)}^{2}\right) ds +\epsilon N_0^2(t)+c_\epsilon \no{a}_{L^{2p+2}(0,T)}^{2p+2}.
    \end{align}
Combining \eqref{mult_u_to_p}-\eqref{upubbar2} while also recalling \eqref{re-ineq} and the fact that $\kappa, \nu >0$, we infer
    \begin{align}\label{mult_u_to_p2}
       \frac{1}{p+2}\frac{d}{dt}\no{u}_{L_x^{p+2}(0,L)}^{p+2}\le &-\alpha \, \text{Im}\int_0^L \tfrac{p}{2}u^2|u|^{p-2}\bar{u}_x^2 dx
       -\kappa\no{u}_{L^{2p+2}_x(0,L)}^{2p+2}+\gamma\no{u}_{L_x^{p+2}(0,L)}^{p+2}\nn\\
        &+\text{Re}\left((\nu-i\alpha)\left(\left|u(L,t)\right|^pu(L,t)\bar{b}(t)-|a(t)|^pa(t)\bar{u}_x(0,t)\right)\right).
    \end{align}

Multiplying \eqref{mul_H1est} by $\alpha$, \eqref{mult_u_to_p2} by $\beta$ and adding the resulting inequalities, we find
    \begin{align}\label{e1d-eq}
&\quad
\frac{\alpha}{2}\frac{d}{dt}\no{u_x(t)}^2_{L^2_x(0,L)} + \frac{\beta}{p+2}\frac{d}{dt}\no{u(t)}_{L_x^{p+2}(0,L)}^{p+2}
\nn\\
&\le -\beta\kappa\no{u(t)}_{L^{2p+2}_x(0,L)}^{2p+2}+\beta\gamma\no{u(t)}_{L_x^{p+2}(0,L)}^{p+2}
-\alpha\nu\no{u_{xx}(t)}_{L^2_x(0,L)}^2+\alpha\gamma \no{u_x(t)}_{L^2_x(0,L)}^2 
\nn\\
&\quad
+ \alpha \, \text{Re}\left(u_t(L,t)\bar{b}(t)-a'(t)\bar{u}_x(0,t)\right)
+ \alpha \,  \text{Re}\left[\left(\kappa+i\beta\right)\left(\left|u(L,t)\right|^pu(L,t)\bar{b}(t)-|a(t)|^pa(t)\bar{u}_x(0,t)\right)\right]
\nn\\
&\quad
- \alpha  \gamma \, \text{Re}\left(u(L,t)\bar{b}(t)-a(t)\bar{u}_x(0,t)\right)
+\beta \, \text{Re}\left((\nu-i\alpha)\left(\left|u(L,t)\right|^pu(L,t)\bar{b}(t)-|a(t)|^pa(t)\bar{u}_x(0,t)\right)\right).
    \end{align}
Letting
$$
E_1(t):=\frac{\alpha}{2}\no{u_x(t)}^2_{L^2_x(0,L)} + \frac{\beta}{p+2}\no{u(t)}_{L_x^{p+2}(0,L)}^{p+2},
$$
we integrate equation \eqref{e1d-eq} in $t$ to obtain
\begin{align}\label{e1d-eq2}
E_1(t)
&\le
E_1(0) -\beta\kappa \int_0^t \no{u(s)}_{L^{2p+2}_x(0,L)}^{2p+2} ds +\beta\gamma \int_0^t \no{u(s)}_{L_x^{p+2}(0,L)}^{p+2} ds
\nn\\
&\quad 
-\alpha \nu \int_0^t \no{u_{xx}(s)}_{L^2_x(0,L)}^2 ds+\alpha\gamma  \int_0^t \no{u_x(s)}_{L^2_x(0,L)}^2 ds
\nn\\
&\quad 
+ \alpha \, \text{Re}  \int_0^t \left(u_t(L,s)\bar{b}(s)-a'(s)\bar{u}_x(0,s)\right) ds
\nn\\
&\quad 
 + \alpha \,  \text{Re} \left( \left(\kappa+i\beta\right) \int_0^t  \left(\left|u(L,s)\right|^pu(L,s)\bar{b}(s)-|a(s)|^p a(s)\bar{u}_x(0,s)\right) ds \right)
\nn\\
&\quad  
- \alpha  \gamma \, \text{Re} \int_0^t \left(u(L,s)\bar{b}(s)-a(s)\bar{u}_x(0,s)\right) ds
\nn\\
&\quad  +\beta \, \text{Re}  \left((\nu-i\alpha) \int_0^t \left(\left|u(L,s)\right|^pu(L,s)\bar{b}(s)-|a(s)|^pa(s)\bar{u}_x(0,s)\right)  ds \right).
\end{align}
Hence, setting $E:=E_0+E_1$ and using inequality \eqref{L2defest} for $E_0$ along with the estimates \eqref{be1}, \eqref{be2}, \eqref{upubbar}, \eqref{apauxbar}, \eqref{be3} and \eqref{upubbar2} for the various boundary terms, we have
\begin{align}\label{EtE0}
E(t)
&\le
E(0) +2\gamma\int_0^t E_0(s) ds -\beta\kappa \int_0^t \no{u(s)}_{L^{2p+2}_x(0,L)}^{2p+2} ds +\beta\gamma \int_0^t \no{u(s)}_{L_x^{p+2}(0,L)}^{p+2} ds
\nn\\
&\quad 
-\alpha \nu \int_0^t \no{u_{xx}(s)}_{L^2_x(0,L)}^2 ds+\alpha\gamma  \int_0^t \no{u_x(s)}_{L^2_x(0,L)}^2 ds
\nn\\
&\quad
+2 \left|\nu+i\alpha\right| \left(\epsilon D_L(t)^2 + c_\epsilon \no{b}_{L^2(0,t)}^2  + \epsilon N_0(t)^2 + c_\epsilon \no{a}_{L^2(0,t)}^2\right)
\nn\\
&\quad 
+ \alpha \, \bigg(c  +  \epsilon \no{u(t)}_{H_x^1(0,L)}^2 + \left(1+ c^2 c_\epsilon\right) \no{b}_{H^1(0, T)}^2 
+
c \int_0^t\no{u(s)}_{H_x^1(0,L)}^2ds
\nn\\
&\hspace*{1.5cm}
+
\epsilon \int_0^t\left(\no{u_{xx}(s)}_{L_x^2(0,L)}^2+\no{u_x(s)}_{L_x^2(0,L)}^2\right)ds + c_\epsilon\no{a}_{H^1(0,T)}^2
\bigg)
\nn\\
&\quad 
 + \alpha
 \left(
 c + c\int_0^t\left(\no{u(s)}_{L^{p+2}_x(0,L)}^{p+2}+\no{u_x(s)}_{L_x^{2}(0,L)}^{2}\right) ds
+
\epsilon N_0^2(t)+c_\epsilon \no{a}_{L^{2p+2}(0,T)}^{2p+2}
\right)
\nn\\
&\quad  
+ \alpha  \left(\epsilon D_L^2(t)+\epsilon N_0^2(t) + c_\epsilon\no{a}_{L^2(0,T)}^2 + c_\epsilon\no{b}_{L^2(0,T)}^2\right)
\nn\\
&\quad  +\beta  \left(c + c\int_0^t\left(\no{u(s)}_{L^{p+2}_x(0,L)}^{p+2}+\no{u_x(s)}_{L_x^{2}(0,L)}^{2}\right) ds +\epsilon N_0^2(t)+c_\epsilon \no{a}_{L^{2p+2}(0,T)}^{2p+2}\right).
\end{align}
Choosing $\epsilon$ sufficiently small, using the bounds \eqref{DLtsquare} and \eqref{N0tsquare} for $D_L(t)$ and $N_0(t)$, and eliminating the third and fifth term on the right side of \eqref{EtE0} as we are working under the assumption of $\alpha > 0$ and $\beta \geq 0$, we deduce 
\begin{equation}
E(t)\le c + c\int_0^tE(s)ds, \quad 0\le t\le T,
\end{equation}
where $c$ depends on the fixed parameters $\epsilon, \no{a}_{H^1(0,T)},\no{b}_{H^1(0,T)},p,L,T, \kappa, \beta, \alpha, \nu$. Gronwall's inequality then implies that 
\begin{equation}
E(t)\le c \, e^{cT}, \quad 0\le t\le T,
\end{equation}
therefore, the local solution $H^1$ solution extends globally in time.

\begin{remark}[Range of $p$] It is well-known that the range of admissible values of $p$ for global well-posedness of the CGL equation is $(0,\infty)$ when one considers homogeneous boundary conditions. However, the situation is more complex in presence of an inhomogeneous Neumann boundary condition. The restriction $p\le 2$ (with $\beta>0$) in the above analysis is due to the estimate of the boundary terms that involve nonlinear traces such as \eqref{upubbar} and \eqref{upubbar2}.  If one were to consider an inhomogeneous Dirichlet initial-boundary value problem with boundary conditions $u(0,t)=a(t)$ and $u(L,t)=b(t)$ (instead of $u_x(0,t)=b(t)$), then one would have been able to extend the range of admissible values of $p$ for global well-posedness to $(0,\infty)$ in the case of $\beta>0$. Another alternative in order to extend the range of $p$ to $(0,\infty)$ when $\beta>0$ is to assume that $b(t)\equiv 0$, i.e. to use the homogeneous Neumann  boundary condition $u_x(0,t)=0$ at the right endpoint, while the left endpoint is still subject to an inhomogeneous Dirichlet boundary condition.
\end{remark}

\noindent
\textbf{The case of $\beta<0$.}
Similarly to \eqref{EtE0}, using the fact that $\beta<0$ while $\kappa, \nu >0$, we have
    \begin{align}
        &\quad\frac{1}{2}\no{u(t)}_{L_x^2(0,L)}^2+\frac{\alpha}{2}\no{u_x(t)}_{L_x^2(0,L)}^2 \nn\\
        &\le \frac{1}{2}\no{u_0}_{L^2(0,L)}^2 +\frac{\alpha}{2}\no{u_0'}_{L^2(0,L)}^2 +\alpha\gamma\int_0^t\no{u_x(s)}_{L^2_x(0,L)}^2ds-\beta \kappa\int_0^t\no{u(s)}_{L^{2p+2}_x(0,L)}^{2p+2}ds\nn\\
        &\quad-\alpha\nu\int_0^t\no{u_{xx}(s)}_{L^2_x(0,L)}^2ds -\frac{\beta}{p+2}\no{u(t)}_{L_x^{p+2}(0,L)}^{p+2}+\gamma\int_0^t\no{u(s)}_{L^2_x(0,L)}^2ds\nn\\
        &\quad+ c + c\int_0^t\left(\no{u(s)}_{L^{p+2}_x(0,L)}^{p+2}+\no{u_x(s)}_{L_x^{2}(0,L)}^{2}\right)+\epsilon N_0^2(t)+c_\epsilon \no{a}_{L^{2p+2}(0,T)}^{2p+2}\nn\\
         &\quad+\epsilon D_L^2(t)+\epsilon N_0^2(t) + c_\epsilon\no{a}_{L^2(0,T)}^2 + c_\epsilon\no{b}_{L^2(0,T)}^2\nn\\
         &\quad+c+\epsilon\no{u(t)}_{H_x^1(0,L)}^2+(1+c_\epsilon)\no{b}_{H_t^1(0,T)}^2+c\int_0^t\no{u(s)}_{H_x^1(0,L)}^2ds\nn\\
         &\quad+\epsilon \int_0^t\left(\no{u_{xx}(s)}_{L_x^2(0,L)}^2+\no{u_x(s)}_{L_x^2(0,L)}^2\right)ds +c_\epsilon\no{a}_{H^1(0,T)}^2\nn\\
         &\quad {-(\alpha\kappa+\beta\nu)\text{Re}\int_0^t\int_0^L\left(\tfrac{p+2}{2}|u(s)|^p|u_x(s)|^2+\tfrac{p}{2}u^2(s)|u(s)|^{p-2}\bar{u}_x^2(s)\right)dxds.}
         \label{be4}
    \end{align}
By the (inhomogeneous version of) Gagliardo-Nirenberg inequality, i.e., with the constant of estimate also depending on $L$,
\begin{equation}\label{gagfirst}
        -\frac{\beta}{p+2}\no{u(t)}_{L_x^{p+2}(0,L)}^{p+2}\le c_{p,L} \no{u(t)}_{H_x^1(0,L)}^{\frac{p}{2}} \no{u(t)}_{L^2_x(0,L)}^{\frac{p+4}{2}}
\end{equation}
so using the epsilon-Young inequality twice, once for the product $\no{u(t)}_{H_x^1(0,L)}^{\frac{p}{2}} \no{u(t)}_{L^2_x(0,L)}^{\frac{p+4}{2}}$ 
with $q=\frac 4p$ (this step requires $p<4$) and once again for the resulting product $c_\epsilon \no{u(t)}_{L^2_x(0,L)}^{\frac{2(p+4)}{4-p}}$ with $q=\frac{2(4-p)}{p+4}$ (this step requires {$p<\frac 43$}), we obtain
\begin{equation}
        -\frac{\beta}{p+2}\no{u(t)}_{L_x^{p+2}(0,L)}^{p+2}\le  c_\epsilon +\epsilon\no{u(t)}_{H_x^1(0,L)}^{2}+\epsilon\no{u(t)}_{L^2_x(0,L)}^{4}.
\end{equation}
Via the same argument, assuming that {$p<\frac{4}{5}$} we have
\begin{equation}
    \begin{aligned}
        -\beta \kappa\no{u(t)}_{L^{2p+2}_x(0,L)}^{2p+2} \le c_{p,L}\no{u(t)}_{H_x^2(0,L)}^{\frac{p}{2}}\no{u(t)}_{L^2_x(0,L)}^{\frac{3p+4}{2}}\le c_\epsilon+\epsilon\no{u(t)}_{H_x^2(0,L)}^{2}+\epsilon\no{u(t)}_{L^2_x(0,L)}^{4}.
    \end{aligned}
\end{equation}
In order to handle the term $\no{u(t)}_{L^2_x(0,L)}^{4}$, we observe that $D_L$ and $N_0$ are non-decreasing. Therefore, by the integral form of Gronwall's inequality applied to \eqref{L2defest0}, we infer
$$\no{u(t)}_{L^2_x(0,L)}^2\le \left(\no{u_0}_{L_x^2(0,L)}^2+2\left|\nu+i \alpha\right| \left(\no{b}_{L^2(0,T)}D_L(t)+\no{a}_{L^2(0,T)}N_0(t)\right)\right)e^{2\gamma t},\quad 0\leq t \le T.$$
Taking squares of both sides and using $(A+B)^2\le  2 \, (A^2+B^2)$, we find
$$\no{u(t)}_{L^2_x(0,L)}^4\lesssim 2\left(\no{u_0}_{L_x^2(0,L)}^4+8\left(\nu^2+\alpha^2\right)\left(\no{b}_{L^2(0,T)}^2D_L^2(t)+\no{a}_{L^2(0,T)}^2N^2_0(t)\right)\right)e^{4\gamma T}$$
i.e. we can control the \textit{fourth power} of the $L^2$ norm of $u$ by just the \textit{square} of $N_0$ via the bound
\begin{equation}\label{l2-est-b<0}
\no{u(t)}_{L^2_x(0,L)}^4\le c+cD_L^2(t)+cN_0^2(t),
\end{equation}
where $c$ denotes a constant depending on the fixed parameters  $T,\nu,\alpha,\no{a}_{L^2(0,T)}, \no{b}_{L^2(0,T)}$ and $\no{u_0}_{L^2_x(0,L)}$.

\begin{remark}
    It is important to notice that even if we do not have full control of the $L^2$ norm, we still have a mild control over it, see the fourth power on the left side of inequality \eqref{l2-est-b<0}. That is, although the above bound of the $L^2$ norm is not uniform in $t$ (as it would have been in the case of zero boundary conditions), it still allows us to control that norm mildly thanks to the bounds \eqref{DLtsquare} and \eqref{N0tsquare} for $D_L(t)$ and $N_0(t)$.
\end{remark}

Furthermore,  by H\"older's inequality,
\begin{align}
-(\alpha\kappa+\beta\nu)\text{Re}\int_0^L\left(\tfrac{p+2}{2}|u|^p|u_x|^2+\tfrac{p}{2}u^2|u|^{p-2}\bar{u}_x^2\right)dx 
&\le (p+1)|\alpha\kappa+\beta\nu|\int_0^L|u|^p|u_x|^2dx
\nn\\
&\le (p+1)|\alpha\kappa+\beta\nu|\no{u(t)}_{L^{p+2}_x(0,L)}^p\no{u_x(t)}_{L_x^{p+2}(0,L)}^2
\end{align}
and the right side can be handled via the Gagliardo-Nirenberg and epsilon-Young inequalities as above, after assuming that {$p<\frac{4}{5}$}. More precisely, by the Gagliardo-Nirenberg inequality,
$$
\no{u_x}_{L^{p+2}} \leq \no{u}_{H^{1, p+2}} \lesssim \no{u}_{L^2}^{\frac{p+4}{4(p+2)}} \no{u}_{H^2}^{\frac{3p+4}{4(p+2)}},
\quad
\no{u}_{L^{p+2}} \lesssim \no{u}_{L^2}^{\frac{3p+8}{4(p+2)}} \no{u}_{H^2}^{\frac{p}{4(p+2)}}
$$
hence, if $p<\frac 45$, the epsilon-Young inequality yields
\begin{equation}
\no{u(t)}_{L^{p+2}_x(0,L)}^p\no{u_x(t)}_{L_x^{p+2}(0,L)}^2
\le c\no{u(t)}_{H_x^2(0,L)}^{1+\frac{p}{4}}\no{u(t)}_{L^2(0,L)}^{1+\frac{3p}{4}}
\le c_\epsilon+\epsilon\no{u(t)}_{H_x^2(0,L)}^{2}+\epsilon\no{u(t)}_{L^2_x(0,L)}^{4}.
\end{equation}
Again, the fourth power of the $L^2$ norm of $u$ on the right side can be controlled only by the square of $N_0$ thanks to the bound \eqref{l2-est-b<0}.

In view of the above calculations, choosing $\epsilon$ sufficiently small so that the various time integrals of the $L^2$ norm of $u_{xx}$ are overall multiplied by a negative coefficient, we eliminate the terms on the right side of~\eqref{be4} that have a negative sign to infer the estimate 
$$\no{u(t)}_{H^1_x(0,L)}^2\le c + c\int_0^t\no{u(s)}_{H^1_x(0,L)}^2ds, \quad 0\le t \le T.$$ Therefore,  by Gronwall's inequality,   local $H^1$ solutions can be extended globally if $\beta<0$ and $p\in (0,\frac{4}{5})$.

\begin{remark}
        For homogeneous boundary value problems, the $L^2$ norm of the solution is automatically bounded on any interval $[0,T]$.  However, from the $L^2$ estimate we see that this is no longer readily available in the case of inhomogeneous boundary conditions. This is a big challenge for controlling the nonlinearity when $\beta<0$  since one generally uses Gagliardo-Nirenberg inequalities together with boundedness of the $L^2$ norm to achieve this goal. Even in that case, $p$ is usually restricted to the range $(0,4)$ unless initial datum is assumed to be small.  For the inhomogeneous initial-boundary value problem, it is a highly nontrivial problem to prove global well-posedness when $\beta<0$ in absence of an apriori bound on the $L^2$ norm. 
 Hence, in the presence of inhomogeneous boundary conditions, it is more reasonable to expect a global well-posedness result for only mild cases of $p$.
\end{remark}

\section{Chaos suppression}
\label{stab-s}
In this section, we design a boundary feedback stabilizer for the CGL equation with the property that the controller uses only a finite number of Fourier modes of the state.  More precisely, we are concerned with the stabilization of the following  nonlinear model:
\begin{eqnarray} \label{pde_nonlin}
	\begin{cases}
		u_t - (\nu + i\alpha) u_{xx} + (\kappa + i\beta) |u|^p u - \gamma u = 0, &(x,t) \in(0,L) \times (0,T), \\
		u(0,t) = 0, \quad u_x(L,t) = b(t), & t \in (0,T),\\
		u(x,0) = u_0(x), &x \in (0,L),
	\end{cases}
\end{eqnarray}
where, $\alpha, \beta \in \mathbb{R}$, $\kappa, \gamma > 0$, $0 < p < 4$. $b(t) = b(P_N u(\cdot,t))$ is a nonlocal Neumann type feedback control input that involves finitely many Fourier modes of $u$.

To develop a finite-dimensional controller and ensure the well-posedness of the closed-loop system, we follow a structured approach. First, in Section \ref{bs-trans}, we introduce two key elements of the control design: a Volterra-type transformation that incorporates a projection operator and a target model which features a finite-parameter interior damping term that acts as a stabilizing mechanism. We derive sufficient conditions for the kernel of the transformation to ensure that it effectively maps the original system to the target model. Next, in Section \ref{bdd-inv}, we establish the bounded invertibility of the aforementioned transformation on the spaces \( H^\ell(0,L) \), \( \ell = 0, 1, 2 \). This guarantees that the target model and the original system exhibit the same long-term behavior. A critical challenge arises due to the projection operator in the sought-after transformation, which may not be invertible for certain values of \( \mu \) and \( N \). We exclude these problematic values, defining the remaining pairs as admissible.  Section \ref{ctrl-design} presents the finite-dimensional feedback control law. In Section \ref{wp-s}, the well-posedness for general inhomogeneous boundary conditions, \( u(0,t) = a(t) \) and \( u_x(L,t) = b(t) \) was established through analysis of a continuous data-to-solution map 
\[
u \mapsto {\Phi}[u_0, a, b, f(u)].
\]
Our controller is of feedback type and is actuated only at the right endpoint of the domain, while we impose homogeneous boundary condition at the left endpoint. Thus, in this setting, the data-to-solution map takes the closed-loop form 
\[
u \mapsto \Phi[u_0, 0, b_u, f(u)],
\]
where \( b_u=b(P_Nu(t)) \) is the sought-after feedback that depends on the state \( u \). Moreover, due to the nature of the boundary controller, \( b_u \) is nonlocal.  In above formulation, one may simply take \( f(u) = 0 \) for the linearized problem. In Section \ref{local-wp}, we establish the local-in-time well-posedness of both the linear and nonlinear closed-loop systems using a contraction argument on the mapping \( \Phi \), without requiring any smallness condition on initial datum. In Section \ref{stabilization}, we extend these results to demonstrate the stabilization of both linear and nonlinear systems. Notably, this step also extends the local-in-time solution to a global one. Uniqueness of global in-time solution is proved in Section \ref{uniqueness-closed-loop}.
\subsection{Kernel and the linear target model}
\label{bs-trans}
First, we will study the linearized model below:
\begin{eqnarray} \label{pde_lin}
	\begin{cases}
		u_t - (\nu + i\alpha) u_{xx} - \gamma u = 0, &(x,t) \in(0,L) \times (0,T), \\
		u(0,t) = 0, \quad u_x(L,t) = b(t), & t \in (0,T),\\
		u(x,0) = u_0(x), &x \in (0,L).
	\end{cases}
\end{eqnarray}
We set the transformation
\begin{equation}  \label{backstepping-trans}
	\begin{split}
		u(x,t) &= w(x,t) + \int_0^x k(x,y) P_N w(y,t) dy \\
		&=: (T_N w)(x,t),
	\end{split}
\end{equation}
involving the projection operator $P_N : L^2(0,L) \to \text{span}\{e_j\}_{j=1}^N$. If the kernel $k$ satisfies the following boundary value problem (see Appendix \ref{app-kernel} for details)
\begin{eqnarray}\label{kernel_pde}
	\begin{cases}
		(\nu + i \alpha) (k_{xx} - k_{yy}) + \mu k = 0, & (x,y) \in \Delta_{x,y}, \\
		k(x,0) = 0, \quad k(x,x) = -\dfrac{\mu x}{2(\nu + i \alpha)}, &x \in [0,L],
	\end{cases}
\end{eqnarray}
where $\Delta_{x,y} := \{ (x,y) \, | \, 0 \leq x \leq L, 0 \leq y \leq x \}$, then \eqref{pde_lin} can be obtained from the following so called ``linear target model'' under the transformation \eqref{backstepping-trans}, 
\begin{eqnarray} \label{pde_tarlin}
	\begin{cases}
		w_t - (\nu + i\alpha) w_{xx} - \gamma w + \mu P_N w = 0, &(x,t) \in(0,L) \times (0,T), \\
		w(0,t) = w_x(L,t) = 0, & t \in (0,T),\\
		w(x,0) = w_0(x), &x \in (0,L).
	\end{cases}
\end{eqnarray}
Here, the initial datum $w_0$ depends on $u_0$ and its explicit representation will be given in Section \ref{bdd-inv}. 

Observe that the target model \eqref{pde_tarlin} retains the general structure of the linear plant \eqref{pde_lin} together with homogeneous versions of the same type of boundary conditions in \eqref{pde_lin} and an additional finite rank term $\mu P_N w$, involving projection of the state $w$ onto the finite-dimensional space $\text{span}\{e_j\}_{j=1}^N$. The purpose of this term is to eliminate destabilizing, possibly chaotic, effects that occur due to the choice of problem parameters. Notice that the differential operator
\begin{equation*}
    A = -(\nu + i\alpha) \partial_x^2 - \gamma I
\end{equation*}
with domain $D(A) = \{ \varphi \in H^2(0,L) \, | \, \varphi(0) = \varphi^\prime(0) = 0 \}$ has a countable set of eigenvalues for which only finitely many of them lie on the right complex half-plane. Therefore, for suitably chosen $\mu$ and $N$, the term $\mu P_N w$ is capable of shifting the eigenvalues with real parts to the left complex half-plane which eventually amounts to stabilize the associated unstable Fourier modes and asymptotically steer solutions uniformly to the zero equilibrium. 

It is well-known that the kernel model \eqref{kernel_pde} can be transformed into an integral equation and employing the method of successive approximations (see, e.g., \cite[Section 4]{krsticbook}), a solution to \eqref{kernel_pde} can be expressed in series form as follows:
\begin{equation}
	\label{ksolrep}
	k(x,y) = -\frac{\mu y}{2 (\nu + i \alpha)} \sum\limits_{m=0}^\infty \left(-\frac{\mu}{4 (\nu + i \alpha)}\right)^m \frac{(x^2 - y^2)^m}{m! (m+1)!},
\end{equation}
in which the sum is uniformly and absolutely convergent on $\overline {\Delta_{x,y}}$.

\subsection{Bounded invertibility of $T_N$}
\label{bdd-inv}

It is crucial to establish the invertibility of the transformation \eqref{backstepping-trans} with a bounded inverse on relevant spaces, as it allows us to conclude that the original plant and the target model have the same long-time dynamics. If the transformation \eqref{backstepping-trans} did not involve a projection operator, one could prove bounded invertibility on $L^2$ based Sobolev spaces by employing standard functional analytical tools (e.g., see \cite{Liu03,OzBa}). However, in the present case, there exist certain conditions on $\mu$ and $N$ for which the this transformation is not necessarily surjective, hence it is not invertible. We refer reader to \cite[Section 2]{KaOzY} for a detailed discussion of this topic. This motivates us to make the following definition to exclude ill-suited ($\mu,N)$ pairs, and classify the remaining ones as admissible.

\begin{definition}[Admissible decay rate-mode pair]\label{ratemodepair} Let $\mu>0$ and $N\in\mathbb{Z}_+$. If $T_N$ is invertible with a bounded inverse, then we say $(\mu,N)$ is an admissible decay rate-mode pair.
\end{definition}

For given $\mu$, it is important to know that whether there exists an $N \geq 1$ such that $(\mu,N)$ is admissible. Indeed, the situation is better and one can show as in the following lemma that each $\mu$ is associated with infinitely many admissible $(\mu,N)$ pairs.

\begin{lemma} \label{invlem}
    Let $\mu>0$, $k = k(x,y;\mu)$ be a smooth kernel solving \eqref{kernel_pde} and $T_N$ be given by \eqref{backstepping-trans}. Then, there exist infinitely many $N \in \mathbb{Z}_+$ such that $T_N$ is invertible on $H^\ell(0,L)$, $L \geq 0$.
\end{lemma}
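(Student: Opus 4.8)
The plan is to show that the finite-rank operator $K_N := T_N - I$ becomes, in a suitable sense, negligible relative to the identity for infinitely many values of $N$, so that $T_N = I + K_N$ is invertible by a Neumann series (or a small-perturbation argument). The operator $K_N$ acts as $(K_N w)(x) = \int_0^x k(x,y)\, (P_N w)(y)\, dy$, so it is the composition of the spectral projection $P_N$, which cuts off the Fourier–sine-type expansion $w = \sum_{j\ge 1} (e_j,w)_2\, e_j$ at level $N$, with the Volterra integral operator $V$ with kernel $k$. Since $k$ is smooth on $\overline{\Delta_{x,y}}$ by the series representation \eqref{ksolrep}, $V$ is a bounded operator on each $H^\ell(0,L)$, $\ell = 0,1,2$. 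The key point is that $K_N = V \circ P_N$ does \emph{not} tend to zero in norm as $N\to\infty$ (it tends to $V$), so a naive "large $N$" argument fails; instead we must exploit the precise structure of $V\circ P_N$.

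The first step I would carry out is to make the observation (essentially the content of \cite[Section 2]{KaOzY}) that $T_N w = w$ forces, upon applying $P_N$ to both sides, a finite-dimensional equation for $P_N w$: writing $v = P_N w$, we get $P_N w = P_N u - P_N V v$, i.e. $(I + P_N V P_N)\, v = P_N u$ after noting $V v = V P_N w$ depends only on $v$ when we iterate. Thus $T_N$ is invertible iff the finite matrix $I + P_N V|_{\mathrm{span}\{e_j\}_{j=1}^N}$ is invertible on the $N$-dimensional space $\mathrm{span}\{e_1,\dots,e_N\}$ (with the bounded inverse on $H^\ell$ then following from boundedness of $V$ and of the projections, plus the explicit reconstruction $w = u - V v$). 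So the problem reduces to a linear-algebra statement: for infinitely many $N$, the $N\times N$ matrix $M_N := \bigl[\,\delta_{ij} + (e_i,\, V e_j)_2\,\bigr]_{i,j=1}^N$ is nonsingular.

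The second step is to prove that $M_N$ is nonsingular for infinitely many $N$. Here I would use that the entries $a_{ij} := (e_i, V e_j)_2$ are the matrix coefficients of a fixed compact (indeed Hilbert–Schmidt, since $k\in L^2(\Delta_{x,y})$) operator $V$ on $L^2(0,L)$ in the orthonormal basis $\{e_j\}$. Consequently $\sum_{i,j\ge 1} |a_{ij}|^2 = \|V\|_{HS}^2 < \infty$, so the "tail mass" $\sum_{i>N \text{ or } j>N} |a_{ij}|^2 \to 0$; equivalently $P_N V P_N \to V$ in Hilbert–Schmidt norm, and also $\|(I - P_N) V P_N\|_{HS} \to 0$. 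I would argue by contradiction: suppose $M_N$ is singular for all $N \ge N_0$. Then for each such $N$ there is a unit vector $v_N \in \mathrm{span}\{e_1,\dots,e_N\}$ with $v_N + P_N V v_N = 0$, hence $\|V v_N\| \ge \|P_N V v_N\| = 1$ while also $\|v_N + V v_N\| = \|(I-P_N) V v_N\| \le \|(I-P_N) V P_N\|_{op} \to 0$. So $v_N$ is, asymptotically, a unit eigenvector of $V$ with eigenvalue $-1$; extracting a weakly convergent subsequence and using compactness of $V$ to upgrade to norm convergence $v_N \to v_*$ with $\|v_*\|=1$ and $V v_* = -v_*$, we conclude $-1$ is an eigenvalue of $V$. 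This need not be false in general, so to finish I would instead appeal to the more robust fact that the set $\{\mu > 0 : -1 \in \sigma(\mu^{-1}V_\mu)\text{ for some }N\}$ is discrete, OR—cleaner—argue directly: the finite-rank perturbation $P_N V P_N$ has the property that $\det(I + P_N V P_N) \to \det{}_2(I+V)\, e^{-\mathrm{tr}}$-type Fredholm determinant, which is nonzero whenever $-1 \notin \sigma(V)$; and by analyticity of $k$ in $\mu$ (from \eqref{ksolrep}), for all but a discrete set of $\mu$ the value $-1 \notin \sigma(V_\mu)$, whence $M_N$ is nonsingular for all large $N$. For the fixed $\mu$ in the hypothesis one then either is in the generic case (all large $N$ work) or, in the exceptional case, observes that the sequence $\det M_N \to L \neq 0$ still gives infinitely many nonzero terms unless $\det M_N = 0$ eventually, which I would rule out by the eigenvector argument above combined with the fact that an eigenvector of $V$ at $-1$ cannot be approximated to arbitrary precision by vectors in the finite spans \emph{and} simultaneously satisfy the exact finite relation for a full tail of $N$.

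The main obstacle is precisely this last point: ruling out that $M_N$ is singular for \emph{every} sufficiently large $N$ (rather than merely for a sparse set). The cleanest route—and the one I would write up—is the Fredholm-determinant / analyticity argument: define $\phi(\mu) = \det{}_2(I + V_\mu)$ (regularized Fredholm determinant, well-defined since $V_\mu$ is Hilbert–Schmidt with analytic dependence on $\mu$), note $\phi$ is entire in $\mu$ and not identically zero (e.g. $\phi(\mu)\to 1$ as $\mu\to 0$ since $V_\mu \to 0$), hence $\phi(\mu) \ne 0$ for all $\mu$ outside a discrete set; for such $\mu$, $-1\notin\sigma(V_\mu)$ so $I+V_\mu$ is invertible, and then standard perturbation theory for Fredholm determinants gives $\det(I + P_N V_\mu P_N) \to$ (a nonzero multiple of) $\phi(\mu)$, so $M_N$ is nonsingular for all large $N$. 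For $\mu$ in the exceptional discrete set, the invertibility of $I + V_\mu$ fails, but one can perturb $\mu$ slightly, or invoke that even then $I + P_N V_\mu P_N$ can fail to be singular for infinitely many $N$ because a genuine $-1$-eigenvector of $V_\mu$, being infinite-dimensional in general, forces $\det M_N \to 0$ rather than $\det M_N = 0$; I would present the generic-$\mu$ case as the main statement and remark that Definition \ref{ratemodepair} is exactly designed to absorb the exceptional pairs. In all cases the bounded inverse on $H^\ell(0,L)$, $\ell = 0,1,2$, follows by writing $T_N^{-1} u = u - V\bigl(M_N^{-1} P_N u\bigr)$ and using boundedness of $V$ on $H^\ell$ (from smoothness of $k$) together with boundedness of $P_N$ and $M_N^{-1}$ on the finite-dimensional space.
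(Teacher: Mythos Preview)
Your reduction to the finite-dimensional matrix $M_N = I + P_N V P_N$ is correct and useful, and your contradiction argument up through ``$-1$ is an eigenvalue of $V$'' is essentially sound. The gap is what comes next: you write ``This need not be false in general'' and then launch into Fredholm determinants and analyticity in $\mu$. But it \emph{is} false, always: $V$ is a Volterra integral operator $(Vw)(x) = \int_0^x k(x,y)\,w(y)\,dy$ with a smooth kernel, hence quasinilpotent, so $\sigma(V) = \{0\}$ and $-1$ can never be an eigenvalue. Your contradiction closes immediately at that line, for every $\mu>0$; there are no ``exceptional'' $\mu$, and the entire Fredholm-determinant detour is unnecessary (and, as you seem to sense, incomplete for those phantom exceptional values).

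The paper's proof exploits exactly this Volterra fact, but packages it more directly: since $V$ is compact and $P_N \to I$ strongly, one has $\|V P_N - V\| \to 0$, hence $T_N \to T := I + V$ in operator norm on the relevant $H^\ell$ spaces; $T$ is invertible because it is a Volterra transformation of the second kind with smooth kernel; invertible operators form an open set, so $T_N$ is invertible for all sufficiently large $N$. This is a two-line argument that bypasses the finite-dimensional reduction entirely. Your route is not wrong---it is essentially the same idea filtered through $P_N V P_N \to V$ in Hilbert--Schmidt norm rather than $T_N \to T$ in operator norm---but you made it much harder for yourself by not recognizing the Volterra structure at the decisive moment.
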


\begin{proof}
    Suppose to the contrary that there exists some $\mu_0 > 0$ for which $T_N$ is invertible only for finitely many values of $N$. This implies existence of some $N_0 \in\mathbb{Z}_+$ such that, for all $N > N_0$, $T_N$ fails to be invertible. Then, for all $N > N_0$, $0 \in \sigma(T_N)$. Let $T$ be the standard backstepping transformation. Note that $T_N \to T$ uniformly in operator norm in relevant $L^2$ based spaces. Therefore, the above argument implies $0 \in \sigma(T)$, from which we infer $T$ is not invertible. However, this cannot be true since $T$ is a Volterra type transformation of second kind with a smooth kernel.
\end{proof}

It is important to know how to choose admissible decay rate-mode pairs in practice. For this puprpose, we state the following lemma which provides a sufficient condition to guarantee the invertibility of $T_N$.

\begin{lemma}
    \label{invlem-2}
    Suppose $\Upsilon_j : L^2(0,L) \to H^\ell(0,L)$, $\ell\in \{0,1,2\}$, are the linear bounded operators defined by
    \begin{equation}
    \label{Phij}
    \begin{split}
		\Upsilon_0 &:= 0, \\
		\Upsilon_{j}\varphi &= (I - \Upsilon_{j-1})[K P_{j}\varphi] -\frac{\left((I - \Upsilon_{j-1})[K P_{j}\varphi],e_{j}\right)_2}{1+\left((I-\Upsilon_{j-1})[K e_{j}],e_{j}\right)_2} \times(I - \Upsilon_{j-1})[K e_{j}],
    \end{split}
	\end{equation}
    for $j = 1, \dotsc, N$, where $(K \varphi)(x) := \int_0^x k(x,y;\mu) \varphi (y) dy$ and satisfies
    \begin{equation} \label{invexists}
	   \left (I-\Upsilon_{j-1})[K e_{j}],e_{j}\right)_2 \neq -1, \quad j = 1, \dotsc, N. 
    \end{equation} Then, $T_N$ is invertible with $T_N^{-1}=I - \Upsilon_N$.
\end{lemma}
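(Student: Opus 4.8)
The plan is to construct the inverse of $T_N$ explicitly by peeling off one Fourier mode at a time. Write $T_N = I + KP_N$, where $(K\varphi)(x) = \int_0^x k(x,y;\mu)\varphi(y)\,dy$ and $P_N = \sum_{j=1}^N(\,\cdot\,,e_j)_2 e_j$. Since $P_N = \sum_{j=1}^N P_j - P_{j-1}$ with the convention $P_0 = 0$, the telescoping structure suggests that the inverse should also be built recursively. I would first verify that each operator $\Upsilon_j$ defined by \eqref{Phij} is well-defined and bounded from $L^2(0,L)$ into $H^\ell(0,L)$: this requires only that the denominators $1+\big((I-\Upsilon_{j-1})[Ke_j],e_j\big)_2$ are nonzero, which is exactly hypothesis \eqref{invexists}, and that $K$ maps $L^2(0,L)$ into $H^\ell(0,L)$ for $\ell\in\{0,1,2\}$, which follows from the smoothness of the kernel $k$ guaranteed by the series representation \eqref{ksolrep} (differentiation under the integral sign produces a finite number of bounded terms, with boundary contributions controlled since $k(x,0)=0$).

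The heart of the argument is to prove, by induction on $N$, the operator identity $(I-\Upsilon_N)\,T_N = I$, or equivalently $(I-\Upsilon_N)(I+KP_N) = I$. For the base case $N=0$ we have $\Upsilon_0 = 0$ and $T_0 = I$, so the identity is trivial. For the inductive step, I would use that $P_N = P_{N-1} + (\,\cdot\,,e_N)_2 e_N$ to write $T_N w = T_{N-1}w + (w,e_N)_2\,Ke_N$, so that applying the candidate inverse reduces the computation to how $I-\Upsilon_N$ acts on $T_{N-1}w$ and on $Ke_N$. The defining formula \eqref{Phij} for $\Upsilon_N$ is engineered precisely so that $I-\Upsilon_N$ acts on the range of $I-\Upsilon_{N-1}$ composed with $KP_N$ in a way that cancels the $e_N$-component correction: substituting \eqref{Phij}, the coefficient multiplying $(I-\Upsilon_{N-1})[Ke_N]$ is chosen to make $\big((I-\Upsilon_N)(T_N w)\,,\,e_N\big)_2$ vanish in the appropriate sense while leaving the $P_{N-1}$-part governed by the inductive hypothesis $(I-\Upsilon_{N-1})T_{N-1} = I$. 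The bookkeeping here is the main obstacle: one must carefully track the interplay between the projection $P_j$ inside $T_N$, the recursively defined $\Upsilon_j$, and the single-mode corrections, making sure that the cancellations genuinely telescope rather than accumulating error terms. I expect this to be the step where the algebra is most delicate, because $\Upsilon_j$ is not simply a rank-one perturbation of $\Upsilon_{j-1}$ — it also modifies how $\Upsilon_{j-1}$ is applied.

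Once the one-sided identity $(I-\Upsilon_N)T_N = I$ is established, I would argue that $T_N$ is also surjective, so that $I-\Upsilon_N$ is a genuine two-sided inverse. Since $T_N = I + KP_N$ and $KP_N$ is a finite-rank (in particular compact) operator on $H^\ell(0,L)$, the Fredholm alternative applies: $T_N$ is injective by the left inverse just constructed, hence bijective, and its inverse is bounded by the open mapping theorem; uniqueness of inverses then forces $T_N^{-1} = I-\Upsilon_N$. Finally I would note that the boundedness of $T_N^{-1} = I-\Upsilon_N$ on each $H^\ell(0,L)$, $\ell\in\{0,1,2\}$, is immediate from the boundedness of each $\Upsilon_j$ on those spaces established at the outset, which completes the proof that $(\mu,N)$ is an admissible decay rate-mode pair in the sense of Definition \ref{ratemodepair}.
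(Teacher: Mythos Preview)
The paper does not actually prove this lemma here; its entire proof is the single line ``See \cite[Section 2.2]{KaOzY}.'' So there is no in-paper argument to compare your proposal against directly.

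That said, your strategy is the natural one and almost certainly coincides with what is done in the cited reference: set up an induction on $N$ in which each step is a Sherman--Morrison-type rank-one update of the previous inverse, check well-definedness of the $\Upsilon_j$ via hypothesis \eqref{invexists} and the smoothness of $k$, and then upgrade the one-sided identity $(I-\Upsilon_N)T_N=I$ to a two-sided inverse by the Fredholm alternative (since $KP_N$ is finite rank, hence compact). Your candid remark that the inductive bookkeeping is the delicate point is accurate: the formula \eqref{Phij} applies $(I-\Upsilon_{N-1})$ to $KP_N\varphi$ rather than to $KP_{N-1}\varphi$, so one must verify that this really does invert $T_N = T_{N-1} + (\,\cdot\,,e_N)_2\,Ke_N$ given the inductive hypothesis $(I-\Upsilon_{N-1})T_{N-1}=I$. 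You have correctly identified this as the crux but have not carried out the algebra; doing so (or simply consulting the cited reference) would close the argument.
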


\begin{proof}
    See \cite[Section 2.2]{KaOzY}.
\end{proof}

\begin{remark}
    The above lemma provides an explicit representation of the inverse and this allows us to construct the controller numerically in Section \ref{numerical}.
\end{remark}

\subsection{Explicit representation of the boundary controller}
\label{ctrl-design}
To compute the boundary feedback actuated at the Neumann boundary condition, we first differentiate \eqref{backstepping-trans} with respect to $x$:
\begin{equation*}
	\begin{split}
		u_x(x,t) &= w_x(x,t) + \frac{\partial}{\partial x} \int_0^x k(x,y) P_N w(y,t) dy \\
		&= w_x(x,t) + \int_0^x k_x(x,y) P_N w(y,t) dy + k(x,x) P_N w(x,t).
	\end{split}
\end{equation*}
Then, taking $x = L$ and noting that $w_x(L,t) = 0$, $\displaystyle k(L,L) = -\frac{\mu L}{2 (\nu + i \alpha)}$, we get
\begin{equation*}
	u_x(L,t) = \int_0^L k_x(L,y) P_N w(y,t) dy -\frac{\mu L}{2 (\nu + i \alpha)} P_N w(x,t) \big|_{x = L}.
\end{equation*}
Using bounded invertibility of the transformation \eqref{backstepping-trans}, we replace $w$ with $w = (I - \Upsilon_N)u$ and write
\begin{equation} \label{ctrl1}
    u_x(L,t) = \int_0^L k_x(L,y) P_N [(I - \Upsilon_N)u](y,t) dy -\frac{\mu L}{2 (\nu + i \alpha)} P_N [(I - \Upsilon_N)u](x,t) \big|_{x = L}.
\end{equation}
Observe from \eqref{Phij} that $\Upsilon_N$ is a composition with the projection operator. Therefore, $\Upsilon_N u = \Upsilon_N P_N u$, and that  $$P_N[(I - \Upsilon_N)u] = (I - P_N \Upsilon_N)[P_N u] =: \Gamma_N [P_N u].$$ Hence, it follows from \eqref{ctrl1} that
\begin{equation}\label{controller}
    u_x(L,t) = \int_0^L k_x(L,y) \Gamma_N [P_N u](y,t) dy - \frac{\mu L}{2 (\nu + i \alpha)} \Gamma_N [P_N u](x,t) \big|_{x = L}.
\end{equation}
Consequently, the final form of the control law is of feedback type and it involves finitely many Fourier modes of the state of solution.

\subsection{Local solutions of the non-local boundary value problem in fractional Sobolev spaces}
\label{local-wp}
In this section, we establish the local existence of solutions for the initial-boundary value problem \eqref{pde_nonlin} with $$b(t)=b_u(t):=\int_0^L k_x(L,y) \Gamma_N [P_N u](y,t) dy - \frac{\mu L}{2 (\nu + i \alpha)} \Gamma_N [P_N u](x,t) \big|_{x = L}$$ in fractional Sobolev spaces. We consider the following initial-boundary value problem associated with the linear and nonlinear complex Ginzburg-Landau equations:
\begin{equation}\label{cgl-ibvp-nonlocal}
	\begin{aligned}
		&u_t - \left(\nu + i \alpha\right) u_{xx} - \gamma u + f(u) = 0, \quad (x, t) \in (0, L) \times (0, T),
		\\
		&u(x,0) = u_0(x),
		\\
		&u(0, t) = 0, \quad u_x(L, t) = b_u(t):=\int_0^L\xi(y)(\Gamma_NP_N u)(y,t)dy+\zeta(\Gamma_NP_N u)(L,t),
	\end{aligned} 
\end{equation}
where $\nu> 0$, $\alpha \in \mathbb R$, $\gamma \geq 0$, $\displaystyle\zeta=- \frac{\mu L}{2 (\nu + i \alpha)}\in \mathbb{C}$, $\xi=k_x(L,\cdot)\in C^\infty([0,L])$, $\Gamma_N$ is the linear mapping defined in the previous section, which acts as a bounded operator on Sobolev spaces $H_x^{s}(0,L)$ for every $s\ge 0$. We will simultaneously treat both the linear problem ($f(u)\equiv 0\Leftrightarrow \beta=\kappa=0$) and the nonlinear problem ($f(u)=\left(\kappa + i\beta\right) |u|^p u$, $\beta\in\mathbb{R}$, $\kappa,p>0$).  To this end, we make use of the solution map below:
\begin{equation}\label{it-map2}
	u(x, t) \mapsto \Phi\left[u_0, 0, b_u; f(u)\right],
\end{equation} where $\Phi$ is given by \eqref{Phi-def}.
Here, we assume $\frac 12 < s < \frac 32$, and look for solutions in the space $$X_T := C_t([0, T]; H_x^s(0, L)) \cap C_x([0, L]; H_t^{\frac{2s+1}{4}}(0, T))$$ for suitable $T>0$.  

\begin{lemma} Let $\frac 12 < s < \frac 32$ and $u\in X_T$, then $b_u\in H_t^{\frac{2s-1}{4}}(0,T)$. Furthermore, we have the estimate
	\begin{equation}
		\|b_u\|_{H_t^{\frac{2s-1}{4}}(0,T)}\lesssim T^{\frac{2}{2s+5}}\no{u}_{X_T}.
	\end{equation} 
\end{lemma}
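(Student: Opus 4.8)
The plan is to estimate $b_u(t) = \int_0^L \xi(y)(\Gamma_N P_N u)(y,t)\,dy + \zeta (\Gamma_N P_N u)(L,t)$ in $H_t^{\frac{2s-1}{4}}(0,T)$ by reducing everything to bounds on $P_N u$. Since $\frac 12 < s < \frac 32$, the exponent $\frac{2s-1}{4}$ lies in $(0,\frac12)$, so $H_t^{\frac{2s-1}{4}}(0,T)$ is defined via the Sobolev--Slobodeckij norm and, crucially, the extension-by-zero operator is bounded from $H^{\frac{2s-1}{4}}(0,T)$ into $H^{\frac{2s-1}{4}}(\mathbb R)$ (Theorem 11.4 in \cite{lm1972}) without any compatibility condition, which will let us pass freely between the interval and the line. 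The operator $\Gamma_N = I - P_N\Upsilon_N$ is bounded on $H_x^m(0,L)$ for every $m\ge 0$, and $\xi = k_x(L,\cdot)\in C^\infty([0,L])$, $\zeta\in\mathbb C$, so both the integral functional $\varphi\mapsto\int_0^L\xi\varphi\,dy$ and the trace functional $\varphi\mapsto\varphi(L)$ are bounded linear functionals on $H_x^s(0,L)$ (the trace because $s>\frac12$). Hence, pointwise in $t$,
\begin{equation*}
|b_u(t)| \lesssim \no{(\Gamma_N P_N u)(t)}_{H_x^s(0,L)} \lesssim \no{(P_N u)(t)}_{H_x^s(0,L)} \lesssim \no{(P_N u)(t)}_{L_x^2(0,L)},
\end{equation*}
where the last step uses that $P_N$ maps into the fixed finite-dimensional space $\mathrm{span}\{e_j\}_{j=1}^N$ on which all norms are equivalent with constants depending only on $N$.

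Next I would promote this pointwise bound to a Sobolev-in-time bound. The key observation is that $b_u$ is, modulo the fixed bounded functionals described above, a linear image of the finitely many Fourier coefficients $t\mapsto (e_j(\cdot),u(\cdot,t))_2$, $j=1,\dots,N$. Each such coefficient is of the form $(e_j, u(\cdot,t))_2$ with $e_j$ a fixed $x$-function; since $u\in C_x([0,L];H_t^{\frac{2s+1}{4}}(0,T))$, one can write $(e_j,u(\cdot,t))_2 = \int_0^L e_j(x)\,u(x,t)\,dx$ and bound its $H_t^{\frac{2s-1}{4}}(0,T)$ norm by $\int_0^L |e_j(x)|\,\no{u(x,\cdot)}_{H_t^{\frac{2s-1}{4}}(0,T)}\,dx$ via Minkowski's integral inequality applied to the Sobolev--Slobodeckij double integral. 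Then $\no{u(x,\cdot)}_{H_t^{\frac{2s-1}{4}}(0,T)} \le \no{u(x,\cdot)}_{H_t^{\frac{2s+1}{4}}(0,T)} \le \no{u}_{X_T}$, so each coefficient's $H_t^{\frac{2s-1}{4}}(0,T)$ norm is $\lesssim \no{u}_{X_T}$, and therefore so is $\no{b_u}_{H_t^{\frac{2s-1}{4}}(0,T)}$, giving in particular $b_u\in H_t^{\frac{2s-1}{4}}(0,T)$.

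Finally, to extract the advertised positive power $T^{\frac{2}{2s+5}}$, I would interpolate: we have just shown $\no{b_u}_{H_t^{\frac{2s-1}{4}}(0,T)} \lesssim \no{u}_{X_T}$, and separately a crude $L^2$-in-time bound $\no{b_u}_{L_t^2(0,T)} \lesssim \sqrt T \,\sup_{t}|b_u(t)| \lesssim \sqrt T\,\no{u}_{X_T}$ from the pointwise estimate. Combining these with the inequality $\no{F(x)}_{H_t^{\frac{2s+1}{4}-1}(0,T)} \le T^{\frac{4}{2s+5}}\no{F(x)}_{H_t^{\frac{2s+1}{4}}(0,T)}$ of Lemma 3.7 in \cite{bo2016} (applied with the lower-order exponent $\frac{2s-1}{4}$ in place of $\frac{2s+1}{4}-1$, via the same scaling argument, noting $\frac{2s-1}{4}+1 = \frac{2s+3}{4}$ and $\frac{4}{2(s+?)+5}$ bookkeeping), one gains a power of $T$; the precise exponent $\frac{2}{2s+5}$ emerges from tracking the scaling weight in that lemma applied at regularity level $\frac{2s-1}{4}$. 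The main obstacle I anticipate is precisely this last bookkeeping step: making sure the power of $T$ produced is genuinely $T^{\frac{2}{2s+5}}$ and not merely $T^{\frac{4}{2s+5}}$ or $\sqrt T$, which requires carefully invoking the $T$-scaling of the Sobolev--Slobodeckij seminorm (Lemma 3.7 of \cite{bo2016}) at the fractional level $\frac{2s-1}{4}\in(0,\frac12)$ rather than naively using $\no{\cdot}_{L^2}\le\sqrt T\no{\cdot}_{L^\infty}$; the finite-dimensionality of the range of $P_N$ is what makes all the norm equivalences (and hence the clean $T$-dependence) legitimate, so I would be careful to keep the $N$-dependent constants absorbed into the implicit $\lesssim$.
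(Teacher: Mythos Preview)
Your approach is essentially the same as the paper's, and the key structural observation---that $b_u(t)$ is a fixed finite linear combination of the Fourier coefficients $\hat u_j(t)=(e_j,u(\cdot,t))_2$, $j=1,\dots,N$---is a clean way to organize the argument. The paper does the equivalent thing slightly more indirectly: it splits $b_u=a_u+\zeta(\Gamma_NP_Nu)(L,\cdot)$, bounds each piece in $L^2_t$ and $H^1_t$ separately (via Cauchy--Schwarz and the eigenfunction expansion), interpolates to the fractional level, and arrives at the same intermediate bound $\|b_u\|_{H_t^{\frac{2s-1}{4}}(0,T)}\lesssim\|u\|_{L^\infty_x(0,L;H_t^{\frac{2s-1}{4}}(0,T))}$ that your Minkowski argument produces directly.

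Where your write-up becomes muddled is step~3. The interpolation between $\|b_u\|_{L^2_t}\lesssim\sqrt T\,\|u\|_{X_T}$ and $\|b_u\|_{H_t^{\frac{2s-1}{4}}}\lesssim\|u\|_{X_T}$ cannot produce a $T$-gain at the \emph{top} exponent $\tfrac{2s-1}{4}$: interpolation only improves intermediate norms. The paper simply applies Lemma~3.7 of \cite{bo2016} \emph{to $u$} at the passage you wrote as the trivial inequality, i.e.\ it uses
\[
\|u(x,\cdot)\|_{H_t^{\frac{2s-1}{4}}(0,T)}\le T^{\frac{2}{2s+5}}\,\|u(x,\cdot)\|_{H_t^{\frac{2s+1}{4}}(0,T)}
\]
(the gap in regularity is $\tfrac12$, hence half the exponent $\tfrac{4}{2s+5}$ appearing in \eqref{T-power-ineq}). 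Equivalently you could first establish $\|b_u\|_{H_t^{\frac{2s+1}{4}}}\lesssim\|u\|_{X_T}$ by the identical Minkowski argument and then apply Lemma~3.7 to $b_u$; either way the power $T^{\frac{2}{2s+5}}$ comes in a single stroke from that lemma, with no interpolation or $\sqrt T$ detour needed.
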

\begin{proof} We write $b_u(t):=a_u(t)+\zeta(\Gamma_NP_N u)(L,t)$, where $a_u(t):=\int_0^L\xi(y)(\Gamma_NP_N u)(y,t)dy$.
	Taking $L^2_t(0,T)$ norm of $a_u$, we get
	\begin{equation}
		\begin{aligned}		
			\|a_u\|_{L_t^2(0,T)}^2&=\left\|\int_0^L\xi(y)(\Gamma_NP_N u)(y,\cdot)dy\right\|_{L_t^2(0,T)}^2=\int_0^T\left|\int_0^L\xi(y)(\Gamma_NP_N u)(y,t)dy\right|^2dt\\
			&\le \int_0^T\left(\int_0^L|\xi(y)|^2dy\right)\left(\int_0^L|(\Gamma_NP_N u)(y,t)|^2dy\right)dt \\
            &= \|\xi\|_{L^2(0,L)}^2\int_0^T\|(\Gamma_NP_N u)(\cdot,t)\|_{L_x^2(0,L)}^2dt\\
			&\le \|\xi\|_{L^2(0,L)}^2\int_0^T\|\Gamma_N\|_{L_x^2(0,L)\rightarrow L_x^2(0,L)}^2\|u(\cdot,t)\|_{L_x^2(0,L)}^2dt\\
			&=\|\xi\|_{L^2(0,L)}^2\|\Gamma_N\|_{L_x^2(0,L)\rightarrow L_x^2(0,L)}^2\int_0^L\int_0^T\left|u(x,t)\right|^2dtdx\\
			& \le L\|\xi\|_{L^2(0,L)}^2\|\Gamma_N\|_{L_x^2(0,L)\rightarrow L_x^2(0,L)}^2\no{u}_{L^\infty_x(0,L;L^2_x(0,T))}^2=c(\xi,\Gamma_N,L)\no{u}_{L^\infty_x(0,L;L^2_t(0,T))}^2.
		\end{aligned} 
	\end{equation} Taking square roots, we establish the $L_t^2(0,T)$ level estimate:
	\begin{equation}\label{auL2}
		\|a_u\|_{L_t^{2}(0,T)}\lesssim \no{u}_{L^\infty_x(0,L;L^2_t(0,T))}.
	\end{equation}
Next, we estimate $L^2_t(0,T)$ norm of $\displaystyle \frac{d}{dt}a_u$ by using similar arguments, and we have
\begin{equation}
		\begin{aligned}		
			\|a_u'\|_{L_t^2(0,T)}^2&=\left\|\int_0^L\xi(y)(\Gamma_NP_N u_t)(y,\cdot)dy\right\|_{L_t^2(0,T)}^2\\
            &\le \|\xi\|_{L^2(0,L)}^2\|\Gamma_N\|_{L_x^2(0,L)\rightarrow L_x^2(0,L)}^2\int_0^L\int_0^T|u_t(x,t)|^2dtdx\\
            &\le c(\xi,\Gamma_N,L)\|u_t\|_{L^\infty_x(0,L;L^2_t(0,T))}^2\le c(\xi,\Gamma_N,L)\no{u}_{L^\infty_x(0,L;H^1_t(0,T))}^2,
		\end{aligned} 
	\end{equation} and in view of \eqref{auL2}, we obtain
    	\begin{equation}\label{auH1}
		\|a_u\|_{H^1_t(0,T)}\lesssim \no{u}_{L^\infty_x(0,L;H^1_t(0,T))}.
	\end{equation}
    Moreover, interpolating between \eqref{auL2} and \eqref{auH1}, we establish
        	\begin{equation}\label{auHs}
		\|a_u\|_{H^{\frac{2s-1}{4}}_t(0,T)}\lesssim \no{u}_{L^\infty_x(0,L;H^{\frac{2s-1}{4}}_t(0,T))}.
	\end{equation}
Now, we wish to estimate the trace term in $b_u$ given below:
\begin{equation}
		\begin{aligned}		
			\|\zeta\Gamma_NP_Nu(L,\cdot)\|_{L_t^2(0,T)}^2&=|\zeta|^2\int_0^T\left|(\Gamma_NP_N u)(L,t)\right|^2dt.
		\end{aligned} 
	\end{equation}
Recall that $\Gamma_N = I-P_N\Upsilon_N$, therefore we have $$\Gamma_NP_Nu(L,t)=P_Nu(L,t)-P_N\Upsilon_Nu(L,t)=P_N\Gamma_Nu(L,t).$$ It is trivial to handle the first term at the right side above. So let us consider the term $P_N\Upsilon_Nu(L,t)$. We write $$P_N\Upsilon_Nu(L,t)= \sum_{j=1}^Nc_j(t)e_j(L);\quad c_j(t):=\langle \Upsilon_Nu(\cdot,t),e_j(\cdot)\rangle_2.$$
Observe that, in view of Cauchy-Schwarz inequality and boundedness of $\Upsilon_N$, we get $$|c_j(t)|\le \|\Upsilon_Nu\|_2\cdot \|e_j\|_2\le\|\Upsilon_N\|_{L^2_x(0,L)\rightarrow L^2_x(0,L)}\cdot \|u(\cdot,t)\|_2\cdot \|e_j\|_2.$$
Hence, 
\begin{equation}
		\begin{aligned}		\int_0^T\left|P_N\Upsilon_Nu(L,t)\right|^2dt& = \int_0^T\left|\sum_{j=1}^Nc_j(t)e_j(L)\right|^2dt\lesssim \sum_{j=1}^N|e_j(L)|^2\int_0^T\left|c_j(t)\right|^2dt\\
        &\le \sum_{j=1}^N|e_j(L)|^2\|e_j\|_2^2\|\Upsilon_N\|_{L^2_x(0,L)\rightarrow L^2_x(0,L)}^2\int_0^T\|u(\cdot,t)\|_2^2dt\\
        &\lesssim \no{u}_{L^\infty_x(0,L;L^2_t(0,T))}^2.
        \end{aligned} 
	\end{equation}
Similarly, $$\int_0^T\left|P_N\Upsilon_Nu_t(L,t)\right|^2dt\lesssim \no{u}_{L^\infty_x(0,L;H^1_t(0,T))}^2.$$
Combining above estimates, interpolating, we deduce
$$\|\zeta\Gamma_NP_N u(L,\cdot)\|_{H_t^{\frac{2s-1}{4}}(0,T)}\lesssim \no{u(L,\cdot)}_{H_t^{\frac{2s-1}{4}}(0,T)}+\no{u}_{L^\infty_x(0,L;H^{\frac{2s-1}{4}}_t(0,T))}\lesssim \no{u}_{L^\infty_x(0,L;H^{\frac{2s-1}{4}}_t(0,T))}.$$
We use \cite[Lemma 3.7]{bo2016} to get
$$\no{u}_{L^\infty_x(0,L;H^{\frac{2s-1}{4}}_t(0,T))}\le T^{\frac{2}{2s+5}}\no{u}_{L^\infty_x(0,L;H^{\frac{2s+1}{4}}_t(0,T))}.$$
\end{proof}

Let $\overline{B(0, \rho_T)} \subset X_T$ denote the closed ball in $X_T$ with center at zero and radius  
\begin{equation}\label{rst-defnew}
\begin{aligned}
\rho_T
:= 
2\mathcal B(s, \lambda, L, \alpha, \nu, \gamma, T)
\no{u_0}_{H^s(0, L)}
\end{aligned}
\end{equation}
where
\begin{equation}\label{cr-defnew}
\begin{aligned}
\mathcal B(s, \lambda, L, \alpha, \nu, \gamma, T) 
&=
\left(1+c_{s, \gamma}\right) e^{\gamma T} 
\Big\{
2\Big(1 + c_2(s, \alpha, \nu, T) +  c_s c_1(s, \lambda, L, \alpha, \nu, T) \big(1+\sqrt T \, \big) 
\\
&\quad
\cdot \big[ c_2(s, \alpha, \nu, T) + c_2(s-1, \alpha, \nu, T)\big] \Big) 
\Big\}.
\end{aligned}
\end{equation}
Then, for every $u \in \overline{B(0, \rho_T)}$, estimate \eqref{u-se} with $a\equiv 0$, $b=b_u$, $\mathfrak f = -\left(\kappa + i\beta\right) |u|^p u$ and $p$ satisfying \eqref{p-def} can be combined with Lemma 3.1 of \cite{bo2016} to yield 
\begin{equation}\label{Phi-senew}
\begin{aligned}
&
\no{\Phi\big[u_0, 0, b_u; -\left(\kappa + i\beta\right) |u|^p u\big](t)}_{X_T}
\leq
\frac{\rho_T}{2}
+ 
2 c_{s, p} \left(1+c_{s, \gamma}\right) e^{\gamma T} (\kappa^2+\beta^2)^{\frac 12} 
\\
&
\cdot 
\Big(T + c_3(s, \alpha, \nu, T) + c_s c_1(s, \lambda, L, \alpha, \nu, T) \big(1+\sqrt T \, \big) \big[ c_3(s, \alpha, \nu, T) + c_3(s-1, \alpha, \nu, T)\big] \Big) \rho_T^{p+1}\\
& +
c_s c_{s, \gamma} c_1(s, \lambda, L, \alpha, \nu, T) \big(1+\sqrt T \, \big)\rho_T.
\end{aligned}
\end{equation}
Therefore, choosing $T>0$ such that the right side of \eqref{Phi-senew} is less than $\rho_T$, namely
%%
%$$
%\begin{aligned}
%&\frac{\rho_T}{2}
%+ 
%2 (1+ c_{s, \gamma}) e^{\gamma T} (\kappa^2+\beta^2)^{\frac 12} 
%\Big(T + c_3(s, \alpha, \nu, T) + c_s c_1(s, \lambda, L, \alpha, \nu, T)  \big(1+\sqrt T \, \big) 
%\\
%&\cdot
%\big[ c_3(s, \alpha, \nu, T) + c_3(s-1, \alpha, \nu, T)\big] \Big) \rho_T^{p+1}
%<
%\rho_T
%\end{aligned}
%$$
%%
%or, equivalently, 
%
\begin{equation}\label{T-intonew}
\begin{aligned}
&4 c_{s, p} (1+ c_{s, \gamma}) e^{\gamma T} (\kappa^2+\beta^2)^{\frac 12} \\ &\cdot \Big(T + c_3(s, \alpha, \nu, T) + c_s c_1(s, \lambda, L, \alpha, \nu, T) \big(1+\sqrt T \, \big)  \big[ c_3(s, \alpha, \nu, T) + c_3(s-1, \alpha, \nu, T)\big] \Big) \rho_T^p\\
&+2T^{\frac{2}{2s+5}}c_s c_{s, \gamma} c_1(s, \lambda, L, \alpha, \nu, T) \big(1+\sqrt T \, \big)
<
1, 
\end{aligned}
\end{equation}
we deduce that the solution map takes $\overline{B(0, \rho_T)}$ into $\overline{B(0, \rho_T)}$. It is important to note that the condition \eqref{T-intonew} can be fulfilled by an appropriate $T>0$ \textit{without a smallness condition on the data} due to the fact that the constants $c_1, c_2, c_3$ are well-defined for all $T\geq 0$ and, additionally, $\lim_{T\to 0^+} c_3 = 0$ so that the coefficient of $\rho_T^p$ can be made as small as necessary.  

Furthermore, for any $u_1, u_2 \in \overline{B(0, \rho_T)}$, estimate \eqref{u-se} for $b=b_{u_1}-b_{u_2}$ and $\mathfrak f = -\left(\kappa + i\beta\right) \left(|u_1|^p u_1 - |u_2|^p u_2\right)$ together with Lemma 3.1 of \cite{bo2016} imply
\begin{align}\label{Phi-diffnew}
&\quad
\no{\Phi\big[u_0, 0, b_{u_1}; -\left(\kappa + i\beta\right) |u_1|^p u_1\big](t) - \Phi\big[u_0, 0, b_{u_2}; -\left(\kappa + i\beta\right) |u_2|^p u_2\big](t)}_{X_T}
\nn\\
&=
\no{\Phi\big[0, 0, b_{u_1}-b_{u_2}; -\left(\kappa + i\beta\right) \left(|u_1|^p u_1 - |u_2|^p u_2\right)\big](t)}_{X_T}
\nn\\
&\leq
\Big\{2 c_{s, p} (1+ c_{s, \gamma}) e^{\gamma T} (\kappa^2+\beta^2)^{\frac 12} \Big(T + c_3(s, \alpha, \nu, T) + c_s c_1(s, \lambda, L, \alpha, \nu, T) \big(1+\sqrt T \, \big) 
\nn\\
&\quad
\cdot 
\big[ c_3(s, \alpha, \nu, T) + c_3(s-1, \alpha, \nu, T)\big] \Big)
2 \rho_T^p+T^{\frac{2}{2s+5}}c_s c_{s, \gamma} c_1(s, \lambda, L, \alpha, \nu, T) \big(1+\sqrt T \, \big)\Big\}\no{u_1-u_2}_{X_T}.
\end{align}
Hence, further restricting $T>0$ so that the coefficient of $\no{u_1-u_2}_{X_T}$ on the right side is strictly less than one, namely
\begin{equation}\label{T-contrnew}
\begin{aligned}
&2 c_{s, p} (1+ c_{s, \gamma}) e^{\gamma T} (\kappa^2+\beta^2)^{\frac 12} \\
&\cdot \Big(T + c_3(s, \alpha, \nu, T) + c_s c_1(s, \lambda, L, \alpha, \nu, T) \big(1+\sqrt T \, \big)  \big[ c_3(s, \alpha, \nu, T) + c_3(s-1, \alpha, \nu, T)\big] \Big) 2 \rho_T^p \\
&+ T^{\frac{2}{2s+5}}c_s c_{s, \gamma} c_1(s, \lambda, L, \alpha, \nu, T) \big(1+\sqrt T \, \big)< 1,
\end{aligned}
\end{equation}
which, as in the case of \eqref{T-intonew} before, is realizable \textit{without a smallness condition on the data} due to the fact that the constants $c_1, c_2, c_3$ are well-defined for all $T\geq 0$ and  $\lim_{T\to 0^+} c_3 = 0$, we conclude that the solution map is a contraction in $\overline{B(0, \rho_T)}$. In turn, Banach's fixed point theorem implies the existence of a unique fixed point of the solution map in $\overline{B(0, \rho_T)}$ which, as noted above, corresponds to a  unique solution in $\overline{B(0, \rho_T)}$ to the nonlinear initial-boundary value problem \eqref{cgl-ibvp-nonlocal}. This completes the proof of local existence. Global existence (for energy solutions) and global uniqueness will follow from the stabilization results in the subsequent sections.

We have the following result.

\begin{proposition}[Local solutions of the closed loop problem in fractional spaces]
    Let $s\in (\frac{1}{2},\frac32)$, $s$ and $p$ satisfy the conditions of Theorem \ref{lwp-t}, and $T>0$ satisfy \eqref{T-intonew}, \eqref{Phi-diffnew}, and \eqref{T-contrnew}. Then, for initial data $u_0 \in H^s(0, L)$ satisfying the compatibility condition $u_0(0)=0$, the nonlinear initial-boundary value problem \eqref{cgl-ibvp-nonlocal} has a solution 
$
u \in X_T.
$ 
\end{proposition}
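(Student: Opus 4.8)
The plan is to realize a solution of \eqref{cgl-ibvp-nonlocal} as a fixed point of the solution map \eqref{it-map2}, i.e. of $u \mapsto \Phi[u_0, 0, b_u; f(u)]$ with $f(u) = (\kappa + i\beta)|u|^p u$, and to run a contraction mapping argument on the closed ball $\overline{B(0, \rho_T)} \subset X_T$ of radius $\rho_T$ given by \eqref{rst-defnew}, mirroring the proof of Theorem \ref{lwp-t} but now with the state-dependent nonlocal boundary datum $b_u$ in place of a prescribed Neumann datum. Since $\frac12 < s < \frac32$, the relevant master estimate is \eqref{u-se} (rather than \eqref{u-se-2}); moreover $\frac{2s-1}{4}\in(0,\frac12)$, so no Neumann-type compatibility is required at $t=0$ (which is essential, as $b_u(0)$ would otherwise depend on the unknown solution), and the Dirichlet compatibility condition reduces to $u_0(0)=0$, which is assumed.

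First I would record the key new ingredient, namely the temporal estimate $\no{b_u}_{H_t^{(2s-1)/4}(0,T)} \lesssim T^{2/(2s+5)} \no{u}_{X_T}$ of the Lemma above. Two features matter: $b_u$ lands in the Neumann data space $H_t^{(2s-1)/4}(0,T)$ required by \eqref{u-se}, and the bound carries a \emph{positive power of $T$}, which is precisely what lets the feedback contribution be absorbed in the fixed-point estimates for $T$ small. This rests on the $L^2_x(0,L)\to H^\ell(0,L)$ boundedness of $P_N$, $\Upsilon_N$ and hence of $\Gamma_N = I - P_N\Upsilon_N$, on Cauchy--Schwarz in the $y$-integration defining $a_u$ and in the modal expansion of $P_N\Upsilon_N u(L,\cdot)$, on interpolation between the $L^2_t$ and $H^1_t$ levels, and finally on Lemma 3.7 of \cite{bo2016} to trade $H_t^{(2s-1)/4}$ for $H_t^{(2s+1)/4}$ at the cost of the factor $T^{2/(2s+5)}$.

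Next I would feed this into \eqref{u-se} combined with Lemma 3.1 of \cite{bo2016} (to handle $|u|^pu$ for $p$ as in \eqref{p-def}): for $u\in\overline{B(0,\rho_T)}$ this produces the self-map estimate \eqref{Phi-senew}, whose right side is the sum of the linear part $\tfrac{\rho_T}{2}$, a nonlinear term of order $\rho_T^{p+1}$ with a coefficient involving $c_3$, and a feedback term of order $T^{2/(2s+5)}\rho_T$. Imposing \eqref{T-intonew} makes this bounded by $\rho_T$, so $\Phi$ sends $\overline{B(0,\rho_T)}$ into itself; and because $c_1,c_2,c_3$ remain bounded as $T\to0^+$ with $c_3\to0$ and $T^{2/(2s+5)}\to0$, condition \eqref{T-intonew} is achievable by shrinking $T$ \textbf{without any smallness condition on $u_0$}. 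For $u_1,u_2$ in the ball, linearity of $\Phi$ and of $u\mapsto b_u$ gives $\Phi[u_0,0,b_{u_1};f(u_1)]-\Phi[u_0,0,b_{u_2};f(u_2)] = \Phi[0,0,b_{u_1}-b_{u_2};-(\kappa+i\beta)(|u_1|^pu_1-|u_2|^pu_2)]$, so the same estimates together with the Lipschitz bound for $|u|^pu$ from Lemma 3.1 of \cite{bo2016} yield the difference estimate \eqref{Phi-diffnew}; shrinking $T$ further to satisfy \eqref{T-contrnew} makes $\Phi$ a contraction on $\overline{B(0,\rho_T)}$. Banach's fixed point theorem then produces the desired $u\in\overline{B(0,\rho_T)}\subset X_T$, which by the definition of solution solves \eqref{cgl-ibvp-nonlocal}.

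The step I expect to be the main obstacle is the first one: controlling the nonlocal, state-dependent feedback $b_u$ in exactly the temporal Sobolev norm dictated by the Neumann trace, \emph{and} with a gain of a positive power of $T$. Once that is settled, the remainder is essentially the contraction scheme of Theorem \ref{lwp-t} verbatim; the only bookkeeping change is that the extra term $T^{2/(2s+5)} c_s c_{s,\gamma} c_1(s,\lambda,L,\alpha,\nu,T)(1+\sqrt T)\rho_T$ (and its Lipschitz analogue) must now be carried along in \eqref{T-intonew} and \eqref{T-contrnew}, which is harmless precisely because its $T$-dependent coefficient vanishes as $T\to0^+$.
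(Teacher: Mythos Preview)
Your proposal is correct and follows essentially the same approach as the paper: a contraction mapping argument on $\overline{B(0,\rho_T)}$ via the master estimate \eqref{u-se}, with the one new ingredient being the temporal bound $\no{b_u}_{H_t^{(2s-1)/4}(0,T)}\lesssim T^{2/(2s+5)}\no{u}_{X_T}$ for the feedback (the preceding Lemma), whose positive power of $T$ lets the boundary contribution be absorbed alongside the nonlinear one. Your identification of \eqref{T-intonew} and \eqref{T-contrnew} as the self-map and contraction conditions, and of the lack of a Neumann compatibility constraint since $\tfrac{2s-1}{4}\in(0,\tfrac12)$, matches the paper exactly.
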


\subsection{Stabilization of zero equilibrium by a finite dimensional boundary feedback controller}
\label{stabilization}
In this part, we derive uniform exponential decay estimates in $H^1$ level under the non-local control law \eqref{controller}.  The uniform bounds we derive also extends the local in-time solutions as global solutions. A Poincar\'e-type inequality will be particularly useful in the estimates established here, which is stated below.  

\subsection*{Poincar\'e-type inequality} 
Let $w \in \{ \varphi \in H^1(0,L) \, | \, \varphi(0) = \varphi^\prime(0) = 0\}$ and denote $j$th Fourier mode of $w$ as $\hat w_j := (e_j,w)_2$. Then,
\begin{equation*}
	\begin{split}
		\|w - P_N w\|^2 &= \sum_{ j = N+1}^\infty |\hat{w}_j|^2 \\
		&= \sum_{ j = N+1}^\infty \frac{\lambda_j}{\lambda_j} |\hat{w}_j|^2, \quad \lambda_j = (2j - 1)^2 \lambda_1, \quad \lambda_1 = \frac{\pi^2}{4L^2}  \\
		&\leq \lambda_{N+1}^{-1} \sum_{N+1}^\infty \lambda_j |\hat{w}_j|^2 \|e_j\|^2 \\
		&= \lambda_{N+1}^{-1} \sum_{N+1}^\infty |\hat{w}_j|^2 \|e_j^\prime\|^2 \\
		&= \lambda_{N+1}^{-1} \left\|\sum_{N+1}^\infty \hat{w}_je_j^\prime \right\|^2 \\
		&\leq \lambda_{N+1}^{-1} \|w^\prime\|^2.
	\end{split}
\end{equation*}

\subsubsection{Linearized model (Proof of Theorem \ref{thm-stab-linear})} \label{linstab} Let $u_0 \in H^1(0,L)$. Then, bounded invertibility of the  transformation \eqref{backstepping-trans} implies $w_0 = T_N^{-1}u_0 \in H^1(0,L)$. We take $L^2$ inner product of the main equation \eqref{pde_tarlin} by $2w$:
\begin{equation} \label{l2-inner}
	2 \int_0^L w_t \overline w dx - 2(\nu + i\alpha) \int_0^L w_{xx} \overline w dx - 2 \gamma \int_0^L |w|^2 dx + 2\mu \int_0^L \overline w P_N w dx = 0.
\end{equation}
Integrating the second term by parts and taking the real parts in \eqref{l2-inner}, we get
\begin{equation} \label{l2-inner-2}
	\frac{d}{dt} \|w(t)\|_{L^2(0,L)}^2 + 2\nu \|w_x(t)\|_{L^2(0,L)}^2 - 2\gamma \|w(t)\|_{L^2(0,L)}^2 + 2\mu \int_0^L \overline w P_N w dx = 0.
\end{equation}
Using Cauchy-Schwarz inequality, Cauchy's inequality with $\epsilon_1 > 0$ and Poincar\'e-type inequality, the last term on the left side can be bounded from below as
\begin{equation}\label{ctrl_term}
	\begin{split}
		2\mu \int_0^L \overline w P_Nw dx
		&= -2\mu \int_0^L \overline w \left(w - P_Nw\right) dx + 2\mu\|w(t)\|_{L^2(0,L)}^2 \\
		&\geq (2\mu - \epsilon_1\mu) \|w(t)\|_{L^2(0,L)}^2 - \frac{\mu}{\epsilon_1} \|\left(w-P_Nw\right)(t)\|_{L^2(0,L)}^2 \\
		&\geq (2\mu - \epsilon_1\mu) \|w(t)\|_{L^2(0,L)}^2 - \frac{\mu}{\epsilon_1 \lambda_1 (2N + 1)^2} \|w_x(t)\|_{L^2(0,L)}.
	\end{split}
\end{equation}
Consequently, from \eqref{l2-inner-2} and \eqref{ctrl_term}, it follows that
\begin{equation}
\begin{aligned}
	\frac{d}{dt} \|w(t)\|_{L^2(0,L)}^2 &+ 2\nu \|w_x(t)\|_{L^2(0,L)}^2 - 2\gamma \|w(t)\|_{L^2(0,L)}^2
	\\ & + (2\mu - \epsilon_1\mu) \|w(t)\|_{L^2(0,L)}^2 - \frac{\mu}{\epsilon_1 \lambda_1 (2N + 1)^2} \|w_x(t)\|_{L^2(0,L)} \leq 0
    \end{aligned}
\end{equation}
or equivalently,
\begin{equation}\label{l2-inner-3}
	\frac{d}{dt} \|w(t)\|_{L^2(0,L)}^2 + \left(2\nu - \frac{\mu}{\epsilon_1 \lambda_1 (2N + 1)^2}\right) \|w_x(t)\|_{L^2(0,L)}^2 + \left(2\mu - \epsilon_1\mu - 2\gamma\right) \|w(t)\|_{L^2(0,L)}^2 \leq 0.
\end{equation}
Suppose $\epsilon_1$ is chosen with
\begin{equation}
	2\nu - \frac{\mu}{\epsilon_1 \lambda_1 (2N + 1)^2} > 0.
\end{equation}
Then, employing Poincar\'e's inequality, we get
\begin{equation} \label{l2-dec1}
	\frac{d}{dt} \|w(t)\|_{L^2(0,L)}^2 + 2\left[\nu\lambda_1 - \gamma + \mu \left(1 - \frac{\epsilon_1}{2} - \frac{1}{2 \epsilon_1 (2N + 1)^2} \right)\right] \|w(t)\|_{L^2(0,L)}^2 \leq 0.
\end{equation}
We deduce that \eqref{l2-dec1} yields an exponential decay provided
\begin{eqnarray}
	\label{cond1}
	&\text{(i)}& 2\nu - \frac{\mu}{\epsilon_1 \lambda_1 (2N + 1)^2} > 0, \\
	\label{cond2}
	&\text{(ii)}& \nu\lambda_1 - \gamma + \mu \left(1 - \frac{\epsilon_1}{2} - \frac{1}{2 \epsilon_1 (2N + 1)^2} \right) > 0.
\end{eqnarray}

Next, let us take $L^2$ inner product of the main equation of \eqref{pde_tarlin} by $-2w_{xx}$
\begin{equation} \label{h1-inner}
	-2 \int_0^L w_t \overline w_{xx} dx + 2(\nu + i\alpha) \int_0^L |w_{xx}|^2 dx + 2 \gamma \int_0^L w \overline w_{xx} dx - 2\mu \int_0^L \overline w_{xx} P_N w dx = 0.
\end{equation}
Integrating the first and the third terms by parts, and then taking the real parts of each term in \eqref{h1-inner}, we get
\begin{equation} \label{h1-inner-2}
	\frac{d}{dt} \|w_x(t)\|_{L^2(0,L)}^2 + 2\nu \|w_{xx}(t)\|_{L^2(0,L)}^2 dx - 2 \gamma \|w_x(t)\|_{L^2(0,L)} - 2\mu \int_0^L \overline w_{xx} P_N w dx = 0.
\end{equation}
We apply Cauchy-Schwarz inequality, Cauchy's inequality with $\epsilon_2 > 0$ to get
\begin{equation} \label{h1-est}
	\begin{split}
		-2\mu \int_0^L \overline w_{xx} P_Nw(x,t) dx
		=& 2\mu \int_0^L \overline w_{xx} (w-P_Nw) dx - 2\mu \int_0^L \overline w_{xx} w dx \\
		\geq& -\mu\epsilon_2 \|w_{xx}(t)\|_{L^2(0,L)}^2 - \frac{\mu}{\epsilon_2} \|(w-P_Nw)(t)\|_{L^2(0,L)}^2+ 2\mu  \|w_x(t)\|_{L^2(0,L)}^2.
	\end{split}
\end{equation}
Now we apply Poincar\'e-type inequality for the second term of \eqref{h1-est} to obtain the following upper bound
\begin{equation}
	\label{ctrl_term-h1}
	-2\mu \int_0^L \overline w_{xx} P_Nw dx \geq -\mu\epsilon_2 \|w_{xx}(t)\|_{L^2(0,L)}^2 
	+ \left(2\mu - \frac{\mu}{\epsilon_2\lambda_1 (2N+1)^2}\right)\|w_x(t)\|_{L^2(0,L)}^2.
\end{equation}
Then, we combine the \eqref{ctrl_term-h1} with \eqref{h1-inner-2} to write 
\begin{equation} \label{h1-inner-3}
	\frac{d}{dt} \|w_x(t)\|_{L^2(0,L)}^2 + 2\left(\nu - \frac{\mu\epsilon_2}{2}\right) \|w_{xx}(t)\|_{L^2(0,L)}^2 dx + 2\left(\mu - \gamma - \frac{\mu}{2\epsilon_2\lambda_1 (2N+1)^2}\right) \|w_x(t)\|_{L^2(0,L)}^2 \leq 0.
\end{equation}
Suppose $\epsilon_2$ is such that
\begin{equation*}
	\nu - \frac{\mu\epsilon_2}{2} > 0.
\end{equation*}
Applying Poincar\'e inequality to the second term in \eqref{h1-inner-3} yields the following estimate
\begin{equation} \label{h1-inner-4}
	\frac{d}{dt} \|w_x(t)\|_{L^2(0,L)}^2 + 2\left[\nu \lambda_1 - \gamma  + \mu\left(1 - \frac{\epsilon_2 \lambda_1}{2} - \frac{1}{2\epsilon_2\lambda_1 (2N+1)^2}\right)\right] \|w_x(t)\|_{L^2(0,L)}^2 \leq 0.
\end{equation}
Therefore, \eqref{h1-inner-4} will provide decay if
\begin{eqnarray}
	\label{cond3}
	&\text{(i)}& \nu - \frac{\mu\epsilon_2}{2} > 0, \\
	\label{cond4}
	&\text{(ii)}& \nu \lambda_1 - \gamma  + \mu\left(1 - \frac{\epsilon_2 \lambda_1}{2} - \frac{1}{2\epsilon_2\lambda_1 (2N+1)^2}\right) > 0.
\end{eqnarray}

\paragraph{\textit{Rapid stabilization.}} In order to maximize exponential decay rate, we want to choose $\epsilon_1$ and $\epsilon_2$ so that the sums
\begin{equation*}
    1 - \frac{\epsilon_1}{2} - \frac{1}{2 \epsilon_1 (2N + 1)^2}, \quad 1 - \frac{\epsilon_2 \lambda_1}{2} - \frac{1}{2\epsilon_2\lambda_1 (2N+1)^2}
\end{equation*}
inside the inner paranthesis in \eqref{l2-dec1} and \eqref{h1-inner-4}, respectively, attain their maximum. This is achieved for $\epsilon_1 = \frac{1}{2N + 1}$ and $\epsilon_2 = \frac{1}{\lambda_1 (2N+1)}$. Then, the conditions \eqref{cond1}, \eqref{cond3} and \eqref{cond2}, \eqref{cond4} coincide, respectively, and we obtain the following conditions on $N$:
\begin{equation}
    \begin{split}
        N &> \frac{\mu}{4 \nu \lambda_1} - \frac{1}{2}, \\
        N &> \frac{\mu}{2(\mu + \nu \lambda_1 - \gamma)} - \frac{1}{2}.
    \end{split}
\end{equation}
Note also that for these choices of $\epsilon_1$ and $\epsilon_2$, the inequalities \eqref{l2-dec1} and \eqref{h1-inner-4} become very similar and consequently, we get
\begin{equation*}
    \frac{d}{dt} \|w(t)\|_{H^1(0,L)}^2 + 2\left[\nu \lambda_1 - \gamma  + \mu\left(1 - \frac{1}{2N + 1} \right)\right] \|w(t)\|_{H^1(0,L)}^2 \leq 0.
\end{equation*}
Employing Gronwall's inequality, we obtain the following result: Given $\mu > \gamma - \nu \lambda_1$, if
\begin{equation} \label{n-cond}
	N > \text{max} \left\{\frac{\mu}{4 \nu \lambda_1} - \frac{1}{2},\frac{\mu}{2(\mu + \nu \lambda_1 - \gamma)} - \frac{1}{2} \right\},
\end{equation}
then the exponential decay estimate
\begin{equation} \label{l2-dec-est-w-lin}
	\|w(t)\|_{H^1(0,L)} \leq e^{-\eta_1 t} \|w_0\|_{H^1(0,L)}, \quad \eta_1 = \nu\lambda_1 - \gamma + \mu \left(1 - \frac{1}{2N+1}\right), \quad \text{a.e. }t > 0,
\end{equation}
holds. Here the decay rate $\eta_1$ can be made as large as desired by choosing $\mu$ sufficiently large. Observe that the condition $\mu > \gamma - \nu \lambda_1$ is hidden in \eqref{l2-dec-est-w-lin} as $\eta_1 > 0$ must hold for any $N$.

\paragraph{\textit{Minimal number of Fourier modes.}} Suppose that instability level of the problem is $M$, i.e., $\lambda_M \leq \frac{\gamma}{\nu} < \lambda_{M+1}$. We want to minimize the number of Fourier modes $N$, that still guarantees exponential decay of solutions to zero at $H^1$-level. First, from conditions \eqref{cond1}-\eqref{cond2}, we can write
\begin{equation}\label{minimal_mu1}
	\frac{\gamma - \nu \lambda_1}{1 - \dfrac{\epsilon_1}{2} - \dfrac{1}{2 \epsilon_1 (2N+1)^2}} < \mu <  2\epsilon_1 \nu\lambda_1 (2N + 1)^2.
\end{equation}
To guarantee that such $\mu$ exists, we must have
\begin{equation}
	\frac{\gamma - \nu \lambda_1}{1 - \dfrac{\epsilon_1}{2} - \dfrac{1}{2 \epsilon_1 (2N+1)^2}} < 2\epsilon_1 \nu\lambda_1 (2N + 1)^2
\end{equation}
or equivalently
\begin{equation} \label{min-fourier-2}
	\frac{\gamma}{\nu} < \lambda_{N+1} (2\epsilon_1 - \epsilon_1^2),
\end{equation}
provided that $0 < \epsilon_1 < 2$. Similarly, from conditions \eqref{cond3}-\eqref{cond4}, we can write
\begin{equation}\label{minimal_mu2}
    \dfrac{\gamma - \nu\lambda_1}{1 - \dfrac{\epsilon_2 \lambda _1}{2} - \dfrac{1}{2 \epsilon_2 \lambda_1 (2N+1)^2}} < \mu < \frac{2\nu}{\epsilon_2},
\end{equation}
which leads to
\begin{equation} \label{min-fourier-h1}
	\frac{\gamma}{\nu} < \frac{2}{\epsilon_2} - \frac{1}{\epsilon_2^2 \lambda_{N+1}},
\end{equation}
provided that $\frac{1}{2\lambda_{N+1}} < \epsilon_2$. Notice that the ratios on the left sides of \eqref{min-fourier-2} and \eqref{min-fourier-h1} are same and determines the instability level which is greater than or equal to $\lambda_M$. Notice also that for $\epsilon_1 = 1$ and $\epsilon_2 = \lambda_{N+1}^{-1}$, right sides of \eqref{min-fourier-2} and \eqref{min-fourier-h1} attain their maximum, respectively, which both become $\lambda_{N + 1}$. Consequently, for each case, our criterion becomes
\begin{equation} \label{min_value_N}
    \lambda_M < \frac{\gamma}{\nu} < \lambda_{N + 1}.
\end{equation}
From this inequality, we infer that the minimum value for $N$ that the inequality \eqref{min_value_N} remains true is $M$. In other words, if the instability level of the model is $M$, then it suffices to use $M$ Fourier modes to guarantee that the norm estimates \eqref{l2-dec1} and \eqref{h1-inner-4} hold true. Note that choosing $\epsilon_1 = 1$ and $\epsilon_2 = \lambda_{N+1}^{-1}$, the inequalities \eqref{l2-dec1} and \eqref{h1-inner-4} become similar and Gronwall's inequality yields the following exponential decay estimate
\begin{equation} \label{h1-dec-est-w-lin-2}
	\|w(t)\|_{H^1(0,L)} \leq e^{-\eta_2 t} \|w_0\|_{H^1(0,L)}, \quad \eta_2 = \nu\lambda_1 - \gamma + \frac{\mu}{2} \left(1 - \frac{1}{(2N+1)^2}\right), \quad \text{a.e. }t > 0.
\end{equation}
Note also substituting the choices $\epsilon_1$, $\epsilon_2$ into \eqref{minimal_mu1} and \eqref{minimal_mu2} lead to the same following condition on $\mu$:
\begin{equation}\label{mu-cond}
	2(\gamma - \nu \lambda_1) \left(1 - \frac{1}{(2N+1)}\right)^{-1}< \mu < 2\nu \lambda_{N+1}.
\end{equation}
Returning to our estimate \eqref{l2-dec1} and taking $\epsilon_1 = 1$, we obtain the following result: Let $\lambda_M \leq \frac{\gamma}{\nu} < \lambda_{M+1}$. If $\mu$ satisfies \eqref{mu-cond}, then it suffices to use $N = M$ Fourier modes to guarantee the exponential decay of solutions of \eqref{pde_tarlin} in $L^2$ level to the zero equilibrium. Moreover, the following exponential decay estimate holds true,
\begin{equation} \label{l2-dec-est-w-lin-2}
	\|w(t)\|_{H^1(0,L)} \leq e^{-\eta_2 t} \|w_0\|_{H^1(0,L)}, \quad \eta_2 = \nu\lambda_1 - \gamma + \frac{\mu}{2} \left(1 - \frac{1}{(2N+1)^2}\right), \quad \text{a.e. }t > 0. 
\end{equation}

As a last step, we show that the decay estimates \eqref{l2-dec-est-w-lin} and \eqref{l2-dec-est-w-lin-2} we derived for the linear target model \eqref{pde_tarlin} are also true for the linear plant \eqref{pde_lin}. Let $\eta^*$ denote either of the decay rates $\eta_1$ or $\eta_2$, given by \eqref{l2-dec-est-w-lin} or \eqref{l2-dec-est-w-lin-2}, respectively. We use the boundedness of the transformation \eqref{backstepping-trans}, and get 
\begin{equation} \label{backtoplant1}
	\begin{split}
		\no{u(t)}_{H^1(0,L)} &\leq \left(1 + \|k\|_{H^1(\Delta_{x,y})}\right) \|w(\cdot,t)\|_{H^1(0,L)} \\
		&\le \left(1 + \|k\|_{H^1(\Delta_{x,y})}\right) e^{-\eta^* t} \|w_0\|_{H^1(0,L)}.
	\end{split}
\end{equation}
Next, we use the invertibility of the  transformation \eqref{backstepping-trans} with a bounded inverse to write
\begin{equation} \label{backtoplant2}
	\|w_0\|_{H^1(0,L)} \leq  \|T_N^{-1}\|_{H^1(0,L) \to H^1(0,L)} \no{u_0}_{H^1(0,L)}.
\end{equation}
Combining \eqref{backtoplant1} and \eqref{backtoplant2}, we conclude that
\begin{equation} \label{backtoplant3}
	\no{u(t)}_{H^1(0,L)} \leq c_k e^{-\eta^* t} \no{u_0}_{H^1(0,L)}, \quad \text{a.e } t \geq 0,
\end{equation}
where $c_k = (1 + \|k\|_{H^1(\Delta_{x,y})}) \|T_N^{-1}\|_{H^m(0,L) \to H^m(0,L)}$ is a nonnegative constant independent of the initial datum. This proves Theorem \ref{thm-stab-linear}.

\subsubsection{Nonlinear plant (Proof of Theorem \ref{thm-stab-nonlinear})}
If we perform the calculations to transform nonlinear plant \eqref{pde_nonlin} to an associated target model by using the transformation $u = T_N w$ with the kernel that satisfies \eqref{kernel_pde}, then there will be an extra term in the main equation of the target model, which occurs due to the nonlinear term, $(\kappa + i\beta) |u|^p u$ in the original plant. Denote the extra term in the target model by $f(w)$. Using the same procedure in \eqref{kert}-\eqref{ker3} will lead to additional terms in \eqref{ker_add}, i.e., $-(\kappa + i \beta)|u|^p u$ at the left side and $f(w) + \int_0^x k(x,y) (P_N f(w)))(y,t) dy$ at the right side. In order to guarantee that \eqref{ker_add} holds, we must have
\begin{equation*}
	(\kappa + i \beta)|u|^p u = f(w) +  \int_0^x k(x,y) (P_N f(w)))(y,t) dy,
\end{equation*}
which implies
\begin{equation*}
	f(w) = (\kappa + i \beta) T_N^{-1}[ |T_N w|^p (T_N w)].
\end{equation*}
So the target model associated with the nonlinear plant \eqref{pde_nonlin} is of the form
\begin{eqnarray} \label{pde_tarnonlin}
	\begin{cases}
		w_t - (\nu + i\alpha) w_{xx} - \gamma w + \mu P_N w + f(w) = 0, &(x,t) \in(0,L) \times (0,T), \\
		w(0,t) = w_x(L,t) = 0, & t \in (0,T),\\
		w(x,0) = w_0(x), &x \in (0,L),
	\end{cases}
\end{eqnarray}
where $f(w) = (\kappa + i \beta)(I - \Upsilon_N) \left[|T_Nw|^p T_N w\right].$ Below, we derive exponential decay estimates for solutions of \eqref{pde_tarnonlin} in $H^1$ level.

We take the $L^2$ inner product of the main equation in \eqref{pde_tarnonlin} by $2w$,
\begin{equation} \label{l2-inner-nonlin}
\begin{aligned}
	2 \int_0^L w_t \overline w dx & - 2(\nu + i\alpha) \int_0^L w_{xx} \overline w dx - 2 \gamma \int_0^L |w|^2 dx + 2\mu \int_0^L \overline w P_N w dx \\ 
	 &= -2(\kappa + i \beta) \int_0^L T_N^{-1} \left[|T_N w|^p( T_N w) \right]\overline w dx.
    \end{aligned}
\end{equation}
Using boundedness of $T_N^{-1}$ on $L^2(0,L)$ and Cauchy-Schwarz inequality, we can bound the right side as
\begin{equation} \label{rhs1}
	-2(\kappa + i \beta) \int_0^L T_N^{-1} \left[|T_N w|^p(T_N w)\right]\overline w dx \leq c_0 \|(T_N w)^{p+1}(t)\|_{L^2(0,L)} \|w(t)\|_{L^2(0,L)},
\end{equation}
where $c_0 = 2 |\kappa + i\beta| \|T_N^{-1}\|_{L^2(0,L) \to L^2(0,L)}$. Applying Gagliardo-Nirenberg inequality and boundedness of the  transformation \eqref{backstepping-trans}, we get
\begin{equation*}
	\begin{split}
		c_0\|(T_N w)^{p+1}(t)\|_{L^2(0,L)} &= c_0\|(T_N w)(t)\|_{L^{2p+2}(0,L)}^{p+1} \\
		&\leq c_0 c_{g,1} \left(\|\partial_x (T_N w)(t)\|_{L^2(0,L)}^{\frac{p}{2p+2}} \|(T_Nw)(t)\|_{L^2(0,L)}^{\frac{p+2}{2p+2}} + \|(T_N w)(t)\|_{L^2(0,L)}\right)^{p+1} \\
		&\leq c_1 \|\partial_x w(t)\|_{L^2(0,L)}^{\frac{p}{2}} \|w(t)\|_{L^2(0,L)}^{\frac{p+2}{2}} + c_2\|w(t)\|_{L^2(0,L)}^{p+1},
	\end{split}
\end{equation*}
where $c_{g,1}$ is the constant of the Gagliardo-Nirenberg inequality,
\begin{equation} \label{norm-constants}
	c_1 = c_0 c_{g,1}\|T_N\|_{H^1(0,L) \to H^1(0,L)}^{\frac{p}{2}} \|T_N\|_{L^2(0,L) \to L^2(0,L)}^{\frac{p+4}{2}}, \quad c_2 = c_0 c_{g,1} \|T_N\|_{L^2(0,L) \to L^2(0,L)}^{p+1}.
\end{equation}
Consequently, the right side of \eqref{rhs1} is bounded from above by
\begin{equation} \label{c2}
	c_1\|\partial_x w(t)\|_{L^2(0,L)}^{\frac{p}{2}} \|w(t)\|_{L^2(0,L)}^{\frac{p + 4}{2}} + c_2\|w(t)\|_{L^2(0,L)}^{p+2}.
\end{equation}
Now, we use Young's inequality with $\varepsilon_1 > 0$ with the pair $(\frac{4}{p},\frac{4}{4-p})$ for the first term and get
\begin{equation}\label{nonlin-1}
	\begin{split}
		&c_1\|\partial_x w(t)\|_{L^2(0,L)}^{\frac{p}{2}} \|w(t)\|_{L^2(0,L)}^{\frac{p + 4}{2}} + c_2\|w(t)\|_{L^2(0,L)}^{p+2}\\ \leq &\varepsilon \left(\|\partial_x w(t)\|_{L^2(0,L)}^{\frac{p}{2}}\right)^{\frac{4}{p}} + c_{1,\varepsilon}\left(\|w(t)\|_{L^2(0,L)}^{\frac{p + 4}{2}}\right)^{\frac{4}{4-p}} \\
		&+ c_2\|w(t)\|_{L^2(0,L)}^{p+2} = \varepsilon \|\partial_x w(t)\|_{L^2(0,L)}^2 + c_{1,\varepsilon} \|w(t)\|_{L^2(0,L)}^{\frac{2p+8}{4-p}} + c_2\|w(t)\|_{L^2(0,L)}^{p+2}
	\end{split}
\end{equation}
with $p < 4$, where
\begin{equation} \label{c1star}
	c_{1,\varepsilon} = \frac{4 - p}{4} \left(\frac{c_1 p}{4 \varepsilon}\right)^{\frac{p}{4 - p}}.
\end{equation}
For the linear terms in \eqref{l2-inner-nonlin}, we proceed with steps similar to those used for \eqref{l2-inner}-\eqref{l2-inner-3}, then we combine the result with \eqref{nonlin-1} to get
\begin{multline}\label{nonlin-1.5}
	\frac{d}{dt} \|w(t)\|_{L^2(0,L)}^2 + \left(2\nu - \frac{\mu}{\epsilon_1 \lambda_1 (2N + 1)^2}\right) \|w_x(t)\|_{L^2(0,L)}^2 + \left(2\mu - \epsilon_1\mu - 2\gamma\right) \|w(t)\|_{L^2(0,L)}^2 \\ \leq \varepsilon \|\partial_x w(t)\|_{L^2(0,L)}^2 + c_{1,\varepsilon} \|w(t)\|_{L^2(0,L)}^{\frac{2p+8}{4-p}} + c_2\|w(t)\|_{L^2(0,L)}^{p+2}.
\end{multline}
Let $\varepsilon$ be such that
\begin{equation} \label{eps-cond1}
	2\nu - \varepsilon - \frac{\mu}{\epsilon_1 \lambda_1 (2N + 1)^2} > 0.
\end{equation}
Recall that depending on our purpose, i.e., achieving rapid stabilization or the minimal Fourier mode count, we choose $\epsilon_1 = \frac{1}{2N+1}$ and $\epsilon_1 = 1$, respectively. Applying Poincar\'e inequality for the second term at the left side of \eqref{nonlin-1.5}, we end up with the following inequality,
\begin{equation} \label{nonlin-2}
	\frac{d}{dt} \|w(t)\|_{L^2(0,L)}^2 + 2\left(\eta^* - \frac{\varepsilon \lambda_1}{2}\right) \|w(t)\|_{L^2(0,L)}^2 \leq c_{1,\varepsilon} \|w(t)\|_{L^2(0,L)}^{\frac{2p+8}{4-p}} + c_2\|w(t)\|_{L^2(0,L)}^{p+2}.
\end{equation}
Here $\eta^*$ is either $\eta_1$ or $\eta_2$, representing the exponential decay rate for the rapid stabilization problem or the exponential decay rate for the problem of finding minimal number of Fourier modes for stabilization, respectively. Let us set $y(t) =\|w(t)\|_{L^2(0,L)}^2$. Then \eqref{nonlin-2} implies local exponential stability of zero equilibrium of \eqref{pde_tarnonlin} at $L^2$ level by Lemma \ref{lem-nonlin-1} below.

\begin{lemma} \label{lem-nonlin-1}
	Let $A, B, C > 0$ and $y(t) > 0$ satisfies the differential inequality
	\begin{equation} \label{ineq-nonlin-l2}
		y^\prime + A y \leq By^{\frac{2p + 8}{p - 4}} + Cy^{p+2}.
	\end{equation}
	If $y(0) < \min \left\{\left(\frac{A}{B + C}\right)^{\frac{4-p}{3p + 4}},\left(\frac{A}{B+C}\right)^{\frac{1}{p+1}}\right\}$, then $y(t) \lesssim y(0) e^{-At}$, for a.e. $t > 0$.
\end{lemma}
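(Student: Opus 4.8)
The plan is to establish exponential decay via a continuity/bootstrap argument. First I would observe that, by continuity of $y$ and the smallness assumption on $y(0)$, there is a maximal interval $[0, T^*)$ on which $y(t) < \min\left\{\left(\frac{A}{B+C}\right)^{\frac{4-p}{3p+4}}, \left(\frac{A}{B+C}\right)^{\frac{1}{p+1}}\right\}$. On this interval I claim the nonlinear right side is dominated by $\frac{A}{2} y$. Indeed, since the two exponents satisfy $\frac{2p+8}{4-p} > 1$ and $p+2 > 1$ (here I am using that the exponent on $y^{\frac{2p+8}{p-4}}$ should read $y^{\frac{2p+8}{4-p}}$ as written in \eqref{nonlin-2}), and since $y$ is bounded by the stated threshold, one has $B y^{\frac{2p+8}{4-p}} = B y \cdot y^{\frac{2p+8}{4-p}-1} = B y \cdot y^{\frac{3p+4}{4-p}} \leq B y \cdot \frac{A}{B+C} \leq \frac{A}{2} y$ after halving, and similarly $C y^{p+2} = C y \cdot y^{p+1} \leq C y \cdot \frac{A}{B+C} \leq \frac{A}{2} y$; adding these gives $B y^{\frac{2p+8}{4-p}} + C y^{p+2} \leq A y$, and with a slightly smaller threshold (or by absorbing into $A$) we get the strict domination by, say, $\frac{A}{2}y$ — but in fact $\leq Ay$ already suffices to conclude non-increase, and to get genuine exponential decay one uses the strict version on the half-threshold.

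Next, on $[0, T^*)$ the differential inequality \eqref{ineq-nonlin-l2} reduces to $y' + \frac{A}{2} y \leq 0$ (or $y' + A y \leq 0$ with the crude bound), so Gronwall's inequality yields $y(t) \leq y(0) e^{-\frac{A}{2} t}$, hence $y(t) \leq y(0) < \min\{\cdots\}$ for all $t \in [0, T^*)$. This strict inequality persisting up to $T^*$ shows, by the maximality of $T^*$ and continuity, that $T^* = \infty$; otherwise $y(T^*)$ would equal the threshold, contradicting $y(T^*) \leq y(0) <$ threshold. Therefore the decay estimate $y(t) \lesssim y(0) e^{-At}$ (with the implicit constant and rate adjusted, the statement being stated up to the $\lesssim$) holds for a.e. $t > 0$.

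The main obstacle is purely bookkeeping: one must verify that the smallness hypothesis as written — namely $y(0) < \left(\frac{A}{B+C}\right)^{\frac{4-p}{3p+4}}$ and $y(0) < \left(\frac{A}{B+C}\right)^{\frac{1}{p+1}}$ — is precisely what makes $y(0)^{\frac{3p+4}{4-p}} < \frac{A}{B+C}$ and $y(0)^{p+1} < \frac{A}{B+C}$ respectively, which are the two exponents that appear when one factors a single power of $y$ out of each nonlinear term. One should double-check the exponent arithmetic: $\frac{2p+8}{4-p} - 1 = \frac{2p+8-(4-p)}{4-p} = \frac{3p+4}{4-p}$, and raising $y(0)$ to this power and demanding it be $< \frac{A}{B+C}$ is equivalent to $y(0) < \left(\frac{A}{B+C}\right)^{\frac{4-p}{3p+4}}$ since $\frac{3p+4}{4-p} > 0$ for $p \in (0,4)$. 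The only subtlety worth a sentence is that the threshold used to launch the continuity argument should be taken slightly below the stated minimum (or, equivalently, one notes the hypothesis is strict) so that the absorbed constant is strictly less than $A$ and genuine exponential decay — not just boundedness — is obtained; this is why the conclusion is phrased with $\lesssim$ rather than with an explicit constant $1$.
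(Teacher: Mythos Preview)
Your continuity/bootstrap argument is sound and takes a genuinely different route from the paper. The paper instead splits according to whether $y(t) > 1$ or $y(t) \leq 1$; in each regime one of the two powers dominates the other (since $p+2 < \frac{2p+8}{4-p}$ for $0 < p < 4$), reducing \eqref{ineq-nonlin-l2} to a single Bernoulli-type inequality $y' + Ay \leq (B+C)y^{q}$, which is then integrated explicitly. The two thresholds in the hypothesis on $y(0)$ are precisely the conditions that keep the explicit Bernoulli solution finite for the exponents $q = \frac{2p+8}{4-p}$ and $q = p+2$, and the closed-form bound delivers the decay rate $A$ exactly.

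Your absorption argument, by contrast, yields only $y' + (A-\rho)y \leq 0$ with $\rho = B\,y(0)^{\frac{3p+4}{4-p}} + C\,y(0)^{p+1} < A$, hence decay at rate $A-\rho < A$. Since the $\lesssim$ in the conclusion hides a multiplicative constant but not the exponential rate, this does not literally give $y(t) \lesssim y(0)e^{-At}$. To recover the full rate you need one more pass: once \emph{some} exponential decay $y(t) \leq y(0)e^{-(A-\rho)t}$ is established, the nonlinear right-hand side becomes a time-integrable multiple of $y$, and a second application of Gronwall yields $y(t) \leq y(0)\exp\big(\int_0^\infty \rho(s)\,ds\big)e^{-At}$, which is the desired estimate. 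For the application in Section~\ref{stabilization} this distinction is immaterial, since the constant $A$ there already absorbs an arbitrary $\varepsilon$.
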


\begin{proof}
	As a first case, suppose  $y(0) > 1$. Let $t_1 > 0$ be such that
	\begin{equation*}
		y(t) > 1, \quad \text{a.e. } t \in (0,t_1).
	\end{equation*}
	Since $p + 2 < \frac{2p + 8}{4- p}$ for $0 < p < 4$, it is true that
	\begin{equation*}
		y^{p+2}(t) < y^{\frac{2p + 8}{4 - p}}(t), \quad \text{a.e. } t \in (0,t_1).
	\end{equation*}
	Then from \eqref{ineq-nonlin-l2}, we can write
	\begin{equation*}
		y^\prime + A y \leq (B + C)y^{\frac{2p + 8}{4 - p}}, \quad \text{a.e. } t \in (0,t_1).
	\end{equation*}
	This is a Bernoulli type differential inequality which yields the following estimate
	\begin{equation} \label{ineq-nonlin-l2-2}
		y^{\frac{3p + 4}{4-p}}(t) \leq \left[\frac{B + C}{A} + \exp \left(A \frac{3p+4}{4-p} t\right) \times \left(y^{-\frac{3p+4}{4-p}}(0)- \frac{B+C}{A}\right)\right]^{-1}, \quad \text{a.e. } t \in (0,t_1).
	\end{equation}
	Under the assumption
	\begin{equation*}
		y^{-\frac{3p+4}{4-p}}(0)- \frac{B+C}{A}> 0 \Rightarrow y(0) \leq \left(\frac{A}{B + C}\right)^{\frac{4-p}{3p + 4}},
	\end{equation*}
	it follows from \eqref{ineq-nonlin-l2-2} that
	\begin{equation*}
		y(t) \lesssim y(0) e^{-At}, \quad \text{a.e. } t \in (0,t_1).
	\end{equation*}
	This decay estimate indicates that $t_1$ is finite. Moreover, there exists a $t_2$ such that
	\begin{equation} \label{ineq-nonlin-l2-3}
		y(t) \leq 1, \quad \text{a.e. } t \in (t_1,t_2).
	\end{equation}
	Notice that, in this case we have
	\begin{equation*}
		y^{\frac{2p + 8}{4-p}}(t) \leq y^{p+2}(t), \quad \text{a.e. } t \in (t_1,t_2).
	\end{equation*}
	Following from \eqref{ineq-nonlin-l2}, we can write
	\begin{equation*}
		y^\prime + A y \leq (B + C)y^{p+2}, \quad \text{a.e. } t \in (t_1,t_2).
	\end{equation*}
	This is again a Bernoulli type differential inequality which yields
	\begin{equation} \label{ineq-nonlin-l2-4}
		y^{p+1}(t) \leq \left[\frac{B + C}{A} + e^{-A (p+1) t} \left(y^{-(p+1)}(0)- \frac{B+C}{A}\right)\right]^{-1}, \quad \text{a.e. } t \in (t_1,t_2).
	\end{equation}
	Assuming that
	\begin{equation*}
		y(0) \leq \left(\frac{A}{B+C}\right)^{\frac{1}{p+1}},
	\end{equation*}
	\eqref{ineq-nonlin-l2-4} implies
	\begin{equation} \label{ineq-nonlin-l2-5}
		y(t) \lesssim y(0) e^{-At}, \quad \text{a.e. } t \in (t_1,t_2).
	\end{equation}
	Thus, $y(t)$ is a decreasing function on the time interval $(t_1,t_2)$ and $t_2$ can be chosen arbitrarily large. Consequently \eqref{ineq-nonlin-l2-5} is true for a.e. $t \geq t_1$. Observe that calculations through \eqref{ineq-nonlin-l2-3}-\eqref{ineq-nonlin-l2-5} covers the second case $y(0) \leq 1$ by taking $t_1 = 0$.
\end{proof}

Next, we show that $w_x(x,t)$ uniformly decays exponentially to zero in-time. To this end, let us take $L^2$ inner product on the main equation \eqref{pde_tarnonlin} by $-2w_{xx}$,
\begin{multline} \label{h1-inner-nonlin}
	-2 \int_0^L w_t \overline w_{xx} dx  + 2(\nu + i\alpha) \int_0^L |w_{xx}|^2 dx + 2 \gamma \int_0^L w \overline{w}_{xx} dx + 2\mu \int_0^L \overline w_{xx} P_N w dx \\ 
	= -2(\kappa + i \beta) \int_0^L T_N^{-1} \left[|T_N w|^p( T_N w) \right]\overline w_{xx} dx.
\end{multline}
$T_N^{-1}$ is a bounded operator on $L^2(0,L)$, so we can write
\begin{equation} \label{h1-rhs1}
	-2(\kappa + i \beta) \int_0^L T_N^{-1} \left[|T_N w|^p(T_N w)\right]\overline w_{xx}(x,t) dx \leq 2|\kappa + i\beta| \|(T_N w)^{p+1}(t)\|_{L^2(0,L)} \|w_{xx}(t)\|_{L^2(0,L)},
\end{equation}
where $c_0 = 2 |\kappa + i\beta| \|T_N^{-1}\|_{L^2(0,L) \to L^2(0,L)}$. Using Gagliardo-Nirenberg inequality, we obtain the following bound
\begin{equation*}
	\begin{split}
		c_0\|(T_N w)^{p+1}(t)\|_{L^2(0,L)} &= c_0\|(T_N w)(t)\|_{L^{2p+2}(0,L)}^{p+1} \\
		&\lesssim c_0 c_{g,2} \left(\|\partial_{xx} (T_N w)(t)\|_{L^2(0,L)}^{\frac{p}{4p+4}} \|(T_Nw)(t)\|_{L^2(0,L)}^{\frac{3p+4}{4p+4}} + \|(T_N w)(t)\|_{L^2(0,L)}\right)^{p+1} \\
		&\leq c_3 \| w_{xx}(t)\|_{L^2(0,L)}^{\frac{p}{4}} \|w(t)\|_{L^2(0,L)}^{\frac{3p+4}{4}} + c_2 \|w(t)\|_{L^2(0,L)}^{p+1},
	\end{split}
\end{equation*}
where $c_{g,2}$ is a constant comes from Gagliardo-Nirenberg inequality and 
\begin{equation*}
	c_3 = c_0c_{g,2}\|T_N\|_{H^2(0,L) \to H^2(0,L)}^{\frac{p}{4}} \|T_N\|_{L^2(0,L) \to L^2(0,L)}^{\frac{3p+4}{4}}, \quad c_4 = c_0 c_{g,2} \|T_N\|_{L^2(0,L) \to L^2(0,L)}^{p+1}.
\end{equation*}
So the right side of \eqref{h1-rhs1} is bounded from above as
\begin{equation*}
	c_3 \|w_{xx}(t)\|_{L^2(0,L)}^{\frac{p+4}{4}} \|w(t)\|_{L^2(0,L)}^{\frac{3p+4}{4}} + c_4\|w(t)\|_{L^2(0,L)}^{p+1} \|w_{xx}(t)\|_{L^2(0,L)}.
\end{equation*}
We use Young's inequality with $\varepsilon > 0$ for the first term with the pair $\left(\frac{8}{p+4},\frac{8}{4-p}\right)$ and Cauchy's inequality with the same $\varepsilon > 0$, for the second term to get
\begin{multline*}
	c_3 \|w_{xx}(t)\|_{L^2(0,L)}^{\frac{p+4}{4}} \|w(t)\|_{L^2(0,L)}^{\frac{3p+4}{4}} + c_4\|w(t)\|_{L^2(0,L)}^{p+1} \|\partial_{xx} w(t)\|_{L^2(0,L)}\\  \leq \frac{\varepsilon}{2} \|\partial_{xx} w(t)\|_{L^2(0,L)}^{2} + c_{3,\varepsilon} \|w(t)\|_{L^2(0,L)}^{\frac{6p+8}{4-p}} + \frac{\varepsilon}{2}\|\partial_{xx} w(t)\|_{L^2(0,L)}^{2}  + c_{4,\varepsilon} \|w(t)\|_{L^2(0,L)}^{2p+2} \\
	\leq \varepsilon \|\partial_{xx} w(t)\|_{L^2(0,L)}^{2}  +c_{3,\varepsilon,\lambda_1} \|w_x(t)\|_{L^2(0,L)}^{\frac{6p+8}{4-p}} + c_{4,\varepsilon,\lambda_1} \|w_x(t)\|_{L^2(0,L)}^{2p+2} 
\end{multline*}
provided that $p < 4$ where
\begin{equation*}
	c_{3,\varepsilon} = \frac{4 - p}{8} \left(\frac{c_3 (p + 4)}{4\varepsilon}\right)^{\frac{p+4}{4-p}}, \quad 	c_{4,\varepsilon} = \frac{c_4^2}{2\varepsilon},
\end{equation*}
and
\begin{equation} \label{c3c4}
	c_{3,\varepsilon,\lambda_1} = c_{3,\varepsilon} (\lambda_1^{-1})^{\frac{6p+8}{4-p}}, \quad c_{4,\varepsilon,\lambda_1} = c_{4,\varepsilon} (\lambda_1^{-1})^{2p+2}.
\end{equation}
Regarding the linear terms in \eqref{h1-inner-nonlin}, we apply similar arguments to those given by \eqref{h1-inner}-\eqref{h1-inner-4} and obtain the following inequality
\begin{multline*}
	\frac{d}{dt} \|w_x(t)\|_{L^2(0,L)}^2 + 2\left(\nu - \frac{\mu\epsilon_2}{2}\right) \|w_{xx}(t)\|_{L^2(0,L)}^2 dx + 2\left(\mu - \gamma - \frac{\mu}{2\epsilon_2\lambda_1 (2N+1)^2}\right) \|w_x(t)\|_{L^2(0,L)} \\ \leq \varepsilon \|\partial_{xx} w(t)\|_{L^2(0,L)}^{2} + c_{3,\varepsilon,\lambda_1}  \|w_x(t)\|_{L^2(0,L)}^{\frac{6p+8}{4-p}} + c_{4,\varepsilon,\lambda_1} \|w_x(t)\|_{L^2(0,L)}^{2p+2},
\end{multline*}
Choose $\varepsilon$ such that
\begin{equation} \label{eps-cond2}
	2\nu - \varepsilon - \mu\epsilon_2 > 0,
\end{equation}
where $\epsilon_2 = \frac{1}{\lambda_1 (2N+1)}$ or $\epsilon_2 = \lambda_{N+1}^{-1}$, depending either we want to achieve rapid stabilization or we want to minimize number of Fourier modes, respectively. Then, we obtain the following inequality
\begin{equation} \label{nonlin-3}
	\frac{d}{dt} \|w_x(t)\|_{L^2(0,L)}^2 + 2\left(\eta^* - \frac{\varepsilon \lambda_1}{2}\right) \|w_x(t)\|_{L^2(0,L)}^2 \leq c_{3,\varepsilon,\lambda_1} \|w_x(t)\|_{L^2(0,L)}^{\frac{6p+8}{4-p}} + c_{4,\varepsilon,\lambda_1} \|w_x(t)\|_{L^2(0,L)}^{2p+2},
\end{equation}
where $\eta^*$ is either $\eta_1$ or $\eta_2$ given by \eqref{l2-dec-est-w-lin} or \eqref{l2-dec-est-w-lin-2}, respectively. Denote $z(t) = \|w_x(t)\|_{L^2(0,L)}^2$. If $z(0)$ is sufficiently small, then $z$ exponentially decays to zero by the following lemma.

\begin{lemma} \label{lem-nonlin-2}
	Let $A, B, C > 0$ and $z(t) > 0$ satisfy the differential inequality
	\begin{equation} \label{ineq-nonlin-h1}
		z^\prime + A z \leq Bz^{\frac{6p+8}{4-p}} + Cz^{2p+2}.
	\end{equation}
	If $z(0) < \min \left\{\left(\frac{A}{B + C}\right)^{\frac{4 - p}{7p + 4}},\left(\frac{A}{B+C}\right)^{\frac{1}{2p+1}}\right\}$, then $z(t) \lesssim z(0) e^{-At}$, for a.e. $t > 0$.
\end{lemma}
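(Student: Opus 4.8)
The plan is to follow the proof of Lemma~\ref{lem-nonlin-1} essentially verbatim, since the two statements have the same structure and only the exponents differ. First I would record two elementary arithmetic facts. For $0<p<4$ one has $2p+2<\frac{6p+8}{4-p}$: clearing the positive denominator $4-p$, this is $(2p+2)(4-p)<6p+8$, i.e.\ $0<2p^{2}$. This lets one compare the two power terms on the right-hand side of \eqref{ineq-nonlin-h1} according to whether $z$ exceeds $1$ or not. Moreover the exponents produced by the Bernoulli substitution are $\frac{6p+8}{4-p}-1=\frac{7p+4}{4-p}$ and $(2p+2)-1=2p+1$, whose reciprocals $\frac{4-p}{7p+4}$ and $\frac{1}{2p+1}$ are precisely the powers appearing in the smallness threshold for $z(0)$.

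Then I would split into the cases $z(0)>1$ and $z(0)\le 1$. In the first case, let $t_{1}>0$ be maximal such that $z(t)>1$ on $(0,t_{1})$. There $z^{2p+2}\le z^{\frac{6p+8}{4-p}}$, so \eqref{ineq-nonlin-h1} gives the Bernoulli-type differential inequality $z'+Az\le (B+C)\,z^{\frac{6p+8}{4-p}}$, and integrating it exactly as in the derivation of \eqref{ineq-nonlin-l2-2} yields
\[
z^{\frac{7p+4}{4-p}}(t)\le\left[\frac{B+C}{A}+\exp\!\Big(A\,\tfrac{7p+4}{4-p}\,t\Big)\left(z^{-\frac{7p+4}{4-p}}(0)-\frac{B+C}{A}\right)\right]^{-1},\qquad t\in(0,t_{1}).
\]
The hypothesis $z(0)<\big(\tfrac{A}{B+C}\big)^{\frac{4-p}{7p+4}}$ makes the correction factor $z^{-\frac{7p+4}{4-p}}(0)-\frac{B+C}{A}$ strictly positive, so the bound above forces $z(t)\lesssim z(0)e^{-At}$ on $(0,t_{1})$; in particular $z$ falls below $1$ and $t_{1}<\infty$.

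For $t\ge t_{1}$ one has $z(t)\le 1$, hence $z^{\frac{6p+8}{4-p}}\le z^{2p+2}$ and \eqref{ineq-nonlin-h1} reduces to $z'+Az\le (B+C)\,z^{2p+2}$; integrating this Bernoulli inequality as in \eqref{ineq-nonlin-l2-4} gives
\[
z^{2p+1}(t)\le\left[\frac{B+C}{A}+\exp\!\big(A(2p+1)t\big)\left(z^{-(2p+1)}(0)-\frac{B+C}{A}\right)\right]^{-1}.
\]
The second smallness hypothesis $z(0)<\big(\tfrac{A}{B+C}\big)^{\frac{1}{2p+1}}$ again keeps the correction factor positive, so $z$ is nonincreasing and $z(t)\lesssim z(0)e^{-At}$ for a.e.\ $t\ge t_{1}$; combining the two regimes yields the claim for a.e.\ $t>0$, while the case $z(0)\le 1$ is covered by the last display alone with $t_{1}=0$. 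I do not expect a genuine obstacle here: the only points that require care are the exponent comparison $2p+2<\frac{6p+8}{4-p}$ for $0<p<4$ and the verification that the two smallness thresholds are exactly those that keep the constant in the Bernoulli solution from changing sign — both of which are routine, precisely as in Lemma~\ref{lem-nonlin-1}.
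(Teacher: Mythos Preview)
Your proposal is correct and follows exactly the approach the paper intends: the paper's own proof of Lemma~\ref{lem-nonlin-2} consists of the single sentence that it is identical to the proof of Lemma~\ref{lem-nonlin-1}, once one observes $\frac{6p+8}{4-p}>2p+2$ for $0<p<4$. You have simply spelled out that identical argument, including the exponent comparison and the Bernoulli-inequality integrations with the correct exponents $\frac{7p+4}{4-p}$ and $2p+1$.
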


\begin{proof} 
	The proof is identical to that of Lemma \ref{lem-nonlin-1}, observing that $\displaystyle \frac{6p + 8}{4 - p} > 2p + 2$ for $0 < p < 4$.
\end{proof}

Combining Lemma \ref{lem-nonlin-1} and Lemma \ref{lem-nonlin-2} with the assumptions on $w_0$ and $w^\prime_0$, yields the local exponential stabilization of zero equilibrium of the target model \eqref{pde_tarnonlin} in $H^1$ level. Moreover, we have the following exponential decay estimate
\begin{equation*}
    \|w(t)\|_{H^1(0,L)}^2 \lesssim c_k e^{-\left(\eta^* - \frac{\varepsilon \lambda_1}{2} \right)} \|w_0\|_{H^1(0,L)}, \quad \text{a.e. }t \geq 0.
\end{equation*}

Finally, to go back to the original plant, we apply the same arguments \eqref{backtoplant1}-\eqref{backtoplant3} and derive the decay estimates
\begin{equation*}
	\no{u(t)}_{H^1(0,L)} \lesssim c_k e^{-\left(\eta^* - \frac{\varepsilon \lambda_1}{2}\right) t} \no{u_0}_{H^1(0,L)}, \quad \text{a.e. }t \geq 0,
\end{equation*}
provided that $\no{u_0}_{H^1(0,L)}$ is sufficiently small. This proves Theorem \ref{thm-stab-nonlinear}. Here $c_k = (1 + \|k\|_{H^m(\Delta_{x,y})}) \|T_N^{-1}\|_{H^m(0,L) \to H^m(0,L)}$ is a nonnegative constant independent of the initial datum.

\begin{remark}
	The conditions on $\mu$ and $N$ for the linear model stated in Theorem \ref{thm-stab-linear} remain same for the nonlinear model. As long as these conditions are fulfilled, one can always find an $\varepsilon > 0$ such that \eqref{eps-cond1}, \eqref{eps-cond2} as well as $\eta^* - \frac{\varepsilon \lambda_1}{2} > 0$ hold true.
\end{remark}

\subsection{Extending global uniqueness} \label{uniqueness-closed-loop}
In this part, we show that the uniqueness of the closed-loop system over the ball $\overline{B(0,\rho_T)} \subset X_T$, where $\rho$ is given by \eqref{rst-defnew}, can be extended to the entire space $X_T$. We show this by studying the associated linear and nonlinear target models. Uniqueness of solutions to the linearized and nonlinear plants follows by bijectivity of our integral transformation.

\subsubsection{Linearized plant.} Let $y_1$ and $y_2$ be two solutions to the linearized plant \eqref{pde_lin} corresponding to the same initial condition $y_0$. Since $T_N$ is bijection, we set the unique elements $z_1$ and $z_2$ associated to $y_1$ and $y_2$, i.e., solutions to associated target models, such that $z_1 = T_N^{-1} y_1$ and $z_2 = T_N^{-1} y_2$. Similarly, we set $z_0 = T_N^{-1} y_0$, the initial condition to the target model. Define $z = z_1 - z_2$. Then, using the linearity, $w$ solves the following model
\begin{equation*}
	\begin{cases}
		z_t - (\nu + i\alpha) z_{xx} - \gamma z + \mu P_N z = 0, &(x,t) \in(0,L) \times (0,T), \\
		z(0,t) = z_x(L,t) = 0, & t \in (0,T),\\
		z(x,0) = 0, &x \in (0,L).
	\end{cases}
\end{equation*}
Using energy arguments similar to those used in Section \ref{linstab}, one can derive the following estimate
\begin{equation*}
    \frac{d}{dt} \|z(t)\|_{L^2(0,L)}^2 \leq C \|z(t)\|_{L^2(0,L)}^2, \quad \text{a.e. }t \geq 0.
\end{equation*}
Employing the Grönwall's inequality and noting that $z_0 = 0$, we obtain $z \equiv 0$. By the global existence result, we know that $z(\cdot,t) \in H_0^1(0,L)$ for each $t > 0$. Since the embedding $H_0^1(0,L) \to C([0,L])$ is continuous, we infer that $z = 0$ for $x \in [0,L]$. Hence,
\begin{equation*}
    y_2 - y_1 = T_N z_2 - T_N z_1 = T_N (z_2 - z_1) = T_N 0 = 0,
\end{equation*}
for $x \in [0,L]$.

\subsubsection{Nonlinear plant.} Using the same setting as in the linear case, $z = z_2 - z_1$ solves the following model
\begin{equation*}
	\begin{cases}
		z_t - (\nu + i\alpha) z_{xx} - \gamma z + \mu P_N z + f(z_1) - f(z_2)= 0, &(x,t) \in(0,L) \times (0,T), \\
		z(0,t) = z_x(L,t) = 0, & t \in (0,T),\\
		z(x,0) = 0, &x \in (0,L).
	\end{cases}
\end{equation*}
Using the arguments through \eqref{l2-inner}-\eqref{l2-dec1}, we obtain
\begin{equation} \label{nonlin-unique}
    \frac{d}{dt} \|z(t)\|_{L^2(0,L)}^2 + 2 \eta^* \|z(t)\|_L^2(0,L)^2 \leq \int_0^L \left| (f(z_1) - f(z_2)) \right| |z| dx,
\end{equation}
where $\eta > 0$ is either $\eta_1$ or $\eta_2$ derived in Section \ref{linstab}. Using Cauchy-Schwarz inequality and boundedness of $T_N^{-1}$ on $L^2(0,L)$, we can bound the right side of \eqref{nonlin-unique} as
\begin{align}
        \int_0^L \left| (f(z_1) - f(z_2)) \right| |z| dx &\leq C_{\kappa,\beta} 
        \|z(t)\|_{L^2(0,L)} \left\|T_N^{-1} \left[|T_N z_1|^p (T_N z_1) - |T_N z_2|^p (T_N z_2) \right](t)\right\|_{L^2(0,L)} \nonumber \\
        \label{nonlin-unique-2}
        &\leq C_{\kappa,\beta} \|z(t)\|_{L^2(0,L)} \left\|\left(|T_N z_1|^p (T_N z_1) - |T_N z_2|^p (T_N z_2)\right)(t)\right\|_{L^2(0,L)}
\end{align}
Using the inequality $\left||a|^p a - |b|^p b\right| \leq \left(|a|^{p} + |b|^{p}\right) |a - b|$, where $a,b \in \mathbb{C}$, we can write
\begin{equation} \label{nonlin-unique-3}
    \begin{split}
        \int_0^L& \left| (f(z_1) - f(z_2)) \right| |z| dx \leq \text{Expression in }\eqref{nonlin-unique-2} \\
        &\leq C_{\kappa,\beta} \|z(t)\|_{L^2(0,L)} \left(\|(T_N z_1)(t)\|_{L^\infty(0,L)}^{p} + \|(T_N z_2)(t)\|_{L^\infty(0,L)}^{p}\right) \|(T_N z_1 - T_N z_2)(t)\|_{L^2(0,L)}.
    \end{split}
\end{equation}
Using the estimate in \eqref{nonlin-unique-3} and, also linearity and boundedness of $T_N$ on $L^2(0,L)$, it follows from \eqref{nonlin-unique} that
\begin{equation} \label{nonlin-unique-4}
    \frac{d}{dt} \|z(t)\|_{L^2(0,L)}^2 + 2 \eta^* \|z(t)\|_{L^2(0,L)}^2 \leq C_{\kappa,\beta} \|z(t)\|_{L^2(0,L)}^2 \left(\|z_1(t)\|_{X_T}^{p} + \|z_2(t)\|_{X_T}^{p}\right) ,
\end{equation}
Employing the Grönwall's inequality and noting that $z_0 = 0$ implies $\|z(t)\|_{L^2(0,L)} = 0$ which yields $z \equiv 0$. Since $z(\cdot,t) \in H_0^1(0,L)$ and thanks to the continuous embedding $H_0^1(0,L) \to C([0,L])$, $z(\cdot,t)$ is continuous on $x \in [0,L]$. Hence $z = 0$ for $x \in [0,L]$. Now, due to linearity and bijectivity of our integral transformation, we conclude $y = 0$.

\section{Numerical algorithm and simulations}
\label{numerical}
In this section, we will present numerical simulations verifying our theoretical stabilization results. The approximate solutions are obtained via finite differences and the nonlinear term is treated via Picard's iteration. Throughout this part, we will use the following notations. $N_x > 1$ denotes the number of spatial node points. We will consider a uniform discretization and $\delta_x = \frac{L}{N_x - 1}$ represents the uniform grid spacing. For each $i \in \{1, \dotsc , N_x\}$, $x_i = (i - 1) \delta_x$ represent distinct node points of the spatial domain, and $\varphi^h = [\varphi_1 \, \dotsc \, \varphi_{N_x}]^T \in \mathbb{C}^{N_x}$ is an approximation for the function $\varphi(x)$ on $[0,L]$. $N_t > 1$ represents the number of time steps, $T_{\max}$ is the final time. $\delta_t = \frac{T_{\max}}{N_t - 1}$ is the uniform temporal step size and $t_n = (n - 1)\delta_t$ represents the value of the temporal variable at the $n$th time step for each $n \in \{1, \dotsc, N_t\}$. $\varphi_i^n$ represents an approximation for the function $\varphi(x,t)$ at the point $(x_i,t_n)$.

\subsection{An approximation for the control input} We truncated the kernel and write
\begin{equation*}
	k^M_{i,j} := k^M(x_i,y_j) = -\frac{\mu y_j}{2 \nu} \sum_{m=0}^M \left(-\frac{\mu}{4 \nu}\right)^m \frac{(x_i^2 - y_j^2)^m}{m! (m+1)!}, \quad 1 \leq j \leq i \leq N_x,
\end{equation*}
by setting $M$ so that the error $\underset{1 \leq j \leq i \leq N_x}{\max}|k^{M+1}_{i,j}-k^M_{i,j}|$ is less than the value \textit{eps}, that represents the value for floating-point precision, around $10^{-16}$. We use $\mathbf{K}^h$ that denotes discrete counterpart of the integral operator $K$ that appears in our integral transformation. Employing the trapezoidal rule to approximate the integral, we express $\mathbf{K}^h$ by $N_x$-dimensional square matrix
\begin{equation*}
	(\mathbf{K}^h)_{i,j} =
	\begin{cases}
		0,  &\text{if }j > i, \\
		\frac{\delta_x}{2}k^M_{i,j}, &\text{if }j = i, \\
		\delta_x k^M_{i,j}, &\text{else}.
	\end{cases}
\end{equation*}
Discrete counterpart of the projection operator $P_N$ can be expressed by $\mathbf{P}_N^h = \mathbf{W} \mathbf{W}^T$, where $\mathbf{W}$ is $N_x \times N$ matrix with elements
\begin{equation*}
	(\mathbf{W})_{i,n} = \sqrt{\frac{2}{L}} \sin \left(\frac{ (2n - 1)\pi x_i}{2L}\right), \quad i \in \{1, \dotsc, N_x\}, n \in \{1, \dotsc, N\}.
\end{equation*}
To write a discrete counterpart $\mathbf{\Upsilon}_N^h$ of $\Upsilon_N$, we use the iteration
\begin{equation*}
    \begin{split}
	\Upsilon_{N} \varphi \equiv& (I - \Upsilon_{N-1})[K P_{N} \varphi] -\dfrac{\left( (I-\Upsilon_{N-1})[K P_N \varphi],e_{N}\right)_2}{1+\left((I-\Upsilon_{N-1})[K e_N],e_{N}\right)_2}(I-\Upsilon_{N-1})[K e_N], \\
	\Upsilon_0 \varphi =& 0.
    \end{split}
\end{equation*}
Following the procedure detailed in \cite[Section 6]{KaOzY}, we deduce that the required inputs in order to derive $\Upsilon_N \varphi$ are as follows:
\begin{equation} \label{inviter_input1}
	(K P_1)(K P_2) \dotsm (K P_{N-1})(K P_{N})u
\end{equation}
and
\begin{align} 
	&(K P_1)[K e_2] \tag{4.1--2} \label{inviter_input2} \\
	&(K P_1)(K P_2)[K e_3] \tag{4.1--3} \label{inviter_input3} \\
	& \quad\vdots \nonumber\\
	&(K P_1)(K P_2) \dotsc (K P_{j-1})[K e_j] \tag{4.1--j} \label{inviter_inputj} \\
	& \quad\vdots \nonumber\\
	&(K P_1)(K P_2) \dotsc (K P_{j-1}) \dotsc (K P_{N-1})[K e_N] \tag{4.1--N} \label{inviter_inputN}
\end{align}
For a given $\varphi \in H^\ell(0,L)$, $L = 0,1,2$, Algorithm \ref{alg:inviter} below gives a numerical construction of the discrete counterpart, $\mathbf{\Upsilon}_N^h \varphi^h$, of $\Upsilon_N \varphi$.
\begin{center}
	\begin{algorithm}[H]
    \label{inv-alg}
		\begin{algorithmic}[1]
			\REQUIRE $\varphi^h$
			\IF{$N == 1$}
			\STATE ${\mathbf{\Upsilon}_N^h} \varphi^h \gets \frac{1}{1 + \beta_1} \mathbf{K}^h \mathbf{P}_N^h\varphi^h;$
			\ELSIF{$N > 1$}
			\STATE $K_\text{old}(1) \gets \text{expression } \eqref{inviter_input1};$
			\FOR{$j=2 \to N$}
			\STATE $K_\text{old}(j) \gets \text{expression } \eqref{inviter_inputj};$
			\ENDFOR
			\STATE $i \gets N;$
			\FOR{$p = 2 \to i$}
			\STATE $K_\text{new} \gets \mathbf{0}_{i-1}$
			\STATE $K_\text{new}(1) \gets \Psi(K_\text{old}(1),K_\text{old}(2));$
			\FOR{$j = 3 \to i$}
			\STATE $K_\text{new}(j-1) \gets \Psi\left(K_\text{old}(2),K_\text{old}(j)\right);$
			\ENDFOR
			\STATE \text{clear} $K_\text{old};$
			\STATE $K_\text{old} \gets K_\text{new};$
			\STATE $i \gets i - 1;$
			\ENDFOR
			\STATE $\mathbf{\Upsilon}_N^h\varphi^h \gets K_\text{old}(1);$
			\ENDIF
		\end{algorithmic}
		\caption{Numerical algorithm for derivation of $\mathbf{\Upsilon_N}\varphi$.}
		\label{alg:inviter}
	\end{algorithm}
\end{center}
Let $\mathbf{k}^M_{j,x} := [k_{N_x,1} \, \dotsm \, k_{N_x,N_x}]^T$ be an approximation for $k_x(L,y)$. Then, we can write the discrete counterpart of the control law given by \eqref{controller}
\begin{equation*}
	b(P_N u) \approx g_{P_N u}^n := \text{trapz} \left\{ \mathbf{k}^M_{j,x} * \mathbf{P}_N^h [(\mathbf{I}_{N_x} - \mathbf{\Upsilon}_N^h) \mathbf{u}^n],j\right\} - \frac{\mu L}{2 (\nu + i \alpha)} \left(\mathbf{P}_N^h [(\mathbf{I}_{N_x} - \mathbf{\Upsilon}_N^h) \mathbf{u}^n]\right)\big|_{i = N_x},
\end{equation*}
for each $n \in \{1, \dotsc, N_t\}$. Here the multiplication, $*$ is the elementwise multiplication of vectors having same dimension, $\mathbf{I}_{N_x}$ is the $N$-dimensional identity matrix and \emph{trapz} represents the numerical integration by using the composite trapezoidal rule with respect to the index $j \in \{1, \dotsc, N_x\}$.

\subsection{Linear model} We use second order central differences that approximates to the second order spatial derivative and Crank-Nicolson scheme for the time iteration. In addition, we use one-sided second order finite difference approximation that approximates to the Neumann type boundary control input. Then the associated discrete problem for the linear model \eqref{pde_lin} reads:  For each $n = 1, \dotsc, N_t$,
\begin{equation} \label{disc_lin}
	\begin{cases}
		\text{given } \mathbf{u}^n \in \mathbb{R}^{N_x}, \text{ find } \mathbf{u}^{n+1} \in \mathbb{R}^{N_x} \text{ such that} \\
		\left(\mathbf{I}_{N_x} + \frac{\delta_t}{2} \mathbf{A}\right) \mathbf{u}^{n+1} = \mathbf{B}_l^n, \quad i = 2, \dotsc, N_x - 1, \\
		u_1^{n+1} = 0, \quad \dfrac{3u^{n+1}_{N_x} - 4u^{n+1}_{N_x - 1} + u^{n+1}_{N_x - 2}}{2\delta_x} = g_{P_N u}^{n},
	\end{cases}
\end{equation}
where $\mathbf{A} := -(\nu + i\alpha)  \mathbf{\Delta} - \gamma \mathbf{I}_{N_x}$,  $\mathbf{\Delta}$ is the $N_x$-dimensional tridiagonal matrix and $\mathbf{B}_l^n := \left(\mathbf{I}_{N_x} - \frac{\delta_t}{2} \mathbf{A}\right) \mathbf{u}^{n}$.
%\begin{equation} \label{disc_lin_rhs}
%	\mathbf{B}_l^n := \left(\mathbf{I}_{N_x} - \frac{\delta_t}{2} \mathbf{A}\right) \mathbf{u}^{n}.
%\end{equation}

\begin{algorithm}[H]
	\begin{algorithmic}[1]
		\REQUIRE  Initial state $u_0$.
		\STATE $\mathbf{u}^1 \gets \mathbf{u_0}$;
		\FOR{$n=1 \to N_t -1 $}
		%		\STATE $u^{n+1}_1 \gets 0$;
		%		\STATE
		%		\begin{varwidth}[t]{\linewidth}
			%			$\frac{1}{2\delta_x}\left(3u^{n+1}_{N_x} - 4u^{n+1}_{N_x - 1} + u^{n+1}_{N_x - 2}\right) \gets g_{P_N u}^n$;
			%		\end{varwidth}
		\STATE \text{Solve \eqref{disc_lin} for }$\mathbf{u}^{n+1}$;
		\ENDFOR
	\end{algorithmic}
	\caption{Numerical algorithm for linear model.}
	\label{alg:numlin}
\end{algorithm}

\paragraph{\bf{Experiment 1.}} Consider the linear model
\begin{eqnarray} \label{pde_lin_num}
	\begin{cases}
		u_t - (1 + 3i) u_{xx} - 23u = 0, &(x,t) \in(0,1) \times (0,T), \\
		u(0,t) = 0, u_x(1,t) = b(t), & t \in (0,T),\\
		u(x,0) = \sin (2\pi x) - \frac{1}{2} \sin (3 \pi x), &x \in (0,1).
	\end{cases}
\end{eqnarray}
Here $\lambda_2 = \frac{9\pi^2}{4}$, $\lambda_3 = \frac{25 \pi^2}{4}$ and $\lambda_2 < \frac{\gamma}{\nu} = 23 < \lambda_3$, so the instability level of the problem is $2$. Therefore, by Theorem \ref{thm-stab-linear}, we should be able to stabilize zero equilibrium by using a control law involving $N = 2$ Fourier modes of the state:
\begin{equation*}
	b(P_2 u(\cdot,t)) = \int_0^1 k_x(1,y) \Gamma_2 [P_2 u](y,t) dy - \frac{\mu}{2 (1 + 3i)} \Gamma_2 [P_2 u](x,t) \bigg|_{x = 1}.
\end{equation*}
Using the problem parameters, we calculate
\begin{equation*}
	2(\gamma - \nu \lambda_1) \left(1 - \frac{1}{2N+1}\right)^{-1} \simeq 51.3, \quad  2\nu \lambda_{3} \simeq 493.5,
\end{equation*}
so that, choosing $\mu = 60$ fulfills the criterion
\begin{equation}
	2(\gamma - \nu \lambda_1) \left(1 - \frac{1}{2N+1}\right)^{-1}< \mu < 2\nu \lambda_{N+1}.
\end{equation}
Note also that for $j = 1$
\begin{equation*}
	1 +	\left(K e_{1},e_{1}\right)_2 \simeq -0.26 + 0.87 i \neq 0
\end{equation*}
and for $j = 2$
\begin{equation*}
	1 + \left((I-\Upsilon_{1})[K e_{2}],e_{2}\right)_2 \simeq 0.67 + 0.17 i \neq 0
\end{equation*}
so that, in view of Lemma \ref{invlem-2}, $T_N^{-1}$ can be constructed via the recursion \eqref{Phij} for the pair $(\mu,N) = (60,2)$. See Fig. \ref{fig:ctrl_lin_3d} for the graph of the solution $\left|u(x,t)\right|$ of the stabilized linear model \eqref{pde_lin_num} and Fig. \ref{fig:ctrl_lin} for time evolutions of $L^2(0,1)$ norm and $H^1(0,1)$ norm of the solution.
%\begin{figure}[h!]
%	\centering
%	\includegraphics[scale=0.7]{lin3d.png}
%	\caption{Graph of the solution of the stabilized linear model \eqref{pde_lin_num}.}
%	\label{fig:ctrl_lin_3d}
%\end{figure}
\begin{figure}[!h]
    \begin{subfigure}[b]{0.49\textwidth}
		\centering
		\includegraphics[scale=0.7]{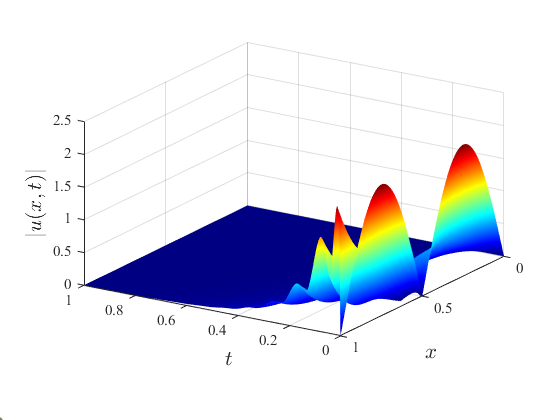}
        \vspace{-1 cm}
		\caption{Solution of the stabilized linear model~\eqref{pde_lin_num}.}
		\label{fig:ctrl_lin_3d}
	\end{subfigure}
	\\
    
    \vspace{0.75 cm}
	\begin{subfigure}[b]{0.49\textwidth}
		\centering
		\includegraphics[width=\textwidth]{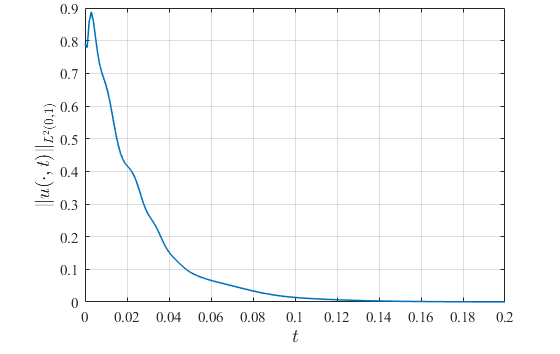}
		\caption{Time evolution of $L^2$ norm of solution of stabilized linear model~\eqref{pde_lin_num}.}
		\label{fig:ctrl_lin_l2}
	\end{subfigure}
	\hfill
	\begin{subfigure}[b]{0.49\textwidth}
		\centering
		\includegraphics[width=\textwidth]{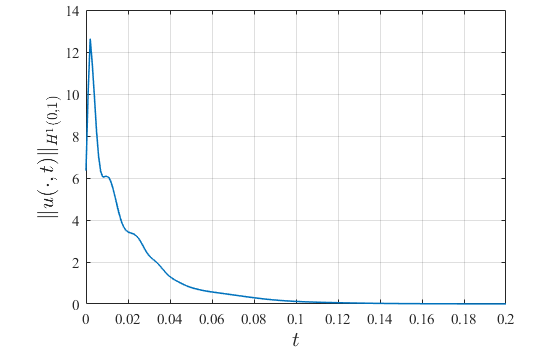}
        \caption{Time evolution of $H^1$ norm of solution of stabilized linear model~\eqref{pde_lin_num}.}
        \label{fig:ctrl_lin_h1}
	\end{subfigure}
	\caption{Simulations for the stabilized nonlinear model \eqref{pde_lin_num}.}
	\label{fig:ctrl_lin}
\end{figure}

\subsection{Nonlinear model} Discretization procedure we presented above for the linear model yields the following fully discrete form for the nonlinear plant \eqref{pde_nonlin}
\begin{equation} \label{disc_nonlin}
	\left(\mathbf{I}_{N_x} + \frac{\delta_t}{2} \mathbf{A}\right)\mathbf{u}^{n+1} + \frac{\delta_t}{2} (\kappa + i\beta) \mathbf{u}^{n+1} \left|\mathbf{u}^{n+1}\right|^p = \mathbf{B}_l^n + \mathbf{B}_{nl}^n,
\end{equation}
where $\mathbf{B}_{nl}^n := -\frac{\delta_t}{2} (\kappa + i\beta)\mathbf{u}^{n} \left|\mathbf{u}^{n}\right|^p$. We treat the nonlinear unknown term via Picard iteration method. Let $\mathbf{u}^{n,s}$ be an approximation to $\mathbf{u}^{n+1}$ and consider the iteration
\begin{equation} \label{iter_nonlin}
	\begin{cases}
		\mathbf{u}^{n,s+1} = \mathbf{u}^{n,s} + \mathbf{du}, \quad s = 0, 1, 2 \dotsc, \\
		\mathbf{u}^{n,0} := \mathbf{u}^n,
	\end{cases}
\end{equation}
together with the one-sided second order approximation to the Neumann actuation from the right endpoint
\begin{equation} \label{iter_nonlin2}
	\frac{3u^{n,s+1}_{N_x} - 4u^{n,s+1}_{N_x - 1} + u^{n,s+1}_{N_x - 2}}{2\delta_x} = g_{P_N u}^{n,s},
\end{equation}
until $\underset{1 \leq i \leq N_x}{\max} |\mathbf{du}|$ is small enough. Once it is, we stop the iteration and set $\mathbf{u}^{n+1} = \mathbf{u}^{n,s} + \mathbf{du}$. To find $\mathbf{du}$ for each iteration, we simply replace the unknown approximation, $\mathbf{u}^{n,s} + \mathbf{du}$, for the linear multiplicand and the computed approximation, $\mathbf{u}^{n,s}$, for the nonlinear multiplicand, to appriximate $\mathbf{u}^{n+1} \left|\mathbf{u}^{n+1}\right|^p$:
\begin{equation} 
	\mathbf{u}^{n+1} \left|\mathbf{u}^{n+1}\right|^p \approx (\mathbf{u}^{n,s} + \mathbf{du}) \left|\mathbf{u}^{n,s}\right|^p = \mathbf{u}^{n,s} \left|\mathbf{u}^{n,s}\right|^p + \mathbf{du}  \left|\mathbf{u}^{n,s}\right|^p.
\end{equation}
Using this approximation in \eqref{disc_nonlin}, we obtain the following linear difference scheme in $\mathbf{du}$: Given $\mathbf{u}^{n,s}$, find $\mathbf{du}$ (and therefore $\mathbf{u}^{n,s+1}$) such that
\begin{multline} \label{dw_lin}
	\left(\mathbf{I}_{N_x} + \frac{\delta_t}{2} \mathbf{A} + \frac{\delta_t}{2}(\kappa + i\beta) \text{diag}\left(\left|\mathbf{u}^{n,s}\right|^p\right) \right)\mathbf{du} \\ = \mathbf{B}_l^n + \mathbf{B}_{nl}^n - \left(\mathbf{I}_{N_x} + \frac{\delta_t}{2} \mathbf{A}\right)\mathbf{u}^{n,s} -  \frac{\delta_t}{2} (\kappa + i\beta) \left(\mathbf{u}^{n,s} * \left|\mathbf{u}^{n,s}\right|^p\right), \quad i = 1, \dotsc, N_x-1.
\end{multline}
The $N_x$th component of $\mathbf{u}^{n,s+1}$ follows from \eqref{iter_nonlin2}
\begin{equation*}
	u^{n,s+1}_{N_x} = \frac{2 \delta_x g_{P_N u}^{n,s} + 4u^{n,s+1}_{N_x - 1} - u^{n,s+1}_{N_x - 2}}{3}.
\end{equation*}

\begin{algorithm}[H]
	\begin{algorithmic}[1]
		\REQUIRE  Initial state $u_0$.
		\STATE $\mathbf{u}^1 \gets \mathbf{u_0}$;
		\FOR{$n=1 \to N_t - 1$}
		\STATE $u^{n+1}_1 \gets 0$;
		\STATE $\mathbf{u}^{n,0} \gets \mathbf{u}^n$;
		\STATE $s \gets 0$;
		\STATE $\mathbf{du} = \mathbf{1}_{N_x}$;
		\WHILE{$\text{max}|\mathbf{du}| > \text{TOL}$}
		\STATE \text{Solve \eqref{dw_lin} for }$\mathbf{du}$;
		\STATE $\mathbf{u}^{n,s+1} \gets \mathbf{u}^{n,s} + \mathbf{du}$;
		\STATE
		\begin{varwidth}[t]{\linewidth}
			$u^{n,s+1}_{N_x} \gets \frac{1}{3}\left(2 \delta_x g_{P_N u}^{n,s} + 4u^{n,s+1}_{N_x - 1} - u^{n,s+1}_{N_x - 2}\right)$;
		\end{varwidth}
		\STATE $s \gets s + 1$;
		\ENDWHILE
		\STATE $\mathbf{u}^{n+1} = \mathbf{u}^{n,p+1}$;
		\ENDFOR
	\end{algorithmic}
	\caption{Numerical algorithm for nonlinear model.}
	\label{alg:numnonlin}
\end{algorithm}

\paragraph{\bf{Experiment 2.}} As a second example, we consider a nonlinear model
\begin{eqnarray} \label{pde_nonlin_num}
	\begin{cases}
		u_t - (1 + i) u_{xx} + (1 + 4i) |u|^2 u - 10u = 0, &(x,t) \in(0,1) \times (0,T), \\
		u(0,t) = 0, u_x(1,t) = b(t), & t \in (0,T),\\
		u(x,0) = 2x - 1 - \cos(\pi x) -2 i \sin (2\pi x), &x \in (0,1).
	\end{cases}
\end{eqnarray}
In the absence of the control input, i.e. if we have a homogeneous Neumann type boundary condition on the right endpoint, solution evolves to a nontrivial steady state (see Fig. \ref{fig:woctrl_nonlin}).
\begin{figure}[h!]
	\begin{subfigure}[b]{0.49\textwidth}
		\centering
		\includegraphics[width=\textwidth]{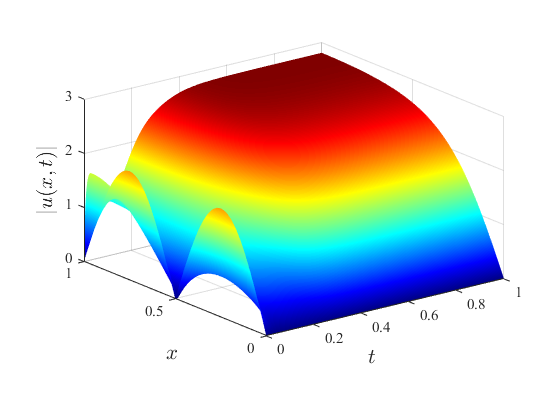}
		\caption{Solution of the uncontrolled nonlinear model~\eqref{pde_nonlin_num}.}
		\label{fig:woctrl_nonlin_3d}
	\end{subfigure}
	\hfill
	\begin{subfigure}[b]{0.49\textwidth}
		\centering
		\includegraphics[width=\textwidth]{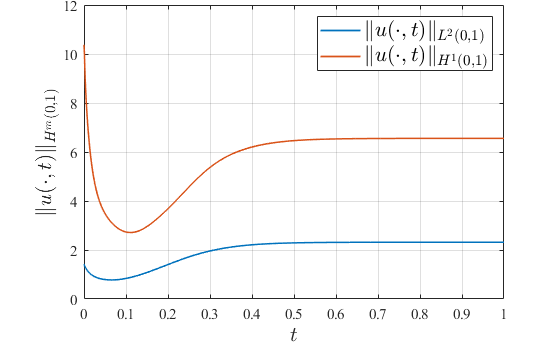}
		\caption{Time evolution of $L^2$ norm and $H^1$ norm of the solution of the uncontrolled nonlinear model \eqref{pde_nonlin_num}.}
		\label{fig:woctrl_nonlin_norm}
	\end{subfigure}
	\caption{Simulations for the nonlinear model \eqref{pde_nonlin_num} with $g = 0$.}
	\label{fig:woctrl_nonlin}
\end{figure}

Let us consider the part (ii) case of the Theorem \ref{thm-stab-nonlinear}: $\mu > \gamma - \lambda_1 \nu$ is given and find an associated value for $N$ so that the criterion \eqref{N_condd} holds. Let $\mu = 12$ which is greater than $\gamma - \lambda_1 \nu = 10 - \frac{\pi^2}{4}$. Then, 
\begin{equation} \label{N_conddd}
	N > \text{max} \left\{\frac{\mu}{4 \nu \lambda_1} - \frac{1}{2},\frac{\mu}{2(\mu + \nu \lambda_1 - \gamma)} - \frac{1}{2} \right\} = \text{max} \left\{\frac{12}{\pi^2} - \frac{1}{2}, \frac{24}{8 + \pi^2} - \frac{1}{2}\right\}.
\end{equation}
$N = 1$ fulfills \eqref{N_conddd}. Then, we can consider a control law that involves only a single Fourier mode of the state which is is of the form
\begin{equation} \label{exp2}
	b(P_1 u(\cdot,t)) = \int_0^1 k_x(1,y) \Gamma_1 [P_1 u](y,t) dy - \frac{6}{1 + i} \Gamma_1 [P_1 u](x,t) \bigg|_{x = 1}.
\end{equation}
Note that
\begin{equation*}
	1 +	\left(K e_{1},e_{1}\right)_2 \simeq 0.42 + 0.37 i \neq 0.
\end{equation*}
Thus, thanks to the Lemma \ref{invlem-2}, the bounded inverse $T_1^{-1}$ can be constructed via the recursion \eqref{Phij}. See Fig. \ref{fig:ctrl_nonlin} for the simulations of the controlled nonlinear model \eqref{pde_nonlin_num}.

\begin{figure}[h!]
	\begin{subfigure}[b]{0.49\textwidth}
		\centering
		\includegraphics[scale=0.7]{nonlin3d.png}
        \vspace{-1 cm}
		\caption{Solution of the stabilized nonlinear model~\eqref{pde_nonlin_num}.}
		\label{fig:ctrl_nonlin_3d}
	\end{subfigure}
	\\
    
    \vspace{0.75 cm}
	\begin{subfigure}[b]{0.49\textwidth}
		\centering
		\includegraphics[width=\textwidth]{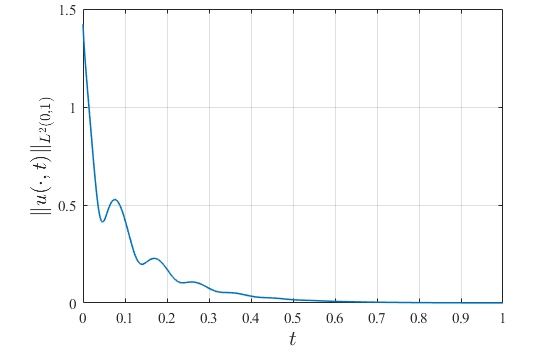}
		\caption{Time evolution of $L^2$ norm of solution of stabilized nonlinear model~\eqref{pde_nonlin_num}.}
		\label{fig:ctrl_nonlin_l2}
	\end{subfigure}
	\hfill
	\begin{subfigure}[b]{0.49\textwidth}
		\centering
		\includegraphics[width=\textwidth]{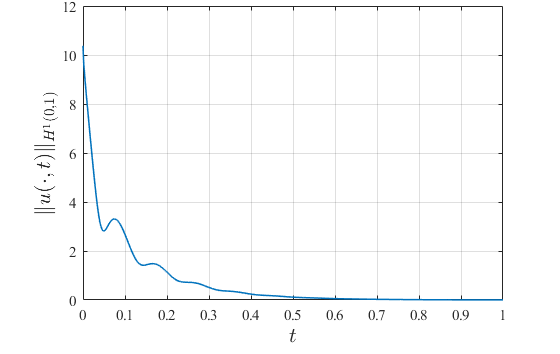}
		\caption{Time evolution of $H^1$ norm of solution of stabilized nonlinear model~\eqref{pde_nonlin_num}.}
		\label{fig:ctrl_nonlin_h1}
	\end{subfigure}
	\caption{Simulations for the stabilized nonlinear model \eqref{pde_nonlin_num}.}
	\label{fig:ctrl_nonlin}
\end{figure}

\appendix

\section{Solution of the forced linear CGL equation on a finite interval}
\label{utm-sol-s}

We derive in detail the unified transform solution formula  \eqref{lcgl-sol-T} for the modified forced linear problem \eqref{lcgl-ibvp-w}. Recall that this formula provides the starting point for deriving the linear estimates used in the proof of the local well-posedness Theorem \ref{lwp-t}. Furthermore, by means of the transformation \eqref{gamma-trans}, it readily implies a corresponding solution formula for the original forced linear CGL problem \eqref{lcgl-ibvp}.

The finite interval Fourier transform of a function $\varphi \in L^2(0, L)$ is defined  by
\begin{equation}\label{ft}
\what \varphi(k) = \int_0^L e^{-ikx} \varphi(x) dx, \quad k \in \mathbb C.
\end{equation}
Note that, due to the bounded range of values of $x$, $\what \varphi(k)$ is an entire function as a consequence of a  standard Paley-Wiener-type theorem (e.g. see \cite{s1994}, Theorem 7.2.4).
Moreover, by extending $\varphi(x)$ by zero outside $(0, L)$ and employing the inversion formula for the Fourier transform on the whole line, we readily deduce the inverse of the finite interval Fourier transform \eqref{ft} as 
\begin{equation}\label{ift}
\varphi(x) = \frac{1}{2\pi} \int_{\mathbb R} e^{ikx} \, \what \varphi(k) dk, \quad x \in (0, L).
\end{equation}

Taking the finite interval Fourier transform \eqref{ft} of the forced linear equation in \eqref{lcgl-ibvp-w}, we obtain 
\begin{equation}\label{lcgl-ft-stage1}
\p_t	\what w(k, t) - \left(\nu + i \alpha \right) \left\{ e^{-ikL} w_x(L, t) - w_x(0, t) +ik \left[e^{-ikL} w(L, t) - w(0, t) + ik \, \what w(k, t) \right]\right\}  = \what f(k, t).
\end{equation}
In view of the two boundary conditions and the notation $u_x(0, t)=g_1(t)$, $u(L, t)=h_0(t)$ for the two unknown boundary values involved in \eqref{lcgl-ft-stage1}, we have
\begin{equation}\label{uhat-ode}
\p_t \what w(k, t) +  \left(\nu + i \alpha\right) k^2 \what w(k, t) = - \left(\nu + i \alpha\right) \Big\{ g_1(t)-e^{-ikL} h_1(t) + ik \big[g_0(t)-e^{-ikL} h_0(t)\big]\Big\} +  \what f(k, t).
\end{equation}
Integrating this equation in $t$ and using the initial condition as well as the notation
\begin{equation}\label{tilde-def}
\omega = \omega(k^2) := \left(\nu + i \alpha\right) k^2,
\quad
\widetilde \varphi(\omega, t) = \int_0^t e^{\omega t'} \varphi(t') dt',
\end{equation}
we obtain the following spectral identity known in the unified transform literature as the global relation:
\begin{equation}\label{lcgl-gr}
\begin{aligned}
e^{\omega t}\,  \what w(k, t) 
&=
\what u_0(k)
-
\left(\nu + i \alpha\right) \left\{ \big[\widetilde g_1(\omega, t) + ik \widetilde g_0(\omega, t)\big] - e^{-ikL}  \left[ \widetilde h_1(\omega, t) + ik  \widetilde h_0(\omega, t)\right]\right\}
\\
&\quad
+ \int_0^t e^{\omega  t'} \what f(k, t') dt', \quad k\in\mathbb C.
\end{aligned}
\end{equation}
Inverting the global relation for $k\in \mathbb R$ via  \eqref{ift}, we find
\begin{align}\label{lcgl-ir}
w(x, t) 
&=
\frac{1}{2\pi} 
\int_{\mathbb R} e^{ikx-\omega t} \, \what u_0(k) \, dk \,
+ \frac{1}{2\pi} 
\int_{\mathbb R} e^{ikx-\omega t} 
\int_0^t e^{\omega  t'} \what f(k, t') dt' dk
\\
&\quad
-
\frac{\nu+i \alpha}{2\pi} 
\int_{\mathbb R} e^{ikx-\omega t}  \, \big[\widetilde g_1(\omega, t) + ik \widetilde g_0(\omega, t)\big]  \, dk 
+
\frac{\nu+i \alpha}{2\pi} 
\int_{\mathbb R} e^{ik(x-L)-\omega t}   \left[ \widetilde h_1(\omega, t) + ik  \widetilde h_0(\omega, t)\right] dk.
\nn
\end{align}

The expression \eqref{lcgl-ir} is not an explicit solution formula, as it involves the two unknown boundary values $g_1, h_0$ via their corresponding temporal transforms \eqref{tilde-def}. 
The key idea of the unified transform is to combine the fact that $\omega$ is invariant under the symmetry $k \mapsto -k$ with appropriate complex contour deformations in order to eliminate these unknown terms. 

Specifically, note that $\left| e^{ikx} \right| = e^{-\text{Im}(k) x}$ thus, since $x>0$, the exponential  $e^{ikx}$ is bounded for $\text{Im}(k)\geq 0$ and decays to zero as $|k|\to \infty$ in the upper half-plane $\left\{\text{Im}(k) > 0\right\}$.
Similarly, $e^{-\omega(t-t')}$ with $0<t'<t$ decays to zero in the regions $\left\{\text{Im}(k) > 0\right\} \setminus \overline{D^+}$ and $\left\{\text{Im}(k) < 0\right\} \setminus \overline{D^-}$, where 
\begin{equation}\label{dpm-def}
D^\pm := \left\{k\in\mathbb C: \text{Im}(k) \gtrless 0 \text{ and } \text{Re}(\omega) < 0\right\}.
\end{equation} 
Hence, the exponential $e^{ikx - \omega (t-t')}$  decays to zero as $|k|\to \infty$ inside the region $\left\{\text{Im}(k) > 0\right\} \setminus \overline{D^+}$ and is bounded on the closure of that region. Moreover, since $x<L$, the same is true for $e^{ik(x-L) - \omega(t-t')}$   inside the region $\left\{\text{Im}(k) < 0\right\} \setminus \overline{D^-}$.

Combining the above exponential decay with analyticity of the relevant integrands and Cauchy's integral theorem along the lines of the proof of Lemma \ref{uh-l} below, we may express \eqref{lcgl-ir} in the form
\begin{align}\label{lcgl-ir-def}
w(x, t) 
&=
\frac{1}{2\pi} 
\int_{\mathbb R} e^{ikx-\omega t} \, \what u_0(k) \, dk 
+ \frac{1}{2\pi} 
\int_{\mathbb R} e^{ikx-\omega t} 
\int_0^t e^{\omega  t'} \what f(k, t') dt' dk
\\
&\quad
-
\frac{\nu+i\alpha}{2\pi} 
\int_{\p D^+} e^{ikx-\omega t}  \, \big[\widetilde g_1(\omega, t) + ik \widetilde g_0(\omega, t)\big]  \, dk 
-
\frac{\nu+i\alpha}{2\pi} 
\int_{\p D^-} e^{ik(x-L)-\omega t}   \left[ \widetilde h_1(\omega, t) + ik  \widetilde h_0(\omega, t)\right] dk,
\nn
\end{align}
where $\p D^\pm$ are the \textit{positively} oriented boundaries of $D^\pm$ depicted in Figure \ref{dpm-f}.
Indeed, in view of the fact that 
$$
\text{Re}(\omega)
% &=
% \text{Re}\left[\left(\nu + i \alpha\right) k^2\right]
% = 
% \nu \left[\text{Re}(k)^2-\text{Im}(k)^2\right] - 2 \alpha \text{Re}(k) \text{Im}(k)
% \\
=
-\nu  \left[\text{Im}(k) + \frac{\alpha  - \sqrt{\alpha^2 + \nu^2}}{\nu} \, \text{Re}(k)\right] \left[\text{Im}(k) + \frac{\alpha  + \sqrt{\alpha^2 + \nu^2}}{\nu} \, \text{Re}(k)\right]
,
$$
the boundary of the region $D^+ \cup D^-$ corresponds to the pair of straight lines given in \eqref{hyp-eq}, as shown in Figure \ref{dpm-f}.

As noted earlier, the transformation $k \mapsto -k$ is a symmetry of $\omega$ and hence leaves the temporal transforms $\widetilde g_j(\omega, t), \widetilde h_j(\omega, t)$ invariant. Thus, when applied to the global relation \eqref{lcgl-gr}, it gives rise to the additional identity
\begin{equation}\label{lcgl-gr-}
\begin{aligned}
e^{\omega t}\,  \what w(-k, t) 
&=
\what u_0(-k)
-
\left(\nu + i \alpha\right) \left\{ \big[\widetilde g_1(\omega, t) - ik \widetilde g_0(\omega, t)\big] - e^{ikL}  \left[ \widetilde h_1(\omega, t) - ik  \widetilde h_0(\omega, t)\right]\right\}
\\
&\quad
+ \int_0^t e^{\omega  t'} \what f(-k, t') dt', \quad k\in\mathbb C.
\end{aligned}
\end{equation}
The two identities \eqref{lcgl-gr} and \eqref{lcgl-gr-} can be solved for  the unknown transforms $\widetilde g_1(\omega, t)$, $\widetilde h_0(\omega, t)$ to yield
\begin{align}\label{bv-sol-s}
\widetilde g_1(\omega, t)
&=
\frac{1}{(\nu+i\alpha) \left(e^{ikL} + e^{-ikL}\right)} 
\bigg\{
-e^{\omega t} \left[ e^{ikL} \what w(k, t) + e^{-ikL} \what w(-k, t) \right]
+
 \left[ e^{ikL} \what u_0(k) + e^{-ikL} \what u_0(-k) \right]
\nn\\
&\quad
+
 \int_0^t e^{\omega  t'} \left[e^{ikL} \what f(k, t') + e^{-ikL} \what f(-k, t') \right] dt'  
-
(\nu+i\alpha) ik \left(e^{ikL} - e^{-ikL} \right) \widetilde g_0(\omega, t)
+
2 (\nu+i\alpha) \widetilde h_1(\omega, t)
\bigg\},
\nn\\
\widetilde h_0(\omega, t)
&=
\frac{1}{(\nu+i\alpha) ik \left(e^{ikL} + e^{-ikL}\right)} 
\bigg\{
e^{\omega t} \left[ \what w(k, t) - \what w(-k, t) \right]
-
\left[ \what u_0(k) - \what u_0(-k) \right]
\nn\\
&\quad
-
\int_0^t e^{\omega  t'} \left[\what f(k, t') - \what f(-k, t') \right] dt'  
+
2(\nu+i\alpha)ik \widetilde g_0(\omega, t)
+
(\nu+i\alpha) \left(e^{ikL}-e^{-ikL}\right) \widetilde h_1(\omega, t)
\bigg\}.
\end{align}

The expressions \eqref{bv-sol-s} still involve unknown quantities, namely the transforms $\what w(\pm k, t)$. In this connection, we have the following crucial result:
\begin{lemma}\label{uh-l}
For all $(x, t) \in (0, L)\times (0, \infty)$, 
\begin{align}
&\int_{\p D^+} \frac{e^{ikx}}{e^{ikL} + e^{-ikL}} \big[ e^{ikL}   \what w(k, t) + e^{-ikL} \what w(-k, t)\big] dk = 0,
\label{uh-l1}
\\
&
\int_{\p D^-} \frac{e^{ik(x-L)}}{e^{ikL} + e^{-ikL}} \big[ \what w(k, t) -  \what w(-k, t)\big] dk = 0.
\label{uh-l2}
\end{align}
\end{lemma}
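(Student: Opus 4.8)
The plan is to prove the first identity \eqref{uh-l1} in detail; the second identity \eqref{uh-l2} follows by an entirely analogous argument that I will indicate at the end. Throughout, $t$ is fixed and plays no role beyond guaranteeing $w(\cdot,t)\in L^1(0,L)$, which holds since $w(\cdot,t)\in H^s(0,L)\hookrightarrow L^1(0,L)$ on the bounded interval $(0,L)$. Set
\[
F(k) := \frac{e^{ikx}}{e^{ikL}+e^{-ikL}}\left[e^{ikL}\,\what w(k,t) + e^{-ikL}\,\what w(-k,t)\right].
\]
The first step is to record that $F$ is analytic in an open neighborhood of $\overline{D^+}$. Indeed, by the Paley--Wiener-type result quoted after \eqref{ft}, $\what w(\pm k,t)$ are entire functions of $k$, so the only possible singularities of $F$ are the zeros of $e^{ikL}+e^{-ikL}=2\cos(kL)$, namely the real points $k=\tfrac{(2m+1)\pi}{2L}$, $m\in\mathbb Z$. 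None of these lies in $\overline{D^+}$, because $\overline{D^+}$ is the closed quarter-plane wedge bounded by the two rays along the lines \eqref{hyp-eq} (see Figure \ref{dpm-f}) and hence meets $\mathbb R$ only at the origin, where $2\cos(0)=2\neq 0$.

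The heart of the argument is an exponential decay estimate for $F$ on $\overline{D^+}$. Fix $k\in\overline{D^+}$ with $\mathrm{Im}(k)L\geq\ln 2$. Since $\mathrm{Im}(k)\geq 0$ there, $|e^{ikL}+e^{-ikL}|\geq e^{\mathrm{Im}(k)L}-e^{-\mathrm{Im}(k)L}\geq \tfrac12 e^{\mathrm{Im}(k)L}$; moreover, writing $e^{ikL}\what w(k,t)=\int_0^L e^{ik(L-y)}w(y,t)\,dy$ and using $|e^{ik(L-y)}|=e^{-\mathrm{Im}(k)(L-y)}\leq 1$ for $y\in[0,L]$ gives $|e^{ikL}\what w(k,t)|\leq \|w(\cdot,t)\|_{L^1(0,L)}$, while $e^{-ikL}\what w(-k,t)=\int_0^L e^{-ik(L-y)}w(y,t)\,dy$ with $|e^{-ik(L-y)}|=e^{\mathrm{Im}(k)(L-y)}\leq e^{\mathrm{Im}(k)L}$ gives $|e^{-ikL}\what w(-k,t)|\leq e^{\mathrm{Im}(k)L}\|w(\cdot,t)\|_{L^1(0,L)}$. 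Combining these with $|e^{ikx}|=e^{-x\,\mathrm{Im}(k)}$ yields $|F(k)|\leq 4\|w(\cdot,t)\|_{L^1(0,L)}\,e^{-x\,\mathrm{Im}(k)}$. Because $\overline{D^+}$ is a wedge of angular width $\tfrac\pi2$ touching $\mathbb R$ only at $0$, there is a constant $c=c(\lambda)>0$ with $\mathrm{Im}(k)\geq c|k|$ for all $k\in\overline{D^+}$; hence $|F(k)|\leq 4\|w(\cdot,t)\|_{L^1(0,L)}\,e^{-cx|k|}$ for $|k|$ large in $\overline{D^+}$. The crucial point is that $x\in(0,L)$ is \emph{strictly} positive, so this is genuine exponential decay.

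With analyticity and decay in hand, the conclusion follows by a standard contour-closing argument: for large $R$, let $C_R$ be the circular arc of radius $R$ inside $D^+$ joining the two points where $\partial D^+$ meets $\{|k|=R\}$, oriented so that $[\partial D^+\cap\{|k|\leq R\}]\cup C_R$ is a closed contour bounding a region inside $\overline{D^+}$, on which $F$ is analytic. Cauchy's theorem makes the integral over this closed contour vanish, while $\big|\int_{C_R}F\,dk\big|\leq 2\pi R\cdot 4\|w(\cdot,t)\|_{L^1(0,L)}\,e^{-cxR}\to 0$ as $R\to\infty$; letting $R\to\infty$ gives $\int_{\partial D^+}F\,dk=0$, which is \eqref{uh-l1}. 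For \eqref{uh-l2} one repeats the argument on $\overline{D^-}$ with $G(k):=\dfrac{e^{ik(x-L)}}{e^{ikL}+e^{-ikL}}\big[\what w(k,t)-\what w(-k,t)\big]$; now the fixed positive quantity supplying decay is $L-x>0$, since for $k\in\overline{D^-}$ one has $|e^{ik(x-L)}|=e^{(L-x)\mathrm{Im}(k)}=e^{-(L-x)|\mathrm{Im}(k)|}$, and the analogous bounds on the denominator and on $\what w(\pm k,t)$ give $|G(k)|\lesssim \|w(\cdot,t)\|_{L^1(0,L)}\,e^{-c(L-x)|k|}$ for $|k|$ large in $\overline{D^-}$. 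I expect the only delicate point to be this decay estimate — specifically, recognizing that although $e^{-ikL}\what w(-k,t)$ grows like $e^{\mathrm{Im}(k)L}$ on its own, the division by $e^{ikL}+e^{-ikL}$ exactly neutralizes that growth, and the surviving factor $e^{ikx}$ (respectively $e^{ik(x-L)}$) then furnishes the decay precisely because the trace point $x$ is interior to $(0,L)$.
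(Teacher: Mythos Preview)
Your proof is correct and follows the same overall scheme as the paper's: analyticity of the integrand in $\overline{D^+}$, Cauchy's theorem on the region bounded by the truncated rays and a large circular arc $C_R$, and showing the arc contribution vanishes as $R\to\infty$. The difference is in how the arc estimate is obtained. The paper integrates by parts once in the definition of $\what w(\pm k,t)$ to extract a factor $1/|k|$ (at the cost of requiring $w_x(\cdot,t)\in L^1(0,L)$), and then uses a Jordan--lemma bound $\int_{\theta_1}^{\theta_2} e^{-xR\sin\theta}\,d\theta = O(1/R)$. You instead exploit the geometric fact that $\overline{D^+}$ is a sector strictly inside the open upper half-plane, so that $\mathrm{Im}(k)\ge c(\lambda)\,|k|$ uniformly there; combined with your direct $L^1$ bounds on $e^{\pm ikL}\what w(\pm k,t)$ this gives genuine exponential decay $|F(k)|\lesssim e^{-c x|k|}$ and makes the arc integral vanish without any Jordan--type argument or integration by parts. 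Your route is slightly more elementary and needs only $w(\cdot,t)\in L^1(0,L)$; the paper's Jordan--lemma approach, on the other hand, would be the natural one if the contour were allowed to touch the real axis tangentially (as happens in the NLS limit $\nu\to 0$), which is not an issue here.
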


\begin{proof}
We only prove \eqref{uh-l1} as \eqref{uh-l2} can be established in an entirely analogous way.
Consider the circular arc
$$
C_R(0) := \left\{R e^{i\theta}: \ \theta_1 \leq \theta \leq \theta_2 \right\}
$$
with the angles $\theta_1, \theta_2$ corresponding to the arguments of the points of intersection of the straight lines \eqref{hyp-eq} with $C_R(0)$ and hence given by 
$$
\theta_1 = \tan^{-1}(\lambda),
\quad
\theta_2 = \pi - \tan^{-1}\left(\frac 1\lambda\right) 
%=  \pi - \tan^{-1} \left(\frac{\alpha + 
%\sqrt{\alpha^2+\nu^2}}{\nu}\right)
=
\frac \pi 2 + \tan^{-1}(\lambda),
$$
where for the last equality we have used that $\tan^{-1}(\lambda) + \tan^{-1}\left(\frac 1\lambda\right) = \frac \pi 2$ for $\lambda>0$. 
Note that $\theta_1 \in (0, \frac \pi 2)$  and $\theta_2 \in (\frac \pi 2, \pi)$ since $\lambda = \frac{-\alpha + 
\sqrt{\alpha^2+\nu^2}}{\nu}$ is positive and finite.
%
% Also, observe that, in the limit $\nu \to 0^+$, if $\alpha>0$ then $\theta_1 \to 0^+$ and $\theta_2 \to (\frac \pi 2)^+$ while if $\alpha<0$ then $\theta_1 \to (\frac \pi 2)^-$ and $\theta_2 \to \pi^-$,  consistently with the fact that in that limit the CGL equation reduces to the NLS equation.

By analyticity in $D^+$, we can deform the contour of integration $\p D^+$ of \eqref{uh-l1} to the limit as $R\to\infty$ of $C_R(0)$ so that
$$
\int_{\p D^+}   
\frac{e^{ikx}}{e^{ikL}+e^{-ikL}} \left[e^{ikL}\,\what w(k, t) + e^{-ikL}\, \what w(-k, t)\right] dk
= 
-\lim_{R\to\infty} I_R(x, t)
$$
where
$$
I_R(x, t)
:=
\int_{C_R(0)}   
\frac{e^{ikx}}{e^{ikL} + e^{-ikL}} \left[e^{ikL}\,\what w(k, t) + e^{-ikL}\, \what w(-k, t)\right] dk.
$$
It thus suffices to show that $\displaystyle \lim_{R\to\infty} I_R(x, t) = 0$.

%
% By the triangle inequality,
% %
% $$
% \left|I_R(x, t)\right|
% \leq
% \left|
% \int_{C_R(0)}   
% \frac{e^{ikx}}{e^{ikL}+e^{-ikL}} \, e^{ikL}\,\what w(k, t) dk
% \right|
% +
% \left|
% \int_{C_R(0)}   
% \frac{e^{ikx}}{e^{2ikL}+1}  \, \what w(-k, t) dk
% \right|.
% $$
% %
For $k \in C_R(0)$, since $0 < \sin\theta_1 \leq \sin\theta$, we have
\begin{align*}
&\left|e^{ikL}+e^{-ikL}\right|
\geq
\left|\left|e^{ikL}\right| - \left|e^{-ikL}\right|\right|
=
e^{L R \sin\theta} - e^{-L R \sin\theta}
\geq
e^{L R \sin\theta_1} - e^{-L R \sin\theta_1} > 0,
\\
&\left|e^{2ikL}+1\right|
\geq
\left|\left|e^{2ikL}\right| - 1\right|
=
1 - e^{-2L R \sin\theta}
\geq
1 - e^{-2L R \sin\theta_1} > 0.
\end{align*}
Therefore, using also the triangle inequality, 
\begin{align*}
\left|I_R(x, t)\right|
&\leq
\frac{1}{e^{L R \sin\theta_1} - e^{-L R \sin\theta_1}}
\int_{\theta_1}^{\theta_2}   
e^{-(x+L) R\sin\theta}
\left|\what w(Re^{i\theta}, t)\right| R d\theta
\nn\\
&\quad
+
\frac{1}{1 - e^{-2L R \sin\theta_1}}
\int_{\theta_1}^{\theta_2}   
e^{-x R\sin\theta}
\left|\what w(-Re^{i\theta}, t)\right| R d\theta.
\end{align*}

Furthermore, integrating by parts in the finite interval Fourier transforms $\what w(\pm k, t)$, we get the bounds
%
% $$
% \what w(k, t)
% =
% %&=
% %\int_{0}^L e^{-iky} u(y, t) dy
% %=
% %\frac{1}{ik} \left[u(0, t) - e^{-ikL} u(L, t)\right]
% %+
% %\frac{1}{ik} \int_{0}^L e^{-iky} w_x(y, t) dy
% %\nn\\
% %&=
% \frac{1}{ik} \left[g_0(t) - e^{-ikL} h_0(t)\right]
% +
% \frac{1}{ik} \int_{0}^L e^{-iky} w_x(y, t) dy
% $$
% %
% thus we have the bounds
%
\begin{equation*}
\begin{aligned}
&\left| \what w(k, t) \right|
\leq
\frac{e^{\text{Im}(k) L}}{|k|}
\left(
\left| g_0(t) \right| + \left| h_0(t) \right| + \left\| w_x(t) \right\|_{L^1(0, L)}
\right),
\\
&\left| \what w(-k, t) \right|
\leq
\frac{1}{|k|}
\left(
\left| g_0(t) \right| + \left| h_0(t) \right| + \left\| w_x(t) \right\|_{L^1(0, L)}
\right),
\end{aligned}
\quad
\text{Im}(k)\geq 0,
\end{equation*}
and so
$$
\left|I_R(x, t)\right|
\leq
% \frac{1}{e^{L R \sin\theta_1} - e^{-L R \sin\theta_1}}
% \int_{\theta_1}^{\theta_2}   
% e^{-(x+L) R\sin\theta} \, 
% \frac{e^{L R \sin\theta}}{R}
% \left(
% \left| g_0(t) \right| + \left| h_0(t) \right| + \left\| w_x(t) \right\|_{L^1(0, L)}
% \right)
%  R d\theta
% \nn\\
% &\quad
% +
% \frac{1}{1 - e^{-2L R \sin\theta_1}}
% \int_{\theta_1}^{\theta_2}   
% e^{-x R\sin\theta}
% \,
% \frac{1}{R}
% \left(
% \left| g_0(t) \right| + \left| h_0(t) \right| + \left\| w_x(t) \right\|_{L^1(0, L)}
% \right)
%  R d\theta
% \nn\\
% &=
\left(
\left| g_0(t) \right| + \left| h_0(t) \right| + \left\| w_x(t) \right\|_{L^1(0, L)}
\right)
\left(
\frac{1}{e^{L R \sin\theta_1} - e^{-L R \sin\theta_1}}
+
\frac{1}{1 - e^{-2L R \sin\theta_1}}
\right)
\int_{\theta_1}^{\theta_2}   
e^{-x R\sin\theta} d\theta.
$$

Finally, observing that
$$
\int_{\theta_1}^{\theta_2}   
e^{-x R\sin\theta} d\theta
=
% \int_{\theta_1}^{\frac \pi 2}   
% e^{-x R\sin\theta} d\theta
% +
% \int_{\frac \pi 2}^{\theta_2}   
% e^{-x R\sin\theta} d\theta
% =
% \int_{\theta_1}^{\frac \pi 2}   
% e^{-x R\sin\theta} d\theta
% +
% \int_{\pi-\theta_2}^{\frac \pi 2} 
% e^{-x R\sin(\pi-\theta)} d\theta
% \nn\\
% &=
\int_{\theta_1}^{\frac \pi 2}   
e^{-x R\sin\theta} d\theta
+
\int_{\pi-\theta_2}^{\frac \pi 2} 
e^{-x R\sin\theta} d\theta
$$
and using the convexity inequality $\sin\theta \geq \frac{2}{\pi} \, \theta$, $0\leq \theta \leq \frac \pi 2$, which applies since $\theta_1 \in (0, \frac \pi 2)$ and $\theta_2 \in (\frac \pi 2, \pi)$, we find
$$
\int_{\theta_1}^{\theta_2}   
e^{-x R\sin\theta} d\theta
\leq
\int_{\theta_1}^{\frac \pi 2}   
e^{-\frac{2xR}{\pi} \, \theta} d\theta
+
\int_{\pi-\theta_2}^{\frac \pi 2} 
e^{-\frac{2xR}{\pi} \, \theta} d\theta
=
\frac{\pi}{2xR} \left[e^{-\frac{2xR}{\pi} \, \theta_1}-e^{-xR} + e^{-\frac{2xR}{\pi} (\pi-\theta_2)}-e^{-xR}\right].
$$
Since the right side tends to zero as $R\to\infty$, we deduce $\displaystyle \lim_{R\to\infty} I_R(x, t) = 0$, completing the proof of Lemma~\ref{uh-l}.
\end{proof}

Combining \eqref{lcgl-ir-def} with the two expressions in \eqref{bv-sol-s} and Lemma \ref{uh-l}, we obtain the following explicit solution formula for the modified forced linear  problem \eqref{lcgl-ibvp-w}:
\begin{align}\label{lcgl-sol-t}
w(x, t) 
&=
\frac{1}{2\pi} \int_{\mathbb R} e^{ikx-\omega t} \, \what u_0(k) \, dk 
+ \frac{1}{2\pi} \int_{\mathbb R} e^{ikx-\omega t} 
\int_0^t e^{\omega  t'} \what f(k, t') dt' dk
\nn\\
&\quad
- \frac{1}{2\pi} 
\int_{\p D^+}  
\frac{e^{ikx-\omega t}}{e^{ikL} + e^{-ikL}} 
\left\{
 \left[ e^{ikL} \what u_0(k) + e^{-ikL} \what u_0(-k) \right]
+
 \int_0^t e^{\omega t'} \left[e^{ikL} \what f(k, t') + e^{-ikL} \what f(-k, t') \right] dt'  
\right\} dk 
\nn\\
&\quad
- \frac{\nu+i\alpha}{\pi} 
\int_{\p D^+}  
\frac{e^{ikx-\omega t}}{e^{ikL} + e^{-ikL}} 
\left[
\widetilde h_1(\omega, t)
+
 ik e^{-ikL} \, \widetilde g_0(\omega, t)
\right] dk 
\\
&\quad
+ \frac{1}{2\pi} 
\int_{\p D^-}    
\frac{e^{ik(x-L)-\omega t}}{e^{ikL} + e^{-ikL}} 
\left\{
\big[ \what u_0(k) - \what u_0(-k) \big]
+
\int_0^t e^{\omega  t'} \left[\what f(k, t') - \what f(-k, t') \right] dt'  
\right\} dk
\nn\\
&\quad
- \frac{\nu+i\alpha}{\pi} 
\int_{\p D^-}   
\frac{e^{ik(x-L)-\omega t}}{e^{ikL} + e^{-ikL}} 
\left[
e^{ikL} \, \widetilde h_1(\omega, t)
+
ik \widetilde g_0(\omega, t) 
\right] dk.
 \nn
\end{align}
Finally, thanks to analyticity and exponential decay, arguing as in the proof of Lemma \ref{uh-l} we can replace the second argument in the transforms $\widetilde g_0(k^2, t)$ and $\widetilde h_1(k^2, t)$  by any fixed $T\geq t$ in order to express the solution formula~\eqref{lcgl-sol-t} in the form \eqref{lcgl-sol-T}, which is the one suitable for proving the well-posedness result of Theorem~\ref{lwp-t}.

\section{The kernel PDE model}
\label{app-kernel}
We differentiate \eqref{backstepping-trans} with respect to $t$, use the main equation in \eqref{pde_tarlin}
\begin{equation} \label{kert}
	\begin{split}
		u_t(x,t) &= w_t(x,t) + \int_0^x k(x,y) (P_N w)_t (y,t) dy \\
		&= w_t(x,t) + \int_0^x k(x,y) (P_N w_t) (y,t) dy \\
		&= w_t(x,t) + \int_0^x k(x,y) \left((\nu + i\alpha) P_N w_{yy} + \gamma P_N w - \mu P_N w\right)(y,t) dy.
	\end{split}
\end{equation}
$P_N$ and $\partial_y^2$ commutes. Indeed, $w(0,t) = w_x(L,t) = 0$ and $e_j(0) = e_j^\prime(L) = 0$, and
\begin{equation}
	\begin{split}
		P_N w_{yy}(y,t) &= \sum_{j=1}^N e_j(y) \int_0^L e_j(s) w_{ss}(s,t) ds \\
		&= \sum_{j=1}^N e_j(y) \left[e_j(x)w_x(x,t) - \frac{d}{dx} e_j(x) w(x,t)\Bigg |_{x=0}^{x = L} + \int_0^L \frac{d^2}{ds^2} e_j(s) w(s,t)ds \right] \\
		&= \sum_{j=1}^N  e_j(y) \int_0^L (-\lambda_j) e_j(s) w(s,t) ds \\
		&= \sum_{j=1}^N (-\lambda_j) e_j(y) \int_0^L e_j(s) w(s,t) ds \\
		&= \sum_{j=1}^N \frac{d^2}{dy^2} e_j(y) \int_0^L e_j(s) w(s,t) ds \\
		&= \frac{\partial^2}{\partial y^2}\left(\sum_{j=1}^N e_j(y) \int_0^L e_j(s) w(s,t) ds\right) \\
		&= \left(P_Nw(y,t)\right)_{yy}.
	\end{split}
\end{equation}
So following from \eqref{kert} and integrating by parts
\begin{equation} \label{ker1}
	\begin{split}
		u_t(x,t) =& w_t(x,t) + \int_0^x k(x,y) \left((\nu + i\alpha) \left(P_N w\right)_{yy} + (\gamma - \mu) P_N w\right)(y,t) dy \\
		=& w_t(x,t) + (\nu + i\alpha)\left(\left(k(x,y) (P_Nw(y,t))_y - k_y(x,y)P_Nw(y,t) \right)\right)\Bigg|_{y=0}^{y=x} \\
		&+ \int_0^x \left((\nu+ i\alpha) k_{yy} + (\gamma - \mu)k\right)(x,y) P_Nw(y,t) dy \\
		=& w_t(x,t) + (\nu + i \alpha) \left(k(x,x)(P_Nw(x,t))_x - k(x,0) (P_Nw(y,t))_y \Big|_{y=0} - k_y(x,x) P_Nw(x,t)\right) \\
		&+ \int_0^x \left((\nu + i\alpha) k_{yy} + (\gamma - \mu)k\right)(x,y) P_Nw(y,t) dy.
	\end{split}
\end{equation}
Next, we differentiate \eqref{backstepping-trans} with respect to $x$ two times to get
\begin{equation} \label{ker2}
	\begin{split}
		-(\nu + i\alpha) u_{xx}(x,t) =&  -(\nu + i\alpha) w_{xx}(x,t) - (\nu + i\alpha) \int_0^x k_{xx}(x,y)P_Nw(y,t) dy \\
		&- (\nu + i \alpha) k_x(x,y)P_Nw(x,t) -(\nu + i \alpha)\frac{d}{dx}k(x,x)P_Nw(x,t) \\
		&- (\nu + i\alpha) k(x,x) (P_Nw(x,t))_x.
	\end{split}
\end{equation}
Adding \eqref{ker1}-\eqref{ker2} together with
\begin{equation} \label{ker3}
	-\gamma u(x,t) = -\gamma w(x,t) -\gamma \int_0^x k(x,y) P_N w(y,t) dy
\end{equation}
side by side, we get
\begin{equation} \label{ker_add}
	\begin{split}
		0 =& -\mu P_Nw(x,t) - \int_0^x \left((\nu + i \alpha)(k_{xx} - k_{yy}) + \mu k\right)(x,y)P_Nw(y,t) dy \\
		&- (\nu + i \alpha)\left(k_x(x,x) + k_y(x,x) + \frac{d}{dx}k(x,x)\right) P_Nw(x,t) - \nu k(x,0) (P_Nw(y,t))_y\Big|_{y=0}.
	\end{split}
\end{equation}
Suppose $k(x,0)  = 0$. Since $\frac{d}{dx} k(x,x) = k_x(x,x) + k_y(x,x)$ and by integrating over $(0,x)$, we get 
\begin{equation}
	\begin{split}
		\left(2(\nu + i \alpha)\frac{d}{dx}k(x,x) + \mu\right) P_Nw(x,t) = 0 &\Rightarrow \frac{d}{dx}k(x,x) = - \frac{\mu}{2(\nu + i \alpha)} \\
		&\Rightarrow k(x,x) = -\frac{\mu x}{2(\nu + i \alpha)}.
	\end{split}
\end{equation}
Consequently
\begin{eqnarray}
	\begin{cases}
		(\nu + i \alpha) (k_{xx} - k_{yy}) + \mu k = 0, & (x,y) \in \Delta_{x,y}, \\
		k(x,0) = 0, \quad k(x,x) = -\dfrac{\mu x}{2(\nu + i \alpha)}, &x \in [0,L],
	\end{cases}
\end{eqnarray}
where $\Delta_{x,y} := \left\{(x,y) \in \mathbb{R}^2 : x\in(0,L), y \in (0,x)\right\}$.

\bibliographystyle{myamsalpha}
\bibliography{references}

\newcommand{\etalchar}[1]{$^{#1}$}
\providecommand{\bysame}{\leavevmode\hbox to3em{\hrulefill}\thinspace}
\providecommand{\MR}{\relax\ifhmode\unskip\space\fi MR }
% \MRhref is called by the amsart/book/proc definition of \MR.
\providecommand{\MRhref}[2]{%
  \href{http://www.ams.org/mathscinet-getitem?mr=#1}{#2}
}
\providecommand{\href}[2]{#2}
\begin{thebibliography}{AMDM19}

\bibitem[AK02]{Ar02}
I.~S. Aranson and L. Kramer, \emph{The world of the complex ginzburg-landau equation}, Reviews of Modern Physics \textbf{74} (2002), no.~1, 99 – 143, Cited by: 1598; All Open Access, Green Open Access.

\bibitem[AK04]{AaKr04}
O.~M. Aamo and M. Krsti\'c, \emph{Global stabilization of a nonlinear {G}inzburg-{L}andau model of vortex shedding}, Eur. J. Control \textbf{10} (2004), no.~2, 105--118, With discussion by Gregory Hagen. \MR{2130898}

\bibitem[AMDM19]{Aur19}
J. Auriol, K.~A. Morris, and F. Di~Meglio, \emph{Late-lumping backstepping control of partial differential equations}, Automatica J. IFAC \textbf{100} (2019), 247--259. \MR{3883232}

\bibitem[AM{\"O}24]{AMO24}
A. Alk{\i}n, D. Mantzavinos, and T. {\"O}zsar{\i}, \emph{Local well-posedness of the higher-order nonlinear {S}chr\"odinger equation on the half-line: single-boundary condition case}, Stud. Appl. Math. \textbf{152} (2024), no.~1, 203--248. \MR{4692728}

\bibitem[ASK05]{AaKr05}
O.~M. Aamo, A. Smyshlyaev, and M. Krsti\'c, \emph{Boundary control of the linearized {G}inzburg-{L}andau model of vortex shedding}, SIAM J. Control Optim. \textbf{43} (2005), no.~6, 1953--1971. \MR{2177789}

\bibitem[ASKF07]{AaKr07}
O.~M. Aamo, A. Smyshlyaev, M. Krsti\'c, and B.~A. Foss, \emph{Output feedback boundary control of a {G}inzburg-{L}andau model of vortex shedding}, IEEE Trans. Automat. Control \textbf{52} (2007), no.~4, 742--748. \MR{2310057}

\bibitem[AT14]{AzTi14}
A. Azouani and E.~S. Titi, \emph{Feedback control of nonlinear dissipative systems by finite determining parameters---a reaction-diffusion paradigm}, Evol. Equ. Control Theory \textbf{3} (2014), no.~4, 579--594. \MR{3274649}

\bibitem[BB06]{BoBra06}
S. Boccaletti and J. Bragard, \emph{Controlling spatio-temporal chaos in the scenario of the one-dimensional complex {G}inzburg-{L}andau equation}, Philos. Trans. R. Soc. Lond. Ser. A Math. Phys. Eng. Sci. \textbf{364} (2006), no.~1846, 2383--2395. \MR{2251255}

\bibitem[BK20]{BaKa20}
E.~P. Balanzario and E.~I. Kaikina, \emph{Regularity analysis for stochastic complex {L}andau-{G}inzburg equation with {D}irichlet white-noise boundary conditions}, SIAM J. Math. Anal. \textbf{52} (2020), no.~4, 3376--3396. \MR{4127101}

\bibitem[BL76]{bl1976}
J. Bergh and J. L\"{o}fstr\"{o}m, \emph{Interpolation spaces. {A}n introduction}, Grundlehren der Mathematischen Wissenschaften, vol. No. 223, Springer-Verlag, Berlin-New York, 1976. \MR{482275}

\bibitem[BM96]{BaMi96}
D. Battogtokh and A. Mikhailov, \emph{Controlling turbulence in the complex {G}inzburg-{L}andau equation}, Phys. D \textbf{90} (1996), no.~1-2, 84--95. \MR{1369492}

\bibitem[BM04]{BeMi04}
C. Beta and A.~S. Mikhailov, \emph{Controlling spatiotemporal chaos in oscillatory reaction-diffusion systems by time-delay autosynchronization}, Phys. D \textbf{199} (2004), no.~1-2, 173--184. \MR{2104514}

\bibitem[BO16]{bo2016}
A. Batal and T. \"{O}zsar\i, \emph{Nonlinear {S}chr\"{o}dinger equations on the half-line with nonlinear boundary conditions}, Electron. J. Differential Equations (2016), Paper No. 222, 20. \MR{3547411}

\bibitem[BS96]{BlSo96}
M.~E. Bleich and J.~E.~S. Socolar, \emph{Controlling spatiotemporal dynamics with time-delay feedback}, Phys. Rev. E \textbf{54} (1996), R17--R20.

\bibitem[Caz03]{c2003}
T. Cazenave, \emph{Semilinear {S}chr\"{o}dinger equations}, Courant Lecture Notes in Mathematics, vol.~10, New York University, Courant Institute of Mathematical Sciences, New York; American Mathematical Society, Providence, RI, 2003. \MR{2002047}

\bibitem[CFK25]{cfk2025}
A. Chatziafratis, A.~S. Fokas, and K. Kalimeris, \emph{The {F}okas method for evolution partial differential equations}, Partial Differential Equations in Applied Mathematics \textbf{14} (2025), 101144.

\bibitem[CO18]{CoOz18}
W.~J. Corr\^ea and T. \"Ozsar\i, \emph{Complex {G}inzburg-{L}andau equations with dynamic boundary conditions}, Nonlinear Anal. Real World Appl. \textbf{41} (2018), 607--641. \MR{3754605}

\bibitem[DTV14]{DTV14}
B. Deconinck, T. Trogdon, and V. Vasan, \emph{The method of {F}okas for solving linear partial differential equations}, SIAM Rev. \textbf{56} (2014), no.~1, 159--186. \MR{3246302}

\bibitem[FHM16]{fhm2016}
A.~S. Fokas, A.~A. Himonas, and D. Mantzavinos, \emph{The {K}orteweg--de {V}ries equation on the half-line}, Nonlinearity \textbf{29} (2016), no.~2, 489--527. \MR{3461607}

\bibitem[FHM17]{fhm2017}
\bysame, \emph{The nonlinear {S}chr\"{o}dinger equation on the half-line}, Trans. Amer. Math. Soc. \textbf{369} (2017), no.~1, 681--709. \MR{3557790}

\bibitem[Fok97]{Fokas97}
A.~S. Fokas, \emph{A unified transform method for solving linear and certain nonlinear {PDE}s}, Proc. Roy. Soc. London Ser. A \textbf{453} (1997), no.~1962, 1411--1443. \MR{1469927}

\bibitem[Fok08]{Fokasbook}
A.~S. Fokas, \emph{A unified approach to boundary value problems}, CBMS-NSF Regional Conference Series in Applied Mathematics, vol.~78, Society for Industrial and Applied Mathematics (SIAM), Philadelphia, PA, 2008. \MR{2451953}

\bibitem[FP05]{fp2005}
A.~S. Fokas and B. Pelloni, \emph{A transform method for linear evolution {PDE}s on a finite interval}, IMA J. Appl. Math. \textbf{70} (2005), no.~4, 564--587. \MR{2156459}

\bibitem[FS12]{fs2012}
A.~S. Fokas and E.~A. Spence, \emph{Synthesis, as opposed to separation, of variables}, SIAM Rev. \textbf{54} (2012), no.~2, 291--324. \MR{2916309}

\bibitem[FS16]{fs2016}
A.~S. Fokas and D.~A. Smith, \emph{Evolution {PDE}s and augmented eigenfunctions. {F}inite interval}, Adv. Differential Equations \textbf{21} (2016), no.~7-8, 735--766. \MR{3493933}

\bibitem[GB04]{GaBu04}
H. Gao and C. Bu, \emph{Dirichlet inhomogeneous boundary value problem for the {$n+1$} complex {G}inzburg-{L}andau equation}, J. Differential Equations \textbf{198} (2004), no.~1, 176--195. \MR{2037754}

\bibitem[GH87]{GhHe87}
J.-M. Ghidaglia and B. H\'eron, \emph{Dimension of the attractors associated to the {G}inzburg-{L}andau partial differential equation}, Phys. D \textbf{28} (1987), no.~3, 282--304. \MR{914451}

\bibitem[GN06]{GoNe06}
A.~A. Golovin and A.~A. Nepomnyashchy, \emph{Feedback control of subcritical oscillatory instabilities}, Phys. Rev. E (3) \textbf{73} (2006), no.~4, 046212, 7. \MR{2231409}

\bibitem[Har29]{h1929}
G.~H. Hardy, \emph{Remarks in {A}ddition to {D}r. {W}idder's {N}ote on {I}nequalities}, J. London Math. Soc. \textbf{4} (1929), no.~3, 199--202. \MR{1575046}

\bibitem[HKM{\etalchar{+}}24]{hkmms2024}
D. Hennig, N.~I. Karachalios, D. Mantzavinos, J. Cuevas-Maraver, and I.~G. Stratis, \emph{On the proximity between the wave dynamics of the integrable focusing nonlinear {S}chr\"{o}dinger equation and its non-integrable generalizations}, J. Differential Equations \textbf{397} (2024), 106--165. \MR{4720515}

\bibitem[HM20]{hm2020}
A.~A. Himonas and D. Mantzavinos, \emph{Well-posedness of the nonlinear {S}chr\"{o}dinger equation on the half-plane}, Nonlinearity \textbf{33} (2020), no.~10, 5567--5609. \MR{4151418}

\bibitem[HM22]{hm2022}
\bysame, \emph{The {R}obin and {N}eumann problems for the nonlinear {S}chr\"{o}dinger equation on the half-plane}, Proc. A. \textbf{478} (2022), no.~2265, Paper No. 279, 20. \MR{4492208}

\bibitem[HMY19a]{hmy2019-rd}
A.~A. Himonas, D. Mantzavinos, and F. Yan, \emph{Initial-boundary value problems for a reaction-diffusion equation}, J. Math. Phys. \textbf{60} (2019), no.~8, 081509, 19. \MR{3996714}

\bibitem[HMY19b]{hmy2019-nls}
\bysame, \emph{The nonlinear {S}chr\"{o}dinger equation on the half-line with {N}eumann boundary conditions}, Appl. Numer. Math. \textbf{141} (2019), 2--18. \MR{3944685}

\bibitem[Hol05]{h2005}
J. Holmer, \emph{The initial-boundary-value problem for the 1{D} nonlinear {S}chr\"{o}dinger equation on the half-line}, Differential Integral Equations \textbf{18} (2005), no.~6, 647--668. \MR{2136703}

\bibitem[KO17]{KaaOz17}
J. Kalantarova and T. \"Ozsar\i, \emph{Finite-parameter feedback control for stabilizing the complex {G}inzburg-{L}andau equation}, Systems Control Lett. \textbf{106} (2017), 40--46. \MR{3681011}

\bibitem[KO22]{KO22}
B. Köksal and T. \"{O}zsar{\i}, \emph{The interior-boundary strichartz estimate for the schrödinger equation on the half line revisited}, Turkish J. Math. \textbf{46} (2022), no.~8, 3323–3351.

\bibitem[Kom94]{K94}
V. Komornik, \emph{Exact controllability and stabilization}, RAM: Research in Applied Mathematics, Masson, Paris; John Wiley \& Sons, Ltd., Chichester, 1994, The multiplier method. \MR{1359765}

\bibitem[KOY25]{KaOzY}
V. Kalantarov, T. \"{O}zsar\i, and K.~C. Y{\i}lmaz, \emph{Finite dimensional backstepping controller design}, IEEE Trans. Automat. Control (2025), forthcoming.

\bibitem[KS08]{krsticbook}
M. Krstic and A. Smyshlyaev, \emph{Boundary control of {PDE}s}, Advances in Design and Control, vol.~16, Society for Industrial and Applied Mathematics (SIAM), Philadelphia, PA, 2008, A course on backstepping designs. \MR{2412038}

\bibitem[Liu03]{Liu03}
W. Liu, \emph{Boundary feedback stabilization of an unstable heat equation}, SIAM J. Control Optim. \textbf{42} (2003), no.~3, 1033--1043. \MR{2002146}

\bibitem[LM72]{lm1972}
J.-L. Lions and E. Magenes, \emph{Non-homogeneous boundary value problems and applications. {V}ol. {I}}, Die Grundlehren der mathematischen Wissenschaften, Band 181, Springer-Verlag, New York-Heidelberg, 1972, Translated from the French by P. Kenneth. \MR{0350177}

\bibitem[LT00a]{LT00}
I. Lasiecka and R. Triggiani, \emph{Control theory for partial differential equations: continuous and approximation theories. {I}}, Encyclopedia of Mathematics and its Applications, vol.~74, Cambridge University Press, Cambridge, 2000, Abstract parabolic systems. \MR{1745475}

\bibitem[LT00b]{LT00-2}
\bysame, \emph{Control theory for partial differential equations: continuous and approximation theories. {II}}, Encyclopedia of Mathematics and its Applications, vol.~75, Cambridge University Press, Cambridge, 2000, Abstract hyperbolic-like systems over a finite time horizon. \MR{1745476}

\bibitem[LZ18]{LiZh18}
J. Li and Z. Zhang, \emph{Nonhomogeneous boundary value problems for the complex {G}inzburg-{L}andau equation posed on a finite interval}, Appl. Anal. \textbf{97} (2018), no.~11, 1896--1915. \MR{3832174}

\bibitem[MMO24]{mmo2024}
C. Mayo, D. Mantzavinos, and T. Ozsari, \emph{Well-posedness of the higher-order nonlinear {S}chrödinger equation on a finite interval}, arXiv:2406.15579 (2024).

\bibitem[MO24]{mo2024}
D. Mantzavinos and T. Ozsari, \emph{Low-regularity solutions of the nonlinear {S}chr\"{o}dinger equation on the spatial quarter-plane}, arXiv:2403.15350v1 (2024), (to appear in Siam Journal on Mathematical Analysis).

\bibitem[MS04]{MoSi04}
K.~A. Montgomery and M. Silber, \emph{Feedback control of travelling wave solutions of the complex {G}inzburg-{L}andau equation}, Nonlinearity \textbf{17} (2004), no.~6, 2225--2248. \MR{2097673}

\bibitem[OB19]{OzBa}
T. \"{O}zsar{\i} and A. Batal, \emph{Pseudo-backstepping and its application to the control of {K}orteweg--de {V}ries equation from the right endpoint on a finite domain}, SIAM J. Control Optim. \textbf{57} (2019), no.~2, 1255--1283. \MR{3934103}

\bibitem[OY19]{OY19}
T. \"{O}zsar{\i} and N. Yolcu, \emph{The initial-boundary value problem for the biharmonic {S}chr\"{o}dinger equation on the half-line}, Commun. Pure Appl. Anal. \textbf{18} (2019), no.~6, 3285--3316. \MR{3985385}

\bibitem[Pel04]{p2004}
B. Pelloni, \emph{Well-posed boundary value problems for linear evolution equations on a finite interval}, Math. Proc. Cambridge Philos. Soc. \textbf{136} (2004), no.~2, 361--382. \MR{2040579}

\bibitem[PS07]{PoSi07}
C.~M. Postlethwaite and M. Silber, \emph{Spatial and temporal feedback control of traveling wave solutions of the two-dimensional complex {G}inzburg-{L}andau equation}, Phys. D \textbf{236} (2007), no.~1, 65--74. \MR{2385024}

\bibitem[RNG07]{RuNeGo07}
B.~Y. Rubinstein, A.~A. Nepomnyashchy, and A.~A. Golovin, \emph{Stability of localized solutions in a subcritically unstable pattern-forming system under a global delayed control}, Phys. Rev. E (3) \textbf{75} (2007), no.~4, 046213, 6. \MR{2358596}

\bibitem[RZ09]{RoZh09}
L. Rosier and B.-Y. Zhang, \emph{Null controllability of the complex {G}inzburg-{L}andau equation}, Ann. Inst. H. Poincar\'e{} C Anal. Non Lin\'eaire \textbf{26} (2009), no.~2, 649--673. \MR{2504047}

\bibitem[SB10]{StBe10}
M. Stich and C. Beta, \emph{Control of pattern formation by time-delay feedback with global and local contributions}, Phys. D \textbf{239} (2010), no.~17, 1681--1691. \MR{2684623}

\bibitem[SCD07]{StCa07}
M. Stich, A.~C. Casal, and J.~I. D\'iaz, \emph{Control of turbulence in oscillatory reaction-diffusion systems through a combination of global and local feedback}, Phys. Rev. E (3) \textbf{76} (2007), no.~3, 036209, 9. \MR{2365580}

\bibitem[Smi12]{s2012}
D.~A. Smith, \emph{Well-posed two-point initial-boundary value problems with arbitrary boundary conditions}, Math. Proc. Cambridge Philos. Soc. \textbf{152} (2012), no.~3, 473--496. \MR{2911141}

\bibitem[Str94]{s1994}
R.~S. Strichartz, \emph{A guide to distribution theory and {F}ourier transforms}, Studies in Advanced Mathematics, CRC Press, Boca Raton, FL, 1994. \MR{1276724}

\bibitem[SYY16]{ShYoYo16}
D. Shimotsuma, T. Yokota, and K. Yoshii, \emph{Existence and decay estimates of solutions to complex {G}inzburg-{L}andau type equations}, J. Differential Equations \textbf{260} (2016), no.~3, 3119--3149. \MR{3427692}

\bibitem[WRE17]{Woit17}
F. Woittennek, M. Riesmeier, and S. Ecklebe, \emph{On approximation and implementation of transformation based feedback laws for distributed parameter systems}, IFAC-PapersOnLine \textbf{50} (2017), no.~1, 6786--6792, 20th IFAC World Congress.

\bibitem[Zua07]{Zua07}
E. Zuazua, \emph{Controllability and observability of partial differential equations: some results and open problems}, Handbook of differential equations: evolutionary equations. {V}ol. {III}, Handb. Differ. Equ., Elsevier/North-Holland, Amsterdam, 2007, pp.~527--621. \MR{2549374}

\end{thebibliography}

\end{document}